\numberwithin{equation}{section}
\newtheorem{theorem}{Theorem}[section]
\newtheorem{lemma}[theorem]{Lemma}
\newtheorem{proposition}[theorem]{Proposition}
\newtheorem{assumption}[theorem]{Assumption}
\newtheorem{corollary}[theorem]{Corollary}
\newtheorem{definition}[theorem]{Definition}
\newtheorem{main result}{Main Result}
\newtheorem{remark}[theorem]{Remark}
\newcommand\dela[1]{}
\def\m{m^{\prime}}
\def\l{\left}
\def\r{\right}
\def\p{\prime}
\title[Well-Posedness and large deviations for the SLLB equation]{Well-Posedness and large deviations for the \\stochastic Landau-Lifshitz-Bloch equation}
\author[S. Gokhale]{Soham Gokhale}
\address{School of Mathematics\\ Indian Institute of Science Education and Research Thiruvananthapuram\\ Trivandrum 695551, INDIA}	\email{gokhalesoham16@iisertvm.ac.in}
\author[U.~Manna]{Utpal Manna}
\address{School of Mathematics\\ Indian Institute of Science Education and Research Thiruvananthapuram\\ Trivandrum 695551, INDIA}
\email{manna.utpal@iisertvm.ac.in}
\keywords{Large deviations principle, Stochastic Landau Lifshitz Bloch equation, Ferromagnetism, Wentzell-Freidlin type large deviations principle}
\subjclass{60H15 
}
\begin{document}
	\maketitle
	\textbf{Abstract:} The stochastic Landau-Lifshitz-Bloch equation in dimensions $1,2,3$ perturbed by pure jump noise (in the Marcus canonical form) is considered. The proof for the existence of a martingale solution uses the classical Faedo-Galerkin approximation, followed by compactness and tightness arguments, including those of Aldous, and Jakubowski's version of the Skorohod representation theorem, among others. Pathwise uniqueness and the theory of Yamada and Watanabe give the existence of a strong solution (for dimensions $1,2$). Later, a weak convergence method is used to establish a Wentzell-Freidlin type large deviations principle for the small noise asymptotic of solutions (for dimensions $1,2$).

	\section{Introduction}\label{section Introduction}
	Initiated by Weiss (see \cite{brown1963micromagnetics} and references therein) and further developed by Landau and Lifshitz (\cite{Landau+Lifshitz_1935_TTheoryOf_MagneticPermeability_Ferromagnetic,Landau1992theory}) and Gilbert \cite{Gilbert}, the study of the theory of ferromagnetism is one of the areas of rapidly increasing importance and applications. 
	The proposed equation of motion for a ferromagnetic medium is now a basis for many studies of magnetic structures, aiding substantially in the field of information storage and processing.
	For temperatures ($\mathbb{T}$) below the Curie temperature ($\mathbb{T}_c$), the magnetization can be modeled by the Landau-Lifshitz-Gilbert (LLG) equation. The LLG equation assumes that the magnetization length remains a constant, which is a serious restriction. This approach is not suitable for higher temperatures. For instance, in heat assisted magnetic recording, the electronic temperature can rise. Also, the magnetization is an average over some distribution function, and its length can change \cite{garanin1997fokker}. \\
	\subsection{The Landau-Lifshitz-Bloch Equation}\label{The Landau-Lifshitz-Bloch Equation}
	Garanin \cite{Garanin_1991_Generalized_EquationOfMotion_Ferromagnet} (see also \cite{garanin1997fokker,garanin2004thermal}) developed a thermodynamically consistent approach, the Landau-Lifshitz-Bloch (LLB) equation for ferromagnetism. The LLB equation is valid for temperatures both below and above the Curie temperature and essentially interpolates between the LLG equation at low temperature and the Ginzburg-Landau theory for phase transition. LLB micromagnetics has become a real alternative to LLG micromagnetics for temperatures that are close to the Curie temperature $\l( \mathbb{T} \geq \frac{3\mathbb{T}_c}{4}\r)$. For instance, in light-induced demagnetization with powerful fs lasers, the electronic temperature is normally raised higher than $\mathbb{T}_c$. While LLG micromagnetics cannot work under these, micromagnetics based on the LLB equation has been proven to describe the observed fs magnetization dynamics \cite{garanin1997fokker}.\\
	The average spin polarization $m$ in a domain $\mathcal{O}\subset\mathbb{R}^d,d=1,2,3$ for $t>0$ satisfies the following LLB equation.
	\begin{align}\label{deterministic SLLBE}
		\begin{cases}
			&\frac{\partial m}{\partial t} =  \gamma m \times H_{\text{eff}} + L_1\frac{1}{|m|_{\mathbb{R}^3}^2}(m \cdot H_{\text{eff}})m - L_2\frac{1}{|m|_{\mathbb{R}^3}^2} m\times (m\times H_{\text{eff}}),\\
			&\frac{\partial m}{\partial \eta}(t,x) =  0,\ t>0,\ x\in \partial\mathcal{O}, \\
			&m(0) =  m_0.
		\end{cases}
	\end{align}
	Here $\gamma>0$ is the gyromagnetic ratio and $L_1,L_2$ are the longitudinal and transverse damping parameters respectively and $H_{\text{eff}}$ is the effective field. $\eta$ denotes the outward pointing normal to the boundary $\partial \mathcal{O}$.\\
	Using the identity
	\begin{align*}
		a \times (b \times c) = b(a \cdot c) - c(a \cdot b)\ \text{for}\ a,b,c\in\mathbb{R}^3,
	\end{align*}
	for the last term gives
	\begin{align*}
		m \times (m \times H_{\text{eff}}) = (m \cdot H_{\text{eff}})m - H_{\text{eff}} \l|m\r|_{\mathbb{R}^3}^2.
	\end{align*}
	For temperature above the Curie temperature, we have $L_1 = L_2 = \kappa_1$ (say).
	Therefore the resulting equation is
	\begin{align}\label{Definition of LLBE}
		\frac{\partial m}{\partial t} = \gamma m \times H_{\text{eff}} + \kappa_1 H_{\text{eff}} .
	\end{align}
	The effective field is given by
	\begin{equation}\label{Definition of H eff}
		H_{\text{eff}} = \Delta m - \frac{1}{\mathcal{X}_{||}}\l( 1 + \frac{3}{5}\frac{\mathbb{T}}{\mathbb{T}-\mathbb{T}_c}\l|m\r|_{\mathbb{R}^3}^2 \r) m.
	\end{equation}	
	
	$\mathcal{X}_{||}$ is the longitudinal susceptibility. On the right hand side of \eqref{Definition of H eff}, the first term denotes the exchange field, the second denotes the entropy correction field.
	
 Using \eqref{Definition of H eff}, we can write  equation \eqref{Definition of LLBE} as
	\begin{align}
		\nonumber \frac{\partial m}{\partial t} = &  \gamma m \times \Delta m -    \frac{\gamma}{\mathcal{X}_{||}}\l( 1 + \frac{3}{5}\frac{\mathbb{T}}{\mathbb{T}-\mathbb{T}_c}\l|m\r|_{\mathbb{R}^3}^2 \r) \l( m \times m \r) + \kappa_1 \Delta m \\
		& - \frac{\kappa_1}{\mathcal{X}_{||}}\l( 1 + \frac{3}{5}\frac{\mathbb{T}}{\mathbb{T}-\mathbb{T}_c}\l|m\r|_{\mathbb{R}^3}^2 \r) m  .
	\end{align}
	Denoting $\mu = \frac{3}{5}\frac{\mathbb{T}}{\mathbb{T}-\mathbb{T}_c} $ and $\kappa =  \frac{\kappa_1}{\chi_{||}}$ results in the following equality.
	\begin{align}\label{LLB equation with constants}
		\frac{\partial m}{\partial t} = &   \kappa_1 \Delta m + \gamma m \times \Delta m -  
		\kappa\l( 1 + \mu \l|m\r|_{\mathbb{R}^3}^2 \r) m 
		.
	\end{align}
	Le \cite{LE_Deterministic_LLBE} showed the existence of a weak solution in a bounded domain.	
	For some recent developments on the LLB equation, we refer the reader to  \cite{Ayouch+Benmouane+Essoufi_2022_RegularSolution_LLB,Hadamache+Hamroun_2020_LargeSolution_LLBE,Li+Guo+Zeng_2021_SmoothSolution_LLBE,Pu+Yang_2022_GlobalSmoothSolutions_LLBE,Wang+EtAl_SmoothSolutionLLBE} among others.
	Note that the said (deterministic) LLB equation turns out to be insufficient, for instance, to capture the dispersion of individual trajectories at high temperatures. 
	Hence, according to Brown  \cite{brown1963micromagnetics,Brown_Thermal_Fluctuations} the LLB equation is modified (see \cite{Evans_etal_2012_Stochastic_Form_LLBE_Physics,garanin2004thermal} for discussions on stochastic form of LLB equation) in order to incorporate random fluctuations and to describe noise induced transitions between equilibrium states of the ferromagnet.
	
	The study of thermal fluctuations in magnetic materials is one of the fundamental issues of modern micromagnetism, and its applications \cite{Atxitia_2016_fundamentals_LLBE}. For instance, magnetic nanostructures are very susceptible to thermal excitations. Also, many new writing techniques have thermal excitation playing an important role, either as a byproduct or triggering magnetization switching (for example, thermally induced magnetization switching).
	Brown \cite{brown1963micromagnetics,Brown_Thermal_Fluctuations} incorporated thermal fluctuation into the Landau-Lifshitz model as formal random fields. 
	Garanin and Chubykalo-Fesenko \cite{garanin2004thermal} suggested the treatment of LLB equation following Brown's treatment of the LLG equation. However, the requirement of the Boltzmann distribution in equilibrium is not fulfilled in the vicinity of the Curie temperature.
	Evans \textit{et. al.} in \cite{Evans_etal_2012_Stochastic_Form_LLBE_Physics} introduced a different form of the stochastic LLB equation, which is consistent with the Boltzmann distribution at arbitrary temperatures. For more details about the LLB equation and its applications, we refer the reader, for instance, to \cite{Atxitia_2016_fundamentals_LLBE,Vogler_2014_LLBE_Exchange_Couple_Grains} among others and references therein.

	Following the pioneering work of Le \cite{LE_Deterministic_LLBE}, Brze\'zniak, Goldys, and Le \cite{ZB+BG+Le_SLLBE} considered the stochastic counterpart of the LLB equation (perturbed by Gaussian noise) and showed the existence of a weak martingale solution in dimensions $1,2,3$ along with pathwise uniqueness of the obtained solution for dimensions $1,2$, also proving the existence of invariant measures. 
	The authors in \cite{Jiang+Ju+Wang_MartingaleWeakSolnSLLBE} show the existence of a weak martingale solution to the stochastic LLB equation (perturbed by Gaussian noise).
	Similarly, the authors in \cite{Qiu+Tang_Wang_AsymptoticBehaviourSLLBE} consider the stochastic LLB equation in 1 dimension and establish large deviations, along with the central limit theorem.

	Our study also has another physical motivation other than understanding the phenomenon of phase transition between equilibrium states induced by thermal fluctuations. Consider a magnet that is traversed by a domain wall. There can be random imperfections (for example, defects, impurities, etc.) within the material. Hence the domain wall is not free to move and is pinned to the imperfections, which act as local potential barriers. When an external magnetic field is applied to move the domain wall away from the imperfection, the free wall moves for a while and is then trapped by another imperfection. The hysteresis loop is therefore distinguished by a series of jumps, known as Barkhausen jumps. Mayergoyz, \textit{et. al.}, in \cite{mayergoyz_2011_LandauLifshitzMagnetizationDynamics_JumpNoise,mayergoyz_2011_MagnetizationDynamics_JumpNoise} introduced jump noise process into magnetization dynamics equations in order to account for the random thermal effects. The exact mechanism of the pinning (imperfection) is imperfectly known \cite{klik+Chang_2013_thermal,puppin+Zani_2004_magnetic}. Our work might help in a better understanding of the said effects.
	
	The aim of this work is twofold. The first is to show the existence and uniqueness of the solution for the stochastic LLB equation (with pure jump noise).
	Taking motivation from \cite{ZB+BG+Le_SLLBE} and \cite{ZB+UM_WeakSolutionSLLGE_JumpNoise,ZB+UM+AAP_LargeDeviations_StochasticNematic} (see also  \cite{ZB+UM_SLLGE_JumpNoise,ZB+UM+Zhai_Preprint_LDP_LLGE_JumpNoise,ZB+Xuhui+Zhai_2019_WellPosedness_LargeDeviations_2DSNSE_Jumps,garanin2004thermal}), we perturb the effective field $H_{\text{eff}}$.
	A key technical issue while considering a jump noise is that the noise must preserve the invariance property under coordinate transformation. This is important, for example, in preserving the constraint condition for the LLG equation.
	Marcus \cite{Marcus_1981_ModellingSDE_Semimartingales}, Kunita \cite{Kunita_2004book_SDE_LevyProcesses_StochasticFlows} (see also \cite{Chechkin+Pavlyukevich_2014_MarcusVsStatonovichNoise_JumpNoise,Applebaum_2009Book_LevyProcesses_StochasticCalculus}) provide a framework to resolve this issue.
	
	Another important point is the following. The proof for the existence of a martingale solution uses the classical Faedo-Galerkin approximation. To show the tightness of the family of laws of the approximate solutions, the work of Flandoli and Gatarek \cite{Flandoli_Gatarek} is usually followed. But this method is not applicable when one considers jump noise.
	The approach by M\'etivier \cite{Metivier_SPDE_InfDimensions_Book}, Brze\'zniak, \textit{et. al.} \cite{ZB+EH_2009_MaximalRegularlty_StochasticConvolutions_LevyNoise,ZB+EH+Paul_2018_StochasticReactionDiffusion_JumpProcesses},  Motyl \cite{Motyl_2013_SNSE_LevyNoise} depends on a deterministic compactness result, which further depends on some energy bounds and the Aldous condition, which is a stochastic version of the equicontinuity result of Arz\'ela and Ascoli. A similar idea has been used in \cite{ZB+UM_WeakSolutionSLLGE_JumpNoise,ZB+UM_SLLGE_JumpNoise} for the stochastic LLG equation.
	
	We first state the problem that we have considered. Let $\mathcal{O}\subset \mathbb{R}^d,d=1,2,3$ be a bounded domain with a smooth boundary $\partial \mathcal{O}$.  The meaning of some of the terms and functions will be given at a later stage. Let $B$ denote the unit ball in $\mathbb{R}^{N}$ (excluding the center) for a fixed $N\in\mathbb{N}$. We consider the following LLB equation with pure jump noise in the Marcus canonical sense. Let $T>0$.
	\begin{align}\label{eqn problem considered with L}
		\nonumber m(t) = & m_0 + \int_{0}^{t} \l[ \Delta m(s) + m(s) \times \Delta m(s) - \l( 1 + \l| m(s) \r|_{\mathbb{R}^3}^2 \r) m(s) \r] \, ds \\
		& + \int_{0}^{t}\int_{B} \l( m(s) \times h + h \r) \diamond dL(s),\ t\in[0,T].
	\end{align}
	with the Neumann boundary condition. Here $L(t) = \l(L_1(t), \dots, L_N(t)\r), N\in\mathbb{N}$ is a $\mathbb{R}^N$-valued L\'evy process with pure jump. For the sake of simplicity, we assume $N = 1$. We postpone more details about the process $L$ and equation \eqref{eqn problem considered with L} to 
	Sections \ref{Some Notations, Assumptions}-\ref{section The Equation in Marcus Canonical Form} .

	The second part of the work aims to establish a Wentzell-Freidlin type large deviations principle for the small noise asymptotics of solutions of \eqref{eqn problem considered with L}. We use the weak convergence method based on the work \cite{Budhiraja+Chen+Dupuis_2013_LDPDrivenByPoissonProcess} (see also \cite{Budhiraja+Dupuis_2000_VariationalRepresentation_PositiveFunctionals_InfDimBrownianMotion,Budhiraja+Dupuis_2019_Book_AnalysisApproximationOfRareEvents,Budhiraja+Dupuis+Ganguly_2016_ModerateDeviations_Jumps,Budhiraja+Dupuis+Maroulas_2011_VariationalRepresentationsContinuousTimeProcesses,Dupuis+Ellis_Book_WeakConvergenceApproach_LargeDeviations}).
	We take motivation from the work of Brezniak, Goldys, and Jegaraj \cite{ZB+BG+TJ_LargeDeviations_LLGE}, wherein the authors first show the existence and uniqueness of a solution to the one dimensional stochastic LLG equation, followed by large deviations principle.\\
	Brze\'zniak and Manna in \cite{ZB+UM_WeakSolutionSLLGE_JumpNoise}  consider the LLG equation perturbed by pure jump noise (in the Marcus canonical form) and show the existence of a weak martingale solution taking values in the unit sphere $\mathbb{S}^2\subset \mathbb{R}^3$ (see also \cite{ZB+UM_SLLGE_JumpNoise} for non-zero Anisotropy energy)
	The same authors, along with Panda in \cite{ZB+UM+AAP_2019_MartingaleSolutions_NematicLiquidCrystals_PureJumpNoise} show the existence of a martingale solution to the nematic liquid crystal model, driven by pure jump noise. In \cite{ZB+UM+AAP_LargeDeviations_StochasticNematic}, the same authors establish large deviations for the liquid crystal model driven by multiplicative Gaussian noise.
	Manna and Panda in \cite{UM+AAP_2021_LargeDeviationsSNSELevyNoise} show the well posedness and large deviations for the constrained stochastic Navier-Stokes equation in 2 dimensions, perturbed by L\'evy noise in the Marcus canonical form.	
	For more related works and some works concerning large deviations, we refer the reader to the following (non-exhaustive) list of works and references therein. \cite{Jianhai+Chenggui_2015_LargeDevitaionsSDEJumps,ZB+Xuhui+Zhai_2019_WellPosedness_LargeDeviations_2DSNSE_Jumps, Chueshov+Millet_2010_Stochastic2DHydrodynamic_WellPosedness_LargeDeviations,Acosta_2000_LargeDeviationStochasticEquations,Emanuela+Hocquet,Sritharan+Sundar_2006_LargeDeviations_2DSNSE_MultiplicativeNoise,Xu+Zhang_LDP2DSNSE_LevyProcess,Zhai+Zhang_2015_LargeDeviations_2DSNSE_MultiplicativeLevyNoise}  among others.
	
	We show the existence of a solution and large deviations for the stochastic LLB equation perturbed by pure jump noise. In the spirit of \cite{UM+AAP_2021_LargeDeviationsSNSELevyNoise,Xu+Zhang_LDP2DSNSE_LevyProcess,Zhai+Zhang_2015_LargeDeviations_2DSNSE_MultiplicativeLevyNoise} among others, one may be able to extend the result to the case of L\'evy noise.
	
	The layout of the work is as follows. Section \ref{section Existence of a Weak Martingale Solution} describes the considered problem \eqref{eqn problem considered with L} in more detail.
	The main aim of the section is to show the existence of a weak martingale solution. First, the equation \eqref{eqn problem considered with L} is written using the Marcus mapping defined in Section \ref{section The Marcus Mapping}. The section also outlines some basic properties of the defined Marcus map.
	To prove the existence of a weak martingale solution, we use the Faedo-Galerkin approximation, followed by some compactness arguments and the use of a generalized Skorokhod Theorem. Section \ref{Section Pathwise Uniqueness and Existence Of Strong Solution} shows that (for $d=1,2$) the obtained solution is pathwise unique, and hence when combined with the theory of Yamada and Watanabe, the existence of a strong solution is shown (Theorem \ref{theorem existence of strong solution}).
	Section \ref{section The Large Deviations Principle} describes the second aim of the work, namely the large deviations principle. We formulate the problem and give some necessary background. The equivalence between the Laplace principle and the large deviations principle for Polish spaces is well known \cite{Dupuis+Ellis_Book_WeakConvergenceApproach_LargeDeviations}. Budhiraja, Chen, and Dupuis in \cite{Budhiraja+Chen+Dupuis_2013_LDPDrivenByPoissonProcess} establish two sufficient conditions for a sequence of laws to satisfy the large deviations principle.
	In Section \ref{section Sufficient Conditions for LDP}, we state the concerned sufficient conditions for the sequence of laws (on an appropriate space) of the solutions to the problem \eqref{eqn problem considered with L} (in an appropriate form) to satisfy the large deviations principle.
	Section \ref{section Verification of Conditions 1 and 2} is dedicated to the verification (for $d=1,2$) of the said conditions. The verification is done in a number of lemmas.
	
	\section{Some Preliminaries, Notataions}\label{Section Some Preliminaries, Notataions}

	We introduce some operators and some spaces that will be used in the work.
	
	\subsection{Some Notations, Assumptions}\label{Some Notations, Assumptions}
	\begin{remark}[\textbf{The operator }$A$]
		Let $A = -\Delta$ denote the negative Neumann Laplacian on $\mathcal{O}\subset \mathbb{R}^d,\ d=1,2,3$. Let $\l\{ e_i \r\}_{n\in\mathbb{N}}$ denote the orthonormal basis of $L^2$, consisting of eigenfunctions of $A$, corresponding to eigenvalues $\lambda_i,i\in\mathbb{N}$ \cite{Evans}. That is for each $i$
		\begin{align}
			Ae_i = \lambda e_i,\ \text{in}\ \partial \mathcal{O}\ \frac{\partial e_i}{\partial n} = 0\ \text{on}\ \partial \mathcal{O}.
		\end{align}
		Here $n$ denotes the outward pointing normal vector on the boundary $\partial \mathcal{O}$.
		Let us define $A_1$ on the space $D(A)\subset L^2$ by 
		\begin{equation*}
			A_1 = I_{L^2} + A,
		\end{equation*}
		where $I_{L^2}$ denotes the identity operator on the space $L^2$. For $\beta > 0$, we define the Hilbert space $X^{\beta} := \text{dom}(A_1^{\beta})$, with the norm
		\begin{equation}
			\l| v \r|_{X^{\beta}} := \l| \l( I + A \r)^{\beta} v \r|_{L^2}.
		\end{equation}
		We assume that $X^{0} = L^2$. The space $X^{-\beta}$ is understood as the dual space of $X^{\beta}$. More details about the spaces can be found in, for example \cite{ZB+BG+TJ_Weak_3d_SLLGE}.
		In particular, the following is a Gelfand triple (for $\beta > 0$).
		\begin{equation}
			X^{\beta} \hookrightarrow L^2 \hookrightarrow X^{-\beta}.
		\end{equation}
		For $0 \leq \beta < \frac{3}{4}$,
		\begin{equation}
			X^{\beta} = H^{2\beta}.
		\end{equation}
		Here $H^{2\beta}$ is the standard Sobolev space.
	\end{remark}
	\begin{assumption}\label{assumption main assumption}
		\hfill
		\begin{enumerate}
			\item The given probability space $\l( \Omega , \mathcal{F} , \mathbb{F} , \mathbb{P} \r)$ satisfies the usual conditions.
			\item The process $\{L(t)\}_{t\in[0,T]}$ is $\mathbb{R}$ valued, $\l(\mathcal{F}_t\r)$-adapted L\'evy process of pure jump type, defined on the above probability space, with $0$ drift. The corresponding time homogeneous Poisson Random Measure (PRM) is $\eta$.
			\item The intensity measure $Leb\otimes \nu$ is such that $\text{supp} \l(\nu\r) \subset B(0,1)\backslash \{0\} =: B\subset\mathbb{R}$.
			\item The function $h\in W^{2,\infty}$.
			\item The initial data is a $\mathcal{F}_0$ -measurable $H^1$-valued random variable.
		\end{enumerate}
	\end{assumption}
	The meaning of the stochastic integral in \eqref{eqn problem considered with L} is explained in the following few lines.
	$L(t)$ is a $\mathbb{R}$-valued L\'evy process with pure jump (i.e., the continuous part of the process is 0). The L\'evy process $L$ is understood as follows.
	\begin{equation}\label{eqn meaning of L eqn 1}
		L(t) = \int_{0}^{t} \int_{B} l \, \tilde{\eta}(ds,dl) + \int_{0}^{t} \int_{B^c} l \, \eta(ds,dl),\ t\geq 0.
	\end{equation}
	Here $\eta$ denotes a time homogeneous Poisson random measure and $\tilde{\eta}$ denotes the corresponding compensated time homogeneous Poisson random measure, with a compensator $\text{Leb} \otimes \nu$ $\l( \tilde{\eta} = \eta - \text{Leb} \otimes \nu \r)$.
	We assume that $\nu$ and $\eta$ are concentrated on $B$. That is, $\text{supp}\l(\nu\r)\subset B$ and $\eta = 0$ on $B^c$. In short, we only consider small jumps (\cite{ZB+UM_WeakSolutionSLLGE_JumpNoise,Ikeda+Watanabe}).\\
	Effectively, equation \eqref{eqn meaning of L eqn 1} becomes
	\begin{equation}\label{eqn meaning of L eqn 2}
		L(t) = \int_{0}^{t} \int_{B} l \, \tilde{\eta}(ds,dl).
	\end{equation}

	\subsection{The Marcus Mapping}\label{section The Marcus Mapping}
	This subsection describes the Marcus mapping and the resulting form of the equation \eqref{eqn problem considered with L}.
	Let \begin{equation*}
		\bar{g}:H^1 \ni v \mapsto g(v) + h \in H^1,
	\end{equation*}
	with 
	\begin{equation*}
		g:H^1 \ni v \mapsto v \times h \in H^1,
	\end{equation*}
	with fixed (given) $h\in W^{2,\infty}$.
	We observe that the map $g$, and hence the map $\bar{g}$ is a bounded map.
	We now define a map 
	\begin{equation*}
		\Phi : \mathbb{R}_{+}\times \mathbb{R} \times H^1 \to L^1, 
	\end{equation*}
	such that for each $x\in H^1$, the mapping
	\begin{equation*}
		t \mapsto \Phi(t,l,x),
	\end{equation*}
	is a $C^1$ solution to the ordinary differential equation
	\begin{equation}\label{eqn ode Marcus Mapping eqn 1}
		\frac{d\Phi}{dt}\l(t,l,x\r) = l\bar{g}\l( \Phi(t,l,x)\r),
	\end{equation}
	with
	$\Phi(0,l,x) = x$.\\
	The equation \eqref{eqn ode Marcus Mapping eqn 1} can be written as
	\begin{equation}
		\Phi(t,l,x) = x + \int_{0}^{t} l \bar{g}\l(\Phi(s,l,x)\r) \, ds.
	\end{equation}
	Arguing similarly, we can show that the map
	\begin{equation*}
		\bar{g}:H^1 \ni v \mapsto g(v) + h \in H^1,	
	\end{equation*}
	is bounded. Therefore, the mappings
	\begin{equation*}
		\mathbb{R}_{+}\times \mathbb{R} \times L^2 \ni \l( t,l,x \r) \mapsto \Phi\l( t,l,x \r) \in L^2,
	\end{equation*}
	and
	\begin{equation*}
		\mathbb{R}_{+}\times \mathbb{R} \times H^1 \ni \l( t,l,x \r) \mapsto \Phi\l( t,l,x \r) \in H^1,
	\end{equation*}
	are well defined.
	In what follows, we fix $t = 1$. $\Phi(l,\cdot) := \Phi(1,l,\cdot)$, $l\in\mathbb{R}$.
	
	\subsection{Some Properties of the Marcus Mapping and some corresponding operators}\label{section Some Properties of the Marcus Mapping and some corresponding operators}
	This subsection enlists some properties of the mapping $\Phi$.

	\begin{lemma}\label{lemma linear growth Phi}
		Let $X \in \l\{ L^2, H^1 \r\}$. Then there exists a constant $C>0$ such that for every $l\in B$, the following hold.

		\begin{equation}\label{eqn linear growth for Phi}
			\l| \Phi(l,x) \r|_{X} \leq C  \l( 1 + \l| x \r|_{X} \r),\ x\in X.
		\end{equation}
		
		\begin{equation}\label{eqn linear growth 2 for Phi}
			\l| \Phi(l,x) \r|_{X}^2 \leq C  \l( 1 + \l| x \r|_{X}^2 \r),\ x\in X.
		\end{equation}
		
	\end{lemma}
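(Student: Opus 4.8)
The plan is to derive the bound directly from the integral representation $\Phi(l,x) = x + \int_0^1 l\,\bar g(\Phi(s,l,x))\,ds$ together with the boundedness of $\bar g$ on $X \in \{L^2, H^1\}$ and Gr\"onwall's inequality. First I would record that $g$ is bounded on each of the spaces $L^2$ and $H^1$: indeed $|g(v)|_{L^2} = |v \times h|_{L^2} \le |h|_{L^\infty}|v|_{L^2}$, and for $H^1$ one differentiates $v \times h$, using $h \in W^{2,\infty} \subset W^{1,\infty}$ to control $|\nabla v \times h|_{L^2} + |v \times \nabla h|_{L^2} \le |h|_{L^\infty}|\nabla v|_{L^2} + |\nabla h|_{L^\infty}|v|_{L^2}$, so $|g(v)|_{H^1} \le C_h |v|_{H^1}$. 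Consequently $|\bar g(v)|_X = |g(v) + h|_X \le C_h|v|_X + |h|_X \le C_h(1 + |v|_X)$, since $h \in W^{2,\infty}(\mathcal O)$ on a bounded domain $\mathcal O$ embeds into both $L^2$ and $H^1$.

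Next I would apply $|\cdot|_X$ to the integral equation and use $|l| \le 1$ for $l \in B$:
\begin{equation*}
	\l| \Phi(t,l,x) \r|_X \le \l| x \r|_X + \int_0^t |l|\, \l| \bar g(\Phi(s,l,x)) \r|_X \, ds \le \l| x \r|_X + \int_0^t C_h \l( 1 + \l| \Phi(s,l,x) \r|_X \r) \, ds.
\end{equation*}
Writing $\varphi(t) := |\Phi(t,l,x)|_X$, this reads $\varphi(t) \le (|x|_X + C_h t) + C_h \int_0^t \varphi(s)\,ds$, and Gr\"onwall's lemma gives $\varphi(t) \le (|x|_X + C_h t)\, e^{C_h t}$ for $t \in [0,1]$. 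Evaluating at $t = 1$ (the fixed value defining $\Phi(l,\cdot)$) yields $|\Phi(l,x)|_X \le (|x|_X + C_h)\, e^{C_h} =: C(1 + |x|_X)$ with $C = C_h e^{C_h}$, which is \eqref{eqn linear growth for Phi}. The second estimate \eqref{eqn linear growth 2 for Phi} then follows immediately by squaring and using $(a+b)^2 \le 2a^2 + 2b^2$, so that $|\Phi(l,x)|_X^2 \le 2C^2(1 + |x|_X^2)$, after relabelling the constant.

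I do not anticipate a serious obstacle here; the only point requiring a little care is the $H^1$ bound on $g$, where one must be sure that $h \in W^{2,\infty}$ (rather than merely $W^{1,\infty}$) is available so that $g$ maps $H^1$ into $H^1$ as asserted in the definition of $\bar g$ — the extra derivative of $h$ is what the paper will later need for higher-regularity estimates, but for this lemma $W^{1,\infty}$ suffices. One should also note that the constant $C$ is genuinely uniform in $l \in B$ precisely because $|l| \le 1$ there, which is exactly the hypothesis imposed; the argument would fail for large jumps, consistent with the paper's restriction to $\mathrm{supp}(\nu) \subset B$.
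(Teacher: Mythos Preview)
Your proof is correct. For the first inequality your argument is essentially identical to the paper's: take the $X$-norm of the integral equation, use linear growth of $\bar g$, apply Gr\"onwall, and absorb $|l|\le 1$ into the constant.

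For the second inequality you take a slightly different and more elementary route: you simply square \eqref{eqn linear growth for Phi} and use $(a+b)^2\le 2a^2+2b^2$. The paper instead derives \eqref{eqn linear growth 2 for Phi} directly by an energy computation, differentiating $\tfrac12|\Phi(t,l,x)|_{L^2}^2$ and exploiting the orthogonality $\langle \Phi\times h,\Phi\rangle_{L^2}=0$ so that only the additive $h$-term survives, then applying Young and Gr\"onwall. Your approach is cleaner and requires no structure of $\bar g$ beyond linear growth; the paper's approach has the minor advantage that it yields slightly sharper constants (no squaring doubles) and rehearses the cancellation $\langle v\times h,v\rangle=0$ that is used repeatedly later. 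Either is perfectly adequate for the lemma as stated.
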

	\begin{proof}[Proof of Lemma \ref{lemma linear growth Phi}]
		We give a brief proof for the second inequality with $X = L^2$. The other cases can be done similarly.		
		For the first inequality,
		\begin{align}
			\Phi(l,x) = x + \int_{0}^{1} l\bar{g}\bigl(\Phi\l(s,l,x\r)\bigr) \, ds.
		\end{align}
		Taking the $L^2$ norm of both sides gives
		\begin{align}
			\nonumber \l| \Phi(l,x) \r|_{L^2} \leq &  \l| x \r|_{L^2} + \l| \int_{0}^{1} l\bar{g}\bigl(\phi\l(s,l,x\r)\bigr) \, ds \r|_{L^2} \\
			\nonumber \leq &  \l| x \r|_{L^2} + \int_{0}^{1} \l| l \r| \l|\bar{g}\bigl(\phi\l(s,l,x\r)\bigr) \r|_{L^2} \, ds \\
			\leq &  \l| x \r|_{L^2} + C \int_{0}^{1} \l| l \r| \l( 1 + \l| \phi\l(s,l,x\r) \r|_{L^2} \r) \, ds.
		\end{align}
		The use of the Gronwall inequality gives the desired result.
		
		For the second inequality, we again have
		\begin{align}
			\Phi(l,x) = x + \int_{0}^{1} l\bar{g}\bigl(\phi\l(s,l,x\r)\bigr) \, ds.
		\end{align}
		Therefore by Young's inequality, we can show the following.	
		\begin{align*}
			\nonumber \frac{1}{2} \l| \Phi(l,x) \r|_{L^2}^2 = &  \frac{1}{2} \l|  x  \r|_{L^2}^2 + \int_{0}^{1} l\l\langle \bar{g}\l( \Phi(s,l,x) \r) , \Phi(s,l,x) \r\rangle_{L^2} \, ds \\
			= &  \frac{1}{2} \l|  x  \r|_{L^2}^2 + \int_{0}^{1} l \l[ \l\langle g \l( \Phi(s,l,x) \r) , \Phi(s,l,x) \r\rangle_{L^2} + \l\langle h , \Phi(s,l,x) \r\rangle_{L^2}\r] \, ds \\
			= &  \frac{1}{2} \l|  x  \r|_{L^2}^2 + \int_{0}^{1} l \l[ \l\langle  \Phi(s,l,x) \times h , \Phi(s,l,x) \r\rangle_{L^2} + \l\langle h , \Phi(s,l,x) \r\rangle_{L^2}\r] \, ds \\
			= &  \frac{1}{2} \l|  x  \r|_{L^2}^2 + \int_{0}^{1} l  \l\langle h , \Phi(s,l,x) \r\rangle_{L^2} \, ds \\
			\leq & \frac{1}{2}\l|  x  \r|_{L^2}^2 + \int_{0}^{1}  \l| l \r|  \l[ \frac{1}{2} \l| h \r|_{L^2}^2 + \l| \Phi(s,l,x) \r|_{L^2}^2 \r] \, ds \\
			\leq & \frac{1}{2}\l|  x  \r|_{L^2}^2 + \frac{1}{2} \l| h \r|_{L^2}^2 + \int_{0}^{1}  \l| l \r| \l| \Phi(s,l,x) \r|_{L^2}^2 \, ds.
		\end{align*}	
		Use of the Gronwall inequality, there exists constants $\tilde{C},C>0$ such that
		\begin{equation*}
			\l| \Phi(l,x) \r|_{L^2}^2 \leq \tilde{C} \l( 1 +  \l| x \r|_{L^2}^2 \r)  e^{\int_{0}^{1} \tilde{C} \, ds} \leq C \l( 1 +  \l| x \r|_{L^2}^2 \r).
		\end{equation*}		
		Note that all of the previous calculations go through for any $l\in\mathbb{R}$. In particular, choosing $l\in B$ can make the constants independent of $l$.		
	\end{proof}
	Consider the following mappings on $L^2$.
	\begin{align}\label{eqn definition of G}
		G(l,v) = \Phi(l,v) - v,\ v\in L^2:
	\end{align}
	\begin{align}\label{eqn definition of H}
		H(l,v) = \Phi(v) - v - l\bar{g}(v),\ v\in L^2.
	\end{align}
	With the above two mappings in mind, we state the following lemma.
	
	\begin{lemma}\label{lemma linear growth Lipschitz G and H}
		There exists a constant $C>0$ such that, for every $v\in L^2$ and $l\in B$, the following inequalities hold.
		\begin{align}\label{eqn G linear growth}
			\l|G(l,v)\r|_{L^2} \leq C \l( 1 + \l| v \r|_{L^2}\r),
		\end{align}
		\begin{align}\label{eqn G Lipschitz}
			\l| G(l,u) - G(l,v) \r|_{L^2} \leq C \l| u - v \r|_{L^2},
		\end{align}
		\begin{align}\label{eqn H linear growth}
			\l|H(l,v)\r|_{L^2} \leq C \l( 1 + \l| v \r|_{L^2}\r),
		\end{align}
		\begin{align}\label{eqn H Lipschitz}
			\l| H(l,u) - H(l,v) \r|_{L^2} \leq C \l| u - v \r|_{L^2}.
		\end{align}
	\end{lemma}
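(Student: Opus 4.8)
The plan is to derive all four inequalities from the already-established linear growth of $\Phi$ (Lemma \ref{lemma linear growth Phi}), the boundedness of $\bar g$, and, crucially, an integral representation of the ``difference'' quantities $G$ and $H$ that exposes a Lipschitz (in fact Gronwall-amenable) structure. First I would record the key representations. Since $\Phi(l,v) = v + \int_0^1 l\,\bar g(\Phi(s,l,v))\,ds$, we immediately get
\begin{equation*}
	G(l,v) = \Phi(l,v) - v = \int_0^1 l\,\bar g\bigl(\Phi(s,l,v)\bigr)\,ds,
\end{equation*}
and similarly, writing $\bar g(v)$ in the integrand of the fixed-time ODE,
\begin{equation*}
	H(l,v) = \Phi(l,v) - v - l\bar g(v) = \int_0^1 l\,\Bigl[\bar g\bigl(\Phi(s,l,v)\bigr) - \bar g(v)\Bigr]\,ds.
\end{equation*}

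For the linear growth bounds \eqref{eqn G linear growth} and \eqref{eqn H linear growth}, I would take $L^2$ norms inside the integral, use $|l|\le 1$ for $l\in B$, and invoke the linear growth of $\bar g$ together with \eqref{eqn linear growth for Phi}: $\bigl|\bar g(\Phi(s,l,v))\bigr|_{L^2}\le C(1+|\Phi(s,l,v)|_{L^2})\le C(1+|v|_{L^2})$. This yields \eqref{eqn G linear growth} directly; for \eqref{eqn H linear growth} one also bounds $|l\bar g(v)|_{L^2}\le C(1+|v|_{L^2})$ and adds the two contributions. For the Lipschitz bounds \eqref{eqn G Lipschitz} and \eqref{eqn H Lipschitz}, the engine is a Gronwall estimate on $\Phi$ itself: subtracting the integral equations for $\Phi(t,l,u)$ and $\Phi(t,l,v)$ gives $\Phi(t,l,u)-\Phi(t,l,v) = (u-v) + \int_0^t l\,[\bar g(\Phi(s,l,u))-\bar g(\Phi(s,l,v))]\,ds$; since $g(w)=w\times h$ is linear in $w$ with $|w\times h|_{L^2}\le |h|_{L^\infty}|w|_{L^2}$, the map $\bar g$ is globally Lipschitz on $L^2$ (the constant $h$ term cancels in differences), so Gronwall gives
\begin{equation*}
	\bigl|\Phi(t,l,u)-\Phi(t,l,v)\bigr|_{L^2} \le e^{|l|\,\|h\|_{L^\infty}}\,|u-v|_{L^2} \le C\,|u-v|_{L^2}, \qquad l\in B,\ t\in[0,1].
\end{equation*}
Then \eqref{eqn G Lipschitz} follows by differencing the representation for $G$ and using this bound under the integral, and \eqref{eqn H Lipschitz} follows by differencing the representation for $H$: the integrand is $l[(\bar g(\Phi(s,l,u))-\bar g(v)\cdot\text{stuff})]$ — more carefully, $H(l,u)-H(l,v) = \int_0^1 l\,[\bar g(\Phi(s,l,u))-\bar g(\Phi(s,l,v))]\,ds - l[\bar g(u)-\bar g(v)]$, and both differences are controlled by $\text{Lip}(\bar g)$ times $|\Phi(s,l,u)-\Phi(s,l,v)|_{L^2}$ or $|u-v|_{L^2}$ respectively, hence by $C|u-v|_{L^2}$.

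I do not expect a genuine obstacle here; the only point requiring a little care is making the constants uniform in $l$, which is handled exactly as in the proof of Lemma \ref{lemma linear growth Phi} — all estimates go through for arbitrary $l\in\mathbb{R}$ with constants depending on $|l|$, and restricting to $l\in B$ (so $|l|\le 1$) makes them $l$-independent. A secondary bookkeeping point is that $\Phi$ and $g$ a priori map into $H^1$, but since all four claimed inequalities are stated in the $L^2$ norm and $H^1\hookrightarrow L^2$, it suffices to work throughout in $L^2$, where $g$ is bounded and Lipschitz by the $W^{2,\infty}\subset L^\infty$ bound on $h$.
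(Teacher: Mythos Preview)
Your proposal is correct and follows essentially the same approach as the paper: both use the integral representation $G(l,v)=\int_0^1 l\,\bar g(\Phi(s,l,v))\,ds$, derive the Lipschitz bound $|\Phi(t,l,u)-\Phi(t,l,v)|_{L^2}\le e^{|l|\,|h|_{L^\infty}}|u-v|_{L^2}$ via Gronwall, and then feed this back into the differenced representations. The only cosmetic difference is that the paper deduces linear growth as a consequence of Lipschitz continuity, whereas you obtain it directly from Lemma~\ref{lemma linear growth Phi}; both routes are valid.
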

	\begin{proof}[Proof of Lemma \ref{lemma linear growth Lipschitz G and H}]
		We give proofs for Lipschitz continuity (\eqref{eqn G Lipschitz},\eqref{eqn H Lipschitz}). Linear growth follows as a consequence. See also Lemma \ref{lemma linear growth G,H,b with epsilon} for Linear Growth.
		\begin{align}
			\nonumber \l|G(l,u) - G(l,v)\r|_{L^2} = & \l| \Phi(l,u) - u - \Phi(l,v) + v \r|_{L^2}\\
			\nonumber \leq & \int_{0}^{1} \l| l \r| \l| \bar{g}(\Phi(s, l, u)) - \bar{g}(\Phi(s,l,v)) \r|_{L^2} \, ds \\
			\leq &  \int_{0}^{1} \l| h \r|_{L^{\infty}} \l| l \r| \l| \Phi(s, l, u) - \Phi(s,l,v) \r|_{L^2} \, ds .
		\end{align}
		We take a small detour here. By the triangle inequality,
		\begin{align}
			\nonumber  \l| \Phi(l,u)  - \Phi(l,v)  \r|_{L^2} - \l| u - v \r|_{L^2} \leq & \l| \Phi(l,u) - u - \Phi(l,v) + v \r|_{L^2} \\
			\leq & \int_{0}^{1} \l| h \r|_{L^{\infty}} \l| l \r| \l| \Phi(s, l, u) - \Phi(s,l,v) \r|_{L^2} \, ds.
		\end{align}
		In particular,
		\begin{equation}
			\l| \Phi(l,u)  - \Phi(l,v)  \r|_{L^2} \leq  \l| u - v \r|_{L^2} + \int_{0}^{1} \l| h \r|_{L^{\infty}} \l| l \r| \l| \Phi(s, l, u) - \Phi(s,l,v) \r|_{L^2} \, ds.
		\end{equation}
		By the Gronwall inequality, we have,
		\begin{equation}
			\l| \Phi(l,u)  - \Phi(l,v)  \r|_{L^2} \leq  \l| u - v \r|_{L^2} e^{1\l| h \r|_{L^{\infty}} \l| l \r|}.
		\end{equation}
		Therefore
		\begin{align}
			\nonumber \l|G(l,u) - G(l,v)\r|_{L^2} \leq &  \int_{0}^{1} \l| h \r|_{L^{\infty}} \l| l \r| \l| \Phi(s, l, u) - \Phi(s,l,v) \r|_{L^2} \, ds \\
			\nonumber \leq & \int_{0}^{1} \l| h \r|_{L^{\infty}} \l| l \r| e^{s\l| h \r|_{L^{\infty}} \l| l \r|} \l| u - v \r|_{L^2} \, ds \\
			\leq & \l| h \r|_{L^{\infty}} \l| l \r| \l( e^{\l| h \r|_{L^{\infty}} \l| l \r|} - 1 \r) \l| u - v \r|_{L^2}.
		\end{align}
		Therefore there exists a constant $C>0$ such that
		\begin{equation}
			\l|G(u) - G(v)\r|_{L^2} \leq C \l|u - v\r|_{L^2}.
		\end{equation}
		Now,
		\begin{align}
			\nonumber \l| H(l,u) - H(l,v) \r|_{L^2} = & \l| \Phi(l,u) - u - l\bar{g}(u) - \Phi(l,u) + u + l\bar{g}(u) \r|_{L^2} \\
			\nonumber \leq & \l| u - v \r|_{L^2} + \l| G(l,u) - G(l,v) \r|_{L^2} +  \l| l \r|\l|\bar{g}(u) - \bar{g}(v) \r|_{L^2}\\
			\leq & C \l| u - v \r|_{L^2}.
		\end{align}
		Hence both $G,H$ are Lipschitz continuous. Hence they also have linear growth. This concludes the proof of the Lemma \ref{lemma linear growth Lipschitz Gn and Hn}.
	\end{proof}
	We define another operator $b$ on $L^2$ as follows.
	\begin{equation}\label{eqn definition of operator b}
		b(v) = \int_{B} H(l,v) \, \nu(dl),\ v\in L^2.
	\end{equation}
	
	\subsection{The Equation in Marcus Canonical Form}\label{section The Equation in Marcus Canonical Form}
	The equation \eqref{eqn problem considered with L} is understood as follows.
	\begin{align}\label{eqn problem considered Marcus Form}
		\nonumber m(t) = & m_0 + \int_{0}^{t} \l[ \Delta m(s) + m(s) \times \Delta m(s) - \l( 1 + \l| m(s) \r|_{\mathbb{R}^3}^2 \r) m(s) \r] \, ds \\
		& + \int_{0}^{t} b\bigl(m(s)\bigr) \, ds + \int_{0}^{t} \int_{B} G\bigl(l,m(s)\bigr) \, \tilde{\eta}(dl,ds),\ t\in[0,T].
	\end{align}

	\subsection{Some Functional Spaces}\label{Section Some Functional Spaces}
	
	As pointed out in Section \ref{section Introduction}, we use certain compactness results from \cite{Metivier_SPDE_InfDimensions_Book,Aldous_1978_Stopping_times_and_tightness,Motyl_2013_SNSE_LevyNoise}, among others.
	Towards that, we define some spaces which will be used, primarily in Section \ref{section tightness}. For more details, we refer the reader to Billingsley \cite{Billingsley_1999_ConvergenceOfProbabilityMeasures_Book}, Parthasarthy \cite{Parthasarathy_1967_Book_ProbabilityMeasuresOnMetricSpaces_Book}, M\'etivier \cite{Metivier_SPDE_InfDimensions_Book}, Aldous \cite{Aldous_1978_Stopping_times_and_tightness,Aldous_1989_StoppingTimes2} among others.

	\begin{enumerate}
		\item The space $\mathbb{D}([0,T] : X^{-\beta})$ : This is the space of all c\`adl\`ag functions $v: [0,T] \to X^{-\beta}$ with the topology induced by the Skorohod metric $\delta_{T,X^{-\beta}}$ given by
		
		\begin{align}
			\nonumber \delta_{T,X^{-\beta}}(x,y) : = \inf_{\lambda\in\Lambda_T} \bigg[ & \sup_{t\in[0,T]}  \rho\bigl( x(t) , y( \lambda(t) ) \bigr) + \sup_{t\in[0,T]} \l| t - \lambda(t) \r| \\
			& + \sup_{s<t}\l| \log \l( \frac{ \lambda(t) - \lambda(s) }{ t - s } \r) \r| \bigg].
		\end{align}
		Here $\Lambda_T$ is the set of all increasing homeomorphisms $[0,T]$ and $\rho$ denotes the norm generated metric on the space $X^{-\beta}$.

		\item The space $L^2_{\text{w}}(0,T:H^2)$ : This is the space $L^2(0,T:H^2)$ endowed with the weak topology.
		\item The space $\mathbb{D}([0,T] : H^1_{\text{w}})$ : This is the space of all weakly c\'adl\'ag functions $v: [0,T] \to H^1$, with the weakest topology such that for all $u\in H^1$, the maps
		\begin{align*}
			\mathbb{D}([0,T] : H^1_{\text{w}}) \ni v \mapsto \l\langle v (\cdot) , u \r\rangle_{H^1} \in \mathbb{D}([0,T] : \mathbb{R}),
		\end{align*}
		are continuous.
	\end{enumerate}

	\section{Existence of a Weak Martingale Solution}\label{section Existence of a Weak Martingale Solution}
	The section is dedicated to the existence of a weak martingale solution for the problem \eqref{eqn problem considered Marcus Form}. We first define what we mean by a weak martingale solution. Then Theorem \ref{theorem existence of weak martingale solution} formulates certain conditions under which the said problem admits a weak martingale solution. The rest of the section is committed to the proof of Theorem \ref{theorem existence of weak martingale solution}.
	
	We now give a definition for a weak martingale solution to the problem \eqref{eqn problem considered Marcus Form}.
	
	\begin{definition}[Weak Martingale solution]\label{definition weak martingale solution}
		A weak martingale solution to the problem \eqref{eqn problem considered Marcus Form} is a tuple
		\begin{align*}
			\l( \Omega^\p , \mathcal{F}^\p , \mathbb{F}^\p , \mathbb{P}^\p , M^\p , \eta^\p\r),
		\end{align*}
		such that
		\begin{enumerate}
			\item $\l( \Omega^\p , \mathcal{F}^\p , \mathbb{F}^\p , \mathbb{P}^\p\r)$ is a probability space (with filtration $\mathbb{F}^\p = \l\{ \mathcal{F}_t^\p \r\}_{t\in[0,T]}$) satisfying the usual hypotheses.
			\item $\eta^\p$ is a time homogeneous Poisson random measure on the measurable space $\l(B,\mathcal{B}\r)$ over $\l( \Omega^\p , \mathcal{F}^\p , \mathbb{F}^\p , \mathbb{P}^\p\r)$ with intensity measure $Leb \otimes \nu$.
			\item $\m:[0,T] \times \Omega ^\p \to H^1$ is an $\mathbb{F}^\p$-progressively measurable weakly c\'adl\'ag process which satisfies the following estimates. There exists a constant $C>0$ that depends upon $T,m_0,h$ such that
			\begin{enumerate}
				\item \begin{equation}\label{eqn weak martingale solution definition bound 1}
					\mathbb{E}^\p\sup_{t\in[0,T]}\l|\m(t)\r|_{H^1}^2 \leq C,
				\end{equation}
				\item \begin{equation}\label{eqn weak martingale solution definition bound 2}
					\mathbb{E}^\p\int_{0}^{T} \l|\m(t)\r|_{H^2}^2 \, dt \leq C.
				\end{equation}
			\end{enumerate}
			\item For each $V\in L^4(\Omega^\p: H^{1})$, \eqref{eqn problem considered Marcus Form} is satisfied $\mathbb{P}^\p$-a.s. in the weak (PDE) sense as follows.
			\begin{align}\label{eqn weak martingale solution definition eqn weak form}
				\nonumber \m(t) = & \, m_0 - \int_{0}^{t}  \l\langle \nabla \m(s) , \nabla V \r\rangle_{L^2} \, ds +  \int_{0}^{t}  \l\langle \m(s) \times \nabla \m(s) , \nabla V \r\rangle_{L^2} \, ds \\
				\nonumber & -  \int_{0}^{t} \int_{\mathcal{O}} \l( 1 + \l|\m(s,x)\r|_{\mathbb{R}^3}^2 \r) \l\langle \m(s,x) ,  V(x) \r\rangle_{\mathbb{R}^3} \, dx \, ds \\
				& + \int_{0}^{t} \l\langle b\bigl(l,\m(s)\bigr) , V \r\rangle_{L^2} \, ds + \int_{0}^{t} \int_{B} \l\langle G\bigl(l,\m(s)\bigr) , V \r\rangle_{L^2} \, \tilde{\eta}(dl,ds),\ t\in[0,T].
			\end{align}
		\end{enumerate}
	\end{definition}
	
	\begin{remark}\label{Remark weak form of Laplacian and cross product terms}
		Since we are allowing the solution $\m$ to be $H^1$- valued, a natural question is how the terms $\Delta \m$, $\m \times \Delta \m$ are understood, in spite of not assuming the existence of a second derivative. We understand the terms as elements of $(H^1)^\p$.
		First, let $v\in D(A)$ and $\phi\in H^1$.
		Using Stokes theorem (see for example \cite{Temam}), we have the following.
		\begin{enumerate}
			\item \begin{align}
				\l\langle \Delta v , \psi \r\rangle_{L^2} = - \l\langle \nabla v , \nabla \psi \r\rangle_{L^2},
			\end{align}
			\item \begin{align}
				\nonumber \l\langle v \times \Delta v , \psi \r\rangle_{L^2} 
				= & -\l\langle  v \times \nabla v , \nabla \psi \r\rangle_{L^2}.
			\end{align}
		\end{enumerate}
		Therefore for $v\in H^1$, the terms $\Delta v$, $v \times \Delta v$ are understood as elements of $(H^1)^\p$ as follows.
		\begin{enumerate}
			\item \begin{equation}
				\ _{(H^1)^\p}\l\langle \Delta v , \phi \r\rangle_{H^1} = - \l\langle \nabla v , \nabla \phi \r\rangle_{L^2},\ \phi\in H^1,
			\end{equation}
			\item \begin{equation}
				\ _{(H^1)^\p}\l\langle v \times \Delta v , \phi \r\rangle_{H^1} = -\l\langle v \times \nabla v , \nabla \phi \r\rangle_{L^2},\ \phi\in H^1.
			\end{equation}
		\end{enumerate}
	\end{remark}

	\begin{theorem}[Existence of a Weak Martingale Solution]\label{theorem existence of weak martingale solution}
		Let $d=1,2,3$. Fix $T>0$. Let the initial data $m_0$ and the given function $h$ satisfy Assumption \ref{assumption main assumption}. Then the problem \eqref{eqn problem considered Marcus Form} admits a weak martingale solution.
	\end{theorem}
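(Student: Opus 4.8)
The plan is to construct the weak martingale solution via a Faedo--Galerkin approximation, following the scheme outlined in the introduction and paralleling the treatment of the stochastic LLG equation with jump noise in \cite{ZB+UM_WeakSolutionSLLGE_JumpNoise}. Let $\{e_i\}_{i\in\mathbb{N}}$ be the orthonormal basis of $L^2$ of eigenfunctions of $A$, and let $\pi_n$ denote the orthogonal projection onto $H_n := \mathrm{span}\{e_1,\dots,e_n\}$. First I would replace each nonlinear term in \eqref{eqn problem considered Marcus Form} by its projected version $\pi_n$ applied to it, obtaining a finite-dimensional SDE driven by the (scalar) compensated Poisson random measure $\tilde\eta$; the coefficients $\pi_n G(l,\cdot)$ and $\pi_n b(\cdot)$ are locally Lipschitz on $H_n$ by Lemma~\ref{lemma linear growth Lipschitz G and H}, while the drift, being polynomial, is locally Lipschitz on $H_n$ as well. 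Hence a unique local $H_n$-valued solution $m_n$ exists; global existence follows once we have the a priori bounds below.

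The crucial analytic step is the energy estimates. Applying the It\^o formula for jump processes to $\tfrac12|m_n(t)|_{L^2}^2$ and to $\tfrac12|\nabla m_n(t)|_{L^2}^2 + \tfrac14\int_{\mathcal O}|m_n|_{\mathbb R^3}^4$, I would exploit the structural orthogonality $\langle m\times\Delta m,\Delta m\rangle=0$ and $\langle m\times\Delta m,m\rangle=0$, so that the troublesome cross-product term drops out of the deterministic part; the dissipative term $-\int_0^t|\Delta m_n|_{L^2}^2\,ds$ together with $-\int_0^t(1+|m_n|^2)|m_n|^2$ controls the remaining polynomial contributions. For the jump terms one uses the linear-growth bounds \eqref{eqn G linear growth}, \eqref{eqn H linear growth} on $G$ and $b=\int_B H(l,\cdot)\nu(dl)$, and for the compensated stochastic integral the Burkholder--Davis--Gundy inequality for jump martingales to bound the supremum in $t$. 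Via Gr\"onwall this yields, uniformly in $n$,
\begin{align*}
\mathbb{E}\sup_{t\in[0,T]}|m_n(t)|_{H^1}^2 + \mathbb{E}\int_0^T|m_n(t)|_{H^2}^2\,dt \le C(T,m_0,h),
\end{align*}
and, iterating the argument, higher moments $\mathbb{E}\sup_t|m_n(t)|_{H^1}^{p}<\infty$ for $p\ge 2$, which I will need for uniform integrability. \textbf{I expect this step to be the main obstacle:} in dimension $d=3$ the quintic term $|m|^4 m$ appearing when one differentiates $\int|m|^4$ (through $H_{\mathrm{eff}}$ and the polynomial nonlinearity) must be absorbed carefully, and the interpolation inequalities relating $|m|_{L^p}$, $|\nabla m|_{L^2}$ and $|m|_{H^2}$ have to be applied with exponents that still close in $d=3$ — this is exactly where the LLB structure (with $L_1=L_2$, so no $1/|m|^2$ singularity) is used.

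With the uniform bounds in hand I would set up tightness. Define the laws $\mathcal L(m_n)$ on the intersection space $\mathbb Z_T := \mathbb D([0,T]:X^{-\beta}) \cap L^2_{\mathrm w}(0,T:H^2) \cap \mathbb D([0,T]:H^1_{\mathrm w}) \cap L^2(0,T:L^2)$ (for a suitable $\beta\in(0,\tfrac34)$ with $H^1\hookrightarrow\hookrightarrow X^{-\beta}$). Tightness in the weak-topology and weak-$L^2$ components follows directly from the energy bounds and Chebyshev; for the $\mathbb D([0,T]:X^{-\beta})$ and $L^2(0,T:L^2)$ components I would verify the Aldous condition in $X^{-\beta}$ by estimating $\mathbb E|m_n(\tau+\theta)-m_n(\tau)|_{X^{-\beta}}$ for stopping times $\tau$ and small $\theta$, splitting the increment according to \eqref{eqn problem considered Marcus Form}: the Bochner integral terms are handled by the $H^2$-bound and the embedding $L^2\hookrightarrow X^{-\beta}$, the $b$-term by \eqref{eqn H linear growth}, and the $\tilde\eta$-integral by the It\^o isometry together with \eqref{eqn G linear growth}, each giving a bound of order $\theta$ or $\theta^{1/2}$. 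By the deterministic compactness criterion of M\'etivier--Motyl this gives tightness of $\{\mathcal L(m_n)\}$ on $\mathbb Z_T$; tightness of the joint laws $\mathcal L(m_n,\eta)$ on $\mathbb Z_T\times\mathbb M_{\bar{\mathbb N}}([0,T]\times B)$ follows since $\eta$ is fixed.

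Finally, invoking Jakubowski's version of the Skorohod representation theorem (applicable since $\mathbb Z_T$ is not metrizable but is a nice topological space), I obtain a new probability space $(\Omega',\mathcal F',\mathbb P')$, a subsequence, random elements $(m_n',\eta_n')$ with the same laws as $(m_n,\eta)$, and limits $(m',\eta')$ with $(m_n',\eta_n')\to(m',\eta')$ $\mathbb P'$-a.s.\ in $\mathbb Z_T\times\mathbb M$. I would equip $\Omega'$ with the filtration generated by $m'$ and $\eta'$ (augmented), check that $\eta'$ is a time-homogeneous PRM with intensity $\mathrm{Leb}\otimes\nu$, and then pass to the limit in the Galerkin equation tested against $V\in H^1$ (or against $e_j$ and then a density argument): the deterministic linear terms pass by weak-$L^2(0,T;H^2)$ convergence, the cross-product term $\langle m_n'\times\nabla m_n',\nabla V\rangle$ by the strong $L^2(0,T;L^2)$ convergence of $m_n'$ combined with weak convergence of $\nabla m_n'$, the polynomial term $(1+|m_n'|^2)m_n'$ by strong $L^2$ convergence plus uniform higher moments (Vitali), and the stochastic and $b$ terms by the Lipschitz/linear-growth estimates of Lemma~\ref{lemma linear growth Lipschitz G and H} together with a standard martingale-identification argument (showing the candidate stochastic integral term is a martingale with the right quadratic variation). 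The a.s.\ bounds \eqref{eqn weak martingale solution definition bound 1}--\eqref{eqn weak martingale solution definition bound 2} for $m'$ transfer from the uniform bounds by Fatou and lower semicontinuity of the norms under the weak topologies. This produces the tuple $(\Omega',\mathcal F',\mathbb F',\mathbb P',m',\eta')$ satisfying Definition~\ref{definition weak martingale solution}, completing the proof.
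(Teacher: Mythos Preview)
Your proposal is correct and follows essentially the same route as the paper: Faedo--Galerkin approximation, uniform $H^1$/$H^2$ energy estimates via the It\^o--L\'evy formula, Aldous tightness in $X^{-\beta}$, and Jakubowski--Skorohod representation followed by passage to the limit. One simplification worth noting: the ``main obstacle'' you anticipate in $d=3$ does not materialize, because the paper uses only $\psi_2(v)=\tfrac12|\nabla v|_{L^2}^2$ (not the augmented functional with $\tfrac14\int|m|^4$) and observes directly that $\langle (1+|v|_{\mathbb R^3}^2)v,\,-\Delta v\rangle_{L^2}\ge 0$ via integration by parts (Proposition~\ref{proposition properties of F existence of weak martingale solution}), so no delicate interpolation or absorption of higher-order polynomial terms is required at this step.
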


	The proof is structured as follows. We approximate the equation \eqref{eqn problem considered Marcus Form} using the classical Faedo-Galerkin approximation (Section \ref{section Faedo Galerkin Approximations}). We obtain uniform energy estimates on the sequence of approximates in Section \ref{Uniform Energy Estimates}. This is followed by Section \ref{section tightness}, in which we use the Aldous condition, along with some compactness criterion from \cite{Metivier_SPDE_InfDimensions_Book,Motyl_2013_SNSE_LevyNoise} to obtain tightness for the laws of the approximates on an appropriate nonmetrizable locally convex topological space. A generalized Skorohod Representation Theorem \cite{ZB+EH+Paul_2018_StochasticReactionDiffusion_JumpProcesses} is then used (Lemma \ref{lemma use of Skorohod Theorem}) to obtain another probability space, along with another sequence of random variables, which converge pointwise. Sections \ref{Section Properties of the limiting process}, \ref{Section Convergence of the Approximate Solutions} and \ref{Section Identifying the Driving Jump Process} show that the obtained limit is a weak martingale solution to the problem \eqref{eqn problem considered Marcus Form}, according to Definition \ref{definition weak martingale solution}.

	\subsection{Faedo-Galerkin Approximations}\label{section Faedo Galerkin Approximations}
	For $n\in\mathbb{N}$, let $H_n\subset L^2$ denote the linear span of the eigenfunctions $(e_1,\dots,e_n)$ corresponding to the first $n$ eigenvalues ($\lambda_1,\dots,\lambda_n$) of the Neumann Laplacian operator. We approximate the equation \eqref{eqn problem considered Marcus Form} in $H_n$.
	Let $P_n:L^2\to H_n$ denote the orthogonal projection operator. We will now define the following operators for $n\in\mathbb{N}$.
	\begin{align*}
		F_n^1 &: H_n \ni v \mapsto \Delta v \in H_n, \\
		F_n^2 &: H_n \ni v \mapsto P_n\l( v \times \Delta v \r) \in H_n, \\
		F_n^3 &: H_n \ni v \mapsto P_n\l( \l( 1 + \l| v \r|_{\mathbb{R}^3}^2 \r) v \r) \in H_n, \\
		g_n &: H_n \ni v \mapsto P_n\l(v \times h\r) \in H_n, \\
		\bar{g}_n &: H_n \ni v \mapsto P_n\l(v \times h + h\r) \in H_n.
	\end{align*}
	Let 
	\begin{equation}
		F_n(v) = F_n^1(v) + F_n^2(v) - F_n^3(v).
	\end{equation}
	\begin{lemma}\label{lemma locally Lipschitz property of Fn,gn}
		For each $n\in\mathbb{N}$, the mappings $F_n^i,i=1,2,3$, and hence also the mapping $F_n$, are locally Lipschitz on $H_n$. For $n\in\mathbb{N}$, maps $g_n,\bar{g}_n$ are Lipschitz continuous on $H_n$.
	\end{lemma}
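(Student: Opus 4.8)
The plan is to exploit that each $H_n$ is a finite-dimensional subspace of $L^2$, on which all the Sobolev norms occurring below are mutually equivalent; quantitatively, for $v\in H_n$ one has $|v|_{H^{k}}\le(1+\lambda_n)^{k/2}|v|_{L^2}$, and, $H_n$ being finite-dimensional, also $|v|_{L^\infty}\le C_n|v|_{L^2}$ for a constant $C_n$ depending only on $n$. Granting these, each of the five maps is the composition with the contraction $P_n$ of a polynomial expression in $v$ of degree $1$, $2$ or $3$, so the asserted Lipschitz estimates reduce to elementary product inequalities, and in particular no compactness or PDE regularity is needed.

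For the linear pieces the argument is immediate. Since $\Delta e_i=-\lambda_i e_i$, one has $|F_n^1 v|_{L^2}\le\lambda_n|v|_{L^2}$, so $F_n^1=\Delta|_{H_n}$ is a bounded linear, hence globally (a fortiori locally) Lipschitz operator on $H_n$. For $g_n$, I would write $g_n(u)-g_n(v)=P_n\big((u-v)\times h\big)$ and estimate $|g_n(u)-g_n(v)|_{L^2}\le|h|_{L^\infty}|u-v|_{L^2}$ using $h\in W^{2,\infty}\hookrightarrow L^\infty$ and $\|P_n\|\le1$, so $g_n$ is globally Lipschitz; finally $\bar g_n=g_n+P_n h$ is an affine perturbation of $g_n$ and therefore also Lipschitz.

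For the nonlinear pieces I would expand the differences algebraically. For $F_n^2$ use $v\times\Delta v-w\times\Delta w=(v-w)\times\Delta v+w\times\Delta(v-w)$ and bound in $L^2$ by H\"older, e.g. $|(v-w)\times\Delta v|_{L^2}\le|v-w|_{L^\infty}|\Delta v|_{L^2}\le C_n\lambda_n|v-w|_{L^2}|v|_{L^2}$, with the analogous estimate for the second summand. For $F_n^3$ use $(1+|v|^2)v-(1+|w|^2)w=(v-w)+|v|^2(v-w)+(|v|^2-|w|^2)w$ and control the quadratic and cubic terms by $C_n(|v|_{L^2}^2+|w|_{L^2}^2)|v-w|_{L^2}$ in the same fashion. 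Restricting $u,v$ to a ball $\{|v|_{L^2}\le R\}$ in $H_n$ turns each of these bounds into $C(n,R)|u-v|_{L^2}$, which is exactly local Lipschitz continuity; summing the three contributions yields the claim for $F_n=F_n^1+F_n^2-F_n^3$.

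The only point requiring any care—there is no real obstacle—is bookkeeping: keeping the constants independent of the base point (they are allowed to, and do, depend on $n$ and on the radius $R$), and systematically trading each factor $|\Delta v|_{L^2}$ or $|v|_{L^\infty}$ for a power of $|v|_{L^2}$ via the finite-dimensional norm equivalences \emph{before} invoking H\"older's inequality.
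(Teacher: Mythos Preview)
Your proof is correct and is precisely the standard argument the paper has in mind: the paper does not give a proof at all but simply declares the lemma ``standard'' and points to Lemma~3.1 of \cite{ZB+BG+Le_SLLBE}, which carries out exactly this finite-dimensional norm-equivalence plus algebraic-expansion computation. There is nothing to add.
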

	We skip the proof of the lemma as it is standard, see for instance, Lemma 3.1 in \cite{ZB+BG+Le_SLLBE}.
	
	\subsubsection{Marcus Mapping for Faedo-Galerkin approximations}
	We define a map $\Phi_n$ (similar to \eqref{eqn ode Marcus Mapping eqn 1}) as the solution to the following ordinary differential equation. Let $l\in\mathbb{R}$.
	\begin{equation}
		\frac{d\Phi_n}{dt}(t,l,x) = l\bar{g}_n\l(\Phi_n(t,l,x)\r),\ t\geq0.
	\end{equation}
	with $\Phi_n(0,l,x) = x ,\ x\in H_n$.\\
	Similarly to Subsection \ref{section The Marcus Mapping}, we define
	\begin{align}\label{eqn definition of Gn}
		G_n(l,v) = \Phi_n(l,v) - v,
	\end{align}
	\begin{align}\label{eqn definition of Hn}
		H_n(l,v) = \Phi_n(l,v) - v - l\bar{g}_n(v),
	\end{align}
	and
	\begin{equation}\label{eqn definition of operator bn}
		b_n(v) = \int_{B} H_n(l,v) \, \nu(dl).
	\end{equation}
	The equation \eqref{eqn problem considered Marcus Form} is approximated by the following equation
	\begin{align}\label{eqn FG approximation of problem considered Marcus Form}
		\nonumber m_n(t) = & m_n(0) + \int_{0}^{t} \l[ \Delta m_n(s) + m_n(s) \times \Delta m_n(s) - \l( 1 + \l| m_n(s) \r|_{\mathbb{R}^3}^2 \r) m_n(s) \r] \, ds \\
		& + \int_{0}^{t} b_n(m_n(s)) \, ds + \int_{0}^{t} \int_{B} G_n(l,m_n(s)) \, \tilde{\eta}(dl,ds),\ t > 0,
	\end{align}
	with $m_n(0) = P_n(m_0)$.
	
	In the remainder of the section, we replace the notation $\Phi_n$ by $\Phi$.
	The following Lemma \ref{lemma linear growth Lipschitz Gn and Hn} gives some growth estimates for the mappings $G_n$ and $H_n$. We skip the proof as it is similar to the proof of Lemma \ref{lemma linear growth Lipschitz G and H}.
	\begin{lemma}\label{lemma linear growth Lipschitz Gn and Hn}
		There exists a constant $C>0$ such that, for any $u,v\in H_n$ and $l\in B$, the following inequalities hold. 
		\begin{align}\label{eqn Gn linear growth}
			\l|G_n(l,v)\r|_{L^2} \leq C \l( 1 + \l| v \r|_{L^2}\r),
		\end{align}
		\begin{align}\label{eqn Gn Lipschitz}
			\l| G_n(l,u) - G_n(l,v) \r|_{L^2} \leq C \l| u - v \r|_{L^2},
		\end{align}
		\begin{align}\label{eqn Hn linear growth}
			\l|H_n(l,v)\r|_{L^2} \leq C \l( 1 + \l| v \r|_{L^2}\r),
		\end{align}
		\begin{align}\label{eqn Hn Lipschitz}
			\l| H_n(l,u) - H_n(l,v) \r|_{L^2} \leq C \l| u - v \r|_{L^2}.
		\end{align}
	\end{lemma}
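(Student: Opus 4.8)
The plan is to reuse the proof of Lemma~\ref{lemma linear growth Lipschitz G and H} almost verbatim, the only new point being that every constant has to be made independent of the truncation parameter $n$; this is supplied entirely by the contraction property of the orthogonal projection $P_n\colon L^2\to H_n$. First I would record the two structural facts about the truncated field $\bar g_n=P_n(\,\cdot\times h+h)$. Since $|P_n w|_{L^2}\le|w|_{L^2}$ and $h\in W^{2,\infty}\hookrightarrow L^\infty$, the map $\bar g_n$ satisfies, for $u,v\in H_n$,
\begin{align*}
|\bar g_n(v)|_{L^2}\le|h|_{L^\infty}|v|_{L^2}+|h|_{L^2},\qquad |\bar g_n(u)-\bar g_n(v)|_{L^2}\le|h|_{L^\infty}|u-v|_{L^2},
\end{align*}
with constants not depending on $n$. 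Moreover $P_n v=v$ for $v\in H_n$, so
\begin{align*}
\langle g_n(v),v\rangle_{L^2}=\langle v\times h,P_n v\rangle_{L^2}=\langle v\times h,v\rangle_{L^2}=0,
\end{align*}
which is exactly the antisymmetry cancellation used in the $L^2$ energy estimate of Lemma~\ref{lemma linear growth Phi}. Consequently the ODE defining $\Phi_n$ has a unique global $H_n$-valued solution, and rerunning the Young/Gronwall computation of Lemma~\ref{lemma linear growth Phi} with $\bar g_n$ in place of $\bar g$ gives $|\Phi_n(l,x)|_{L^2}^2\le C(1+|x|_{L^2}^2)$ with $C$ independent of $n$ and of $l\in B$ (here $|l|\le1$ absorbs the $l$-dependence).

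Next I would obtain the Lipschitz bound for $\Phi_n$: subtracting the integral equations for $\Phi_n(\cdot,l,u)$ and $\Phi_n(\cdot,l,v)$, inserting the $n$-uniform Lipschitz constant of $\bar g_n$, and applying Gronwall exactly as in Lemma~\ref{lemma linear growth Lipschitz G and H} yields
\begin{align*}
|\Phi_n(l,u)-\Phi_n(l,v)|_{L^2}\le|u-v|_{L^2}\,e^{|h|_{L^\infty}|l|}.
\end{align*}
From $G_n(l,v)=\Phi_n(l,v)-v$ one then gets \eqref{eqn Gn Lipschitz} by writing $G_n(l,u)-G_n(l,v)=\int_0^1 l\bigl(\bar g_n(\Phi_n(s,l,u))-\bar g_n(\Phi_n(s,l,v))\bigr)\,ds$ and substituting the previous estimate; since $H_n(l,v)=G_n(l,v)-l\bar g_n(v)$, the bound \eqref{eqn Hn Lipschitz} follows by combining \eqref{eqn Gn Lipschitz} with the Lipschitz property of $\bar g_n$. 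The linear growth estimates \eqref{eqn Gn linear growth} and \eqref{eqn Hn linear growth} then drop out by taking $v=0$ in the Lipschitz inequalities, using $|\Phi_n(l,0)|_{L^2}\le C$ and $\bar g_n(0)=P_n h$ to bound $|G_n(l,0)|_{L^2}$ and $|H_n(l,0)|_{L^2}$ (alternatively, by repeating the Young/Gronwall argument of Lemma~\ref{lemma linear growth Phi} directly).

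The one place that deserves genuine attention is the uniformity in $n$, and as indicated this is handled by two features of the Galerkin truncation: the contraction property of $P_n$ on $L^2$, which keeps the growth and Lipschitz constants of $\bar g_n$ (hence of $G_n$, $H_n$, $b_n$) free of $n$, and the identity $P_n v=v$ on $H_n$, which preserves the cross-product cancellation $\langle v\times h,v\rangle_{L^2}=0$ after projection. No restriction on the dimension $d$ enters. I do not expect any other obstacle: once these uniform bounds on $\bar g_n$ are in place, every remaining step is a line-by-line copy of the corresponding computation for $G$ and $H$ in Lemma~\ref{lemma linear growth Lipschitz G and H}.
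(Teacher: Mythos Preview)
Your proposal is correct and follows exactly the approach the paper intends: the paper simply states that the proof is skipped because it is similar to that of Lemma~\ref{lemma linear growth Lipschitz G and H}, and you have reproduced that argument with the appropriate attention to $n$-uniformity via the contraction property of $P_n$ and the self-adjointness identity $\langle P_n(v\times h),v\rangle_{L^2}=\langle v\times h,v\rangle_{L^2}=0$. There is nothing to add.
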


	Lemma \ref{lemma linear growth Lipschitz Gn and Hn} implies that the mappings $G_n.H_n$ (and hence also $b_n$) have Lipschitz continuity properties. The required linear growth properties follow. Hence by Lemma \ref{lemma linear growth Lipschitz Gn and Hn} along with Lemma \ref{lemma locally Lipschitz property of Fn,gn}, the equation \eqref{eqn FG approximation of problem considered Marcus Form} admits a unique strong solution in $H_n$ (see \cite{ZB+Albeverio_2010_ExistenceGlobalSolutionSDEPoissonNoise}).
	\subsubsection{Some Properties of the Marcus Map $\Phi$}
	\begin{lemma}\label{lemma Lipschitz Continuity Linear Growth Phi L2}
		There exists a constant $C>0$ such that for every $n\in\mathbb{N}$, $l\in B$ and $v\in H_n$, the following hold.
		\begin{equation}
			\l| \Phi(l,v) \r|_{L^2} \leq C \l( 1 + \l| v \r|_{L^2} \r),
		\end{equation}
		and
		\begin{equation}
			\l| \Phi(l,v) \r|_{L^2}^2 \leq C  \l( 1 + \l| v \r|_{L^2}^2 \r).
		\end{equation}
	\end{lemma}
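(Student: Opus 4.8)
The plan is to mirror the proof of Lemma~\ref{lemma linear growth Phi}, the only new ingredient being that all constants can be taken independent of $n$, which rests on the fact that the Galerkin projection $P_n$ is an orthogonal projection — hence an $L^2$-contraction — and that $P_n$ restricts to the identity on $H_n$. Recall that on the remainder of the section $\Phi$ stands for $\Phi_n$, so for $v\in H_n$ and $t\in[0,1]$ we have $\Phi_n(t,l,v) = v + \int_0^t l\,\bar{g}_n\bigl(\Phi_n(s,l,v)\bigr)\,ds$, with $\bar{g}_n(w) = P_n(w\times h + h)$.

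First I would record the elementary bound $\l|\bar{g}_n(w)\r|_{L^2} \le \l|w\times h + h\r|_{L^2} \le \l|h\r|_{L^\infty}\l|w\r|_{L^2} + \l|h\r|_{L^2} \le C\l(1+\l|w\r|_{L^2}\r)$, valid for every $n$ with $C = C(h,|\mathcal{O}|)$, using $\l\|P_n\r\|_{\mathcal{L}(L^2)}=1$, the assumption $h\in W^{2,\infty}$, and boundedness of $\mathcal{O}$ (so $h\in L^2(\mathcal{O})$). Taking the $L^2$ norm in the integral equation for $\Phi_n(t,l,v)$ and using $|l|\le 1$ for $l\in B$ gives, for $t\in[0,1]$,
\begin{equation*}
	\l|\Phi_n(t,l,v)\r|_{L^2} \le \l|v\r|_{L^2} + C + C\int_0^t \l|\Phi_n(s,l,v)\r|_{L^2}\,ds,
\end{equation*}
and Gronwall's inequality evaluated at $t=1$ yields the first estimate, with $C$ independent of $n$ and of $l\in B$.

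For the second estimate one may either square the first (via $(a+b)^2\le 2a^2+2b^2$) or repeat the energy computation of Lemma~\ref{lemma linear growth Phi}: since $\Phi_n(s,l,v)\in H_n$ and $P_n$ is self-adjoint, $\l\langle \bar{g}_n(w),w\r\rangle_{L^2} = \l\langle w\times h + h, P_n w\r\rangle_{L^2} = \l\langle w\times h, w\r\rangle_{L^2} + \l\langle h,w\r\rangle_{L^2} = \l\langle h,w\r\rangle_{L^2}$, the cross-product term vanishing pointwise. Hence $\tfrac12\tfrac{d}{dt}\l|\Phi_n(t,l,v)\r|_{L^2}^2 = l\l\langle h,\Phi_n(t,l,v)\r\rangle_{L^2} \le \tfrac12\l|h\r|_{L^2}^2 + \tfrac12\l|\Phi_n(t,l,v)\r|_{L^2}^2$ for $|l|\le1$, and Gronwall once more closes the estimate uniformly in $n$.

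I do not expect a genuine obstacle here; the only point requiring care is the uniformity claim, which is secured precisely by (i) $P_n$ being an orthogonal projection with $P_n|_{H_n}=\mathrm{id}$, and (ii) the restriction $l\in B$, i.e. $|l|\le1$, which lets the factor $|l|$ be absorbed into the Gronwall constant rather than producing an $n$- or $l$-independent-but-exponential-in-$|l|$ factor as in Lemma~\ref{lemma linear growth Phi}.
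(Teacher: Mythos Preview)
Your proposal is correct and follows essentially the same approach as the paper: bound $\bar{g}_n$ linearly using that $P_n$ is an $L^2$-contraction (hence constants are uniform in $n$), then apply Gronwall. For the squared estimate, the paper's proof of this lemma actually uses Cauchy--Schwarz on the full term $\l\langle \bar{g}_n(\Phi),\Phi\r\rangle_{L^2}$ followed by the linear growth of $\bar{g}_n$ and the bound $|a|\le 1+a^2$, rather than exploiting the cross-product cancellation as you do in your Option~2; your route is the one used in the paper's Lemma~\ref{lemma linear growth Phi} and is slightly cleaner, but the difference is cosmetic.
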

	\begin{proof}[Proof of Lemma \ref{lemma Lipschitz Continuity Linear Growth Phi L2}]
		We give a proof only for the second inequality.
		Recall that $\Phi$ is a solution to the ordinary differential equation
		\begin{align}
			d\Phi (l,v) = l\bar{g}_n\bigl( \Phi (l,v) \bigr) \, dt,
		\end{align}
		with intial data $v$.
		Therefore
		\begin{align}
			\l| \Phi (l,v) \r|_{L^2}^2 
			\nonumber = & \l| v \r|_{L^2}^2 + 2\int_{0}^{1} l \l\langle \bar{g}_n\bigl( \Phi (s,l,v) \bigr) , \Phi(s,l,v) \r\rangle_{L^2}\, ds \\
			\nonumber \leq & \l| v \r|_{L^2}^2 + 2\int_{0}^{1} \l| l \r| \l| \bar{g}_n\bigl( \Phi (s,l,v) \bigr) \r|_{L^2} \l| \Phi(s,l,v) \r|_{L^2}\, ds \\
			\nonumber \leq & \l| v \r|_{L^2}^2 + 2 C \l| l \r| \int_{0}^{1}  \l( 1 + \l|  \Phi (s,l,v)  \r|_{L^2} \r) \l| \Phi(s,l,v) \r|_{L^2}\, ds \\
			\nonumber \leq & \l| v \r|_{L^2}^2 + 2 C \l| l \r| \int_{0}^{1}  \l( \l| \Phi(s,l,v) \r|_{L^2} + \l|  \Phi (s,l,v)  \r|_{L^2}^2 \r) \, ds \\
			\nonumber \leq & \l| v \r|_{L^2}^2 + 2 C \l| l \r| \int_{0}^{1}  \l( 1 + 2\l|  \Phi (s,l,v)  \r|_{L^2}^2 \r) \, ds \\
			\nonumber \leq & \l| v \r|_{L^2}^2 + 4 C \l| l \r| \int_{0}^{1}  \l( 1 + \l|  \Phi (s,l,v)  \r|_{L^2}^2 \r) \, ds\\
			\leq & \l| v \r|_{L^2}^2 + 4 C \l| l \r| + 4 C \l| l \r| \int_{0}^{1}  \l|  \Phi (s,l,v)  \r|_{L^2}^2  \, ds.
		\end{align}
		In the fifth inequality, we have used the fact that for any real number $a$, we have $ \l| a \r| \leq 1 + a^2$.
		
		Therefore by the Gronwall Lemma, we have
		\begin{align}
			\l| \Phi (l,v) \r|_{L^2}^2 \leq \l[ \l| v \r|_{L^2}^2 + 4 C \l| l \r| \r] e^{\int_{0}^{1} 4C\l| l \r| \, ds}.
		\end{align}
		This concludes the proof of the lemma.
	\end{proof}	
	
	The proof of the following lemma is similar to the proof of Lemma \ref{lemma Lipschitz Continuity Linear Growth Phi L2}, and hence we skip it.
	\begin{lemma}\label{lemma Lipschitz Continuity Linear Growth Phi H1}
		There exists a constant $C>0$ such that for every $n\in\mathbb{N}$, $l\in B$ and $v\in H_n$, the following hold.
		\begin{equation}
			\l|  \Phi(l,v) \r|_{H^1} \leq C \l( 1 + \l| v \r|_{H^1} \r),
		\end{equation}
		and
		\begin{equation}
			\l| \Phi(l,v) \r|_{H^1}^2 \leq C  \l( 1 + \l| v \r|_{H^1}^2 \r).
		\end{equation}
	\end{lemma}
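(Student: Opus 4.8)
The plan is to mirror the argument of Lemma \ref{lemma Lipschitz Continuity Linear Growth Phi L2}, but carried out in the $H^1$ norm, exploiting the fact that $\bar{g}_n$ is bounded on $H_n$ with respect to the $H^1$ norm (which follows because $h\in W^{2,\infty}$, so $v\mapsto v\times h$ maps $H^1$ into $H^1$ boundedly, uniformly in $n$ since $P_n$ is an $H^1$-contraction on $H_n$). Recall that $\Phi(l,v)=\Phi(1,l,v)$ where $t\mapsto\Phi(t,l,v)$ solves $\frac{d\Phi}{dt}(t,l,v)=l\bar{g}_n(\Phi(t,l,v))$ with $\Phi(0,l,v)=v$. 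First I would differentiate $t\mapsto\l|\Phi(t,l,v)\r|_{H^1}^2$ in time and use the Cauchy--Schwarz inequality together with the linear growth bound $\l|\bar{g}_n(w)\r|_{H^1}\leq C(1+\l|w\r|_{H^1})$ to obtain
\begin{align*}
	\l|\Phi(t,l,v)\r|_{H^1}^2 \leq \l|v\r|_{H^1}^2 + 2|l|\int_0^t \l|\bar g_n(\Phi(s,l,v))\r|_{H^1}\l|\Phi(s,l,v)\r|_{H^1}\,ds.
\end{align*}
Then, exactly as in the $L^2$ case, I would use $\l|\bar g_n(w)\r|_{H^1}\l|w\r|_{H^1}\leq C(1+\l|w\r|_{H^1})\l|w\r|_{H^1}\leq C(1+2\l|w\r|_{H^1}^2)$ and the elementary inequality $|a|\le 1+a^2$ to absorb the linear term, reducing the estimate to the Gronwall form
\begin{align*}
	\l|\Phi(t,l,v)\r|_{H^1}^2 \leq \l|v\r|_{H^1}^2 + 4C|l|t + 4C|l|\int_0^t \l|\Phi(s,l,v)\r|_{H^1}^2\,ds.
\end{align*}
Applying Gronwall's lemma and evaluating at $t=1$ yields $\l|\Phi(l,v)\r|_{H^1}^2 \leq \bigl(\l|v\r|_{H^1}^2 + 4C|l|\bigr)e^{4C|l|}$, and since $l\in B$ means $|l|\leq 1$, the constant can be taken independent of $l$ and $n$, giving the second claimed inequality; the first then follows by taking square roots and using $\sqrt{a+b}\leq\sqrt a+\sqrt b$.

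The one point requiring a little care — and the only place this differs substantively from the $L^2$ argument — is justifying that $\bar{g}_n$ has linear growth in the $H^1$ norm uniformly in $n$. This rests on the product estimate $\l|w\times h\r|_{H^1}\lesssim \l|w\r|_{H^1}\l|h\r|_{W^{1,\infty}}$ (valid in dimensions $d=1,2,3$ since $H^1\hookrightarrow L^p$ for the relevant $p$, or more simply by Leibniz: $\nabla(w\times h)=\nabla w\times h+w\times\nabla h$), together with $\l|P_n\r|_{\mathcal{L}(H^1)}\le 1$ on $H_n$. I expect this to be the main (minor) obstacle; everything else is a verbatim repetition of the preceding lemma's computation with $L^2$ replaced by $H^1$, which is precisely why the paper states the proof may be skipped.
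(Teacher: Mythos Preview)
Your proposal is correct and follows exactly the approach the paper indicates: the paper skips the proof, noting only that it is similar to the $L^2$ case (Lemma \ref{lemma Lipschitz Continuity Linear Growth Phi L2}), and your argument is precisely that verbatim repetition with $L^2$ replaced by $H^1$, together with the necessary check that $\bar g_n$ has uniform-in-$n$ linear growth in $H^1$ via the Leibniz estimate and the fact that $P_n$ is an $H^1$-contraction.
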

	
	In the following subsection, we establish some uniform energy estimates for the sequence of approximates $m_n$.

	\subsubsection{Uniform Energy Estimates}\label{Uniform Energy Estimates}
	We state a proposition that will be used in the course of the proof of Theorem \ref{theorem existence of weak martingale solution}.

	\begin{proposition}\label{proposition properties of F existence of weak martingale solution}
 
		Let $v\in H_n$. Then the following hold.
		\begin{enumerate}
			\item \begin{equation}
				\l\langle F_n^1(v) , v \r\rangle_{L^2} = - \l| \nabla v \r|_{L^2}^2,
			\end{equation}
			
			\item \begin{equation}
				\l\langle F_n^2(v) , v \r\rangle_{L^2} = 0,
			\end{equation}
			
			\item \begin{equation}
				\l\langle F_n^3(v) , v \r\rangle_{L^2} = \l| v \r|_{L^4}^4 + \l| v \r|_{L^2}^2  ,
			\end{equation}
			
			\item \begin{equation}
				\l| \l\langle \bar{g}_n(v) , v \r\rangle_{L^2} \r| \leq \frac{1}{2} \l| h \r|_{L^2}^2 + \frac{1}{2} \l| v \r|_{L^2}^2,
			\end{equation}
			
			\item \begin{equation}
				\l\langle F_n^1(v) , - \Delta v \r\rangle_{L^2} = - \l| \Delta v \r|_{L^2}^2,
			\end{equation}
			
			\item \begin{equation}
				\l\langle F_n^2(v) , - \Delta v \r\rangle_{L^2} = 0,
			\end{equation}
			
			\item 
			\begin{equation}
				\l\langle F_n^3(v) , - \Delta v \r\rangle_{L^2} =  \l( 1 + \l| v \r|_{L^2}^2 \r) \l| \nabla v \r|_{L^2}^2 + 2\l\langle v , \nabla v \r\rangle_{L^2}^2 \leq 0,
			\end{equation}

			\item \begin{equation}
				\l| \l\langle \bar{g}_n(v) , -\Delta v \r\rangle_{L^2}  \r| \leq \l| h \r|_{W^{1,\infty}} \l| v \r|_{H^1}^2 + \frac{1}{2}\l| h \r|_{H^1}^2 + \frac{1}{2} \l| \nabla v \r|_{L^2}^2.
			\end{equation}
			In particular, there exists a constant $C>0$ such that for each $n\in\mathbb{N}$,
			\begin{equation}
				\l| \l\langle \bar{g}_n(v) , -\Delta v \r\rangle_{L^2}  \r| \leq C\l( 1 + \l| v \r|_{H^1}^2 \r).
			\end{equation}
		\end{enumerate}

	\end{proposition}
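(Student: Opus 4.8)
The plan is to verify the eight identities by reducing each one to a pointwise algebraic identity in $\mathbb{R}^3$ combined with integration by parts under the Neumann boundary condition. Two structural observations are used throughout. First, since $H_n$ is spanned by eigenfunctions of $A=-\Delta$, every $v\in H_n$ lies in $D(A)$, one has $\Delta v=-Av\in H_n$, and the orthogonal projection $P_n$ is self-adjoint with $P_nv=v$ and $P_n(\Delta v)=\Delta v$; consequently, in every pairing coming from $F_n^2$, $F_n^3$ or $\bar g_n$, the operator $P_n$ may be moved onto the other factor of the inner product, where (being $v$ or $-\Delta v$) it acts as the identity. Second, for $v\in D(A)$ and $w\in H^1$ the Green formula $\langle-\Delta v,w\rangle_{L^2}=\langle\nabla v,\nabla w\rangle_{L^2}$ holds because the boundary term vanishes by $\partial v/\partial n=0$; this is exactly the content of Remark~\ref{Remark weak form of Laplacian and cross product terms}.

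Items (1) and (5) are then immediate: $\langle F_n^1(v),v\rangle_{L^2}=\langle\Delta v,v\rangle_{L^2}=-|\nabla v|_{L^2}^2$ and $\langle F_n^1(v),-\Delta v\rangle_{L^2}=-|\Delta v|_{L^2}^2$. For (2) and (6), after removing $P_n$ the claims become $\langle v\times\Delta v,v\rangle_{L^2}=0$ and $\langle v\times\Delta v,-\Delta v\rangle_{L^2}=0$, which follow from the pointwise orthogonality $(a\times b)\cdot a=(a\times b)\cdot b=0$ in $\mathbb{R}^3$ with $a=v$, $b=\Delta v$ (equivalently, one may use the second identity in Remark~\ref{Remark weak form of Laplacian and cross product terms} and then $(v\times\partial_j v)\cdot\partial_j v=0$). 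Item (3) is the pointwise identity $(1+|v|^2)|v|^2=|v|^2+|v|^4$ integrated over $\mathcal O$, giving $|v|_{L^2}^2+|v|_{L^4}^4$. For (4), removing $P_n$ and using $\langle v\times h,v\rangle_{L^2}=0$ pointwise leaves $\langle h,v\rangle_{L^2}$, bounded by $\tfrac12|h|_{L^2}^2+\tfrac12|v|_{L^2}^2$ by Young's inequality.

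The two computations needing a little care are (7) and (8), which I would handle with the product rule. For (7), Green's formula gives $\langle F_n^3(v),-\Delta v\rangle_{L^2}=\langle\nabla v,\nabla\big((1+|v|^2)v\big)\rangle_{L^2}$; expanding $\partial_j\big((1+|v|^2)v\big)=(1+|v|^2)\partial_j v+2(v\cdot\partial_j v)v$, pairing with $\partial_j v$ and summing over $j$ yields, pointwise, $(1+|v|^2)|\nabla v|^2+2\sum_j(v\cdot\partial_j v)^2$, a sum of nonnegative terms; integrating produces the two summands displayed in (7) and, crucially for the energy estimate, shows that this quantity has a definite sign. For (8), expand $\nabla(v\times h+h)=\nabla v\times h+v\times\nabla h+\nabla h$: the term $\langle\nabla v,\nabla v\times h\rangle_{L^2}$ vanishes pointwise, $\langle\nabla v,v\times\nabla h\rangle_{L^2}$ is estimated by $|h|_{W^{1,\infty}}|v|_{L^2}|\nabla v|_{L^2}\le|h|_{W^{1,\infty}}|v|_{H^1}^2$, and $\langle\nabla v,\nabla h\rangle_{L^2}\le\tfrac12|\nabla h|_{L^2}^2+\tfrac12|\nabla v|_{L^2}^2$ by Young; adding these gives the first bound, and since $h\in W^{2,\infty}$ controls both $|h|_{W^{1,\infty}}$ and $|h|_{H^1}$, the constant is uniform in $n$, which gives the displayed consequence.

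No step is a genuine obstacle; the only mild subtlety is to remember that $P_n$ can be dropped precisely because both $v$ and $\Delta v$ belong to $H_n$, and to carry out the product rules in (7)--(8) carefully so that the cross-product contributions are recognised as vanishing pointwise before any estimate is invoked.
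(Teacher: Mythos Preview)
Your argument is correct and follows exactly the approach the paper has in mind: the paper's own proof merely declares items (1)--(3) and (5)--(7) ``straightforward'' with a pointer to \cite{ZB+BG+Le_SLLBE,LE_Deterministic_LLBE}, and says (4) and (8) follow from Young's inequality, so you have in fact supplied the details the paper omits. One remark: your computation for (7) correctly shows that $\langle F_n^3(v),-\Delta v\rangle_{L^2}$ is a sum of \emph{nonnegative} terms, whereas the displayed statement ends with ``$\le 0$''; this is a typo in the statement (in the energy estimate of Lemma~\ref{lemma bounds 2} the term enters with a minus sign via $F_n=F_n^1+F_n^2-F_n^3$, so nonnegativity is exactly what is needed), and your sign is the right one.
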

	\begin{proof}
		Proofs of the inequalities 1, 2, 3, 5, 6, 7 are straightforward. We refer the reader to  \cite{ZB+BG+Le_SLLBE}, \cite{LE_Deterministic_LLBE} for proofs. The fourth  and the eight inequality follows from the use of Young's inequality.
	\end{proof}

	Using the results established so far, we proceed to obtain some uniform energy estimates on the approximate solutions $m_n$.
	\begin{lemma}\label{lemma bounds 1}
		There exists a constant $C>0$ such that for every $n\in\mathbb{N}$, the following hold.
		\begin{equation}\label{eqn L infinity L2 bound mn}
			\mathbb{E} \sup_{t\in[0,T]} \l| m_n(t) \r|_{L^2}^2 \leq C,
		\end{equation}
		\begin{equation}\label{eqn L 2 H1 bound mn}
			\mathbb{E}\int_{0}^{T} \l| m_n(t) \r|_{H^1}^2 \, dt \leq C,
		\end{equation}
		\begin{equation}\label{eqn L 4 L4 bound mn}
			\mathbb{E}\int_{0}^{T} \l| m_n(t) \r|_{L^4}^4 \, dt \leq C.
		\end{equation}
	\end{lemma}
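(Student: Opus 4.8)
The plan is to apply the Itô formula for the Marcus (equivalently, the standard Itô) form of the equation \eqref{eqn FG approximation of problem considered Marcus Form}, first to the functional $v \mapsto |v|_{L^2}^2$, and then to exploit the resulting differential inequality together with the Burkholder--Davis--Gundy inequality and Gronwall's lemma. Since $m_n$ takes values in the finite-dimensional space $H_n$, there is no regularity obstruction to applying the Itô formula. Writing $\Psi_n(v) = |v|_{L^2}^2$ and using that $\langle F_n^1(m_n), m_n\rangle_{L^2} = -|\nabla m_n|_{L^2}^2$, $\langle F_n^2(m_n), m_n\rangle_{L^2} = 0$, and $\langle F_n^3(m_n), m_n\rangle_{L^2} = |m_n|_{L^4}^4 + |m_n|_{L^2}^2$ from Proposition \ref{proposition properties of F existence of weak martingale solution}, the drift contribution from the deterministic part is
\begin{equation*}
	2\langle F_n(m_n(s)), m_n(s)\rangle_{L^2} = -2|\nabla m_n(s)|_{L^2}^2 - 2|m_n(s)|_{L^4}^4 - 2|m_n(s)|_{L^2}^2,
\end{equation*}
which is favourable (it produces exactly the $H^1$ and $L^4$ dissipation we want to control). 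The remaining terms — the drift $b_n(m_n)$, the Itô correction from the jump integral, i.e. the term $\int_B |G_n(l,m_n)|_{L^2}^2\,\nu(dl)$, and the compensated stochastic integral — all need to be bounded. Here the key inputs are the linear growth bounds \eqref{eqn Gn linear growth}, \eqref{eqn Hn linear growth} from Lemma \ref{lemma linear growth Lipschitz Gn and Hn}: they give $|b_n(v)|_{L^2} \le C(1+|v|_{L^2})$ and $\int_B |G_n(l,v)|_{L^2}^2\,\nu(dl) \le C(1+|v|_{L^2}^2)$ (using that $\nu$ is a finite measure on $B$), so these contributions are absorbed into a term of the form $C\int_0^t (1+|m_n(s)|_{L^2}^2)\,ds$.

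Concretely, I would first derive, for each fixed $n$ and each stopping time $\tau \le T$,
\begin{align*}
	|m_n(\tau)|_{L^2}^2 + 2\int_0^{\tau} |\nabla m_n(s)|_{L^2}^2\,ds + 2\int_0^{\tau} |m_n(s)|_{L^4}^4\,ds
	\le{}& |m_n(0)|_{L^2}^2 + C\int_0^{\tau}\bigl(1+|m_n(s)|_{L^2}^2\bigr)\,ds \\
	&+ 2\int_0^{\tau}\!\!\int_B \langle G_n(l,m_n(s)), m_n(s)\rangle_{L^2}\,\tilde{\eta}(dl,ds),
\end{align*}
where $|m_n(0)|_{L^2}^2 = |P_n m_0|_{L^2}^2 \le |m_0|_{L^2}^2$ is bounded uniformly in $n$ by Assumption \ref{assumption main assumption}. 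Taking $\sup_{t\le T}$ and then expectations, the martingale term is handled by BDG: its quadratic variation is $\int_0^T \int_B \langle G_n(l,m_n(s)), m_n(s)\rangle_{L^2}^2\,\nu(dl)\,ds$, which by Cauchy--Schwarz and \eqref{eqn Gn linear growth} is bounded by $C\int_0^T (1+|m_n(s)|_{L^2}^2)^2\,ds$; applying BDG and then Young's inequality lets one split off $\tfrac12\,\mathbb{E}\sup_{t\le T}|m_n(t)|_{L^2}^2$ to be absorbed into the left-hand side, leaving a remainder controlled by $C\,\mathbb{E}\int_0^T(1+|m_n(s)|_{L^2}^2)\,ds$. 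After absorption one is left with
\begin{equation*}
	\mathbb{E}\sup_{t\le T}|m_n(t)|_{L^2}^2 \le C + C\int_0^T \mathbb{E}\sup_{r\le s}|m_n(r)|_{L^2}^2\,ds,
\end{equation*}
and Gronwall's lemma yields \eqref{eqn L infinity L2 bound mn} with a constant depending only on $T, m_0, h$. Feeding this bound back into the dissipation terms $2\,\mathbb{E}\int_0^T |\nabla m_n(s)|_{L^2}^2\,ds$ and $2\,\mathbb{E}\int_0^T |m_n(s)|_{L^4}^4\,ds$, which sit on the left of the inequality, gives \eqref{eqn L 2 H1 bound mn} (combining the $\nabla m_n$ bound with the $L^2$ bound) and \eqref{eqn L 4 L4 bound mn} directly.

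The main obstacle — really the only delicate point — is ensuring that every estimate is uniform in $n$. This requires checking that the constants in Lemma \ref{lemma linear growth Lipschitz Gn and Hn} and in Proposition \ref{proposition properties of F existence of weak martingale solution} do not depend on $n$ (which is asserted there, since $P_n$ is an orthogonal projection and does not increase $L^2$ norms, and the ODE estimates for $\Phi_n$ run with $l$-independent, hence $n$-independent, constants once $l \in B$), and being careful with the absorption step so that the fraction of $\mathbb{E}\sup_{t\le T}|m_n(t)|_{L^2}^2$ pulled to the left-hand side is strictly less than $1$. A minor technical wrinkle is that $\sup$ and expectation must be interchanged only after localizing by a suitable sequence of stopping times $\tau_R = \inf\{t : |m_n(t)|_{L^2} \ge R\}$ to justify the BDG step rigorously; one then passes $R \to \infty$ via monotone convergence, using the already-established pathwise existence of the strong solution in $H_n$ to ensure $\tau_R \uparrow T$ almost surely.
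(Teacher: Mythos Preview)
Your proposal is correct and follows essentially the same route as the paper: apply the It\^o--L\'evy formula to $v\mapsto |v|_{L^2}^2$, use Proposition~\ref{proposition properties of F existence of weak martingale solution} to extract the $H^1$ and $L^4$ dissipation from the drift, control the jump terms via the linear-growth bounds of Lemma~\ref{lemma linear growth Lipschitz Gn and Hn}, then close with BDG and Gronwall. The paper writes the jump contributions as $\psi(\Phi(l,m_n))-\psi(m_n)$ rather than in terms of $G_n$ and $b_n$, but since $\Phi(l,m_n)=m_n+G_n(l,m_n)$ these are the same computation.

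One small correction: for a pure-jump integrator the It\^o formula gives, inside the compensated integral, the full increment $\psi(m_n+G_n)-\psi(m_n)=2\langle G_n,m_n\rangle_{L^2}+|G_n|_{L^2}^2$, not only $2\langle G_n,m_n\rangle_{L^2}$ as you wrote. This does not affect your argument, since $|G_n(l,m_n)|_{L^2}^2\le C(1+|m_n|_{L^2}^2)$ and the quadratic variation bound you feed into BDG is already of order $(1+|m_n|_{L^2}^2)^2$; but the displayed inequality should carry this extra term.
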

	\begin{proof}[Proof of Lemma \ref{lemma bounds 1}]
		Consider the mapping 
		\begin{align*}
			\l\{ \psi : H_n \ni v\mapsto \frac{1}{2} \l|v\r|_{L^2}^2 \in\mathbb{R} \r\}.
		\end{align*}
		Applying the It\^o-L\'evy formula (see for example \cite{UM+AAP_2021_LargeDeviationsSNSELevyNoise}) to $\psi$ gives
		\begin{align}\label{eqn bounds lemma 1 inequality 6}
			\nonumber \psi\bigl(m_n(t)\bigr) - \psi(m_n(0)) = & \int_{0}^{t} \psi^\p(m_n(s)) \biggl( F_n\bigl(m_n(s)\bigr)\biggr) \, ds \\
			\nonumber & + \int_{0}^{t} \int_{B} \l[\psi\biggl(\Phi\bigl(l,m_n(s-)\bigl)\biggr) - \psi\bigl(m_n(s-)\bigr)\r] \tilde{\eta}(ds,dl) \\
			\nonumber & + \int_{0}^{t} \int_{B} \bigg[ \psi\biggl(\Phi\bigl(l,m_n(s-)\bigr)\biggr) - \psi\bigl(m_n(s-)\bigr) \\
			& \qquad- l \l\langle \psi^\p\bigl(m_n(s)\bigr) , \bar{g}\bigl(m_n(s)\bigr)  \r\rangle_{L^2} \bigg] \, \nu(dl) \, ds.
		\end{align}
		Using Proposition \ref{proposition properties of F existence of weak martingale solution} gives
		\begin{align}\label{eqn bounds lemma 1 inequality 2}
			\nonumber \psi(m_n(t)) + & \int_{0}^{t} \l| \nabla m_n(s) \r|_{L^2}^2 \, ds + \int_{0}^{t} \l|  m_n(s) \r|_{L^4}^4 \, ds + \int_{0}^{t} \l|  m_n(s) \r|_{L^2}^2 \, ds \\
			\nonumber \leq & \psi(m_n(0)) + \int_{0}^{t} \int_{B} \l[\psi\biggl(\Phi\bigl(l,m_n(s-)\bigr)\biggr) - \psi\bigl(m_n(s-)\bigr)\r] \tilde{\eta}(ds,dl) \\
			& + \int_{0}^{t} \int_{B} \l[ \psi\biggl(\Phi\bigl(l,m_n(s-)\bigr)\biggr) - \psi\l(m_n(s-)\r) - l \l\langle \psi^\p\bigl(m_n(s)\bigr) , \bar{g}\bigl(m_n(s)\bigr)  \r\rangle_{L^2} \r] \, \nu(dl) \, ds.
		\end{align}
		The second, third, and fourth terms on the left hand side of the above inequality are non-positive and hence can be neglected, still preserving the inequality. Moreover, the second and the fourth integrals combine to give the full $H^1$ norm. This will be used at a later stage.
		Therefore
		\begin{align}\label{eqn bounds lemma 1 inequality 3}
			\nonumber \psi(m_n(t)) \leq & \psi(m_n(0)) + \int_{0}^{t} \int_{B} \l[\psi\biggl(\Phi\bigl(l,m_n(s-)\bigr)\biggr) - \psi\bigl(m_n(s-)\bigr)\r] \tilde{\eta}(ds,dl) \\
			\nonumber & + \int_{0}^{t} \int_{B} \bigg[ \psi\biggl(\Phi\bigl(l,m_n(s-)\bigr)\biggr) - \psi\bigl(m_n(s-)\bigr) \\
			& \qquad- l \l\langle \psi^\p\bigl(m_n(s)\bigr) , \bar{g}\bigl(m_n(s)\bigr)  \r\rangle_{L^2} \bigg] \, \nu(dl) \, ds.
		\end{align}
		In particular, the last term of the above inequality can be bounded as follows.
		\begin{align}\label{eqn bounds lemma 1 inequality 4}
			\nonumber & \l|\int_{0}^{t} \int_{B} \l[ \psi\biggl(\Phi\bigl(l,m_n(s-)\bigr)\biggr) - \psi\bigl(m_n(s-)\bigr) - l \l\langle \psi^\p\bigl(m_n(s)\bigr) , \bar{g}\bigl(m_n(s)\bigr)  \r\rangle_{L^2} \r] \, \nu(dl) \, ds\r| \\
			\nonumber \leq & \int_{0}^{t} \int_{B} \l[ \psi\biggl(\Phi\bigl(l,m_n(s-)\bigr)\biggr) + \psi\bigl(m_n(s-)\bigr) + \l| l \r| \l| m_n(s) \r|_{L^2}  \l| \bar{g}(m_n(s)) \r|_{L^2}  \r] \, \nu(dl) \, ds \\
			\nonumber \leq & \int_{0}^{t} \int_{B} C \l[ \biggl(1 + \psi\bigl(m_n(s-)\bigr)\biggr) + \psi\bigl(m_n(s-)\bigr) + \l| l \r| \l| m_n(s) \r|_{L^2} \bigl( 1 +  \l| m_n(s) \r|_{L^2} \bigr)    \r] \, \nu(dl) \, ds \\
			\leq &  C + C \int_{0}^{t} \l| m_n(s) \r|_{L^2}^2 \, ds.
		\end{align}
		For the second term, we have the following by the Burkh\"older-Davis-Gundy Inequality.
		\begin{align}\label{eqn bounds lemma 1 noise term bound}
			\nonumber &\mathbb{E} \sup_{t\in[0,T]} \l| \int_{0}^{t} \int_{B} \bigl[\psi\bigl(\Phi(l,m_n(s-))\bigr) - \psi\bigl(m_n(s-)\bigr)\bigr] \tilde{\eta}(ds,dl) \r| \\
			\nonumber \leq & C \mathbb{E}  \l( \int_{0}^{T} \int_{B} \bigl[\psi\bigl(\Phi(l,m_n(s-))\bigr) - \psi\l(m_n(s-)\r)\bigr]^2 \, \nu(dl) \, ds \r)^{\frac{1}{2}}\\
			\nonumber \leq & C + C \mathbb{E} \l(\int_{0}^{T} \l| m_n(s) \r|_{L^2}^4 \, ds \r)^{\frac{1}{2}} \\
			\nonumber \leq & C + C \mathbb{E}\l[  \sup_{t\in[0,T]}\l| m_n(s) \r|_{L^2} \l(\int_{0}^{T} \l| m_n(s) \r|_{L^2}^2 \, ds \r)^{\frac{1}{2}} \r] \\
			\leq & C +  \frac{\delta}{2}\mathbb{E} \sup_{t\in[0,T]}\l| m_n(s) \r|_{L^2}^2 + \frac{C^2}{2\delta}\mathbb{E} \int_{0}^{T} \l| m_n(s) \r|_{L^2}^2 \, ds .
		\end{align}
		Here again, $\delta$ is later chosen small enough so as to keep the coefficient of the corresponding term on the left hand side positive. This will be done at a later stage. Combining \eqref{eqn bounds lemma 1 inequality 3} and \eqref{eqn bounds lemma 1 inequality 4}, we get
		\begin{align}\label{eqn bounds lemma 1 inequality 5}
			\nonumber \psi(m_n(t)) \leq & \psi(m_n(0)) + \l| \int_{0}^{t} \int_{B} \l[\psi\l(\Phi(l,m_n(s-))\r) - \psi\l(m_n(s-)\r)\r] \tilde{\eta}(ds,dl)\r| \\
			&+  C + C \int_{0}^{t} \l| m_n(s) \r|_{L^2}^2 \, ds.
		\end{align}
		With the inequality \eqref{eqn bounds lemma 1 inequality 2} in mind, we note that for $v\in H^1$,
		$$\l| v \r|_{H^1}^2 = \l| v \r|_{L^2}^2 + \l| \nabla v \r|_{L^2}^2.$$
		Combining this with the inequality \eqref{eqn bounds lemma 1 inequality 4} gives, for a suitable constant $C>0$,
		\begin{align}\label{eqn bounds lemma 1 inequality 1}
			\nonumber \l|m_n(t)\r|_{L^2}^2  & + \int_{0}^{t} \l| m_n(s) \r|_{H^1}^2 \,  ds + \int_{0}^{t} \l| m_n(s) \r|_{L^4}^4 \, ds \\
			\nonumber &  \leq  C \l| m_n(0) \r|_{L^2}^2\\
			\nonumber & + \l| \int_{0}^{t} \int_{B} \l[\psi\bigl(\Phi(l,m_n(s-))\bigr) - \psi\bigl(m_n(s-)\bigr)\r] \tilde{\eta}(ds,dl)\r|\\
			& +  C + C \int_{0}^{t} \l| m_n(s) \r|_{L^2}^2 \, ds.
		\end{align}
		The second term and the third term on the left hand side are non-negative and hence can be neglected, still preserving the inequality.\\
		We go back to equation \eqref{eqn bounds lemma 1 inequality 5}. Taking the supremum over $[0,T]$, followed by taking the expectation of both sides and using the estimate \eqref{eqn bounds lemma 1 noise term bound} gives
		\begin{align}\label{eqn bounds lemma 1 inequality 7}
			\nonumber \mathbb{E}\sup_{t\in[0,T]} \l|m_n(t)\r|_{L^2}^2 \leq & \mathbb{E}\l| m_n(0) \r|_{L^2}^2
			+ \mathbb{E}\sup_{t\in[0,T]}\l| \int_{0}^{t} \int_{B} \l[\psi\bigl(\Phi(l,m_n(s-))\bigr) - \psi\bigl(m_n(s-)\bigr)\r] \tilde{\eta}(ds,dl)\r|\\
			\nonumber & +  C + C \mathbb{E} \int_{0}^{T} \sup_{r\in[0,s]}\l| m_n(r) \r|_{L^2}^2 \, ds \\
			\leq & \mathbb{E}\l| m_0 \r|_{L^2}^2 + C + C  \mathbb{E} \int_{0}^{T} \sup_{r\in[0,s]}\l| m_n(r) \r|_{L^2}^2 \, ds.
		\end{align}
		Using the Gronwall inequality concludes the proof for the first inequality \eqref{eqn L infinity L2 bound mn}.
		For the second and third inequalities, we go back to the inequality \eqref{eqn bounds lemma 1 inequality 1}. The first and the third terms are non-negative, and hence can be neglected still keeping the inequality intact. Taking the supremum over $[0,T]$, taking the expectation of both sides, and then using the bounds \eqref{eqn bounds lemma 1 noise term bound} and \eqref{eqn L infinity L2 bound mn} gives us the required result (\eqref{eqn L 2 H1 bound mn}). The bound \eqref{eqn L 4 L4 bound mn} can be obtained similarly from the same inequality.
	\end{proof}
	
	\begin{lemma}\label{lemma bounds 2}
		There exists a constant $C>0$ such that for every $n\in\mathbb{N}$, the following hold.
		\begin{equation}\label{eqn L infinity H1 bound mn}
			\mathbb{E} \sup_{t\in[0,T]} \l| m_n(t) \r|_{H^1}^2 \leq C,
		\end{equation}

		\begin{equation}\label{eqn L 2 H2 bound mn}
			\mathbb{E}\int_{0}^{T} \l| \Delta m_n(t) \r|_{L^2}^2 \, dt \leq C.
		\end{equation}
	\end{lemma}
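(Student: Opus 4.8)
The plan is to run the energy estimate of Lemma~\ref{lemma bounds 1} one derivative higher, i.e.\ to test the Galerkin equation \eqref{eqn FG approximation of problem considered Marcus Form} against $-\Delta m_n$. Concretely, I would apply the It\^o--L\'evy formula, in the Marcus form recorded in \eqref{eqn bounds lemma 1 inequality 6}, to the functional $\psi_1 : H_n \ni v \mapsto \tfrac12 \l| v \r|_{H^1}^2 \in \mathbb{R}$, which is smooth on the finite-dimensional space $H_n$ and has derivative $\psi_1^\p(v) = (I-\Delta)v$ (all operators taken on $H_n$, with the Neumann boundary condition built in). The deterministic drift then contributes $\l\langle (I-\Delta)m_n(s), F_n(m_n(s))\r\rangle_{L^2} = \l\langle F_n(m_n(s)), m_n(s)\r\rangle_{L^2} + \l\langle F_n(m_n(s)), -\Delta m_n(s)\r\rangle_{L^2}$; the first bracket is treated exactly as in Lemma~\ref{lemma bounds 1} via items (1)--(3) of Proposition~\ref{proposition properties of F existence of weak martingale solution}, and for the second bracket items (5)--(7) of the same proposition give
\[
	\l\langle F_n(m_n(s)), -\Delta m_n(s)\r\rangle_{L^2} = -\l|\Delta m_n(s)\r|_{L^2}^2 - \bigl(1+\l|m_n(s)\r|_{L^2}^2\bigr)\l|\nabla m_n(s)\r|_{L^2}^2 - 2\l\langle m_n(s),\nabla m_n(s)\r\rangle_{L^2}^2 \le -\l|\Delta m_n(s)\r|_{L^2}^2 .
\]
Hence, once $\int_0^t \l|\Delta m_n(s)\r|_{L^2}^2\,ds$ is moved to the left, every drift contribution has a favourable sign and, crucially, the cubic reaction term is absorbed at this cost-free step.

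For the jump part the increments of $\psi_1$ along the Marcus flow have the same structure as in Lemma~\ref{lemma bounds 1}: there is a martingale term $\int_0^t\int_B[\psi_1(\Phi(l,m_n(s-))) - \psi_1(m_n(s-))]\,\tilde\eta(ds,dl)$ and a compensator correction $\int_0^t\int_B[\psi_1(\Phi(l,m_n(s-))) - \psi_1(m_n(s-)) - l\l\langle (I-\Delta)m_n(s), \bar g_n(m_n(s))\r\rangle_{L^2}]\,\nu(dl)\,ds$. Both are controlled using the $H^1$-linear growth of the Marcus map, Lemma~\ref{lemma Lipschitz Continuity Linear Growth Phi H1}, i.e.\ $\l|\Phi(l,v)\r|_{H^1}^2\le C(1+\l|v\r|_{H^1}^2)$, together with items (4) and (8) of Proposition~\ref{proposition properties of F existence of weak martingale solution} for the $\bar g_n$ term, which yield $|\psi_1(\Phi(l,v)) - \psi_1(v)| \le C(1+\l|v\r|_{H^1}^2)$ and $|l\l\langle (I-\Delta)v,\bar g_n(v)\r\rangle_{L^2}| \le C(1+\l|v\r|_{H^1}^2)$. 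Thus the compensator correction is dominated by $C\int_0^t(1+\l|m_n(s)\r|_{H^1}^2)\,ds$, while for the martingale term the Burkholder--Davis--Gundy inequality, together with $[\psi_1(\Phi(l,v)) - \psi_1(v)]^2 \le C(1+\l|v\r|_{H^1}^4)$, gives a bound by $C\,\mathbb{E}\bigl(\int_0^T(1+\l|m_n(s)\r|_{H^1}^4)\,ds\bigr)^{1/2} \le C + C\,\mathbb{E}\bigl[\sup_{s\le T}\l|m_n(s)\r|_{H^1}\bigl(\int_0^T\l|m_n(s)\r|_{H^1}^2\,ds\bigr)^{1/2}\bigr]$, which by Young's inequality is at most $C + \tfrac{\delta}{2}\,\mathbb{E}\sup_{t\le T}\l|m_n(t)\r|_{H^1}^2 + \tfrac{C}{\delta}\,\mathbb{E}\int_0^T\l|m_n(s)\r|_{H^1}^2\,ds$ for any $\delta>0$.

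Collecting these estimates, taking the supremum over $t\in[0,T]$ and then expectations, and using $\l|m_n(0)\r|_{H^1}=\l|P_n m_0\r|_{H^1}\le\l|m_0\r|_{H^1}$ (the Neumann eigenfunctions being orthogonal in $H^1$ as well), I arrive at
\[
	\mathbb{E}\sup_{t\in[0,T]}\l|m_n(t)\r|_{H^1}^2 + \mathbb{E}\int_0^T\l|\Delta m_n(s)\r|_{L^2}^2\,ds \le C\bigl(1 + \mathbb{E}\l|m_0\r|_{H^1}^2\bigr) + C\,\mathbb{E}\int_0^T\l|m_n(s)\r|_{H^1}^2\,ds + \tfrac{\delta}{2}\,\mathbb{E}\sup_{t\in[0,T]}\l|m_n(t)\r|_{H^1}^2 .
\]
Choosing $\delta$ small enough to absorb the last term into the left-hand side and invoking the already-established bound \eqref{eqn L 2 H1 bound mn} for $\mathbb{E}\int_0^T\l|m_n(s)\r|_{H^1}^2\,ds$ closes both \eqref{eqn L infinity H1 bound mn} and \eqref{eqn L 2 H2 bound mn} simultaneously; note that no Gr\"onwall step is even needed here since the surviving right-hand integral is already controlled by Lemma~\ref{lemma bounds 1}. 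The only point that genuinely requires care is the interaction of the cubic reaction term $F_n^3$ with $-\Delta m_n$: this is exactly where Proposition~\ref{proposition properties of F existence of weak martingale solution}(7) is used, showing that this contribution has the right sign; everything else is a routine repetition of the first-level estimate, with the $H^1$-growth of $\Phi$ replacing its $L^2$-growth.
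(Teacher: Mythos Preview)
Your argument is correct and follows essentially the same route as the paper: apply the It\^o--L\'evy formula to a functional controlling $|\nabla m_n|_{L^2}^2$, use items (5)--(8) of Proposition~\ref{proposition properties of F existence of weak martingale solution} for the drift (in particular (7) for the sign of the cubic term against $-\Delta m_n$), and control the jump terms via the $H^1$-growth of $\Phi$ from Lemma~\ref{lemma Lipschitz Continuity Linear Growth Phi H1} together with BDG and Young. The only differences are cosmetic: the paper works with $\psi_2(v)=\tfrac12|\nabla v|_{L^2}^2$ and recovers the full $H^1$ norm by adding in the $L^2$ bound from Lemma~\ref{lemma bounds 1} at the end, whereas you take $\psi_1(v)=\tfrac12|v|_{H^1}^2$ from the outset; and the paper closes with Gr\"onwall on $\mathbb{E}\sup_{r\le s}|m_n(r)|_{H^1}^2$, while you observe (correctly) that this step can be skipped because the remaining integral $\mathbb{E}\int_0^T|m_n(s)|_{H^1}^2\,ds$ is already bounded by \eqref{eqn L 2 H1 bound mn}.
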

	\begin{proof}[Proof of Lemma \ref{lemma bounds 2}]
		Proof of the lemma is similar to the proof of Lemma \ref{lemma bounds 1}. The idea is to apply the It\^o L\'evy formula to the function $\psi_2$, given by
		\begin{equation}\label{eqn bounds lemma 2 definition of psi2}
			\l\{\psi_2: H_n \ni v \mapsto \frac{1}{2} \l| \nabla v \r|_{L^2}^2 \in \mathbb{R} \r\}.
		\end{equation}
		
		The resulting equation is
		\begin{align}\label{eqn bounds lemma 2 using Ito formula}
			\nonumber &\psi_2(m_n(t)) - \psi_2(m_n(0)) =  \int_{0}^{t} \psi_2^\p\bigl(m_n(s)\bigr) \bigl( F_n(m_n(s))\bigr) \, ds \\
			\nonumber & + \int_{0}^{t} \int_{B} \l[\psi_2\bigl(\Phi\l(l,m_n(s-)\r)\bigr) - \psi_2\bigl(m_n(s-)\bigr)\r] \tilde{\eta}(ds,dl) \\
			& + \int_{0}^{t} \int_{B} \l[ \psi_2\bigl(\Phi\l(l,m_n(s-)\r)\bigr) - \psi_2\bigl(m_n(s-)\bigr) - l \l\langle \psi_2^\p\bigl(m_n(s)\bigr) , \bar{g}(m_n(s)) \r\rangle_{L^2} \r] \, \nu(dl) \, ds.
		\end{align}
		Using Proposition \ref{proposition properties of F existence of weak martingale solution}, there exists a constant $C$ independent of $n$ such that for each $t\in[0,T]$
		
		\begin{align}\label{eqn bounds lemma 2 inequality 1}
			\nonumber  \frac{1}{2} \l| \nabla m_n(t) \r|_{L^2}^2 \leq & \frac{1}{2} \l| m_0 \r|_{H^1}^2  -\int_{0}^{t} \l| \Delta m_n(s) \r|_{L^2}^2 \, ds + C\l( 1 + \int_{0}^{t} \l| m_n(s) \r|_{H^1}^2  \, ds \r) \\
			& + \l| \int_{0}^{t} \int_{B} \bigl[\psi_2\bigl(\Phi(l,m_n(s-))\bigr) - \psi_2\bigl(m_n(s-)\bigr)\bigr] \tilde{\eta}(ds,dl) \r|.
		\end{align}
		That is, for each $t\in[0,T]$, the following inequality holds.
		\begin{align}\label{eqn bounds lemma 2 inequality 2}
			\nonumber \l| \nabla m_n(t) \r|_{L^2}^2 & +  \int_{0}^{t} \l| \Delta m_n(s) \r|_{L^2}^2 \, ds \leq  \frac{1}{2} \l| m_0 \r|_{H^1}^2 +  C\l( 1 + \int_{0}^{t} \l| m_n(s) \r|_{H^1}^2  \, ds \r) \\
			& + \l| \int_{0}^{t} \int_{B} \bigl[\psi_2\bigl(\Phi(l,m_n(s-))\bigr) - \psi_2\bigl(m_n(s-)\bigr)\bigr] \tilde{\eta}(ds,dl) \r|.
		\end{align}
		The second term on the left hand side of the above inequality is non-negative. Therefore that term can be neglected for now, keeping the inequality intact. Then taking supremum over $[0,T]$ of both sides and replacing $\l| m_n(s) \r|_{H^1}$ on the right hand side by $\sup_{r\in[0,s]}\l| m_n(r) \r|_{H^1}$ gives
		\begin{align}\label{eqn bounds lemma 2 inequality 3}
			\nonumber &\sup_{t\in[0,T]}\l| \nabla m_n(t) \r|_{L^2}^2 \leq   C\l( \l| m_0 \r|_{H^1}^2 + 1 + \int_{0}^{T} \sup_{r\in[0,s]}\l| m_n(r) \r|_{H^1}^2  \, ds \r) \\
			& + \sup_{t\in[0,T]} \l| \int_{0}^{t} \int_{B} \bigl[\psi_2\bigl(\Phi(l,m_n(s-))\bigr) - \psi_2\bigl(m_n(s-)\bigr)\bigr] \tilde{\eta}(ds,dl)  \r|.
		\end{align}		
		\textbf{For the Noise Term:} Using Lemma \ref{lemma Lipschitz Continuity Linear Growth Phi H1}, and calculating similar to \eqref{eqn bounds lemma 1 noise term bound} give
		
		\begin{align}\label{eqn bounds lemma 2 noise term}
			\nonumber \mathbb{E} \sup_{t\in[0,T]} \l| \int_{0}^{t} \int_{B} \bigl[\psi_2\bigl(\Phi(l,m_n(s-))\bigr) - \psi_2\bigl(m_n(s-)\bigr)\bigr] \tilde{\eta}(ds,dl) \r| \leq & C +  \frac{1}{2}\mathbb{E} \sup_{t\in[0,T]}\l| m_n(s) \r|_{H^1}^2 \\
			& + \frac{C^2}{2}\mathbb{E} \int_{0}^{T} \l| m_n(s) \r|_{H^1}^2 \, ds.
		\end{align}		
		Taking the expectation of both sides of \eqref{eqn bounds lemma 2 inequality 3} and using Fubini's theorem, along with the inequality \eqref{eqn bounds lemma 2 noise term} gives (for some constant $C>0$, independent of $n\in\mathbb{N}$)

		\begin{equation}\label{eqn bounds lemma 2 inequality 4}
			\mathbb{E}\sup_{t\in[0,T]}\l| m_n(t) \r|_{H^1}^2 \leq 2\,  C\l( \mathbb{E} \l| m_0 \r|_{H^1}^2 + 1 + \int_{0}^{T} \mathbb{E} \sup_{r\in[0,s]}\l| m_n(r) \r|_{H^1}^2  \, ds \r).
		\end{equation}
		Note here that the first term on the left hand side of the above inequality is the full $H^1$ norm. That is be obtained by noting that the $\l|\cdot\r|_{H^1}^2 = \l|\nabla \cdot\r|_{L^2}^2  +  \l|\cdot\r|_{L^2}^2$ and the $L^2$ norm can be bounded by a constant using Lemma \ref{lemma bounds 1}.
		Applying the Gronwall inequality and noting that $\mathbb{E}\l|m_0\r|_{H^1}^2 < \infty $ implies that there exists a constant $C>0$ such that
		\begin{equation}
			\mathbb{E}\sup_{t\in[0,T]}\l| m_n(t) \r|_{H^1}^2 \leq C.
		\end{equation}
		Going back to equation \eqref{eqn bounds lemma 2 inequality 2}, we have
		\begin{align}\label{eqn bounds lemma 2 inequality 5}
			\nonumber \l| \nabla m_n(t) \r|_{L^2}^2 & + \int_{0}^{t} \l| \Delta m_n(s) \r|_{L^2}^2 \, ds \leq   C\l( 1 + \int_{0}^{t} \l| m_n(s) \r|_{H^1}^2  \, ds \r) \\
			& + \l| \int_{0}^{t} \int_{B} \bigl[\psi_2\bigl(\Phi(l,m_n(s-))\bigr) - \psi_2\bigl(m_n(s-)\bigr)\bigr] \tilde{\eta}(ds,dl) \r|.
		\end{align}
		The first term on the left hand side is non-negative. Hence the first term can be neglected, keeping the inequality intact. The resulting inequality is
		\begin{align}\label{eqn bounds lemma 2 inequality 6}
			\nonumber \int_{0}^{T} \l| \Delta m_n(s) \r|_{L^2}^2 \, ds  \leq & 2\, C\l( 1 + \int_{0}^{t} \l| m_n(s) \r|_{H^1}^2  \, ds \r) \\
			& + \l| \int_{0}^{t} \int_{B} \l[\psi_2\bigl(\Phi\l(l,m_n(s-)\r)\bigr) - \psi_2\bigl(m_n(s-)\bigr)\r] \tilde{\eta}(ds,dl) \r|.
		\end{align}
		Taking supremum over $[0,T]$, followed by taking expectation of both sides and using \eqref{eqn bounds lemma 2 noise term} along with the inequality 
		\eqref{eqn L infinity H1 bound mn} gives the desired inequality \eqref{eqn L 2 H2 bound mn} for some constant $C>0$, independent of $n$.
	\end{proof}
	
	\begin{lemma}\label{lemma bounds 3}
		For any $p\geq 1$, there exists a constant $C>0$ such that for every $n\in\mathbb{N}$, the following hold.
		\begin{equation}\label{eqn Lp L infinity L2 bound mn}
			\mathbb{E} \sup_{t\in[0,T]} \l| m_n(t) \r|_{L^2}^{2p} \leq C,
		\end{equation}
		\begin{equation}\label{eqn Lp L2 H1 bound mn}
			\mathbb{E} \l( \int_{0}^{T} \l| m_n(t) \r|_{H^1}^2 \r)^{p} \, dt \leq C.
		\end{equation}
	\end{lemma}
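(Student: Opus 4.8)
The plan is to repeat the argument of Lemma \ref{lemma bounds 1}, but with the quadratic functional $\psi$ replaced by its $p$-th power. Concretely, first I would apply the It\^o--L\'evy formula to the functional $\psi_p:H_n\ni v\mapsto |v|_{L^2}^{2p}\in\mathbb{R}$, whose Fr\'echet derivative is $\psi_p^\p(v)=2p|v|_{L^2}^{2p-2}v$. By Proposition \ref{proposition properties of F existence of weak martingale solution}, parts (1)--(3), the drift term equals
\begin{equation*}
\psi_p^\p(m_n(s))\bigl(F_n(m_n(s))\bigr)=-2p|m_n(s)|_{L^2}^{2p-2}\Bigl(|\nabla m_n(s)|_{L^2}^2+|m_n(s)|_{L^4}^4+|m_n(s)|_{L^2}^2\Bigr),
\end{equation*}
which is non-positive; as in \eqref{eqn bounds lemma 1 inequality 1} this term can either be discarded or retained to recover the full $H^1$ norm.

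Next I would handle the two jump contributions. For the compensator (second-order) term, writing $\Phi(l,v)=v+G_n(l,v)$ and recalling $G_n(l,v)=l\bar g_n(v)+H_n(l,v)$, a Taylor expansion of the smooth, polynomially growing functional $\psi_p$ gives the estimate
\begin{equation*}
\bigl|\psi_p(v+z)-\psi_p(v)-\langle\psi_p^\p(v),z\rangle_{L^2}\bigr|\le C\bigl(|v|_{L^2}^{2p-2}+|z|_{L^2}^{2p-2}\bigr)|z|_{L^2}^2,
\end{equation*}
so that the compensator integrand splits into a genuine remainder in $z=G_n(l,m_n(s-))$ plus the term $\langle\psi_p^\p(m_n(s)),H_n(l,m_n(s))\rangle_{L^2}$. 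Using the linear growth bounds $|G_n(l,v)|_{L^2}\le C(1+|v|_{L^2})$ and $|H_n(l,v)|_{L^2}\le C(1+|v|_{L^2})$ from Lemma \ref{lemma linear growth Lipschitz Gn and Hn}, the boundedness of $\bar g_n$, and of $l\in B$, the whole integrand is dominated by $C(1+|m_n(s)|_{L^2}^{2p})$; since $\nu(B)<\infty$ this term is bounded by $C\int_0^t(1+|m_n(s)|_{L^2}^{2p})\,ds$. For the purely discontinuous stochastic integral, the same Taylor/linear-growth argument bounds the integrand $\psi_p(\Phi(l,m_n(s-)))-\psi_p(m_n(s-))$ in $L^2$ by $C(1+|m_n(s-)|_{L^2}^{2p})$, and applying the Burkholder--Davis--Gundy inequality for jump martingales exactly as in \eqref{eqn bounds lemma 1 noise term bound} yields, after Cauchy--Schwarz and Young, a term of the form $C+\tfrac{\delta}{2}\,\mathbb{E}\sup_{[0,T]}|m_n|_{L^2}^{2p}+\tfrac{C}{\delta}\,\mathbb{E}\int_0^T|m_n(s)|_{L^2}^{2p}\,ds$.

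Collecting these bounds, taking the supremum over $[0,T]$ and then expectation, choosing $\delta$ small to absorb the $\sup$ term into the left side, and invoking Gronwall's inequality yields \eqref{eqn Lp L infinity L2 bound mn}. To make the absorption rigorous one first localises by $\tau_R=\inf\{t:|m_n(t)|_{L^2}\ge R\}$ so that $\mathbb{E}\sup_{[0,T\wedge\tau_R]}|m_n|_{L^2}^{2p}$ is finite, obtains a bound uniform in $R$, and lets $R\uparrow\infty$ by monotone convergence; this is harmless since the Galerkin system is globally well posed. For \eqref{eqn Lp L2 H1 bound mn} I would return to the pathwise energy inequality \eqref{eqn bounds lemma 1 inequality 1}, raise both sides to the power $p$, take $\sup_{[0,T]}$ and then $\mathbb{E}$; the right-hand side then reduces to $\mathbb{E}|m_0|_{L^2}^{2p}$, a constant, $\mathbb{E}\bigl(\int_0^T|m_n(s)|_{L^2}^2\,ds\bigr)^p\le T^{p-1}\mathbb{E}\int_0^T|m_n(s)|_{L^2}^{2p}\,ds$ (bounded by \eqref{eqn Lp L infinity L2 bound mn}), and $\mathbb{E}\sup_{[0,T]}\bigl|\int_0^t\!\int_B[\psi(\Phi(l,m_n(s-)))-\psi(m_n(s-))]\,\tilde\eta(ds,dl)\bigr|^p$, which the $p$-th order BDG inequality controls by $C(1+\mathbb{E}\sup_{[0,T]}|m_n|_{L^2}^{2p})\le C$. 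The main obstacle is the treatment of the jump terms at the higher moment $p$: for $p>2$ the BDG inequality for the purely discontinuous martingale must be invoked in the form carrying both $\mathbb{E}\bigl(\int\!\int|\cdot|^2\,\nu\,ds\bigr)^{p/2}$ and $\mathbb{E}\int\!\int|\cdot|^p\,\nu\,ds$, and the compensator remainder must be handled by a Taylor expansion that genuinely exploits the higher-order smallness of $H_n(l,v)$ and of the second difference of $\psi_p$ — precisely the places where the growth estimates of Lemmas \ref{lemma linear growth Lipschitz G and H}, \ref{lemma linear growth Lipschitz Gn and Hn} and \ref{lemma Lipschitz Continuity Linear Growth Phi L2} are indispensable; the localisation/a priori finiteness point is routine but should be flagged explicitly.
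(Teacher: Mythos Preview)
Your proposal is correct, but for \eqref{eqn Lp L infinity L2 bound mn} you take a different route from the paper. The paper does not apply the It\^o--L\'evy formula to $\psi_p(v)=|v|_{L^2}^{2p}$; instead it simply recalls the pathwise quadratic inequality \eqref{eqn bounds lemma 1 inequality 5} already established in Lemma~\ref{lemma bounds 1}, raises both sides to the power $p$, takes the supremum over $[0,T]$, and uses Jensen's inequality on the time integral and the $p$-th order BDG inequality on the stochastic integral (whose integrand $\psi(\Phi(l,m_n))-\psi(m_n)$ is still the quadratic one). Gronwall then closes the estimate. Your approach via It\^o on $\psi_p$ is equally valid and arguably more systematic, but it costs you the Taylor-remainder analysis of $\psi_p$ and the careful splitting of $G_n$ and $H_n$ in the compensator, all of which the paper bypasses by leveraging the quadratic work already done. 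For \eqref{eqn Lp L2 H1 bound mn} your plan coincides with the paper's: return to \eqref{eqn bounds lemma 1 inequality 1}--\eqref{eqn bounds lemma 1 inequality 2}, raise to power $p$, and control the right-hand side using BDG and the bound just proved. Your remarks on localisation and on the two-term form of BDG for $p>2$ are correct and worth keeping, though the paper does not make them explicit.
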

	\begin{proof}

		The proof is similar to the proof of Lemma \ref{lemma bounds 1}. We give a brief idea here.
		Recall the inequality \eqref{eqn bounds lemma 1 inequality 5}. Taking the power $p$ of both sides (for $p\geq 1$), followed by taking the supremum over $[0,T]$ of both sides, followed by the use of Jensen's inequality, gives
		\begin{align}
			\nonumber &\sup_{t\in[0,T]}\l| m_n(t) \r|_{L^2}^{2p} \leq   C_p\l( \l| m_0 \r|_{H^1}^{2p} + 1 + \int_{0}^{T} \sup_{r\in[0,s]}\l| m_n(r) \r|_{H^1}^{2p}  \, ds \r) \\
			& + \sup_{t\in[0,T]} \l| \int_{0}^{t} \int_{B} \l[ \psi_2\bigl(\Phi\l(l,m_n(s-)\r)\bigr) - \psi_2\bigl(m_n(s-)\bigr) - l \l\langle \psi_2^\p\bigl(m_n(s)\bigr) , \bar{g}\bigl(m_n(s)\bigr) \r\rangle_{L^2} \r] \, \nu(dl) \, ds \r|^{p}.
		\end{align}
		The inequality \eqref{eqn Lp L infinity L2 bound mn} follows after taking the expectation of both sides, using Burkh\"older-Davis-Gundy inequality,  some simplification, and then the Gronwall inequality.		
		Going back to the inequalities \eqref{eqn bounds lemma 1 inequality 2}, \eqref{eqn bounds lemma 1 inequality 1}  gives
		\begin{align}
			\nonumber \l| m_n(t) \r|_{L^2}^2 & + \int_{0}^{t} \l| m_n(s) \r|_{H^1}^2 \, ds \leq   C\l( 1 + \int_{0}^{t} \l| m_n(s) \r|_{H^1}^2  \, ds \r) \\
			& + \l| \int_{0}^{t} \int_{B} \l[ \psi_2\bigl(\Phi\l(l,m_n(s-)\r)\bigr) - \psi_2\bigl(m_n(s-)\bigr) - l \l\langle \psi_2^\p\bigl(m_n(s)\bigr) , \bar{g}\bigl(m_n(s)\bigr) \r\rangle_{L^2} \r] \, \nu(dl) \, ds \r|.
		\end{align}
		As before, the first term is non-negative and hence can be neglected, still preserving the inequality. Taking power $p$ of both sides, followed by taking the supremum over $[0,T]$ of both sides, followed by taking the expectation of both sides, followed by Jensen's inequality and Burkh\"older-Davis-Gundy inequality gives the inequality \eqref{eqn Lp L2 H1 bound mn}.

	\end{proof}
	
	\begin{lemma}\label{lemma bounds 4}
		For any $p\geq 1$, there exists a constant $C>0$ such that for every $n\in\mathbb{N}$, the following hold.
	
		\begin{equation}\label{eqn Lp L infinity H1 bound mn}
			\mathbb{E} \sup_{t\in[0,T]} \l| m_n(t) \r|_{H^1}^{2p} \leq C,
		\end{equation}
	
		\begin{equation}\label{eqn Lp L 2 H2 bound mn}
			\mathbb{E} \l( \int_{0}^{T} \l| \Delta m_n(t) \r|_{L^2}^2 \r)^{p} \, dt \leq C.
		\end{equation}
	\end{lemma}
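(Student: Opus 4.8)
The plan is to repeat the argument of Lemma \ref{lemma bounds 2}, now raising the relevant pathwise inequalities to the power $p$ before passing to expectations, and to close the estimates using the moment bounds already obtained in Lemma \ref{lemma bounds 3}. First I would apply the It\^o--L\'evy formula to the functional $v \mapsto \psi_2(v)^{p} = 2^{-p}\l| \nabla v \r|_{L^2}^{2p}$ (equivalently, iterate It\^o's formula starting from \eqref{eqn bounds lemma 2 using Ito formula}). By Proposition \ref{proposition properties of F existence of weak martingale solution}, the drift generated by $F_n$ contributes the nonpositive term $-p\,\psi_2(m_n(s))^{p-1}\l| \Delta m_n(s) \r|_{L^2}^2$ together with terms dominated by $C\l( 1 + \l| m_n(s) \r|_{H^1}^2 \r)^{p}$, and the same bound holds for the Poisson compensator term, thanks to the $H^1$ linear growth and Lipschitz properties of $\Phi$ (Lemma \ref{lemma Lipschitz Continuity Linear Growth Phi H1} and the $H^1$-analogue of \eqref{eqn G Lipschitz} of Lemma \ref{lemma linear growth Lipschitz Gn and Hn}) and part (8) of Proposition \ref{proposition properties of F existence of weak martingale solution}. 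Dropping the nonpositive Laplacian term, taking the supremum over $[0,T]$ and then expectations, one arrives at an inequality of the form
\begin{align*}
	\mathbb{E}\sup_{t\in[0,T]}\l| \nabla m_n(t) \r|_{L^2}^{2p} \leq & \ C + C\int_{0}^{T} \mathbb{E}\sup_{r\in[0,s]}\l| m_n(r) \r|_{H^1}^{2p} \, ds \\
	& + \mathbb{E}\sup_{t\in[0,T]}\l| M_n(t) \r|^{p},
\end{align*}
where $M_n(t) = \int_{0}^{t}\int_{B}\l[ \psi_2\l( \Phi(l,m_n(s-)) \r) - \psi_2\l( m_n(s-) \r) \r] \, \tilde{\eta}(ds,dl)$.

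For the martingale term I would invoke the Bichteler--Jacod--Kunita form of the Burkholder--Davis--Gundy inequality for purely discontinuous martingales, bounding $\mathbb{E}\sup_{t\in[0,T]}\l| M_n(t) \r|^{p}$ by
\[
	C\, \mathbb{E}\l( \int_{0}^{T}\int_{B} \l| \psi_2(\Phi) - \psi_2(m_n) \r|^{2} \, \nu(dl) \, ds \r)^{p/2} + C\, \mathbb{E} \int_{0}^{T}\int_{B} \l| \psi_2(\Phi) - \psi_2(m_n) \r|^{p} \, \nu(dl) \, ds .
\]
Since $\l| \psi_2(\Phi(l,v)) - \psi_2(v) \r| \leq C\l( 1 + \l| v \r|_{H^1}^2 \r)$ for $l\in B$, the second term is controlled by $C + C\int_{0}^{T}\mathbb{E}\sup_{r\in[0,s]}\l| m_n(r) \r|_{H^1}^{2p}\,ds$, which feeds into the Gronwall argument, while the first is estimated by
\[
	C + C\, \mathbb{E}\l[ \sup_{t\in[0,T]}\l| m_n(t) \r|_{H^1}^{p} \l( \int_{0}^{T}\l| m_n(s) \r|_{H^1}^2 \, ds \r)^{p/2} \r] \leq \tfrac{1}{2}\mathbb{E}\sup_{t\in[0,T]}\l| m_n(t) \r|_{H^1}^{2p} + C,
\]
by Young's inequality and \eqref{eqn Lp L2 H1 bound mn}. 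Absorbing the half-term into the left-hand side (after adding the contribution of $\l| m_n \r|_{L^2}^{2p}$, bounded by \eqref{eqn Lp L infinity L2 bound mn}) and applying the Gronwall inequality together with $\mathbb{E}\l| m_0 \r|_{H^1}^{2p} < \infty$ yields \eqref{eqn Lp L infinity H1 bound mn}.

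For \eqref{eqn Lp L 2 H2 bound mn} I would return to the pathwise inequality \eqref{eqn bounds lemma 2 inequality 5}, drop the nonnegative term $\l| \nabla m_n(t) \r|_{L^2}^2$, raise to the power $p$, take expectations, and bound the right-hand side using \eqref{eqn Lp L2 H1 bound mn}, the just-proved estimate \eqref{eqn Lp L infinity H1 bound mn}, and the martingale bound above. The main technical point I expect is the jump Burkholder--Davis--Gundy inequality with its two-term right-hand side, and the careful absorption of the top-order quantity $\mathbb{E}\sup_{t\in[0,T]}\l| m_n(t) \r|_{H^1}^{2p}$ into the left-hand side (which requires the a priori finiteness of this quantity for fixed $n$, guaranteed by the $H_n$-valued strong solution together with Lemma \ref{lemma bounds 2}); everything else is a routine iteration of the computations in Lemmas \ref{lemma bounds 1}--\ref{lemma bounds 3}.
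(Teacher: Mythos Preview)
Your proposal is correct and follows essentially the same route as the paper: the paper's own proof is simply ``similar to the proof of Lemma \ref{lemma bounds 3}, using the bounds from Lemma \ref{lemma bounds 2} instead of Lemma \ref{lemma bounds 1}'', i.e.\ raise the pathwise inequality \eqref{eqn bounds lemma 2 inequality 2}--\eqref{eqn bounds lemma 2 inequality 5} to the power $p$, apply BDG to the supremum of the martingale term, absorb, and Gronwall. Your write-up is actually more careful than the paper's sketch in one respect: you explicitly invoke the Bichteler--Jacod--Kunita two-term BDG bound for the $p$-th moment of the purely discontinuous martingale, which is the correct refinement for $p>1$ (the paper's ``Burkh\"older--Davis--Gundy inequality'' in the analogous step of Lemma \ref{lemma bounds 3} glosses over this).
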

	We skip the proof as it is similar to the proof of Lemma \ref{lemma bounds 3}, using the bounds from Lemma \ref{lemma bounds 2} instead of Lemma \ref{lemma bounds 1}.

	\subsubsection{Tightness}\label{section tightness}
	We state a few definitions and results first, particular cases of which will be used for obtaining tightness for the laws of $m_n$ on the space $\mathcal{Z}_T$. For more details, we refer the reader to Aldous \cite{Aldous_1978_Stopping_times_and_tightness,Aldous_1989_StoppingTimes2}, Motyl \cite{Motyl_2013_SNSE_LevyNoise}, M\'etivier \cite{Metivier_SPDE_InfDimensions_Book}, Parthasarathy \cite{Parthasarathy_1967_Book_ProbabilityMeasuresOnMetricSpaces_Book}.	
	We now state the Aldous condition for tightness from \cite{Metivier_SPDE_InfDimensions_Book}.
	\begin{definition}[Aldous Condition]\label{definition Aldous Condition}
		Let $\{ X_n \}_{n\in\mathbb{N}}$ be a sequence of c\'adl\'ag, adapted stochastic processes in a Banach space $E$. We say that the sequence $\{ X_n \}_{n\in\mathbb{N}}$ satisfies the Aldous condition on $E$ if for every $\varepsilon>0$ and $\eta>0$ there is $\delta>0$ such that for every sequence $\l\{ \tau_n\r\}_{n\in\mathbb{N}}$ of $\mathbb{F}$-stopping times with $\tau_n + \theta \leq T$ one has
		\begin{equation}
			\sup_{n\in\mathbb{N}} \sup_{0\leq \theta \leq \delta} \mathbb{P} \bigl\{ \| X_n\l( \tau_n + \theta \r) - X_n\l( \tau_n \r) \|_{E} \geq \eta \bigr\} \leq \varepsilon.
		\end{equation}
	\end{definition}	
	We now state a result (see M\'etivier \cite{Metivier_SPDE_InfDimensions_Book}, Motyl \cite{Motyl_2013_SNSE_LevyNoise}) stating a condition which guarantees that the sequence $\{ X_n \}_{n\in\mathbb{N}}$ satisfies the Aldous condition (as in Definition \ref{definition Aldous Condition}) in a separable Banach space.
	\begin{lemma}\label{Lemma alternate definition Aldous Condition}
		Let $E$ be a separable Banach space. Let $\{ X_n \}_{n\in\mathbb{N}}$ be a sequence of $E$-valued random variables. Assume that for every sequence $\l\{ \tau_n\r\}_{n\in\mathbb{N}}$ of $\mathbb{F}$-stopping times with $\tau_n \leq T$, and $\theta \geq 0$, with $\tau_n+\theta \leq T$, we have
		\begin{equation}
			\mathbb{E}\biggl[ \bigl\| X_n(\tau_n + \theta) - X_n(\tau_n) \bigr\|_{E}^{\gamma_1} \biggr] \leq C \theta^{\gamma_2},
		\end{equation}
		for some $\gamma_1,\gamma_2>0$ and some constant $C>0$. Then the sequence $\l\{ X_n \r\}_{n\in\mathbb{N}}$ satisfies the Aldous condition in $E$.
	\end{lemma}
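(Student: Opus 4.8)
The plan is to derive the Aldous condition directly from the assumed moment bound by a one-line application of the Chebyshev--Markov inequality; the statement is essentially a soft consequence of the hypothesis. First I would fix $\varepsilon > 0$ and $\eta > 0$ and take an arbitrary sequence $\{\tau_n\}_{n\in\mathbb{N}}$ of $\mathbb{F}$-stopping times with $\tau_n \leq T$ together with an arbitrary $\theta \geq 0$ satisfying $\tau_n + \theta \leq T$. Applying Markov's inequality with exponent $\gamma_1 > 0$ and then the hypothesis gives
\[
\mathbb{P}\bigl\{ \| X_n(\tau_n + \theta) - X_n(\tau_n) \|_E \geq \eta \bigr\} \leq \frac{1}{\eta^{\gamma_1}}\, \mathbb{E}\bigl[ \| X_n(\tau_n + \theta) - X_n(\tau_n) \|_E^{\gamma_1} \bigr] \leq \frac{C}{\eta^{\gamma_1}}\, \theta^{\gamma_2}.
\]
Since the final bound does not depend on $n$ and is nondecreasing in $\theta$, taking the supremum over $n \in \mathbb{N}$ and over $0 \leq \theta \leq \delta$ yields a bound of the form $C\delta^{\gamma_2}/\eta^{\gamma_1}$.

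The second (and only remaining) step is the choice of $\delta$: because $\gamma_2 > 0$ we have $C\delta^{\gamma_2}/\eta^{\gamma_1} \to 0$ as $\delta \downarrow 0$, so it suffices to take any $\delta > 0$ with $\delta \leq \bigl( \varepsilon\, \eta^{\gamma_1}/C \bigr)^{1/\gamma_2}$. With this $\delta$ one obtains
\[
\sup_{n \in \mathbb{N}} \sup_{0 \leq \theta \leq \delta} \mathbb{P}\bigl\{ \| X_n(\tau_n + \theta) - X_n(\tau_n) \|_E \geq \eta \bigr\} \leq \varepsilon,
\]
which is precisely the Aldous condition of Definition \ref{definition Aldous Condition}.

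There is no real obstacle in this lemma; the only subtlety worth noting is that the hypothesis is assumed to hold with the \emph{same} constant $C$ for all $n$ and for \emph{every} admissible pair $(\{\tau_n\}, \theta)$, so the threshold $\delta$ constructed above depends only on $\varepsilon, \eta, C, \gamma_1, \gamma_2$ and is in particular uniform in $n$ and in the choice of stopping times, exactly as Definition \ref{definition Aldous Condition} demands. The genuine work deferred by this lemma appears later, when one must actually establish a moment estimate of the form $\mathbb{E}\bigl[ \| m_n(\tau_n + \theta) - m_n(\tau_n) \|_{E}^{\gamma_1} \bigr] \leq C\theta^{\gamma_2}$ in a suitable space $E$ (e.g.\ some $X^{-\beta}$) for the drift and compensated-Poisson-integral contributions in \eqref{eqn FG approximation of problem considered Marcus Form}, relying on the uniform energy bounds of Lemmas \ref{lemma bounds 1}--\ref{lemma bounds 4} and the It\^o isometry together with the Burkh\"older--Davis--Gundy inequality for the stochastic integral.
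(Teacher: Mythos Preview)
Your proof is correct and complete: the Markov/Chebyshev step followed by a choice of $\delta$ depending only on $\varepsilon,\eta,C,\gamma_1,\gamma_2$ is exactly how this lemma is established. The paper itself does not supply a proof of this statement; it merely records the lemma with attributions to M\'etivier \cite{Metivier_SPDE_InfDimensions_Book} and Motyl \cite{Motyl_2013_SNSE_LevyNoise}, so there is no in-paper argument to compare against.
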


	\begin{definition}[Modulus of Continuity, \cite{Metivier_SPDE_InfDimensions_Book}]\label{Definition Modulus of Continuity}
		Let $\l(\mathbb{S},\rho\r)$ be a complete separable metric space. Let $u\in \mathbb{D}\l( [0,T] : \mathbb{S}\r)$ and let $\delta>0$ be given. A modulus of continuity of $u$ is defined by
		\begin{equation}
			w_{[0,T],\mathbb{S}}\l(u,\delta\r) : = \inf_{\prod_{\delta}} \max_{t_i\in\bar{w}} \sup_{t_i\leq s<t< t_{i+1} 
				\leq T} \rho\bigl( u(t) - u(s) \bigr),
		\end{equation}
		where $\prod_{\delta}$ is the set of all increasing sequences $\bar{w} = \l( 0 = t_0 < t_1 < \dots < t_n = T \r)$ with the following property
		\begin{equation*}
			t_{i + 1} - t_i \geq \delta,\ i=0,1,\dots n - 1.
		\end{equation*}
	\end{definition}	
	We state the following criterion (see Chapter II in \cite{Metivier_SPDE_InfDimensions_Book}) for relative compactness of a subset of the space $\mathbb{D}([0,T]:\mathbb{S})$, where $\mathbb{S}$ is a complete separable metric space with metric $\rho$.
	\begin{theorem}\label{Theorem general tightness 1}
		A set $A\subset\mathbb{D}([0,T]:\mathbb{S})$ has compact closure if and only if it satisfies the following conditions
		\begin{enumerate}
			\item[(a)] there exists a dense subset $J \subset [0,T]$ such that for each $t\in J$ the set $\l\{u(t),u\in A\r\}$ has compact closure in $\mathbb{S}$.
			
			\item[(b)] 
			\begin{equation}
				\lim_{\delta\to 0}\sup_{u\in  A}w_{[0,T]}\l( u , \delta \r) = 0.
			\end{equation}
		\end{enumerate}
	\end{theorem}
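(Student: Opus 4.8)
Since $\mathbb{S}$ is a complete separable metric space, $\mathbb{D}([0,T]:\mathbb{S})$ is separable and completely metrizable by a metric topologically equivalent to $\delta_{T,\mathbb{S}}$ (see M\'etivier \cite{Metivier_SPDE_InfDimensions_Book}). In such a space a set has compact closure if and only if it is totally bounded, so the plan is to prove that conditions (a)--(b), taken together, are equivalent to total boundedness of $A$; the stated equivalence then follows.

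For the necessity direction I would begin from a finite $\eta$-net $u_1,\dots,u_k$ of $A$. Each $u_i$ is a single c\`adl\`ag path, hence (a standard property) $w_{[0,T]}(u_i,\delta)\to 0$ as $\delta\to 0$, and moreover its range $R_i=\{u_i(t):t\in[0,T]\}$ is totally bounded. Given $u\in A$, pick $i$ with $\delta_{T,\mathbb{S}}(u,u_i)<\eta$ together with an associated time change $\lambda$; then $\sup_t\rho(u(t),u_i(\lambda(t)))<\eta$, so $u(t)$ lies in the $\eta$-neighbourhood of $R_i$ for every $t$. Consequently $\{u(t):u\in A,\ t\in[0,T]\}\subseteq\bigcup_{i\le k}(R_i)^{\eta}$ is totally bounded; since $\eta$ is arbitrary and $\mathbb{S}$ is complete, the entire range is relatively compact, which already yields (a) with $J=[0,T]$. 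For (b) I would transport a near-optimal partition of $u_i$ back through $\lambda^{-1}$; because the distortion term $\sup_{s<t}\l|\log\frac{\lambda(t)-\lambda(s)}{t-s}\r|$ is small, its mesh contracts by at most a bounded factor, and a triangle-inequality argument through $u_i$ gives $w_{[0,T]}(u,\delta/2)\le 3\eta$ uniformly over $u\in A$ once $\delta$ is chosen so that $\max_i w_{[0,T]}(u_i,\delta)<\eta$. Letting $\eta\to 0$ gives (b).

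For the sufficiency direction, which I expect to be the harder half, assume (a) and (b) and fix $\eta>0$. Using (b), choose $\delta>0$ with $\sup_{u\in A}w_{[0,T]}(u,\delta)<\eta$, and (using that $J$ is dense) fix a finite grid $G\subset J$, containing $0$ and $T$, of mesh $<\delta$. For $u\in A$, an almost-optimal partition of mesh $\ge\delta$ contains a point of $G$ in each of its blocks; replacing $u$ on each block by its value at such a grid point yields a step function within $\eta$ of $u$ in the supremum metric, having at most $\lceil T/\delta\rceil+1$ jumps and values in $\{u(g):g\in G\}$, which is totally bounded by (a). Rounding these values to a fixed finite set $V$ produces a step function $\hat v$ within $2\eta$ of $u$, constant on blocks of length $\ge\delta$ and taking values in $V$. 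Finally, a piecewise-linear time change moving the jump times of $\hat v$ onto a fixed auxiliary grid of mesh much smaller than $\delta$ changes $\hat v$ by only $O(\eta)$ in $\delta_{T,\mathbb{S}}$ and leaves only finitely many possibilities for the resulting function (a choice of jump locations in the auxiliary grid together with a value in $V$ on each block). Hence $A$ is covered by finitely many $O(\eta)$-balls; letting $\eta\to 0$ shows $A$ is totally bounded.

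The main obstacle is precisely this last reduction in the sufficiency part. The partition realizing $w_{[0,T]}(u,\delta)$ depends on $u$, so to collapse all these $u$-dependent partitions onto a single finite grid one must exploit the flexibility in the Skorohod metric, keeping the time-change distortion $\sup_{s<t}\l|\log\frac{\lambda(t)-\lambda(s)}{t-s}\r|$ under control while matching blocks of length $\ge\delta$ to grid points; this is what forces the auxiliary grid to be much finer than $\delta$, and making every estimate uniform over $u\in A$ is the delicate point. The remaining ingredients — that a single c\`adl\`ag path has vanishing modulus $w_{[0,T]}(\cdot,\delta)$ and totally bounded range, and that $\mathbb{D}([0,T]:\mathbb{S})$ is Polish — are standard.
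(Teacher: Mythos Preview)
The paper does not give a proof of this theorem at all: it is stated as a known compactness criterion and attributed to Chapter~II of M\'etivier \cite{Metivier_SPDE_InfDimensions_Book}. So there is nothing to compare your argument against in the paper itself; any correct proof you supply would simply be filling in a result the authors take as background.

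Your sketch follows the classical route (as in M\'etivier, or equivalently Billingsley/Ethier--Kurtz): reduce to total boundedness via completeness of the Skorohod metric, then handle the two directions separately. The necessity half is fine. In the sufficiency half your outline is correct in spirit, but one step is stated a bit loosely. You say that replacing $u$ on each block of a near-optimal partition by its value at a grid point of $G$ gives a step function within $\eta$ of $u$ \emph{in the supremum metric}. That is not quite what the modulus $w_{[0,T]}(u,\delta)<\eta$ gives you: it controls oscillation \emph{within} blocks of the $u$-dependent partition, not the jump across a block boundary, so the resulting step function is close to $u$ only after a time change aligning its jump points with the boundaries of $u$'s partition, i.e.\ in the Skorohod metric, not the uniform one. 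This is exactly what you then do in the next paragraph anyway, so the argument survives, but you should phrase the intermediate approximation as a Skorohod-distance estimate from the outset rather than a sup-norm one. With that adjustment the proof goes through.
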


	\begin{lemma}[Motyl \cite{Motyl_2013_SNSE_LevyNoise}]\label{Lemma general tightness 2}
		Let $\l\{X_n\r\}_{n\in\mathbb{N}}$ be a sequence of c\'adl\'ag, adapted stochastic processes in a separable Banach space $E$, which satisfies the Aldous condition. Then for every $\varepsilon>0$, there exists a measurable subset $A_{\varepsilon} \subset \mathbb{D}([0,T] : E)$ with 
		\begin{equation}
			\mathbb{P}^{X_n}(A_{\varepsilon}) \geq 1 - \varepsilon,
		\end{equation}
		and
		\begin{equation}
			\lim_{\delta \to 0} \sup_{u\in A_{\varepsilon}} \sup_{\l| t - s \r| < \delta} \| u(t)  - u(s) \|_{E} = 0.
		\end{equation}
	\end{lemma}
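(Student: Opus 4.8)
To establish Lemma~\ref{Lemma general tightness 2}, the plan is to deduce from the Aldous condition (Definition~\ref{definition Aldous Condition}) a tail estimate, uniform in $n$, for the c\`adl\`ag modulus of continuity $w_{[0,T],E}(\cdot,\delta)$ of Definition~\ref{Definition Modulus of Continuity}; this is the quantity that genuinely governs relative compactness in $\mathbb{D}([0,T]:E)$ via Theorem~\ref{Theorem general tightness 1}, and it is the right reading of the displayed conclusion, since across a jump the raw quantity $\sup_{|t-s|<\delta}\|u(t)-u(s)\|_E$ cannot tend to $0$. The set $A_\varepsilon$ is then produced as a countable intersection of sublevel sets of this modulus whose complements carry probabilities summing to at most $\varepsilon$.

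First I would carry out Aldous's reduction: for fixed $\varepsilon,\eta>0$, produce $\delta_0>0$ with $\sup_{n}\mathbb{P}\bigl(w_{[0,T],E}(X_n,\delta)\ge\eta\bigr)\le\varepsilon$ for all $0<\delta\le\delta_0$. Following M\'etivier~\cite{Metivier_SPDE_InfDimensions_Book} and Motyl~\cite{Motyl_2013_SNSE_LevyNoise} (and ultimately Aldous~\cite{Aldous_1978_Stopping_times_and_tightness}), the mechanism is to introduce the successive hitting times $\sigma_0=0$ and $\sigma_{k+1}=\inf\{t>\sigma_k:\|X_n(t)-X_n(\sigma_k)\|_E\ge\eta/3\}\wedge T$, which are $\mathbb{F}$-stopping times and on each interval $[\sigma_k,\sigma_{k+1})$ keep the oscillation of $X_n$ below $2\eta/3$; applying the Aldous condition to $\sigma_k$ and to $\sigma_k+\theta$ (a finite grid of $\theta\in[0,\delta]$ suffices for the form in Definition~\ref{definition Aldous Condition}) shows that, with probability at least $1-\varepsilon$, consecutive $\sigma_k$ are $\delta$-separated and only finitely many lie in $[0,T]$, so that $\{\sigma_k\}$ is an admissible partition witnessing $w_{[0,T],E}(X_n,\delta)\le\eta$. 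Calibrating $\delta$ against $\eta,\varepsilon,T$ so as to keep the number of ``large oscillation'' blocks under control — which the Aldous condition delivers — closes this step.

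Next I would build $A_\varepsilon$. Given $\varepsilon>0$, apply the previous step with $\eta_k=1/k$ and error level $\varepsilon 2^{-k}$ to obtain $\delta_k\downarrow0$ such that $\sup_n\mathbb{P}\bigl(w_{[0,T],E}(X_n,\delta_k)>1/k\bigr)\le\varepsilon 2^{-k}$, and set
\[
A_\varepsilon:=\bigcap_{k\ge1}\bigl\{u\in\mathbb{D}([0,T]:E):\ w_{[0,T],E}(u,\delta_k)\le 1/k\bigr\}.
\]
Each set on the right is Borel in the Skorokhod space because $u\mapsto w_{[0,T],E}(u,\delta)$ is Borel measurable (cf.~\cite{Billingsley_1999_ConvergenceOfProbabilityMeasures_Book,Metivier_SPDE_InfDimensions_Book}), so $A_\varepsilon$ is measurable; a union bound gives $\mathbb{P}^{X_n}(A_\varepsilon^c)\le\sum_{k\ge1}\varepsilon 2^{-k}=\varepsilon$, that is $\mathbb{P}^{X_n}(A_\varepsilon)\ge1-\varepsilon$ for every $n$; and since $\delta\mapsto w_{[0,T],E}(u,\delta)$ is nondecreasing, for any $\delta>0$ and any $k$ with $\delta_k\ge\delta$ one has $\sup_{u\in A_\varepsilon}w_{[0,T],E}(u,\delta)\le 1/k$, whence the modulus vanishes uniformly on $A_\varepsilon$ as $\delta\to0$.

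The hard part will be the first step: passing from control of increments over random stopping-time windows (which is all the Aldous condition provides) to control of the deterministic Skorokhod modulus, uniformly in $n$, together with the combinatorial estimate bounding the number of oscillation blocks. The rest — measurability of the sublevel sets, the Borel--Cantelli-type union bound, and monotonicity of the modulus in $\delta$ — is routine. In the applications of this lemma one combines it with Lemma~\ref{Lemma alternate definition Aldous Condition} to verify the Aldous hypothesis from moment bounds of the type obtained in Lemmas~\ref{lemma bounds 1}--\ref{lemma bounds 4}.
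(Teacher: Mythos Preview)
The paper does not supply its own proof of this lemma; it is stated with attribution to Motyl~\cite{Motyl_2013_SNSE_LevyNoise} and used as a black box in the tightness argument of Section~\ref{section tightness}. There is therefore nothing in the paper to compare your proposal against.

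That said, your outline is the standard route (Aldous's hitting-time construction to pass from the stopping-time condition to uniform control of the Skorokhod modulus, followed by a Borel--Cantelli intersection to build $A_\varepsilon$), and you correctly flag that the displayed conclusion must be read with the c\`adl\`ag modulus $w_{[0,T],E}(u,\delta)$ of Definition~\ref{Definition Modulus of Continuity} rather than the uniform modulus $\sup_{|t-s|<\delta}\|u(t)-u(s)\|_E$, which cannot vanish on paths with jumps; this is indeed how the lemma is used downstream via Theorem~\ref{Theorem general tightness 1} and Lemma~\ref{Lemma general tightness 3}. One small wording slip: in your second step you want, for given $\delta>0$, to pick $k$ large so that $\delta_k\le\delta$ fails the wrong way; the correct monotonicity argument is that for $\delta\le\delta_k$ one has $w(u,\delta)\le w(u,\delta_k)\le 1/k$, and since $\delta_k\downarrow 0$ this forces $\sup_{u\in A_\varepsilon}w(u,\delta)\to 0$ as $\delta\to 0$. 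Your phrasing ``any $k$ with $\delta_k\ge\delta$'' is in fact the right inequality, but be careful that it is the largeness of $k$ (equivalently the smallness of $\delta$) that drives the limit.
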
	
	Let us fix the following notation for this section. For the definitions of some of the spaces, we refer the reader to Section \ref{Section Some Functional Spaces}. For $\beta\geq 0$, $p\geq 1$ and $1\leq q \leq 6$, let
	\begin{equation}\label{eqn definition of the space ZT}
		\mathcal{Z}_T = \mathbb{D}([0,T]:X^{-\beta})\cap L^p(0,T:L^q)\cap\mathbb{D}([0,T]:H^1_{\text{w}})\cap L^2_{\text{w}}(0,T:H^2).
	\end{equation}
	We endow $\mathcal{Z}_T$ with the smallest topology under which the inclusion maps (for the intersection into all the spaces) are continuous.\\
	Let 
	\begin{equation}
		Z_T : = \mathbb{D}([0,T]:X^{-\beta})\cap L^p(0,T:L^q)\cap\mathbb{D}([0,T]:H^1_{\text{w}}),
	\end{equation}
	equipped with the Borel $\sigma$-algebra (generated by the open sets in the locally convex topology of $Z_T$).

	Using Theorem \ref{Theorem general tightness 1} and Lemma \ref{Lemma general tightness 2}, we have the following lemma. For proof, we refer the reader to Proposition 5.11, \cite{ZB+UM_WeakSolutionSLLGE_JumpNoise}.
	\begin{lemma}\label{Lemma general tightness 3}
		Let $\l\{X_n\r\}_{n\in\mathbb{N}}$ be a sequence of c\'adl\'ag adapted $X^{-\beta}$-valued processes satisfying
		\begin{enumerate}
			\item[(a)] 
			\begin{equation}
				\sup_{n\in\mathbb{N}} \mathbb{E}\l[ 
				\l| X_n \r|_{L^{\infty}(0,T:H^1)}^2 \r] < \infty,
			\end{equation}
			\item[(b)] the Aldous condition in $X^{-\beta}$.
		\end{enumerate}
		Then the sequence $\l\{\mathbb{P}^{X_n}\r\}_{n\in\mathbb{N}}$ is tight in $Z_T$. That is for given $\varepsilon>0$, there exists a compact set $K_{\varepsilon} \subset Z_T$ with
		\begin{equation}
			\mathbb{P}^{X_n}(K_{\varepsilon}) \geq 1 - \varepsilon,
		\end{equation}
		for all $n\in\mathbb{N}$.
	\end{lemma}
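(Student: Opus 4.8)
The plan is to reduce Lemma \ref{Lemma general tightness 3} to the abstract compactness machinery of M\'etivier (Theorem \ref{Theorem general tightness 1}) combined with Motyl's Aldous-type lemma (Lemma \ref{Lemma general tightness 2}), applied separately to each of the three components that make up $Z_T$, and then to intersect. Recall that tightness on a countable intersection of topological spaces follows from tightness on each factor: if $K^{(i)}_\varepsilon$ is compact in the $i$-th factor with $\mathbb{P}^{X_n}(K^{(i)}_\varepsilon)\geq 1-\varepsilon/3$ for all $n$, then $K_\varepsilon := K^{(1)}_\varepsilon\cap K^{(2)}_\varepsilon\cap K^{(3)}_\varepsilon$ is compact in $Z_T$ (the subspace topology of the intersection is the intersection topology) and $\mathbb{P}^{X_n}(K_\varepsilon)\geq 1-\varepsilon$. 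So the work is to produce, for each factor, a family of sets that are compact in that factor, carry uniform-in-$n$ probability close to $1$, and whose intersection we can take.

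The steps, in order: First, handle $\mathbb{D}([0,T]:X^{-\beta})$. By the Banach--Alaoglu / compact embedding $H^1\hookrightarrow X^{-\beta}$ (which holds for $\beta>0$ since $X^{-\beta}\supset X^{1/2}=H^1$ compactly by Rellich; more precisely $H^1=X^{1/2}\hookrightarrow\hookrightarrow L^2\hookrightarrow X^{-\beta}$), the hypothesis (a) $\sup_n\mathbb{E}|X_n|^2_{L^\infty(0,T:H^1)}<\infty$ together with Chebyshev gives that, off a set of probability $\leq\varepsilon/6$, the trajectories lie in a ball $B_R$ of $L^\infty(0,T:H^1)$, hence in a set whose sections $\{u(t)\}$ have compact closure in $X^{-\beta}$ — this verifies condition (a) of Theorem \ref{Theorem general tightness 1} on any dense $J\subset[0,T]$. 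For condition (b), the modulus-of-continuity bound, invoke Lemma \ref{Lemma general tightness 2}: the Aldous condition (b) in the hypothesis yields, for each $n$, a measurable $A_{\varepsilon/6}\subset\mathbb{D}([0,T]:X^{-\beta})$ with $\mathbb{P}^{X_n}(A_{\varepsilon/6})\geq 1-\varepsilon/6$ uniformly in $n$ and $\lim_{\delta\to 0}\sup_{u\in A_{\varepsilon/6}}\sup_{|t-s|<\delta}\|u(t)-u(s)\|_{X^{-\beta}}=0$. Intersecting the two sets and taking closure produces a set $K^{(1)}_\varepsilon$ which by Theorem \ref{Theorem general tightness 1} is relatively compact in $\mathbb{D}([0,T]:X^{-\beta})$, with uniform probability $\geq 1-\varepsilon/3$.

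Second, handle $L^p(0,T:L^q)$ for $1\leq q\leq 6$: here one uses that the above set of trajectories — bounded in $L^\infty(0,T:H^1)$ and with controlled $X^{-\beta}$-modulus of continuity — is relatively compact in $L^p(0,T:L^q)$ by a deterministic interpolation/compactness lemma (of Lions--Aubin--Dubinskii type adapted to c\`adl\`ag paths, as in M\'etivier and Motyl): boundedness in $H^1$ plus equicontinuity into the weaker space $X^{-\beta}$ and the compact chain $H^1\hookrightarrow\hookrightarrow L^q\hookrightarrow X^{-\beta}$ (valid for $q\leq 6$ in dimension $d\leq 3$ by Sobolev embedding) force strong $L^p(0,T;L^q)$ precompactness. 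The same underlying set therefore serves as $K^{(2)}_\varepsilon$ with probability $\geq 1-\varepsilon/3$. Third, handle $\mathbb{D}([0,T]:H^1_{\mathrm w})$: boundedness in $L^\infty(0,T:H^1)$ gives, via Banach--Alaoglu applied pointwise and the fact that bounded subsets of $H^1$ are metrizable in the weak topology, relative compactness in $\mathbb{D}([0,T]:H^1_{\mathrm w})$ (again M\'etivier's criterion with $\mathbb{S}$ the weak ball, the weak-$H^1$ modulus being controlled by the $X^{-\beta}$ modulus since weak convergence in the ball is tested against a countable dense set of functionals). This furnishes $K^{(3)}_\varepsilon$, probability $\geq 1-\varepsilon/3$. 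Finally intersect the three (noting that in fact one single set, the intersection of the $L^\infty(0,T:H^1)$-ball event with the Aldous event, works for all three factors simultaneously once its closure in $\mathcal Z_T$ is taken), obtaining $K_\varepsilon$ compact in $Z_T$ with $\mathbb{P}^{X_n}(K_\varepsilon)\geq 1-\varepsilon$ for all $n$.

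The main obstacle is the second step — the deterministic compact-embedding lemma showing that uniform $L^\infty(0,T;H^1)$ bounds plus a uniform modulus of continuity in the weak space $X^{-\beta}$ imply strong precompactness in $L^p(0,T;L^q)$ for the c\`adl\`ag (not continuous) setting, and in particular tracking the admissible exponent range $1\leq q\leq 6$ against the spatial dimension $d\leq 3$ via the Gagliardo--Nirenberg interpolation $|u|_{L^q}\leq C|u|_{H^1}^{\theta}|u|_{X^{-\beta}}^{1-\theta}$. Once that lemma is in hand (it is exactly Proposition 5.11 of \cite{ZB+UM_WeakSolutionSLLGE_JumpNoise}, which we cite), the rest is bookkeeping: verifying the hypotheses (a) and (b) of Theorem \ref{Theorem general tightness 1} for each factor from the two probabilistic inputs, and assembling the intersection. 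I would therefore state the proof as: invoke Proposition 5.11 of \cite{ZB+UM_WeakSolutionSLLGE_JumpNoise} with $E=X^{-\beta}$, whose hypotheses are precisely (a) and (b) of the present lemma, to conclude tightness in $Z_T$ directly.
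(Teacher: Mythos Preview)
Your proposal is correct and matches the paper's approach exactly: the paper does not give an independent proof of this lemma but simply refers the reader to Proposition~5.11 of \cite{ZB+UM_WeakSolutionSLLGE_JumpNoise}, which is precisely the citation you arrive at in your final sentence. The expository sketch you provide of how that proposition is proved (compact embedding $H^1\hookrightarrow X^{-\beta}$ plus the Aldous equicontinuity for the Skorohod factor, Lions--Aubin--Dubinskii for the $L^p(0,T:L^q)$ factor, and Banach--Alaoglu on balls for the weak-$H^1$ factor) is accurate and helpful, but goes beyond what the paper itself records.
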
	
	We now come back to the proof of Theorem \ref{theorem existence of weak martingale solution}. The bound in $(a)$ of Lemma \ref{Lemma general tightness 3} can be given by Lemma \ref{lemma bounds 2}. Therefore, in order to show the tightness of the laws of $m_n$, it suffices to show that the Aldous condition is satisfied in the space $X^{-\beta}$, which is done in the following Lemma.
	
	\begin{lemma}\label{lemma Aldous Condition for mn}
		The sequence $\{ m_n \}_{n\in\mathbb{N}}$ satisfies the Aldous condition on the space $X^{-\beta}$ for $\beta > \frac{1}{4}$.
	\end{lemma}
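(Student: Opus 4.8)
The strategy is to verify the sufficient condition of Lemma~\ref{Lemma alternate definition Aldous Condition} with $E = X^{-\beta}$: I will produce exponents $\gamma_1,\gamma_2>0$ and a constant $C$ independent of $n$ so that, for every pair of $\mathbb{F}$-stopping times with $\tau_n\leq\tau_n+\theta\leq T$,
\[
\mathbb{E}\left[\left\|m_n(\tau_n+\theta)-m_n(\tau_n)\right\|_{X^{-\beta}}^{\gamma_1}\right]\leq C\,\theta^{\gamma_2}.
\]
Starting from the Galerkin equation~\eqref{eqn FG approximation of problem considered Marcus Form}, the increment $m_n(\tau_n+\theta)-m_n(\tau_n)$ (the initial datum cancels) splits as $J_1^n+J_2^n+J_3^n+J_4^n+J_5^n$, where
\[
J_1^n=\int_{\tau_n}^{\tau_n+\theta}\!\Delta m_n(s)\,ds,\qquad
J_2^n=\int_{\tau_n}^{\tau_n+\theta}\!m_n(s)\times\Delta m_n(s)\,ds,
\]
\[
J_3^n=-\int_{\tau_n}^{\tau_n+\theta}\!\bigl(1+|m_n(s)|_{\mathbb{R}^3}^2\bigr)m_n(s)\,ds,\quad
J_4^n=\int_{\tau_n}^{\tau_n+\theta}\!b_n\bigl(m_n(s)\bigr)\,ds,\quad
J_5^n=\int_{\tau_n}^{\tau_n+\theta}\!\int_B G_n\bigl(l,m_n(s)\bigr)\,\tilde\eta(dl,ds),
\]
and I estimate each piece separately, using the embedding $L^2=X^0\hookrightarrow X^{-\beta}$ for the drift terms and the isometry for compensated Poisson integrals for the noise term.

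For $J_1^n$, H\"older in time gives $\|J_1^n\|_{X^{-\beta}}\leq C\|J_1^n\|_{L^2}\leq C\theta^{1/2}\bigl(\int_0^T|\Delta m_n(s)|_{L^2}^2\,ds\bigr)^{1/2}$, so $\mathbb{E}\|J_1^n\|_{X^{-\beta}}^2\leq C\theta$ by~\eqref{eqn L 2 H2 bound mn}. For $J_3^n$, the pointwise estimate $\|(1+|m_n|_{\mathbb{R}^3}^2)m_n\|_{L^2}\leq\|m_n\|_{L^2}+\|m_n\|_{L^6}^3\leq C(1+\|m_n\|_{H^1}^3)$, valid for $d\leq3$ via $H^1\hookrightarrow L^6$, gives $\|J_3^n\|_{X^{-\beta}}\leq C\theta(1+\sup_{[0,T]}\|m_n\|_{H^1}^3)$, and taking expectations with the moment bound of Lemma~\ref{lemma bounds 4} yields $\mathbb{E}\|J_3^n\|_{X^{-\beta}}\leq C\theta$. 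For $J_4^n$, the linear growth of $b_n$ from Lemma~\ref{lemma linear growth Lipschitz Gn and Hn} gives $\|b_n(m_n(s))\|_{L^2}\leq C(1+\|m_n(s)\|_{L^2})$, whence $\mathbb{E}\|J_4^n\|_{X^{-\beta}}^2\leq C\theta^2(1+\mathbb{E}\sup_{[0,T]}\|m_n\|_{L^2}^2)\leq C\theta^2$ by Lemma~\ref{lemma bounds 1}. For $J_5^n$, rewriting it as $\int_0^T\mathbf{1}_{(\tau_n,\tau_n+\theta]}(s)\int_B G_n(l,m_n(s))\,\tilde\eta(dl,ds)$ with predictable integrand, the It\^o isometry together with the linear growth of $G_n$ and $\nu(B)<\infty$ gives $\mathbb{E}\|J_5^n\|_{X^{-\beta}}^2\leq C\,\mathbb{E}\int_{\tau_n}^{\tau_n+\theta}\int_B\|G_n(l,m_n(s))\|_{L^2}^2\,\nu(dl)\,ds\leq C\theta(1+\mathbb{E}\sup_{[0,T]}\|m_n\|_{L^2}^2)\leq C\theta$, again by Lemma~\ref{lemma bounds 1}.

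The delicate term is $J_2^n$, and this is exactly where the restriction $\beta>\tfrac14$ enters. At the Galerkin level $\Delta m_n(s)\in L^2$ (Lemma~\ref{lemma bounds 2}) and $m_n(s)\in L^6$ (by $H^1\hookrightarrow L^6$ for $d\leq3$), so the product lies in $L^{3/2}$ with $\|m_n(s)\times\Delta m_n(s)\|_{L^{3/2}}\leq\|m_n(s)\|_{L^6}\|\Delta m_n(s)\|_{L^2}\leq C\|m_n(s)\|_{H^1}\|\Delta m_n(s)\|_{L^2}$; for $\beta>\tfrac14$ one has $X^\beta=H^{2\beta}\hookrightarrow L^3$ when $d\leq3$, hence by duality $L^{3/2}\hookrightarrow X^{-\beta}$, and therefore $\|m_n(s)\times\Delta m_n(s)\|_{X^{-\beta}}\leq C\|m_n(s)\|_{H^1}\|\Delta m_n(s)\|_{L^2}$. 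H\"older in time then gives $\|J_2^n\|_{X^{-\beta}}\leq C\theta^{1/2}\bigl(\sup_{[0,T]}\|m_n\|_{H^1}\bigr)\bigl(\int_0^T\|\Delta m_n(s)\|_{L^2}^2\,ds\bigr)^{1/2}$, and Cauchy--Schwarz in $\Omega$ combined with Lemma~\ref{lemma bounds 2} yields $\mathbb{E}\|J_2^n\|_{X^{-\beta}}\leq C\theta^{1/2}$. Collecting the five bounds, dominating the higher-order moments by first moments via Jensen's inequality and using $\theta\leq T$, I obtain $\mathbb{E}\|m_n(\tau_n+\theta)-m_n(\tau_n)\|_{X^{-\beta}}\leq C\theta^{1/2}$ uniformly in $n$, so that Lemma~\ref{Lemma alternate definition Aldous Condition} applies with $\gamma_1=1$, $\gamma_2=\tfrac12$. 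The main obstacle is precisely the handling of $J_2^n$: one has to interpret $m_n\times\Delta m_n$ in the negative order space $X^{-\beta}$, check that $\beta>\tfrac14$ is exactly the threshold making the required Sobolev embedding hold in the worst case $d=3$, and control it by invoking simultaneously the pathwise $L^\infty(0,T;H^1)$ and $L^2(0,T;H^2)$ bounds of Lemma~\ref{lemma bounds 2}.
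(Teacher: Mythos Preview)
Your proof is correct and follows essentially the same approach as the paper: both decompose the increment into the five pieces coming from \eqref{eqn FG approximation of problem considered Marcus Form} and estimate each in $X^{-\beta}$ via Sobolev embeddings and the uniform energy bounds, then invoke Lemma~\ref{Lemma alternate definition Aldous Condition}. The only tactical difference is in the H\"older pairings for the nonlinear terms: for $J_2^n$ and $J_3^n$ you use $L^6$--$L^2$ (exploiting $H^1\hookrightarrow L^6$ and the $L^\infty(0,T;H^1)$ bound of Lemma~\ref{lemma bounds 2}/\ref{lemma bounds 4}), whereas the paper uses $L^4$--$L^2$ and the dual embedding $L^{4/3}\hookrightarrow X^{-\beta}$ together with the $L^4(0,T;L^4)$ bound \eqref{eqn L 4 L4 bound mn}; your choice is arguably cleaner since $X^\beta\hookrightarrow L^3$ holds for $\beta>\tfrac14$ in all dimensions $d\leq 3$, making the threshold transparent.
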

	\begin{proof}[Proof of Lemma \ref{lemma Aldous Condition for mn}]
		We recall the equality \eqref{eqn FG approximation of problem considered Marcus Form}. satisfied by $m_n$.
		\begin{align}
			\nonumber m_n(t) = & m_n(0) + \int_{0}^{t} \l[ \Delta m_n(s) + m_n(s) \times \Delta m_n(s) - \l( 1 + \l| m_n(s) \r|_{\mathbb{R}^3}^2 \r) m_n(s) \r] \, ds \\
			& + \int_{0}^{t} b_n\bigl(m_n(s)\bigr) \, ds + \int_{0}^{t} \int_{B} G_n\bigl(l,m_n(s)\bigr) \, \tilde{\eta}(dl,ds).
		\end{align}
		Let the stopping time $\tau_n$ and $\theta$ be as in Lemma  \ref{Lemma alternate definition Aldous Condition}. Therefore
		\begin{align}
			\nonumber m_n(\tau_n + \theta) - m_n(\tau_n) = & \int_{\tau_n}^{\tau_n + \theta}  \Delta m_n(s)  \, ds \\
			\nonumber &
			+ \int_{\tau_n}^{\tau_n + \theta}  m_n(s) \times \Delta m_n(s)  \, ds 
			- \int_{\tau_n}^{\tau_n + \theta}  \l( 1 + \l| m_n(s) \r|_{\mathbb{R}^3}^2 \r) m_n(s)  \, ds \\
			\nonumber & + \int_{\tau_n}^{\tau_n + \theta} b_n\bigl(m_n(s)\bigr) \, ds 
			+ \int_{\tau_n}^{\tau_n + \theta} \int_{B} G_n\bigl(l,m_n(s)\bigr) \, \tilde{\eta}(dl,ds)\\
			& = \sum_{i=1}^{5} C_i I_i(t).
		\end{align}
		Before starting with the estimates, we state some embeddings that will be used in the calculations that follow. For $\beta>\frac{1}{4}$, the following embeddings are continuous.
		\begin{equation}
			X^{\beta} \hookrightarrow L^4 \hookrightarrow L^2,
		\end{equation}
		and
		\begin{equation}
			L^2 \hookrightarrow L^{\frac{4}{3}} \hookrightarrow X^{-\beta}.
		\end{equation}
		In particular, there exists a constant $C>0$ such that
		\begin{equation}
			\l|\cdot\r|_{X{-\beta}} \leq C \l|\cdot\r|_{L^2}.
		\end{equation}
		and
		\begin{equation}
			\l|\cdot\r|_{X{-\beta}} \leq C \l|\cdot\r|_{L^{\frac{4}{3}}}.
		\end{equation}
		\textbf{Calculations for $I_1$:}
		\begin{align}
			\nonumber & \mathbb{E} \l| \int_{\tau_n}^{\tau_n + \theta}  \Delta m_n(s)  \, ds  \r|_{X^{-\beta}} \leq
			C \mathbb{E}  \int_{\tau_n}^{\tau_n + \theta}  \l| \Delta m_n(s) \r|_{L^2} \, ds  \\
			\nonumber \leq & C \theta^{\frac{1}{2}} \l( \mathbb{E}  \int_{\tau_n}^{\tau_n + \theta}  \l| \Delta m_n(s) \r|_{L^2}^2 \, ds \r)^{\frac{1}{2}}  \\
			\leq & C \theta^{\frac{1}{2}}.
		\end{align}
		\textbf{Calculations for $I_2$:}
		By the Cauchy-Schwarz inequality,
		\begin{align}
			\nonumber \mathbb{E}    &    \l| \int_{\tau_n}^{\tau_n + \theta}  m_n(s) \times \Delta m_n(s)  \, ds  \r|_{X^{-\beta}} \\
			\nonumber  \leq  &  
			C \mathbb{E}  \int_{\tau_n}^{\tau_n + \theta}  \l| m_n(s) \times \Delta m_n(s) \r|_{L^{\frac{4}{3}}} \, ds  \\
			\nonumber \leq & C \mathbb{E}  \int_{\tau_n}^{\tau_n + \theta} \l| m_n(s) \r|_{L^4} \l| \Delta m_n(s) \r|_{L^{2}} \, ds  \\
			\nonumber \leq & \theta^{\frac{1}{4}} C \l( \mathbb{E}  \int_{\tau_n}^{\tau_n + \theta} \l| m_n(s) \r|_{L^4}^4  \, ds \r)^{\frac{1}{4}} \l( \mathbb{E} \int_{\tau_n}^{\tau_n + \theta} \l| \Delta m_n(s) \r|_{L^{2}}^2  \, ds \r)^{\frac{1}{2}} \\
			\nonumber \leq & \theta^{\frac{1}{4}} C \l( \mathbb{E}  \int_{0}^{T} \l| m_n(s) \r|_{L^4}^4  \, ds \r)^{\frac{1}{4}} \l( \mathbb{E} \int_{0}^{T} \l| \Delta m_n(s) \r|_{L^{2}}^2  \, ds \r)^{\frac{1}{2}} \\
			\leq &  C \theta^{\frac{1}{4}}.
		\end{align}		
		\textbf{Calculations for $I_3$:}
		Using the continuous embedding $X^{-\beta} \hookrightarrow L^{\frac{4}{3}}$ along with the Cauchy-Schwartz inequality, we have the following.
		\begin{align}
			\nonumber \mathbb{E} & \l| \int_{\tau_n}^{\tau_n + \theta}  \l( 1 + \l| m_n(s) \r|_{\mathbb{R}^3}^2 \r) m_n(s)   \, ds  \r|_{X^{-\beta}} \\
			\nonumber \leq & \mathbb{E} \l| \int_{\tau_n}^{\tau_n + \theta}   \l| m_n(s) \r|_{\mathbb{R}^3}^2  m_n(s)   \, ds  \r|_{X^{-\beta}}  
			+ \mathbb{E} \l| \int_{\tau_n}^{\tau_n + \theta}   m_n(s)   \, ds  \r|_{X^{-\beta}} \\
			\nonumber \leq & C \mathbb{E}  \int_{\tau_n}^{\tau_n + \theta}  \l| \l| m_n(s) \r|_{\mathbb{R}^3}^2  m_n(s) \r|_{L^{\frac{4}{3}}}  \, ds    
			+ C \mathbb{E}  \int_{\tau_n}^{\tau_n + \theta}  \l| m_n(s) \r|_{L^{2}}  \, ds   \\
			\nonumber \leq & C \mathbb{E}  \int_{\tau_n}^{\tau_n + \theta}  \l|  m_n(s) \r|_{L^{4}}^3  \, ds    
			+ C \mathbb{E}  \int_{\tau_n}^{\tau_n + \theta}  \l| m_n(s) \r|_{L^{2}}  \, ds   \\
			\nonumber \leq & C \theta^{\frac{1}{4}} \mathbb{E} \l( \int_{0}^{T}  \l|  m_n(s) \r|_{L^{4}}^4  \, ds \r)^{\frac{3}{4}}    
			+ C \theta^{\frac{1}{2}} \mathbb{E}  \l( \int_{0}^{T}  \l| m_n(s) \r|_{L^{2}}^2  \, ds \r)^{\frac{1}{2}} \\
			\leq & C\l( \theta^{\frac{1}{4}} + \theta^{\frac{1}{2}} \r).
		\end{align}		
		\textbf{Calculations for $I_4$:}
		Using the continuous embedding $X^{-\beta} \hookrightarrow L^{2}$ along with the Cauchy-Schwartz inequality, we have the following.
		\begin{align}
			\nonumber \mathbb{E} \l| \int_{\tau_n}^{\tau_n + \theta}  b_n(m_n(s))  \, ds  \r|_{X^{-\beta}} \nonumber \leq  & C \mathbb{E}  \int_{\tau_n}^{\tau_n + \theta}  \l| b_n(m_n(s)) \r|_{L^2} \, ds   \\
			\nonumber \leq & C \theta^{\frac{1}{2}} \l( \mathbb{E}  \int_{\tau_n}^{\tau_n + \theta}  \l| b_n(l,m_n(s)) \r|_{L^2}^2 \, ds \r)^{\frac{1}{2}}  \\
			\nonumber \leq & \theta^{\frac{1}{2}} C \l( \mathbb{E}  \int_{0}^{T}  \l( 1 + \l| m_n(s) \r|_{L^2}^2 \r) \, ds \r)^{\frac{1}{2}}  \\
			\leq & \theta^{\frac{1}{2}} C.
		\end{align}		
		\textbf{Calculations for $I_5$:}		
		By the  It\^o-L\'evy Isometry, we have
		\begin{align}
			\nonumber \mathbb{E} \l[ \l| \int_{\tau_n}^{\tau_n + \theta} \int_{B} G_n(l,m_n(s)) \, \tilde{\eta}(dl,ds) \r|_{X^{-\beta}}^2 \r] \leq &
			C \mathbb{E} \l[ \l| \int_{\tau_n}^{\tau_n + \theta} \int_{B} G_n(l,m_n(s)) \, \tilde{\eta}(dl,ds) \r|_{L^2}^2 \r] \\
			\nonumber = & C \mathbb{E} \l[  \int_{\tau_n}^{\tau_n + \theta} \int_{B} \l| G_n(l,m_n(s)) \r|_{L^2}^2 \, \nu(dl) \, ds  \r] \\
			\nonumber \leq & C \mathbb{E} \l[  \int_{\tau_n}^{\tau_n + \theta} \l( 1 + \l| m_n(s) \r|_{L^2}^2 \r) \, ds  \r] \\
			\nonumber \leq & \theta C \mathbb{E} \l[  \sup_{t\in[0,T]} \l( 1 + \l| m_n(s) \r|_{L^2}^2 \r) \, ds  \r] \\
			\leq & \theta C.
		\end{align}
		Combining the above estimates completes the proof for the lemma.

	\end{proof}

	\begin{lemma}\label{lemma tightness lemma}
		The sequence of laws $\l\{\mathcal{L}(m_n)\r\}_{n\in\mathbb{N}}$ is tight on the space $\mathcal{Z}_T$.
	\end{lemma}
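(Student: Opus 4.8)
The plan is to deduce tightness on the full space $\mathcal{Z}_T$ in three moves: first apply Lemma~\ref{Lemma general tightness 3} to obtain tightness on the factor $Z_T$, then produce a compact set for the remaining factor $L^2_{\mathrm{w}}(0,T:H^2)$ by hand, and finally intersect the two resulting families of compact sets.

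First I would verify the two hypotheses of Lemma~\ref{Lemma general tightness 3} for $\{m_n\}_{n\in\mathbb{N}}$. Each $m_n$ is $H_n$-valued, hence a c\`adl\`ag adapted $X^{-\beta}$-valued process; hypothesis (a), i.e.\ $\sup_{n}\mathbb{E}\,|m_n|_{L^{\infty}(0,T:H^1)}^2<\infty$, is precisely estimate \eqref{eqn L infinity H1 bound mn} of Lemma~\ref{lemma bounds 2}, and hypothesis (b), the Aldous condition in $X^{-\beta}$ for $\beta>\tfrac14$, is Lemma~\ref{lemma Aldous Condition for mn}. Lemma~\ref{Lemma general tightness 3} then yields, for every $\varepsilon>0$, a compact set $K^1_{\varepsilon}\subset Z_T$ with $\mathbb{P}^{m_n}(K^1_{\varepsilon})\ge 1-\varepsilon/2$ for all $n$.

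Next I would treat the weak-$H^2$ component. Using the norm equivalence $|v|_{H^2}^2\le C\bigl(|v|_{L^2}^2+|\Delta v|_{L^2}^2\bigr)$ together with the bounds \eqref{eqn L infinity L2 bound mn} and \eqref{eqn L 2 H2 bound mn} of Lemmas~\ref{lemma bounds 1} and \ref{lemma bounds 2}, one gets a uniform bound $\sup_{n}\mathbb{E}\int_0^T|m_n(t)|_{H^2}^2\,dt=:C_2<\infty$. Since $L^2(0,T:H^2)$ is a separable Hilbert space, every closed ball of radius $R$ is metrizable and compact for the weak topology, hence compact in $L^2_{\mathrm{w}}(0,T:H^2)$; a Chebyshev estimate with $R_{\varepsilon}:=(2C_2/\varepsilon)^{1/2}$ then gives a ball $K^2_{\varepsilon}$ with $\mathbb{P}^{m_n}(K^2_{\varepsilon})\ge 1-\varepsilon/2$ for all $n$.

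Finally I would set $K_{\varepsilon}:=K^1_{\varepsilon}\cap K^2_{\varepsilon}$, so that $\mathbb{P}^{m_n}(K_{\varepsilon})\ge 1-\varepsilon$ by a union bound, and check that $K_{\varepsilon}$ is compact in $\mathcal{Z}_T$. This last point is where I expect the only real subtlety: $\mathcal{Z}_T$ carries the coarsest topology making its four defining inclusions continuous and is not metrizable, so one reasons with sequences, as is standard in this setting --- from a sequence in $K_{\varepsilon}$ extract a subsequence converging in $Z_T$ to some $u\in K^1_{\varepsilon}$, then a further subsequence converging in $L^2_{\mathrm{w}}(0,T:H^2)$ to some $\tilde u\in K^2_{\varepsilon}$, and identify $u=\tilde u$ using that both $Z_T$ (through $\mathbb{D}([0,T]:X^{-\beta})\hookrightarrow L^2(0,T:X^{-\beta})$) and $L^2_{\mathrm{w}}(0,T:H^2)$ embed continuously into $L^2(0,T:X^{-\beta})$ equipped with its weak topology, which is Hausdorff; the subsequence then converges to $u\in K_{\varepsilon}$ in $\mathcal{Z}_T$. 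The genuinely substantive work --- the Aldous condition (Lemma~\ref{lemma Aldous Condition for mn}) and the uniform energy estimates (Lemmas~\ref{lemma bounds 1}--\ref{lemma bounds 4}) --- is already in hand, so what remains is essentially this bookkeeping with the intersection topology.
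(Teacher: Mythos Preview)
Your proposal is correct and follows essentially the same approach as the paper: the paper's proof likewise reduces to the Aldous condition in $X^{-\beta}$ (via Lemma~\ref{lemma Aldous Condition for mn}) together with the $L^{\infty}(0,T:H^1)$ bound to get tightness on $Z_T$, and then invokes weak compactness of closed balls in $L^2(0,T:H^2)$ for the remaining factor, combining the two at the end. You spell out the intersection-of-compacts bookkeeping more carefully than the paper does, but there is no substantive difference in strategy.
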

	\begin{proof}[Proof of Lemma \ref{lemma tightness lemma}]
		It suffices to show that the sequence $\l\{m_n\r\}_{n\in\mathbb{N}}$ satisfies the Aldous condition (Definition \ref{definition Aldous Condition}) on the space $X^{-\beta},\ \beta > \frac{1}{4}$, which follows from Lemma \ref{lemma Aldous Condition for mn}.
	\end{proof}

	Lemma \ref{lemma Aldous Condition for mn} shows that the sequence of laws $\l\{\mathcal{L}(m_n)\r\}_{n\in\mathbb{N}}$ is tight on the space $\mathcal{Z}_T$. There are two parts to the proof.\\
	\textbf{Part 1:} Aldous condition on the space $X^{-\beta}$ (see Definition \ref{definition Aldous Condition}, Lemma \ref{lemma Aldous Condition for mn}) gives the tightness of the sequence of the laws of $m_n$ on the space $\mathbb{D}([0,T]:X^{-\beta})$. In particular, the Aldous condition gives equicontinuity (condition $(b)$ in Theorem \ref{Theorem general tightness 1}) for the sequence $\l\{m_n\r\}_{}n\in\mathbb{N}$ in the space $\mathbb{D}([0,T]:X^{-\beta})$, which along with the uniform bound on the space $L^{\infty}(0,T:H^1)$ (Lemma \ref{lemma bounds 2}) gives the compact embedding of a closed ball in the space $L^{\infty}(0,T:H^1)$ into the space $Z_T = \mathbb{D}([0,T]:X^{-\beta})\cap L^p(0,T:L^q)\cap\mathbb{D}([0,T]:H^1_{\text{w}})$.\\
	\textbf{Part 2:} Tightness of the sequence of laws of $m_n$ on $L^2_{\text{w}}(0,T:H^2)$ follows from the fact that closed balls in $L^2(0,T:H^2)$ are relatively compact in the space $L^2_{\text{w}}(0,T:H^2)$.\\
	Combining parts 1 and 2 gives the desired tightness of the sequence of laws of $m_n$ on the space $\mathcal{Z}_T$, which can also be written as $Z_T \cap L^2_{\text{w}}(0,T:H^2)$.

	In metric spaces, Prokhorov Theorem \cite{Parthasarathy_1967_Book_ProbabilityMeasuresOnMetricSpaces_Book} and Skorohod Theorem \cite{Billingsley_1999_ConvergenceOfProbabilityMeasures_Book} can be used to obtain convergence from the tightness. Since $Z_T$ is a nonmetrizable locally convex space, we use the following generalization to nonmetric spaces .

	\begin{theorem}[Skorohod-Jakubowski, \cite{ZB+UM_WeakSolutionSLLGE_JumpNoise} ]\label{THeorem Skorohod Jakubowski}
		Let $\mathcal{X}$ be a topological space such that there exists a sequence of continuous functions $f_m : \mathcal{X}\to\mathbb{C}$ that separates points of $\mathcal{X}$. Let $\mathcal{V}$ be the $\sigma$-algebra generated by $\l\{ f_m \r\}_{m\in\mathbb{N}}$. Then we have the following assertions.
		\begin{enumerate}
			\item Every compact set $\mathcal{K}\subset \mathcal{X}$ is metrizable.
			\item Let $\mu_n$ be a tight sequence of probability measures on $\l( \mathcal{X} , \mathcal{V} \r)$. Then there exists a subsequence $\l\{\mu_{n_{k}}\r\}_{k\in\mathbb{N}}$, random variables $X_{k},X$ for $k\in\mathbb{N}$ on a common probability space $\l( \bar{\Omega} , \bar{\mathbb{F}} , \bar{\mathbb{P}}\r)$ with $\bar{\mathbb{P}}^{X_{k}} = \mu_k$ for each $k\in\mathbb{N}$ and $X_k\to X$ $\bar{\mathbb{P}}$-a.s. as $k\to\infty$.
		\end{enumerate}
	\end{theorem}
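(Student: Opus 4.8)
The plan is to recognize this as Jakubowski's nonmetric extension of the Skorohod representation theorem and to reduce it, via a single embedding, to the classical statement on a Polish space. First I would compose each $f_m$ with a homeomorphism of $\mathbb{C}$ onto a bounded set, so that without loss of generality $\sup_x |f_m(x)| \le 1$; then
\[
	F : \mathcal{X} \ni x \longmapsto \bigl( f_m(x) \bigr)_{m\in\mathbb{N}} \in \mathbb{Y} := \overline{\mathbb{D}}^{\,\mathbb{N}}
\]
is continuous, and injective because the $f_m$ separate points, while $\mathcal{V} = \sigma(f_m : m\in\mathbb{N}) = F^{-1}(\mathcal{B}(\mathbb{Y}))$. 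The target $\mathbb{Y}$ is a compact metrizable (hence Polish) space. For assertion (1) this is essentially the whole argument: if $\mathcal{K}\subset\mathcal{X}$ is compact, then $F|_{\mathcal{K}}$ is a continuous bijection from a compact space onto the Hausdorff space $F(\mathcal{K})\subset\mathbb{Y}$, hence a homeomorphism, so $\mathcal{K}$ inherits the metric of $\mathbb{Y}$.

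For assertion (2) I would first push everything forward to $\mathbb{Y}$. Using tightness, fix an increasing sequence of compact sets $K_l\subset\mathcal{X}$ with $\sup_n \mu_n(\mathcal{X}\setminus K_l) < 2^{-l}$, and set $\mathcal{X}_0 := \bigcup_l K_l$, which is Borel and $\mu_n$-full for every $n$. The images $\nu_n := F_*\mu_n$ satisfy $\nu_n(\mathbb{Y}\setminus F(K_l)) = \mu_n(\mathcal{X}\setminus K_l) < 2^{-l}$ with each $F(K_l)$ compact. Since $\mathbb{Y}$ is compact metric, a subsequence satisfies $\nu_{n_k}\Rightarrow\theta$, and the classical Skorohod representation theorem on $\mathbb{Y}$ yields $\mathbb{Y}$-valued random variables $Y_k, Y$ on a common probability space $(\bar{\Omega},\bar{\mathcal{F}},\bar{\mathbb{P}})$ with $\mathrm{Law}(Y_k) = \nu_{n_k}$, $\mathrm{Law}(Y) = \theta$, and $Y_k \to Y$ $\bar{\mathbb{P}}$-almost surely.

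Next I would transport this representation back to $\mathcal{X}$. Each $F(K_l)$ is compact, hence closed in $\mathbb{Y}$, so the Portmanteau theorem gives $\theta(F(K_l)) \ge \limsup_k \nu_{n_k}(F(K_l)) \ge 1 - 2^{-l}$; thus $\theta$ and all the $\nu_{n_k}$ are concentrated on the $\sigma$-compact set $S := \bigcup_l F(K_l)\subset F(\mathcal{X}_0)$. On $S$ the inverse $F^{-1}$ is well defined and, by part (1), continuous on every stratum $F(K_l)$; since $f_m\circ F^{-1}$ is the $m$-th coordinate projection, $F^{-1}$ is measurable from $S$ into $(\mathcal{X},\mathcal{V})$. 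Hence $X_k := F^{-1}(Y_k)$ and $X := F^{-1}(Y)$ are well-defined $\mathcal{X}$-valued random variables (set arbitrarily on the remaining null sets), and checking the generators $f_m$ shows $\mathrm{Law}(X_k) = \mu_{n_k}$ while $F_*\mathrm{Law}(X) = \theta$.

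The hard part will be upgrading the almost sure convergence $Y_k\to Y$ in $\mathbb{Y}$ to almost sure convergence $X_k\to X$ in the (generally non-metrizable) topology of $\mathcal{X}$. This does not follow from measurability of $F^{-1}$ alone, because $F^{-1}$ is continuous only on the compact strata $F(K_l)$ and a sequence in $\mathbb{Y}$ converging to a point of $F(K_{l_0})$ need not eventually lie in any fixed $F(K_l)$. The resolution exploits the $\sigma$-compact stratification more carefully: by thinning the subsequence so that $\sum_k \mu_{n_k}(\mathcal{X}\setminus K_k) < \infty$ and applying Borel--Cantelli one localizes the trajectories along the strata, and then, using that on each $K_l$ the map $F$ is a homeomorphism onto $F(K_l)$, one shows that $\bar{\mathbb{P}}$-almost surely the sequence $X_k$ together with its limit eventually lies in a common compact stratum, on which $Y_k \to Y$ forces $X_k\to X$. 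This localization is the technical core of Jakubowski's argument, and for the complete details I would follow \cite{ZB+UM_WeakSolutionSLLGE_JumpNoise} and the references therein.
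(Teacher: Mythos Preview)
The paper does not prove this theorem at all; it is stated as a known result and attributed to \cite{ZB+UM_WeakSolutionSLLGE_JumpNoise} (ultimately Jakubowski), so there is no ``paper's own proof'' to compare against. Your sketch is the standard route to Jakubowski's theorem---embed $\mathcal{X}$ into the Polish cube via the separating functions, apply the classical Skorohod representation there, and pull back using the $\sigma$-compact stratification and a Borel--Cantelli localization---and it is essentially correct as outlined, with the genuine technical work correctly identified in your final paragraph.
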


	In order to talk about the convergence (of laws) for the pair $\l( m_n , \eta \r)$, we use a generalization of the Skorohod Theorem (see \cite{ZB+EH+Paul_2018_StochasticReactionDiffusion_JumpProcesses}). We use the said result to obtain another sequence of $\mathcal{Z}_T$-valued random variables (possibly on a different probability space). Let $\eta_n := \eta,n\in\mathbb{N}$. With this notation and Lemma \ref{lemma tightness lemma} and Theorem \ref{THeorem Skorohod Jakubowski}, we can conclude that the sequence $\l\{ \mathcal{L}\l(m_n,\eta_n\r)\r\}_{n\in\mathbb{N}}$ is tight on the space $\mathcal{Z}_T \times \mathbb{M}_{\bar{\mathbb{N}}}\l([0,T] \times B\r)$, where $\mathbb{M}_{\bar{\mathbb{N}}}(\mathcal{S})$ denotes all the $\mathbb{N}\cup\{\infty\}$-valued measures on the space $\l(\mathcal{S},\mathscr{\varrho}\r)$ (see for instance \cite{ZB+UM_WeakSolutionSLLGE_JumpNoise}).
	By applying a generalized Jakubowski-Skorohod Representation Theorem \cite{ZB+EH+Paul_2018_StochasticReactionDiffusion_JumpProcesses}, we have the following lemma.

	\begin{lemma}\label{lemma use of Skorohod Theorem}
		There exists a probability space $$\l( \Omega^\p , \mathbb{F}^\p , \mathcal{F}^\p , \mathbb{P}^\p\r)$$ and another sequence of random variables on that space $\{ \m_n \}_{n\in\mathbb{N}}$, along with $\mathcal{Z}_T \times \mathbb{M}_{\bar{\mathbb{N}}}\l([0,T] \times B\r)$-valued random variables $\l(\m,\eta^\p\r) , \l(\m_n,\eta_n^\p\r),n\in\mathbb{N}$ on the aforementioned probability space, such that the following hold.
		\begin{enumerate}
			\item \begin{equation}
				\mathcal{L}\l( \m_n,\eta_n^\p \r) = \mathcal{L}\l( \m_n,\eta_n^\p \r), \forall n\in\mathbb{N},
			\end{equation}
			\item \begin{equation}
				\l( \m_n,\eta_n^\p \r) \to \l( \m,\eta_n^\p \r)\ \text{in}\ \mathcal{Z}_T \times \mathbb{M}_{\bar{\mathbb{N}}}\l([0,T] \times B\r),
			\end{equation}
			\item \begin{equation}
				\eta_n^\p(\omega^\p) = \eta^\p(\omega^\p),\ \text{for all} \  \omega^\p\in\Omega^\p.
			\end{equation}
			
		\end{enumerate}
	\end{lemma}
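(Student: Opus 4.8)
The plan is to obtain the lemma as a direct application of the generalized Skorohod--Jakubowski representation theorem in the Poisson-random-measure form of \cite{ZB+EH+Paul_2018_StochasticReactionDiffusion_JumpProcesses}, which converts convergence in law into almost sure convergence on a new probability space and, crucially, permits the driving random measure to be kept pathwise fixed along the new sequence. Two ingredients are needed. The first is tightness of $\{\mathcal{L}(m_n,\eta_n)\}_{n\in\mathbb{N}}$ on the carrier space $\mathcal{Z}_T\times\mathbb{M}_{\bar{\mathbb{N}}}([0,T]\times B)$: here Lemma \ref{lemma tightness lemma} already gives tightness of $\{\mathcal{L}(m_n)\}$ on $\mathcal{Z}_T$, and since $\eta_n=\eta$ for every $n$ the laws $\mathcal{L}(\eta_n)$ form the constant sequence $\delta_{\mathcal{L}(\eta)}$, which is trivially tight; a product of two tight families of laws is tight, so the joint laws are tight. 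The second is the hypothesis of Theorem \ref{THeorem Skorohod Jakubowski}, namely that the carrier space admits a countable family of continuous functions separating its points.

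To verify this second hypothesis I would build the separating family factor by factor. Since $\mathcal{Z}_T$ carries the initial (coarsest) topology making the four inclusions into $\mathbb{D}([0,T]:X^{-\beta})$, $L^p(0,T:L^q)$, $\mathbb{D}([0,T]:H^1_{\mathrm{w}})$ and $L^2_{\mathrm{w}}(0,T:H^2)$ continuous, it is enough to exhibit a countable separating family of continuous functions on each factor and pull them back through the inclusions. For the two Polish factors $\mathbb{D}([0,T]:X^{-\beta})$ and $L^p(0,T:L^q)$ this is automatic. For $\mathbb{D}([0,T]:H^1_{\mathrm{w}})$ one fixes a countable dense set $\{u_k\}\subset H^1$ and uses the maps $v\mapsto\langle v(\cdot),u_k\rangle_{H^1}$ into $\mathbb{D}([0,T]:\mathbb{R})$, followed by a countable separating family of the Polish space $\mathbb{D}([0,T]:\mathbb{R})$; for $L^2_{\mathrm{w}}(0,T:H^2)$ one uses $v\mapsto\int_0^T\langle v(t),w_k(t)\rangle_{H^2}\,dt$ with $\{w_k\}$ dense in the separable Hilbert space $L^2(0,T:H^2)$. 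Finally, $\mathbb{M}_{\bar{\mathbb{N}}}([0,T]\times B)$ admits such a family via integration against a countable family of bounded continuous functions on $[0,T]\times B$, as recorded in \cite{ZB+UM_WeakSolutionSLLGE_JumpNoise}. The countable union of all these maps separates points of $\mathcal{Z}_T\times\mathbb{M}_{\bar{\mathbb{N}}}([0,T]\times B)$.

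With both ingredients in place, the representation theorem of \cite{ZB+EH+Paul_2018_StochasticReactionDiffusion_JumpProcesses} produces the probability space $(\Omega^\p,\mathcal{F}^\p,\mathbb{F}^\p,\mathbb{P}^\p)$, a (not relabelled) subsequence, random variables $(\m_n,\eta_n^\p)$ with $\mathcal{L}(\m_n,\eta_n^\p)=\mathcal{L}(m_n,\eta_n)$ and a limit $(\m,\eta^\p)$ with $(\m_n,\eta_n^\p)\to(\m,\eta^\p)$ $\mathbb{P}^\p$-a.s. in $\mathcal{Z}_T\times\mathbb{M}_{\bar{\mathbb{N}}}([0,T]\times B)$; because the original second marginals are the constant sequence $\eta$, the construction can be arranged so that $\eta_n^\p(\omega^\p)=\eta^\p(\omega^\p)$ for every $\omega^\p\in\Omega^\p$, which is assertion (3), and assertions (1) and (2) are then immediate. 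I expect the main obstacle to be the bookkeeping in the previous paragraph --- verifying the Jakubowski condition for the non-metrizable factors $\mathbb{D}([0,T]:H^1_{\mathrm{w}})$ and $L^2_{\mathrm{w}}(0,T:H^2)$ and checking that adjoining $\mathbb{M}_{\bar{\mathbb{N}}}([0,T]\times B)$ preserves it, together with citing the exact strengthening that keeps the noises pathwise identical; none of this is deep, but it is the only part that requires care, the rest being an assembly of results from \cite{Motyl_2013_SNSE_LevyNoise,ZB+EH+Paul_2018_StochasticReactionDiffusion_JumpProcesses,ZB+UM_WeakSolutionSLLGE_JumpNoise}.
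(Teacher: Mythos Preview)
Your proposal is correct and follows essentially the same approach as the paper: the paper simply states that the lemma follows by applying the generalized Jakubowski--Skorohod Representation Theorem from \cite{ZB+EH+Paul_2018_StochasticReactionDiffusion_JumpProcesses}, after having established tightness of $\{\mathcal{L}(m_n,\eta_n)\}$ on $\mathcal{Z}_T\times\mathbb{M}_{\bar{\mathbb{N}}}([0,T]\times B)$ (via Lemma \ref{lemma tightness lemma} and Theorem \ref{THeorem Skorohod Jakubowski}). Your write-up supplies the verification details---the separating family on the non-metrizable factors and the tightness of the constant marginal---that the paper leaves implicit.
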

	
	For the remainder of the section, we fix $p = q = 4$ and $\beta = \frac{1}{2}$. Note that $X^{-\frac{1}{2}}$ can be identified with the space $(H^1)^\p$.

	\subsubsection{\textbf{Properties of the limiting process:}}\label{Section Properties of the limiting process}
	The space $\mathbb{D}([0,T]:H_n)$ can be defined in the spirit of Section \ref{Section Some Functional Spaces}.
	
	\begin{remark}
		The set $\mathbb{D}([0,T]:H_n)$ is a Borel subset of $\mathcal{Z}_T$ (see for example \cite{ZB+UM+AAP_2019_MartingaleSolutions_NematicLiquidCrystals_PureJumpNoise,UM+AAP_2021_LargeDeviationsSNSELevyNoise}). Therefore the newly obtained processes $\m_n$ satisfy the same bounds as the corresponding processes $m_n$.
	\end{remark}

	With the above remark in mind, we have the following lemma for the sequence $\l\{\m_n\r\}_{n\in\mathbb{N}}$.
	\begin{lemma}\label{lemma bounds 1 lemma mn prime}
		Let $p\geq 1$. Then, there exists a constant $C>0$, which can depend on $p$ but not on $n\in\mathbb{N}$ such that the following hold.
		\begin{equation}
			\mathbb{E}^\p \sup_{t\in[0,T]}\l| \m_n(t) \r|_{H^1}^{2p} \leq C,
		\end{equation}
		\begin{equation}
			\mathbb{E}^\p \l( \int_{0}^{t} \l| \m_n(t) \r|_{H^2}^2 \, ds \r)^{2p} \leq C.
		\end{equation}
		Here $\mathbb{E}^\p$ denotes the expectation with respect to the probability space $\l(\Omega^\p,\mathbb{P}^\p\r)$.
	\end{lemma}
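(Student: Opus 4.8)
The plan is to transfer the uniform bounds already established for $m_n$ in Lemmas \ref{lemma bounds 3} and \ref{lemma bounds 4} to the new sequence $\m_n$ by invoking the equality of laws from Lemma \ref{lemma use of Skorohod Theorem}. First I would recall, as in the preceding remark, that each $\m_n$ takes values in $\mathbb{D}([0,T]:H_n)$, which is a Borel subset of $\mathcal{Z}_T$, and that the first marginal of item $(1)$ of Lemma \ref{lemma use of Skorohod Theorem} gives $\mathcal{L}(\m_n) = \mathcal{L}(m_n)$ as Borel probability measures on $\mathcal{Z}_T$.

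Next, I would introduce the two $[0,\infty]$-valued functionals on $\mathcal{Z}_T$,
\begin{equation*}
	\Psi_1(v) = \sup_{t\in[0,T]} \l| v(t) \r|_{H^1}^{2p}, \qquad \Psi_2(v) = \l( \int_{0}^{T} \l| v(t) \r|_{H^2}^2 \, dt \r)^{2p},
\end{equation*}
and argue that they are Borel measurable (indeed lower semicontinuous) for the topology of $\mathcal{Z}_T$. For $\Psi_1$ one writes $\l| v(t) \r|_{H^1} = \sup_{k} \l| \l\langle v(t) , \phi_k \r\rangle_{H^1} \r|$ for a fixed countable set $\{\phi_k\}$ dense in the unit ball of $H^1$; since $v \mapsto \l( t \mapsto \l\langle v(t),\phi_k\r\rangle_{H^1} \r)$ maps $\mathbb{D}([0,T]:H^1_{\text{w}})$ continuously into $\mathbb{D}([0,T]:\mathbb{R})$, the supremum over $t\in[0,T]$ of a c\`adl\`ag real function is a Borel functional, and the outer countable supremum over $k$ preserves measurability. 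For $\Psi_2$ one likewise expresses the $H^2$ norm through a countable family of continuous linear functionals and uses lower semicontinuity of the norm under the weak topology of $L^2(0,T:H^2)$ together with Fatou's lemma.

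Granting this, the pushforward (change of variables) formula yields, for $i=1,2$,
\begin{equation*}
	\mathbb{E}^\p \bigl[ \Psi_i(\m_n) \bigr] = \int_{\mathcal{Z}_T} \Psi_i \, d\mathcal{L}(\m_n) = \int_{\mathcal{Z}_T} \Psi_i \, d\mathcal{L}(m_n) = \mathbb{E}\bigl[ \Psi_i(m_n) \bigr].
\end{equation*}
Applying Lemma \ref{lemma bounds 4} (with $2p$ in place of $p$), together with Lemma \ref{lemma bounds 3} to control the lower-order $L^2$ part of the $H^2$ norm, the right-hand side is bounded by a constant $C>0$ depending on $p$ (through $m_0$, $h$, $T$) but not on $n\in\mathbb{N}$, which gives the two asserted estimates.

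The step I expect to be the main obstacle is the measurability and lower semicontinuity of $\Psi_1$ and $\Psi_2$ on the non-metrizable space $\mathcal{Z}_T$, specifically the handling of the weak topologies in $\mathbb{D}([0,T]:H^1_{\text{w}})$ and $L^2_{\text{w}}(0,T:H^2)$; the remedy is precisely the reduction to countable suprema of continuous functionals sketched above, after which the bounds for $m_n$ pass through verbatim.
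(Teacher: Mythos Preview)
Your proposal is correct and follows precisely the approach the paper intends: the paper does not give a separate proof of this lemma but relies on the preceding remark that $\mathbb{D}([0,T]:H_n)$ is a Borel subset of $\mathcal{Z}_T$ and hence, by the equality of laws in Lemma~\ref{lemma use of Skorohod Theorem}, the bounds from Lemmas~\ref{lemma bounds 3} and~\ref{lemma bounds 4} carry over verbatim to $\m_n$. Your added discussion of the Borel measurability of $\Psi_1,\Psi_2$ via countable suprema of continuous linear functionals is a legitimate (and more careful) justification of a step the paper leaves implicit.
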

	Similarly, we can show the following bounds for the limit process $\m$, in particular for $p = 1$.
	\begin{lemma}\label{Lemma FG approximations bounds on m prime}
		There exists a constant $C>0$ such that for every $n\in\mathbb{N}$, the following hold.
	
		\begin{equation}
			\mathbb{E}^\p \sup_{t\in[0,T]}\l| \m(t) \r|_{H^1}^{2} \leq C,
		\end{equation}
		\begin{equation}
			\mathbb{E}^\p \int_{0}^{T} \l| \m(t) \r|_{H^2}^2 \, dt  \leq C.
		\end{equation}
	\end{lemma}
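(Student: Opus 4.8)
The plan is to transfer the uniform bounds of Lemma \ref{lemma bounds 1 lemma mn prime} to the limit, exploiting the lower semicontinuity of the norms carried by the two spaces $\mathbb{D}([0,T]:H^1_{\text{w}})$ and $L^2_{\text{w}}(0,T:H^2)$ that enter the definition of $\mathcal{Z}_T$, together with Fatou's lemma. First I would record that, by Lemma \ref{lemma use of Skorohod Theorem}, $\m_n\to\m$ in $\mathcal{Z}_T$ for $\mathbb{P}^\p$-almost every $\omega^\p\in\Omega^\p$, and hence, composing with the continuous inclusions of $\mathcal{Z}_T$ into its factors, $\m_n(\cdot,\omega^\p)\to\m(\cdot,\omega^\p)$ both in $\mathbb{D}([0,T]:H^1_{\text{w}})$ and in $L^2_{\text{w}}(0,T:H^2)$. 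I would also recall from Lemma \ref{lemma bounds 1 lemma mn prime} (which it suffices to invoke with $p=1$) that $\sup_n\mathbb{E}^\p\sup_{t\in[0,T]}|\m_n(t)|_{H^1}^2<\infty$ and $\sup_n\mathbb{E}^\p\int_0^T|\m_n(t)|_{H^2}^2\,dt<\infty$.

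The $H^2$-bound is the short one. Since convergence in $L^2_{\text{w}}(0,T:H^2)$ is by definition weak convergence in the Hilbert space $L^2(0,T:H^2)$, weak lower semicontinuity of the norm gives, pathwise, $\int_0^T|\m(t)|_{H^2}^2\,dt\le\liminf_n\int_0^T|\m_n(t)|_{H^2}^2\,dt$; taking $\mathbb{E}^\p$ and applying Fatou's lemma together with the bound above yields the second inequality. No moment beyond the square is needed, as nonnegativity is all Fatou's lemma requires.

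For the $H^1$-bound I would argue through the scalar Skorohod space. By the definition of the topology on $\mathbb{D}([0,T]:H^1_{\text{w}})$, for $\mathbb{P}^\p$-a.e. $\omega^\p$ and every $u\in H^1$ the real-valued c\`adl\`ag functions $\langle\m_n(\cdot,\omega^\p),u\rangle_{H^1}$ converge to $\langle\m(\cdot,\omega^\p),u\rangle_{H^1}$ in $\mathbb{D}([0,T]:\mathbb{R})$. Since the time changes in the Skorohod metric are increasing homeomorphisms of $[0,T]$, the functional $f\mapsto\sup_{t\in[0,T]}|f(t)|$ is continuous on $\mathbb{D}([0,T]:\mathbb{R})$, so $\sup_t|\langle\m(t,\omega^\p),u\rangle_{H^1}|=\lim_n\sup_t|\langle\m_n(t,\omega^\p),u\rangle_{H^1}|\le\big(\liminf_n\sup_t|\m_n(t,\omega^\p)|_{H^1}\big)\,|u|_{H^1}$. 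Taking the supremum over $u$ in a countable dense subset of the unit ball of $H^1$ and interchanging the two suprema on the left-hand side --- legitimate since both are suprema of a single nonnegative quantity in the variables $t$ and $u$ --- I obtain $\sup_t|\m(t,\omega^\p)|_{H^1}\le\liminf_n\sup_t|\m_n(t,\omega^\p)|_{H^1}$ for a.e. $\omega^\p$. Squaring, taking $\mathbb{E}^\p$, and using Fatou's lemma with the uniform bound of Lemma \ref{lemma bounds 1 lemma mn prime} gives the first inequality.

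The only delicate point is this last step: one should pass through $\mathbb{D}([0,T]:\mathbb{R})$, where the supremum functional is genuinely continuous thanks to the invariance of the supremum under time reparametrisation, rather than trying to work directly with the $H^1$-norm, and one must justify the interchange of the two suprema before reassembling the $H^1$-norm. The remaining ingredients --- weak lower semicontinuity of the $L^2(0,T:H^2)$-norm and the two applications of Fatou's lemma --- are entirely routine.
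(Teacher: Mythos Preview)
Your argument is correct and is precisely the standard way to obtain such bounds for a Skorohod--Jakubowski limit: weak lower semicontinuity of the $L^2(0,T:H^2)$-norm handles the second inequality, and the passage through $\mathbb{D}([0,T]:\mathbb{R})$ together with the invariance of $\sup_t|\cdot|$ under time reparametrisation handles the first; Fatou's lemma then transfers both pathwise inequalities to expectations. The paper does not spell out any of this --- it simply writes ``Similarly, we can show the following bounds for the limit process $\m$, in particular for $p=1$'' --- so your proposal is in fact a complete justification of what the paper leaves implicit, and the approaches coincide.
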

	The above lemma shows that the obtained process $\m$ satisfies the bounds \eqref{eqn weak martingale solution definition bound 1} and \eqref{eqn weak martingale solution definition bound 2} in $(3)$ of Definition \ref{definition weak martingale solution}.
	We recall that by \ref{lemma use of Skorohod Theorem}, we have the following convergence $\mathbb{P}^\p$-a.s.
	\begin{equation}
		\m_n\to\m\ \text{in}\ \mathcal{Z}_T.
	\end{equation}
	See \eqref{eqn definition of the space ZT} for the definition of the space $\mathcal{Z}_T$.

	\subsubsection{\textbf{Convergence of the Approximate Solutions:}}\label{Section Convergence of the Approximate Solutions}
	
	The aim of this subsection is to show that the obtained process $\m$ is a weak martingale solution for the problem \eqref{eqn FG approximation of problem considered Marcus Form}. We skip the details and refer the reader to analogous calculations in  \cite{ZB+BG+Le_SLLBE,ZB+UM_WeakSolutionSLLGE_JumpNoise,UM+AAP_2021_LargeDeviationsSNSELevyNoise} for details. We give a structure of the proof and some arguments.

	\begin{lemma}\label{lemma convergence lemma 1 of FG approximates}
		The following convergences hold.
		\begin{enumerate}
			\item \begin{equation}\label{eqn L4L4L4 convergence mn prime}
				\m_n\to\m\ \text{in}\ L^4\l( \Omega^\p : L^4(0,T:L^4)\r),
			\end{equation}
			\item 
			
			\begin{equation}
				\m_n \to  \m\ \text{weakly in}\ L^2\l( \Omega^\p : L^2(0,T : H^2 )\r),
			\end{equation}

			\item For $r\in (1,\frac{4}{3})$,
			\begin{equation}
				\m_n \times \Delta \m_n \to \m \times \Delta \m\ \text{weakly in}\ L^r(\Omega^\p:L^r(0,T:L^2)),
			\end{equation}
			
			and for $\beta>\frac{1}{4}$
			\begin{equation}
				\m_n \times \Delta \m_n \to \m \times \Delta \m\ \text{weakly in}\  L^2(\Omega^\p:L^2(0,T:X^{-\beta})).
			\end{equation}

			\item  Let $V\in L^4(\Omega^\p:(0,T:H^1))$. Then
			\begin{equation}
				\lim_{n\to\infty}\mathbb{E} \int_{0}^{T} \l\langle P_n \l[ \l( 1 + \l| \m_n(s) \r|_{\mathbb{R}^3}^2 \r) \m_n(s) \r] 
				-\l( 1 + \l| \m(s) \r|_{\mathbb{R}^3}^2 \r) \m(s)  , V(s) \r\rangle_{L^2} \, ds = 0.
			\end{equation}
		\end{enumerate}
	\end{lemma}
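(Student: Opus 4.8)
The plan is to deduce all four convergences from two facts already established: the $\mathbb{P}^\p$-a.s.\ convergence $\m_n\to\m$ in $\mathcal{Z}_T$ given by Lemma~\ref{lemma use of Skorohod Theorem}, and the uniform moment bounds of Lemma~\ref{lemma bounds 1 lemma mn prime}, used together with the embeddings $H^1\hookrightarrow L^4$, $H^1\hookrightarrow L^6$ (valid for $d\le3$) and $X^\beta\hookrightarrow L^4$ for $\beta>\tfrac14$. Two mechanisms recur throughout: (i) $\mathbb{P}^\p$-a.s.\ convergence in a weaker topology plus a uniform bound on a strictly higher moment upgrades, by Vitali's theorem, to strong convergence in the relevant Bochner space; and (ii) a uniform bound plus reflexivity yields weak convergence along a subsequence whose limit is then identified using the a.s.\ convergence, after which uniqueness of the weak limit promotes this to the whole sequence.

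For part~(1), I would use that $L^4(0,T:L^4)$ is one of the factors defining $\mathcal{Z}_T$, so $\m_n\to\m$ in $L^4(0,T:L^4)$ $\mathbb{P}^\p$-a.s.; since $\|\m_n\|_{L^4(0,T:L^4)}^4\le T\sup_{t\in[0,T]}|\m_n(t)|_{L^4}^4\le CT\sup_{t\in[0,T]}|\m_n(t)|_{H^1}^4$, Lemma~\ref{lemma bounds 1 lemma mn prime} bounds $\mathbb{E}^\p\|\m_n\|_{L^4(0,T:L^4)}^{4p}$ uniformly in $n$ for every $p$, so $\{\|\m_n-\m\|_{L^4(0,T:L^4)}^4\}_n$ is uniformly integrable and mechanism~(i) applies. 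For part~(2), Lemma~\ref{lemma bounds 1 lemma mn prime} bounds $\m_n$ uniformly in the Hilbert space $L^2(\Omega^\p:L^2(0,T:H^2))$, so one extracts a weakly convergent subsequence there; testing against $\mathbf{1}_A(\omega)\psi(t)\phi$ with $\phi\in H^2$ and using the a.s.\ convergence in $L^2_{\mathrm w}(0,T:H^2)$ (another factor of $\mathcal{Z}_T$), together with the uniform bound to pass $\mathbb{E}^\p$ through the limit, identifies the weak limit as $\m$, and mechanism~(ii) finishes it.

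For part~(3), Hölder's inequality gives, pointwise in $(\omega,t)$, $|\m_n\times\Delta\m_n|_{L^{4/3}}\le|\m_n|_{L^4}|\Delta\m_n|_{L^2}$, hence $|\m_n\times\Delta\m_n|_{X^{-\beta}}^2\le C|\m_n|_{H^1}^2|\Delta\m_n|_{L^2}^2$ by the dual embedding $L^{4/3}\hookrightarrow X^{-\beta}$; integrating and using $\mathbb{E}^\p[\sup_t|\m_n(t)|_{H^1}^4]\le C$ and $\mathbb{E}^\p(\int_0^T|\Delta\m_n(s)|_{L^2}^2\,ds)^2\le C$ from Lemma~\ref{lemma bounds 1 lemma mn prime} gives a uniform bound in $L^2(\Omega^\p:L^2(0,T:X^{-\beta}))$, while a parallel estimate using the interpolation $|\m_n|_{L^\infty}\lesssim|\m_n|_{L^2}^{1-\theta}|\m_n|_{H^2}^{\theta}$ (admissible since $H^2\hookrightarrow L^\infty$ for $d\le3$), with $\theta$ chosen so that $r(1+\theta)\le2$, gives a uniform bound in $L^r(\Omega^\p:L^r(0,T:L^2))$ for $r<\tfrac43$. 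I would then extract weakly convergent subsequences in both reflexive spaces and identify the limits via the scalar triple product identity $\langle\m_n\times\Delta\m_n,\phi\rangle_{L^2}=\langle\Delta\m_n,\phi\times\m_n\rangle_{L^2}$ for smooth $\phi$: by part~(1), $\phi\times\m_n\to\phi\times\m$ strongly in $L^2$ with high moments, and by part~(2), $\Delta\m_n\rightharpoonup\Delta\m$ weakly in $L^2(\Omega^\p:L^2(0,T:L^2))$, so the weak–strong pairing passes to the limit, and density of smooth test functions plus uniqueness of weak limits pins the limit to $\m\times\Delta\m$. I expect this to be the main obstacle: the dimension-dependent exponents must be tracked so that the product estimates land in exactly the stated spaces, and the product convergence has to be established at the level of $L^\gamma(\Omega^\p;\cdot)$ rather than merely $\mathbb{P}^\p$-a.s., which forces repeated use of the higher-moment bounds to exchange limit and expectation.

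Finally, for part~(4), I would split $P_n[(1+|\m_n|^2)\m_n]-(1+|\m|^2)\m=(I-P_n)[(1+|\m_n|^2)\m_n]+\big[(1+|\m_n|^2)\m_n-(1+|\m|^2)\m\big]$. Against the fixed $V\in L^4(\Omega^\p:L^4(0,T:H^1))\subset L^4(\Omega^\p:L^4(0,T:L^4))$, the projection error contributes $\mathbb{E}^\p\int_0^T\langle(1+|\m_n|^2)\m_n,(I-P_n)V\rangle_{L^2}\,ds$, which vanishes because $(I-P_n)V\to0$ in $L^2$ for a.e.\ $(\omega,t)$ with $|(I-P_n)V|_{L^2}\le|V|_{L^2}$, while $|(1+|\m_n|^2)\m_n|_{L^2}\le C(1+|\m_n|_{H^1}^3)$ has uniformly bounded high moments ($H^1\hookrightarrow L^6$, Lemma~\ref{lemma bounds 1 lemma mn prime}), so dominated convergence applies. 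For the difference, the elementary inequality $\big||a|^2a-|b|^2b\big|\le(|a|^2+|b|^2)|a-b|$ yields $|(1+|\m_n|^2)\m_n-(1+|\m|^2)\m|_{L^{4/3}}\le|\m_n-\m|_{L^4}\big(1+|\m_n|_{L^4}^2+|\m|_{L^4}^2\big)$; pairing with $V$ and using Hölder's inequality in space, time and $\Omega^\p$, the factor $\big(\mathbb{E}^\p\int_0^T|\m_n-\m|_{L^4}^4\,ds\big)^{1/3}$ tends to $0$ by part~(1) while the remaining factors are controlled by Lemma~\ref{lemma bounds 1 lemma mn prime}, so the difference contributes nothing in the limit.
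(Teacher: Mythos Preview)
Your proposal is correct and follows precisely the standard route the paper has in mind: the paper itself skips the proof and refers to Section~5 of \cite{ZB+BG+Le_SLLBE} (and related works), where the argument is exactly the Vitali/uniform-integrability upgrade for part~(1), weak compactness plus identification for parts~(2)--(3), and the algebraic splitting for part~(4) that you describe.

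One small correction in part~(3): for $d=3$ the interpolation you wrote, $|\m_n|_{L^\infty}\lesssim|\m_n|_{L^2}^{1-\theta}|\m_n|_{H^2}^{\theta}$, forces $\theta>3/4$, and then $r(1+\theta)\le2$ only gives $r<8/7$, which falls short of the claimed range $r<4/3$. Replace it by Agmon's inequality $|\m_n|_{L^\infty}\le C|\m_n|_{H^1}^{1/2}|\m_n|_{H^2}^{1/2}$; then
\[
\int_0^T|\m_n\times\Delta\m_n|_{L^2}^r\,dt
\le C\sup_{t}|\m_n(t)|_{H^1}^{r/2}\int_0^T|\m_n(t)|_{H^2}^{3r/2}\,dt,
\]
and $3r/2<2$ is exactly $r<4/3$, after which H\"older in $\Omega^\p$ together with Lemma~\ref{lemma bounds 1 lemma mn prime} gives the uniform $L^r(\Omega^\p:L^r(0,T:L^2))$ bound. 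With this fix your weak--strong identification via $\langle\m_n\times\Delta\m_n,\phi\rangle_{L^2}=\langle\Delta\m_n,\phi\times\m_n\rangle_{L^2}$ goes through as written.
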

	We skip the proof of this lemma and refer the reader to Section 5 in \cite{ZB+BG+Le_SLLBE} (see also \cite{ZB+BG+TJ_Weak_3d_SLLGE,ZB+BG+TJ_LargeDeviations_LLGE,ZB+UM_WeakSolutionSLLGE_JumpNoise,UM+AAP_2021_LargeDeviationsSNSELevyNoise,ZB+Dhariwal_2012_SNSE_LevyNoise})

	\begin{lemma}\label{lemma convergence for bn existence of weak martingale solution}
		Let $V\in L^4(\Omega^\p : L^2)$. Then the following convergence holds.
		\begin{equation}\label{eqn convergence for bn existence of weak martingale solution}
			\lim_{n\to\infty} \mathbb{E}^\p \l| \int_{0}^{T} \l\langle b_n\bigl(\m_n(s)\bigr) - b\bigl(\m(s)\bigr) , V \r\rangle_{L^2} \, ds \r|^2 = 0.
		\end{equation}
	\end{lemma}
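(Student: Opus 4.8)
The plan is to reduce \eqref{eqn convergence for bn existence of weak martingale solution} to a fourth-moment $L^2$-bound on $b_n(\m_n)-b(\m)$, split this difference into a term governed by the strong $L^4$-convergence of $\m_n$ and a Galerkin-truncation term $b_n-b$, and control the latter by a quantitative rate in the projection index. Since $V$ is time-independent, $\int_0^T\langle b_n(\m_n(s))-b(\m(s)),V\rangle_{L^2}\,ds=\bigl\langle\int_0^T\bigl(b_n(\m_n(s))-b(\m(s))\bigr)\,ds,\,V\bigr\rangle_{L^2}$, so the Cauchy--Schwarz inequality in $L^2(\mathcal{O})$, H\"older's inequality in time, and the Cauchy--Schwarz inequality on $\Omega^\p$ give
\[
\mathbb{E}^\p\Bigl|\int_0^T\bigl\langle b_n(\m_n(s))-b(\m(s)),V\bigr\rangle_{L^2}\,ds\Bigr|^2\le T\,\bigl(\mathbb{E}^\p|V|_{L^2}^4\bigr)^{1/2}\Bigl(\mathbb{E}^\p\int_0^T|b_n(\m_n(s))-b(\m(s))|_{L^2}^4\,ds\Bigr)^{1/2}.
\]
As $V\in L^4(\Omega^\p:L^2)$, it therefore suffices to prove $\mathbb{E}^\p\int_0^T|b_n(\m_n(s))-b(\m(s))|_{L^2}^4\,ds\to0$, and I would do this by estimating the two summands in $b_n(\m_n)-b(\m)=\bigl(b_n(\m_n)-b(\m_n)\bigr)+\bigl(b(\m_n)-b(\m)\bigr)$ separately.

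For $b(\m_n)-b(\m)$, I would use that $b$ is globally Lipschitz on $L^2$: integrating the bound \eqref{eqn H Lipschitz} of Lemma~\ref{lemma linear growth Lipschitz G and H} over $B$ against $\nu$ gives $|b(u)-b(v)|_{L^2}\le C|u-v|_{L^2}$. Since $\mathcal{O}$ is bounded, $|w|_{L^2}\le|\mathcal{O}|^{1/4}|w|_{L^4}$, hence $\mathbb{E}^\p\int_0^T|b(\m_n(s))-b(\m(s))|_{L^2}^4\,ds\le C\,\mathbb{E}^\p\int_0^T|\m_n(s)-\m(s)|_{L^4}^4\,ds$, which vanishes by the strong convergence \eqref{eqn L4L4L4 convergence mn prime} of Lemma~\ref{lemma convergence lemma 1 of FG approximates}.

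The substance of the lemma is the Galerkin-truncation term $b_n(\m_n)-b(\m_n)$, for which I would establish the quantitative rate
\begin{equation}\label{eqn sketch bn minus b rate}
|b_n(v)-b(v)|_{L^2}\le \frac{C\,(1+|v|_{H^1})}{\sqrt{1+\lambda_{n+1}}},\qquad v\in H_n,
\end{equation}
with $C$ independent of $n$. The ingredients are: (i) the spectral estimate $|(I-P_n)w|_{L^2}\le(1+\lambda_{n+1})^{-1/2}|w|_{X^{1/2}}\le C(1+\lambda_{n+1})^{-1/2}|w|_{H^1}$ for $w\in H^1$; (ii) the identity $\bar{g}_n=P_n\circ\bar{g}$, where $\bar{g}(w)=w\times h+h$, together with $|w\times h|_{H^1}\le C|h|_{W^{1,\infty}}|w|_{H^1}$, legitimate since $h\in W^{2,\infty}$; and (iii) the uniform-in-$(n,l)$ $H^1$ linear growth of the Marcus flows $s\mapsto\Phi_n(s,l,v)$ and $s\mapsto\Phi(s,l,v)$ on $[0,1]$, which follows as in Lemmas~\ref{lemma Lipschitz Continuity Linear Growth Phi H1} and~\ref{lemma linear growth Phi}. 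Writing $w_n(s)=\Phi_n(s,l,v)-\Phi(s,l,v)$, so that $w_n(0)=0$ and $\bar{g}_n(\Phi_n(r,l,v))-\bar{g}(\Phi(r,l,v))=P_n\bigl(w_n(r)\times h\bigr)+(I-P_n)\bigl(\Phi(r,l,v)\times h+h\bigr)$, one bounds the first summand by $|h|_{L^\infty}|w_n(r)|_{L^2}$ and the second by $C(1+\lambda_{n+1})^{-1/2}(1+|v|_{H^1})$ using (i) and (iii), and the Gronwall lemma (with $|l|\le1$ for $l\in B$) yields $\sup_{s\in[0,1]}|w_n(s)|_{L^2}\le C(1+\lambda_{n+1})^{-1/2}(1+|v|_{H^1})$. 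Combining this with the analogous bound for $l\,(\bar{g}_n-\bar{g})(v)=l\,(I-P_n)\bar{g}(v)$ and integrating $|H_n(l,v)-H(l,v)|_{L^2}$ over $B$ against $\nu$ gives \eqref{eqn sketch bn minus b rate}.

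Inserting $v=\m_n(s)$ into \eqref{eqn sketch bn minus b rate} and invoking $\mathbb{E}^\p\sup_{t\in[0,T]}|\m_n(t)|_{H^1}^4\le C$ (Lemma~\ref{lemma bounds 1 lemma mn prime} with $p=2$) gives
\[
\mathbb{E}^\p\int_0^T|b_n(\m_n(s))-b(\m_n(s))|_{L^2}^4\,ds\le\frac{C\,T}{(1+\lambda_{n+1})^2},
\]
which tends to $0$ since $\lambda_{n+1}\to\infty$; together with the previous paragraph this yields the claim. The main obstacle is precisely the rate \eqref{eqn sketch bn minus b rate}: the projections $P_n$ converge to the identity only strongly, not uniformly on $L^2$-bounded sets, so the estimate must be carried out in $H^1$ --- propagating the $H^1$ flow bounds for $\Phi_n$ and $\Phi$ through the Gronwall step so as to absorb the $L^2$-error of $P_n$ into a factor $(1+\lambda_{n+1})^{-1/2}$ --- after which the $H^1$-moment bounds on $\m_n$ remove it. The remaining manipulations are routine and parallel the analogous convergences in \cite{ZB+UM_WeakSolutionSLLGE_JumpNoise,UM+AAP_2021_LargeDeviationsSNSELevyNoise}; the argument also uses $\nu(B)<\infty$, consistent with the standing assumptions.
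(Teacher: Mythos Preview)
Your proof is correct and follows the same reduction as the paper: Cauchy--Schwarz to pass to a fourth-moment bound on $b_n(\m_n)-b(\m)$, then Lipschitz continuity of $b$ together with the $L^4\bigl(\Omega^\p:L^4(0,T:L^4)\bigr)$-convergence of $\m_n$ from Lemma~\ref{lemma convergence lemma 1 of FG approximates}. The one difference is that the paper, after asserting ``it suffices to show'' the convergence with $b$ in place of $b_n$, treats the Galerkin-truncation term $b_n(\m_n)-b(\m_n)$ as routine and says nothing further, whereas you supply an explicit quantitative rate $|b_n(v)-b(v)|_{L^2}\le C(1+|v|_{H^1})(1+\lambda_{n+1})^{-1/2}$ via the spectral estimate for $I-P_n$ and $H^1$-propagation along the Marcus flows $\Phi,\Phi_n$. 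Your argument is therefore more complete on this point; it does rely on $\nu(B)<\infty$ when integrating the rate over $B$, which you flag and which is implicit in the paper's use of the operator $b$.
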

	\begin{proof}[Proof of Lemma \ref{lemma convergence for bn existence of weak martingale solution}]
		It suffices to show that
		\begin{equation}
			\lim_{n\to\infty} \mathbb{E}^\p \l| \int_{0}^{T} \l\langle b\bigl(\m_n(s)\bigr) - b\bigl(\m(s)\bigr) , V \r\rangle_{L^2} \, ds \r|^2 = 0.
		\end{equation}

		\begin{align}
			\nonumber \mathbb{E}^\p \l| \int_{0}^{T} \l\langle b\big(\m_n(s)\big) - b\big(\m(s)\big) , V \r\rangle_{L^2} \r|^2\, ds 
			\nonumber \leq & \mathbb{E}^\p \l| \int_{0}^{T} \l| b\big(\m_n(s)\big) - b\big(\m(s)\big) \r|_{L^2} \l| V \r|_{L^2}   \, ds \r|^2 \\
			\nonumber \leq & C \l( \mathbb{E}^\p \l| V \r|_{L^2}^2 \r)^{\frac{1}{2}} 
			\l( \mathbb{E}^\p \int_{0}^{T} \l| b(\m_n(s)) - b\big(\m(s)\big) \r|_{L^2}^4 \, ds \r)^{\frac{1}{2}} \\
			\leq & C \l( \mathbb{E}^\p \int_{0}^{T} \l| \m_n(s) - \m(s) \r|_{L^2}^4 \, ds \r)^{\frac{1}{2}} .
		\end{align}
		The right hand side of the above inequality goes to $0$ as $n\to\infty$, thus concluding the proof of the lemma.
	\end{proof}
	
	\begin{lemma}\label{lemma convergence for Gn existence of weak martingale solution}
		Let $V\in L^4(\Omega^\p:L^2)$. Then the following convergence holds.
		\begin{equation}\label{eqn convergence for Gn existence of weak martingale solution}
			\lim_{n\to\infty} \mathbb{E}^\p\l[ \int_{0}^{T} \int_{B} \bigl| \l\langle G_n(\m_n(s)) - G(\m(s)) , V \r\rangle_{L^2} \bigr|^2 \, \nu(dl) \, ds \r] = 0.
		\end{equation}
	\end{lemma}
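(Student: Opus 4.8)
The plan is to adapt the scheme of Lemma~\ref{lemma convergence for bn existence of weak martingale solution}, the one new feature being that here, besides replacing the argument $\m_n$ of the nonlinearity by the limit $\m$, one must also replace the Galerkin flow $\Phi_n$ by the limiting flow $\Phi$. I keep the $l$-argument (suppressed in the statement) explicit; recall $G(l,v)=\Phi(l,v)-v$ and $G_n(l,v)=\Phi_n(l,v)-v$ from \eqref{eqn definition of G} and \eqref{eqn definition of Gn}, and that $\m_n$ is $H_n$-valued (see the Remark preceding Lemma~\ref{lemma bounds 1 lemma mn prime}), so $\Phi_n(l,\m_n(s))$ is meaningful. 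First I would write, for $s\in[0,T]$ and $l\in B$,
\begin{equation*}
	G_n(l,\m_n(s))-G(l,\m(s))=\bigl[\Phi_n(l,\m_n(s))-\Phi(l,\m_n(s))\bigr]+\bigl[G(l,\m_n(s))-G(l,\m(s))\bigr],
\end{equation*}
so that, by $\l|\l\langle w,V\r\rangle_{L^2}\r|\le\l|w\r|_{L^2}\l|V\r|_{L^2}$ and $(a+b)^2\le 2a^2+2b^2$, it is enough to show that the two quantities
\begin{gather*}
	A_n:=\mathbb{E}^\p\int_0^T\int_B\bigl|\Phi_n(l,\m_n(s))-\Phi(l,\m_n(s))\bigr|_{L^2}^2\,\l|V\r|_{L^2}^2\,\nu(dl)\,ds ,\\
	B_n:=\mathbb{E}^\p\int_0^T\int_B\bigl|G(l,\m_n(s))-G(l,\m(s))\bigr|_{L^2}^2\,\l|V\r|_{L^2}^2\,\nu(dl)\,ds
\end{gather*}
both tend to $0$. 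The term $B_n$ is handled exactly as in Lemma~\ref{lemma convergence for bn existence of weak martingale solution}: the Lipschitz estimate \eqref{eqn G Lipschitz} of Lemma~\ref{lemma linear growth Lipschitz G and H}, which (as its proof shows) holds with a constant that is $O(|l|)$ on $B$ and hence $\nu$-square-integrable, gives $B_n\le C\,\mathbb{E}^\p\bigl[\l|V\r|_{L^2}^2\int_0^T\l|\m_n(s)-\m(s)\r|_{L^2}^2\,ds\bigr]$; an application of Hölder's inequality in $\omega^\p$ (using $V\in L^4(\Omega^\p:L^2)$), the inclusion $L^4(\mathcal{O})\hookrightarrow L^2(\mathcal{O})$, and the strong convergence $\m_n\to\m$ in $L^4(\Omega^\p:L^4(0,T:L^4))$ of \eqref{eqn L4L4L4 convergence mn prime} then force $B_n\to 0$.

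The term $A_n$ is the crux. Fixing $l\in B$ and $x\in H_n$, set $\phi_n(t):=\Phi_n(t,l,x)$ and $\phi(t):=\Phi(t,l,x)$; writing $\bar g_n=P_n\circ\bar g$ on $H_n$ and subtracting the defining ordinary differential equations one obtains $\phi_n(0)-\phi(0)=0$ and
\begin{equation*}
	\frac{d}{dt}\bigl(\phi_n(t)-\phi(t)\bigr)=l\,P_n\bigl[(\phi_n(t)-\phi(t))\times h\bigr]+l\,(I-P_n)\bigl(\phi(t)\times h+h\bigr).
\end{equation*}
Since $P_n$ is an orthogonal projection on $L^2$ and $\l|w\times h\r|_{L^2}\le\l|h\r|_{L^\infty}\l|w\r|_{L^2}$, the Gronwall inequality yields
\begin{equation*}
	\bigl|\Phi_n(l,x)-\Phi(l,x)\bigr|_{L^2}\le|l|\,e^{|l|\l|h\r|_{L^\infty}}\int_0^1\bigl|(I-P_n)\bigl(\Phi(\sigma,l,x)\times h+h\bigr)\bigr|_{L^2}\,d\sigma .
\end{equation*}
Putting $x=\m_n(s)$, squaring, using Cauchy--Schwarz in $\sigma$, and then adding and subtracting $\Phi(\sigma,l,\m(s))\times h$ inside the projection, I would split $A_n$ into a first contribution governed by $(I-P_n)\bigl[(\Phi(\sigma,l,\m_n(s))-\Phi(\sigma,l,\m(s)))\times h\bigr]$, which by $\l|(I-P_n)\cdot\r|_{L^2}\le\l|\cdot\r|_{L^2}$ and the $L^2$-Lipschitz continuity of $\Phi(\sigma,l,\cdot)$ (established in the proof of Lemma~\ref{lemma linear growth Lipschitz G and H}) is at most $C|l|^2\l|\m_n(s)-\m(s)\r|_{L^2}^2$ and is therefore disposed of exactly like $B_n$; and a residual contribution controlled by
\begin{equation*}
	|l|^2\,e^{2|l|\l|h\r|_{L^\infty}}\int_0^1\bigl|(I-P_n)\bigl(\Phi(\sigma,l,\m(s))\times h+h\bigr)\bigr|_{L^2}^2\,d\sigma ,
\end{equation*}
in which the only $n$-dependence is the truncation $P_n$.

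For the residual contribution I would invoke dominated convergence. Pointwise in $(\omega^\p,s,l,\sigma)$, the quantity $\bigl|(I-P_n)(\Phi(\sigma,l,\m(s))\times h+h)\bigr|_{L^2}^2$ tends to $0$ as $n\to\infty$, because $P_n\to I$ strongly on $L^2$ and $\Phi(\sigma,l,\m(s))\times h+h$ is a fixed element of $L^2$; and, by the linear growth of $\Phi$ in $L^2$ (Lemma~\ref{lemma linear growth Phi}), the full integrand $|l|^2 e^{2|l|\l|h\r|_{L^\infty}}\bigl|(I-P_n)(\cdots)\bigr|_{L^2}^2\,\l|V\r|_{L^2}^2$ is dominated by $C\,|l|^2\bigl(1+\l|\m(s)\r|_{L^2}^2\bigr)\l|V\r|_{L^2}^2$, which is integrable against $\mathbb{P}^\p\otimes ds\otimes\nu(dl)\otimes d\sigma$ since $\int_B|l|^2\,\nu(dl)<\infty$ (the Lévy measure integrates $|l|^2$ on $B$), $V\in L^4(\Omega^\p:L^2)$, and $\m\in L^4(\Omega^\p:L^4(0,T:L^4))$ by \eqref{eqn L4L4L4 convergence mn prime} (Hölder in $\omega^\p$ and $L^4(\mathcal{O})\hookrightarrow L^2(\mathcal{O})$ combining the last two). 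Hence $A_n\to0$, and together with $B_n\to0$ this proves the lemma. The step I expect to be the main obstacle is precisely this analysis of $A_n$: performing the $\Phi_n$--$\Phi$ comparison cleanly --- it is what produces the crucial $(I-P_n)$ factor --- and then disentangling the two sources of $n$-dependence in $\Phi_n(l,\m_n(s))-\Phi(l,\m_n(s))$, namely the moving argument $\m_n$ and the projection $P_n$, so that the a priori bounds of Lemmas~\ref{lemma bounds 1 lemma mn prime} and~\ref{Lemma FG approximations bounds on m prime} together with the strong convergence \eqref{eqn L4L4L4 convergence mn prime} supply a legitimate integrable dominating function.
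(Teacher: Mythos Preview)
Your proof is correct and in fact more complete than the paper's. The paper's argument opens with the sentence ``As before, it suffices to show the calculations for $G(\m_n)-G(\m)$, that is, not considering the projection operator $P_n$,'' and then only carries out what you call the $B_n$ estimate: bound $\l|\l\langle G(\m_n)-G(\m),V\r\rangle_{L^2}\r|^2$ by Cauchy--Schwarz, apply the Lipschitz inequality \eqref{eqn G Lipschitz} with the $l$-dependent constant, use H\"older in $\omega'$ against $V\in L^4(\Omega':L^2)$, and conclude via \eqref{eqn L4L4L4 convergence mn prime}. The passage from $G_n$ to $G$ is simply asserted, not argued.

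Your contribution is precisely to justify that passage. Since $G_n=\Phi_n-\mathrm{id}$ with $\Phi_n$ the flow of $l\bar g_n=lP_n\bar g$ (not $P_n\Phi$), this is not a one-line remark, and your ODE comparison
\[
\frac{d}{dt}(\phi_n-\phi)=l\,P_n[(\phi_n-\phi)\times h]+l\,(I-P_n)(\phi\times h+h)
\]
followed by Gronwall is exactly the right mechanism: it isolates the $(I-P_n)$ factor acting on a fixed ($n$-independent) $L^2$ element, after which the further split into a Lipschitz piece (handled like $B_n$) and a residual piece (handled by dominated convergence, with domination supplied by Lemma~\ref{lemma linear growth Phi}, $\int_B|l|^2\,\nu(dl)<\infty$, and the $L^4$ bounds on $\m$ and $V$) is clean and correct. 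In short, the paper's route and yours coincide on $B_n$; your treatment of $A_n$ fills the gap the paper leaves implicit.
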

	\begin{proof}[Proof of Lemma \ref{lemma convergence for Gn existence of weak martingale solution}]
		As before, it suffices to show the calculations for $G(\m_n) - G(\m)$, that is, not considering the projection operator $P_n$.
		\begin{align}
			\nonumber & \mathbb{E}^\p\l[ \int_{0}^{T} \int_{B} \l| \l\langle G\big(\m_n(s)\big) - G\big(\m(s)\big) , V \r\rangle_{L^2} \r|^2 \, \nu(dl) \, ds \r] \\
			\nonumber \leq & \mathbb{E}^\p\l[ \int_{0}^{T} \int_{B} \l|  G_n\big(\m_n(s)\big) - G\big(\m(s)\big) \r|_{L^2}^2 \l| V \r|_{L^2}^2 \, \nu(dl) \, ds \r] \\
			\nonumber \leq & \mathbb{E}^\p\l[ \l| V \r|_{L^2}^2 \int_{0}^{T} \l|  \m_n(s) - \m(s) \r|_{L^2}^2  \int_{B} C(l)   \, \nu(dl) \, ds \r] \\
			\nonumber \leq & C \l[ \mathbb{E}^\p \l| V \r|_{L^2}^4 \r]^{\frac{1}{2}} \mathbb{E}^\p \l[\int_{0}^{T} \int_{B} C(l)^2 \l|  \m_n(s) - \m(s) \r|_{L^2}^4  \, \nu(dl) \, ds \r] \\
			\leq & \l[ \mathbb{E}^\p \int_{0}^{T} \l|  \m_n(s) - \m(s) \r|_{L^2}^4  \, ds \r]^{\frac{1}{2}}. 
		\end{align}
		The right hand side converges to $0$ as $n\to\infty$.
	\end{proof}
	The above lemma, combined with the It\^o-L\'evy Isometry concludes the convergence for the stochastic integral. That is,
	\begin{equation}
		\lim_{n\to\infty} \mathbb{E}^\p\l| \int_{0}^{T} \int_{B}  \l\langle G_n(\m_n(s)) - G(\m(s)) , V \r\rangle_{L^2}  \, \eta^\p(dl,ds) \r|^2 = 0.
	\end{equation}

	\subsubsection{\textbf{Identifying the Driving Jump Process:}}\label{Section Identifying the Driving Jump Process}

	We give a brief proof. Similar details can be found in Section 6.3 in \cite{ZB+UM_WeakSolutionSLLGE_JumpNoise}.
	
	Let $V\in L^4(\Omega^\p:H^1)$. Let us set the following notations for this subsection.
	\begin{align}
		\nonumber M_n(\m_n , \tilde{\eta}^\p_n , V)(t) : = & \int_{0}^{t} \bigg[ \l\langle \m_n(0) , V \r\rangle_{X^{\beta}} 
		+ \l\langle \Delta \m_n(s) , V \r\rangle_{L^2}
		+ \l\langle m_n(s) \times \Delta \m_n(s) , V \r\rangle_{L^2} \\
		\nonumber & - \l\langle \l( 1 + \l| \m_n(s) \r|^2\r) \m_n(s) , V \r\rangle_{H^1}  \bigg] \, ds
		+ \int_{0}^{t} \int_{B} \l\langle G\big(l,\m_n(s)\big) , V \r\rangle_{L^2} \tilde{\eta}^\p_n(dl , ds)\\
		& + \int_{0}^{t} \l\langle b\big(\m_n(s)\big) , V \r\rangle_{L^2} \, ds,
	\end{align}
	and
	\begin{align}
		\nonumber M(\m , \tilde{\eta}^\p , V)(t) : = & \int_{0}^{t} \bigg[ \l\langle \m(0) , V \r\rangle_{X^{\beta}} 
		+ \l\langle \Delta \m(s) , V \r\rangle_{L^2}
		+ \l\langle m(s) \times \Delta \m(s) , V \r\rangle_{L^2} \\
		\nonumber & - \l\langle \l( 1 + \l| \m(s) \r|^2\r) \m(s) , V \r\rangle_{H^1}  \bigg] \, ds
		+ \int_{0}^{t} \int_{B} \l\langle G\big(l,\m(s)\big) , V \r\rangle_{L^2} \tilde{\eta}^\p(dl , ds)\\
		& + \int_{0}^{t} \l\langle b\big(\m(s)\big) , V \r\rangle_{L^2} \, ds.
	\end{align}
	Following Section 6 in \cite{ZB+UM_WeakSolutionSLLGE_JumpNoise}, we write two results in the form of two steps.
	\begin{enumerate}
		\item[\textbf{Step 1:}]
		\begin{enumerate}
			\item[(a)] 
			\begin{equation}
				\lim_{n\to\infty} \mathbb{E}^\p \l[ \int_{0}^{T} \l| \m_n(t) - \m(t) \r|_{L^2}^2 \, dt \r]     = 0.
			\end{equation}
			This follows from Lemma \ref{lemma convergence lemma 1 of FG approximates} and the continuous embedding $L^4\hookrightarrow L^2$.
			
			\item[(b)]
			\begin{equation}
				\lim_{n\to\infty} \mathbb{E}^\p \l[ \int_{0}^{T} \l| M_n(\m_n , \tilde{\eta}^\p_n , V)(t) - M(\m , \tilde{\eta}^\p , V)(t) \r|^2 \, dt \r]     = 0.
			\end{equation}
			This is a combination of Lemma \ref{lemma convergence lemma 1 of FG approximates}, Lemma \ref{lemma convergence for bn existence of weak martingale solution} and Lemma \ref{lemma convergence for Gn existence of weak martingale solution}.
		\end{enumerate}
		
		\item[\textbf{Step 2:}]
		For each $t\in[0,T]$, $\mathbb{P}^\p$-a.s., the following equality holds.
		\begin{equation}
			\l\langle \m(t) , V \r\rangle_{H^1} = M(\m, \tilde{\eta}^\p , V)(t).
		\end{equation}
	\end{enumerate}
	\textbf{Conclusion.} The above equality shows that the process $\m$ satisfies the equality \eqref{eqn weak martingale solution definition eqn weak form}. Further, Lemma \ref{Lemma FG approximations bounds on m prime} implies that the obtained process $\m$ satisfies the bounds \eqref{eqn weak martingale solution definition bound 1} and \eqref{eqn weak martingale solution definition bound 2}. Combining the above steps, we can conclude (see \cite{ZB+UM_WeakSolutionSLLGE_JumpNoise,ZB+BG+TJ_Weak_3d_SLLGE,UM+AAP_2021_LargeDeviationsSNSELevyNoise}) that the obtained limit $\m$ is a weak martingale solution of the problem \eqref{eqn problem considered Marcus Form}.	
	This concludes the proof of Theorem \ref{theorem existence of weak martingale solution}.
	
	\section{Pathwise Uniqueness and Existence Of Strong Solution}\label{Section Pathwise Uniqueness and Existence Of Strong Solution}
	In the following section, we restrict ourselves to $d=1,2$. The pathwise uniqueness will be shown in a later section for a more general equation.	
	Before moving to the pathwise uniqueness part (Theorem \ref{theorem pathwise uniqueness existence of weak martingale solution}), we state a result that occurs as a corollary of the bounds established in Theorem \ref{theorem existence of weak martingale solution}. We refer the reader back to Remark \ref{Remark weak form of Laplacian and cross product terms} to see how the terms are to be understood in the space $(H^1)^\p$.
	
	\begin{corollary}\label{corollary weak martingale solution PDE strong sense}
		Let $\l( \Omega , \mathbb{F} , \mathcal{F} , \mathbb{P} , m , \eta \r)$ denote a weak martingale solution to \eqref{eqn problem considered Marcus Form}. Then the equation \eqref{eqn problem considered Marcus Form} makes sense in $(H^1)^\p$.
	\end{corollary}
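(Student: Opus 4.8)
The plan is to show that if $(\Omega,\mathbb{F},\mathcal{F},\mathbb{P},m,\eta)$ is a weak martingale solution in the sense of Definition \ref{definition weak martingale solution}, then each term appearing on the right-hand side of \eqref{eqn problem considered Marcus Form} is, for almost every $\omega$ and for Lebesgue-a.e.\ $t$, a well-defined element of $(H^1)^\p$, and that the integral identity \eqref{eqn problem considered Marcus Form} holds as an equality in $(H^1)^\p$ (equivalently in $X^{-\beta}$ with $\beta=\tfrac12$). The starting point is the bounds \eqref{eqn weak martingale solution definition bound 1}--\eqref{eqn weak martingale solution definition bound 2}: from $\mathbb{E}\sup_{t}|m(t)|_{H^1}^2\le C$ and $\mathbb{E}\int_0^T|m(t)|_{H^2}^2\,dt\le C$ we get, $\mathbb{P}$-a.s., $m\in L^\infty(0,T;H^1)\cap L^2(0,T;H^2)$, which is exactly the regularity used in Remark \ref{Remark weak form of Laplacian and cross product terms} to interpret $\Delta m$ and $m\times\Delta m$.

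First I would treat the deterministic drift terms one at a time. The term $\Delta m(s)$ lies in $(H^1)^\p$ for a.e.\ $s$ via $\ _{(H^1)^\p}\langle \Delta m(s),\phi\rangle_{H^1}=-\langle\nabla m(s),\nabla\phi\rangle_{L^2}$, and $s\mapsto\Delta m(s)$ is Bochner integrable into $(H^1)^\p$ on $[0,t]$ because $\int_0^t|\nabla m(s)|_{L^2}\,ds<\infty$ a.s.\ (indeed $m\in L^\infty(0,T;H^1)$). For the torque term $m(s)\times\Delta m(s)$, the identity in Remark \ref{Remark weak form of Laplacian and cross product terms} gives $\ _{(H^1)^\p}\langle m(s)\times\Delta m(s),\phi\rangle_{H^1}=-\langle m(s)\times\nabla m(s),\nabla\phi\rangle_{L^2}$; in dimensions $d=1,2$ the embedding $H^1\hookrightarrow L^4$ controls $|m(s)\times\nabla m(s)|_{L^2}\le|m(s)|_{L^4}|\nabla m(s)|_{L^4}$, or more directly one uses $|m(s)\times\nabla m(s)|_{L^{4/3}}\le|m(s)|_{L^4}|\nabla m(s)|_{L^2}$ and $L^{4/3}\hookrightarrow X^{-\beta}$ for $\beta>\tfrac14$, as in the calculations for $I_2$ in Lemma \ref{lemma Aldous Condition for mn}; integrability in $s$ then follows from $m\in L^\infty(0,T;H^1)\cap L^2(0,T;H^2)$. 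The polynomial term $(1+|m(s)|_{\mathbb{R}^3}^2)m(s)$ lies in $L^{4/3}\subset(H^1)^\p$ for a.e.\ $s$ because $|m(s)|_{L^4}^3+|m(s)|_{L^2}\in L^\infty(0,T)$ a.s.\ (again using $H^1\hookrightarrow L^4$ in $d\le 2$, though $d=3$ also works with the stated $L^6$ embeddings), and it is Bochner integrable on $[0,t]$.

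Next I would handle the two jump terms. The operator $b$ of \eqref{eqn definition of operator b} maps $L^2\to L^2$ with linear growth by Lemma \ref{lemma linear growth Lipschitz G and H}, so $s\mapsto b(m(s))$ is a.s.\ an $L^2$-valued (hence $(H^1)^\p$-valued) integrable path on $[0,t]$, using $m\in L^\infty(0,T;L^2)$. For the stochastic integral $\int_0^t\int_B G(l,m(s))\,\tilde\eta(dl,ds)$, the linear growth $|G(l,m(s))|_{L^2}\le C(1+|m(s)|_{L^2})$ and the finiteness of $\nu$ give $\mathbb{E}\int_0^T\int_B|G(l,m(s))|_{L^2}^2\,\nu(dl)\,ds<\infty$, so the integral is a well-defined $L^2$-valued (a fortiori $(H^1)^\p$-valued) càdlàg martingale by the It\^o--L\'evy isometry; this is the same estimate used for $I_5$ in Lemma \ref{lemma Aldous Condition for mn}. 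Finally, since $m_0\in H^1\subset(H^1)^\p$ and \eqref{eqn weak martingale solution definition eqn weak form} holds tested against every $V\in L^4(\Omega^\p:H^1)$, a density argument (the map $\phi\mapsto\langle\cdot,\phi\rangle_{H^1}$ separates points of $(H^1)^\p$, and deterministic $\phi\in H^1$ suffice) upgrades the scalar identity \eqref{eqn weak martingale solution definition eqn weak form} to the vector identity \eqref{eqn problem considered Marcus Form} in $(H^1)^\p$; I would note that each pairing in \eqref{eqn weak martingale solution definition eqn weak form} is precisely the $(H^1)^\p$--$H^1$ duality pairing of the corresponding term just identified. The only mild subtlety — the "main obstacle," though it is modest — is making sure the torque and cubic terms genuinely land in a space whose dual contains $H^1$; this is why the dimension restriction is harmless here (all estimates above go through for $d=1,2,3$ with the $L^q$, $q\le 6$, embeddings), and it is handled cleanly by working in $X^{-\beta}=(H^1)^\p$ with $\beta=\tfrac12$ and invoking $L^{4/3}\hookrightarrow X^{-1/2}$, exactly as fixed at the end of Section \ref{section tightness}.
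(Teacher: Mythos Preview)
Your proposal is correct and follows essentially the same approach the paper gestures at: the paper gives no explicit proof of this corollary, merely stating that it ``occurs as a corollary of the bounds established in Theorem \ref{theorem existence of weak martingale solution}'' and referring to Remark \ref{Remark weak form of Laplacian and cross product terms} for the interpretation of $\Delta m$ and $m\times\Delta m$ in $(H^1)^\p$. You have simply spelled out the term-by-term verification that the paper leaves to the reader, using exactly the same ingredients (the a~priori bounds \eqref{eqn weak martingale solution definition bound 1}--\eqref{eqn weak martingale solution definition bound 2}, the duality interpretation of Remark \ref{Remark weak form of Laplacian and cross product terms}, the embedding $L^{4/3}\hookrightarrow X^{-\beta}$ for $\beta>\tfrac14$ as in Lemma \ref{lemma Aldous Condition for mn}, and the linear growth of $G$ and $b$ from Lemma \ref{lemma linear growth Lipschitz G and H}).
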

	\begin{theorem}\label{theorem pathwise uniqueness existence of weak martingale solution}
		Let $\l( \Omega , \mathbb{F} , \mathcal{F} , \mathbb{P} , m_i , \eta \r)$, $i=1,2$ be two weak martingale solutions (on the same probability space) to the problem \eqref{eqn problem considered Marcus Form}. Then for each $t\in[0,T]$, we have
		\begin{equation}
			m_1(t) = m_2(t),\ \mathbb{P}-a.s.
		\end{equation}
	\end{theorem}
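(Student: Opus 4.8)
The plan is to compare the two solutions through an energy estimate for their difference $w := m_1 - m_2$, the only non-routine ingredients being the control of the exchange (cross-product) terms — which is what forces $d \le 2$ — and a stochastic Gronwall argument to deal with the random, merely time-integrable coefficient that appears. First I would write the equation for $w$. Since $m_1$ and $m_2$ solve \eqref{eqn problem considered Marcus Form} on the same stochastic basis, driven by the same Poisson random measure $\eta$ and with the same initial datum $m_0$, Corollary \ref{corollary weak martingale solution PDE strong sense} gives $w(0)=0$ and, in $(H^1)^\p$,
\[
dw(s) = \bigl[\Delta w(s) + \bigl(m_1\times\Delta m_1 - m_2\times\Delta m_2\bigr)(s) - \bigl(R(m_1) - R(m_2)\bigr)(s) + \bigl(b(m_1(s)) - b(m_2(s))\bigr)\bigr]\,ds + \int_B \bigl(G(l,m_1(s-)) - G(l,m_2(s-))\bigr)\,\tilde\eta(dl,ds),
\]
where $R(v) := (1 + |v|_{\mathbb R^3}^2)\,v$. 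By the bounds of Definition \ref{definition weak martingale solution}, $\mathbb P$-a.s.\ $m_i \in L^\infty(0,T;H^1)\cap L^2(0,T;H^2)$ for $i=1,2$; in particular $w$ is a.s.\ an $L^2$-valued process whose drift lies (for $d \le 2$) in $L^2(0,T;X^{-\beta})$ for a suitable $\beta$, so the It\^o--L\'evy formula applies to $s\mapsto|w(s)|_{L^2}^2$ in the appropriate Gelfand triple, Remark \ref{Remark weak form of Laplacian and cross product terms} being used to interpret and integrate by parts the $\Delta$- and cross-product terms.

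The heart of the proof is the resulting inequality. Using $\langle\Delta w,w\rangle_{L^2}=-|\nabla w|_{L^2}^2$ and, for $H^2$-fields, $\langle a\times\Delta b,c\rangle_{L^2}=-\langle a\times\nabla b,\nabla c\rangle_{L^2}-\langle\nabla a\times\nabla b,c\rangle_{L^2}$, the exchange contribution $\langle m_1\times\Delta w + w\times\Delta m_2,\,w\rangle_{L^2}$ reduces to a finite sum of trilinear terms of the types $\langle\nabla m_i\times\nabla w,w\rangle_{L^2}$ and $\langle w\times\nabla m_i,\nabla w\rangle_{L^2}$ (the dangerous candidate $\langle m_1\times\nabla w,\nabla w\rangle_{L^2}$ vanishing pointwise). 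Each is bounded by $|\nabla m_i|_{L^4}|\nabla w|_{L^2}|w|_{L^4}$, and then the two-dimensional interpolation estimates $|w|_{L^4}^2 \le C|w|_{L^2}\bigl(|w|_{L^2}+|\nabla w|_{L^2}\bigr)$ and $|\nabla m_i|_{L^4}^4 \le C|\nabla m_i|_{L^2}^2|\nabla m_i|_{H^1}^2$, with Young's inequality, dominate it by $\varepsilon|\nabla w|_{L^2}^2 + C_\varepsilon\bigl(1+|\nabla m_1|_{L^4}^4+|\nabla m_2|_{L^4}^4\bigr)|w|_{L^2}^2$. The cubic term is handled the same way via $\bigl||m_1|^2 m_1-|m_2|^2 m_2\bigr|\le C(|m_1|_{\mathbb R^3}^2+|m_2|_{\mathbb R^3}^2)|w|$; the $b$-term by $\langle b(m_1)-b(m_2),w\rangle_{L^2}\le C|w|_{L^2}^2$ from Lemma \ref{lemma linear growth Lipschitz G and H}; and the compensator term coming from the jump integral, which equals $\int_0^t\!\int_B|G(l,m_1(s-))-G(l,m_2(s-))|_{L^2}^2\,\nu(dl)\,ds$, by $\le C\int_0^t|w(s-)|_{L^2}^2\,ds$, again by Lemma \ref{lemma linear growth Lipschitz G and H}. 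Choosing $\varepsilon$ small enough to absorb all $|\nabla w|_{L^2}^2$ contributions into the good term on the left, one arrives at
\[
|w(t)|_{L^2}^2 \le \int_0^t \Xi(s)\,|w(s)|_{L^2}^2\,ds + M(t),\qquad t\in[0,T],
\]
where $M$ is a local martingale with $M(0)=0$ and $\Xi(s):=C\bigl(1+|\nabla m_1(s)|_{L^4}^4+|\nabla m_2(s)|_{L^4}^4+|m_1(s)|_{H^1}^4+|m_2(s)|_{H^1}^4\bigr)$ satisfies $\int_0^T\Xi(s)\,ds<\infty$ $\mathbb P$-a.s., since $\int_0^T|\nabla m_i(s)|_{L^4}^4\,ds\le C|m_i|_{L^\infty(0,T;H^1)}^2\int_0^T|m_i(s)|_{H^2}^2\,ds<\infty$.

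To conclude I would invoke a stochastic Gronwall lemma: the product rule for $t\mapsto|w(t)|_{L^2}^2\,e^{-\int_0^t\Xi(s)\,ds}$ cancels the Gronwall term and leaves $|w(t)|_{L^2}^2\,e^{-\int_0^t\Xi}\le\int_0^t e^{-\int_0^s\Xi}\,dM(s)=:\widetilde M(t)$, a local martingale vanishing at $0$; localizing $\widetilde M$ along a reducing sequence $\sigma_k\uparrow T$, taking expectations, and using non-negativity gives $\mathbb E\bigl[|w(t\wedge\sigma_k)|_{L^2}^2\,e^{-\int_0^{t\wedge\sigma_k}\Xi}\bigr]=0$, hence $w(t\wedge\sigma_k)=0$ $\mathbb P$-a.s., and letting $k\to\infty$ yields $m_1(t)=m_2(t)$ $\mathbb P$-a.s.\ for every $t\in[0,T]$.

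I expect the main obstacle to be precisely the exchange terms: because the LLB equation carries no saturation constraint $|m|=1$ (unlike LLG), the cancellation $\langle m\times\Delta w,w\rangle_{L^2}=-\langle\nabla m\times\nabla w,w\rangle_{L^2}$ is only partial, and the remainder must be genuinely absorbed into the Dirichlet energy by interpolation — this is what restricts the argument to $d\le 2$ and makes essential use of the $L^2(0,T;H^2)$-regularity supplied by Definition \ref{definition weak martingale solution}. A secondary technical point is that $\Xi$ is only integrable, not bounded, in time, which rules out a naive take-expectations-then-Gronwall argument and necessitates the exponential-weight/stochastic-Gronwall device above; in $d=1$ the same scheme goes through with easier Sobolev embeddings.
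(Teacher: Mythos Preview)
Your proposal is correct and follows essentially the same route as the paper: the paper defers the proof to Theorem \ref{theorem pathwise uniqueness stochastic control equation}, where it likewise applies the It\^o formula to $\tfrac12|M_1-M_2|_{L^2}^2$, absorbs the exchange terms into the Dirichlet energy via $d\le 2$ interpolation, uses the Lipschitz bounds of Lemma \ref{lemma linear growth Lipschitz G and H} for the $b$- and $G$-contributions, and then applies the exponential-weight (stochastic Gronwall) device with a stopping-time localization to conclude. The only cosmetic difference is the form of the Gronwall weight: the paper packages it as $\psi_C(s)=C\bigl(1+|m_2|_{L^\infty}(|m_1|_{L^\infty}+|m_2|_{L^\infty})+|m_2|_{H^1}^2|m_2|_{H^2}^2+\cdots\bigr)$, while you write it in terms of $|\nabla m_i|_{L^4}^4$, which is equivalent in $d=2$ via the same interpolation you invoke.
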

	\begin{proof}
		We do not give proof for this theorem. A more general result will be shown in a later section (see Theorem \ref{theorem pathwise uniqueness stochastic control equation}).
	\end{proof}
	Pathwise uniqueness shown in Theorem \ref{theorem pathwise uniqueness existence of weak martingale solution},  the existence of a martingale solution (Theorem \ref{theorem existence of weak martingale solution}) combined with the theory of Yamada and Watanabe \cite{Ikeda+Watanabe} gives us the existence a strong solution
	to the problem \eqref{eqn problem considered Marcus Form}, which is made formal in the following theorem.
	\begin{theorem}\label{theorem existence of strong solution}
		The problem \eqref{eqn problem considered Marcus Form} admits a strong solution.
	\end{theorem}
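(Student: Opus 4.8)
The plan is to deduce Theorem~\ref{theorem existence of strong solution} from the combination of Theorem~\ref{theorem existence of weak martingale solution} and Theorem~\ref{theorem pathwise uniqueness existence of weak martingale solution} via the Yamada--Watanabe principle, adapted to equations driven by a Poisson random measure. The starting point is that, for $d=1,2$, Theorem~\ref{theorem existence of weak martingale solution} provides a weak martingale solution $\l(\Omega^\p,\mathcal{F}^\p,\mathbb{F}^\p,\mathbb{P}^\p,\m,\eta^\p\r)$ of \eqref{eqn problem considered Marcus Form}, i.e. a solution on some stochastic basis carrying a time-homogeneous Poisson random measure with the prescribed intensity $\text{Leb}\otimes\nu$, while Theorem~\ref{theorem pathwise uniqueness existence of weak martingale solution} asserts that any two weak martingale solutions carried by the \emph{same} stochastic basis and driven by the \emph{same} $\eta$ coincide almost surely. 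These are exactly the two hypotheses (weak existence and pathwise uniqueness) required to run the Yamada--Watanabe argument.

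First I would set up the canonical path space. Since the driving noise is a pure-jump L\'evy process (equivalently, a Poisson random measure) rather than a Wiener process, I would invoke the Poisson-noise version of the Yamada--Watanabe theorem (in the spirit of Kurtz's abstract formulation, and as used for related magnetization and liquid-crystal models in \cite{ZB+UM_WeakSolutionSLLGE_JumpNoise,ZB+UM+AAP_2019_MartingaleSolutions_NematicLiquidCrystals_PureJumpNoise,UM+AAP_2021_LargeDeviationsSNSELevyNoise}). Concretely, one works on the product of the solution path space $\mathcal{Z}_T$ (or $\mathbb{D}([0,T]:(H^1)^\p)$, $\mathbb{D}([0,T]:H^1_{\text{w}})$, etc.) and the configuration space $\mathbb{M}_{\bar{\mathbb{N}}}\l([0,T]\times B\r)$, on which the joint law of $\l(\m,\eta^\p\r)$ lives. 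Uniqueness in law then follows from pathwise uniqueness by the standard Yamada--Watanabe / Gy\"ongy--Krylov gluing: given two weak solutions, construct on an enlarged space a joint realization of the pair of solution processes driven by a common Poisson random measure (using regular conditional probabilities to disintegrate each solution along its noise component), apply Theorem~\ref{theorem pathwise uniqueness existence of weak martingale solution} to conclude that the two solution components coincide, and hence that the two original laws agree.

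Then I would extract the measurable solution map. Once uniqueness in law is available, the same disintegration shows that there is a measurable map $\Xi$ from the canonical space of the Poisson random measure (together with the $\mathcal{F}_0$-measurable initial datum) into the solution path space such that every weak solution equals $\Xi$ applied to its own driving noise, $\mathbb{P}$-a.s. In particular, on the original probability space $\l(\Omega,\mathcal{F},\mathbb{F},\mathbb{P}\r)$ carrying the process $L$ (equivalently $\eta$) and the initial datum $m_0$ of Assumption~\ref{assumption main assumption}, the process $m:=\Xi(\eta,m_0)$ is $\mathbb{F}$-adapted after the usual augmentation, satisfies the bounds \eqref{eqn weak martingale solution definition bound 1}--\eqref{eqn weak martingale solution definition bound 2}, and solves \eqref{eqn problem considered Marcus Form}; this is precisely a strong solution, and Theorem~\ref{theorem pathwise uniqueness existence of weak martingale solution} makes it the unique one.

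The main obstacle is not any analytic estimate --- all of those are already contained in Theorems~\ref{theorem existence of weak martingale solution} and~\ref{theorem pathwise uniqueness existence of weak martingale solution} --- but the bookkeeping needed to apply Yamada--Watanabe in the present setting: (i) checking that the relevant path spaces are (at worst) Lusin/Souslin, so that regular conditional probabilities and measurable selections are legitimate, which is exactly the structure delivered by the Jakubowski--Skorokhod framework for $\mathcal{Z}_T$ exploited in Lemma~\ref{lemma use of Skorohod Theorem}; and (ii) making precise the ``same driving noise'' identification for a Poisson random measure (rather than a Wiener process), so that pathwise uniqueness in the sense of Theorem~\ref{theorem pathwise uniqueness existence of weak martingale solution} feeds correctly into the joint-law argument. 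Both points are by now standard for jump-driven SPDEs, so I would present the construction at the level of detail above and refer to \cite{Ikeda+Watanabe} for the Brownian template and to \cite{ZB+UM_WeakSolutionSLLGE_JumpNoise,UM+AAP_2021_LargeDeviationsSNSELevyNoise} for the Poisson-noise adaptation, rather than reproducing the argument in full.
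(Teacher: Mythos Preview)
Your proposal is correct and follows essentially the same approach as the paper: the paper simply states that pathwise uniqueness (Theorem~\ref{theorem pathwise uniqueness existence of weak martingale solution}) together with the existence of a weak martingale solution (Theorem~\ref{theorem existence of weak martingale solution}) and the Yamada--Watanabe theory \cite{Ikeda+Watanabe} yields a strong solution, without further elaboration. Your write-up supplies considerably more detail on the Poisson-noise adaptation and the measurable-selection/disintegration mechanics than the paper does, but the underlying argument is identical.
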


	\section{The Large Deviations Principle}\label{section The Large Deviations Principle}
	We remind the reader that we will be restricting ourselves to dimensions $1$ and $2$ (i.e. $d = 1,2$).
	\subsection{Background, Definitions, Formulation of the Problem}
	\begin{definition}[Good rate function]\label{definition rate function}
		Let $Z$ be a Polish space. A function $I:Z\to[0,\infty]$ is called a \textbf{rate function} if $I$ is lower semicontinuous. A rate function $I$ is a good rate function if for arbitrary $M\in[0,\infty)$, the level set $K_M = \{ x : I(x) \leq M \}$ is a compact subset of $Z$.
	\end{definition}

	\begin{definition}[Large deviations principle]\label{definition LDP}
		A family of probability measures $\{ \mathbb{P}_{\varepsilon} : \varepsilon > 0 \}$ is said to satisfy the Large Deviations Principle (LDP) on $Z$ with a good rate funciton $I:Z\to[0,\infty]$ if 
		\begin{enumerate}
			\item for each closed set $F_1\subset Z$,
			\begin{equation}
				\limsup_{\varepsilon\to0} \varepsilon \log \mathbb{P}_{\varepsilon}\l(F_1\r) \leq - \inf_{x\in F_1} I(x),
			\end{equation}
			\item for each open set $F_2\subset Z$,
			\begin{equation}
				\liminf_{\varepsilon\to0} \varepsilon \log \mathbb{P}_{\varepsilon}\l(F_2\r) \geq - \inf_{x\in F_2} I(x).
			\end{equation}
		\end{enumerate}
	\end{definition}	
	For $\varepsilon>0$, our aim is to establish Freidlin-Wentzell type large deviations of the solutions of the following equation on a given probability space $\l(\Omega,\mathcal{F},\mathbb{F},\mathbb{P}\r)$:
	\begin{align}\label{eqn problem considered with epsilon LDP with L}
		dm^{\varepsilon}(t) = \l( \Delta m^{\varepsilon}(t) + m^{\varepsilon}(t) \times \Delta m^{\varepsilon}(t) - \l( 1 + \l| m^{\varepsilon}(t) \r|^2 \r) m^{\varepsilon}(t) \r) \, dt + \varepsilon \bar{g}\l( m^{\varepsilon}(t) \r) \diamond dL^{\varepsilon^{-1}}(t),\ t\geq 0,
	\end{align}
	with $m^{\varepsilon}(0) = m_0$.\\
	We establish some notations here in order to introduce the stochastic control equation in Section \ref{The Stochastic Control Equation}. We refer the reader to \cite{Zhai+Zhang_2015_LargeDeviations_2DSNSE_MultiplicativeLevyNoise,UM+AAP_2021_LargeDeviationsSNSELevyNoise} for more details.
	
	Let $\eta$ denote the time homogeneous Poisson Random Measure (PRM) associated with $L$. Therefore
	\begin{equation*}
		\eta\bigl( [0,t] \times A \bigr) := \# \l\{ s\in[0,t] : L(s) - L(s-)\in A \r\}.
	\end{equation*}	
	If $\nu$ is the compensator, then
	\begin{equation*}
		t\nu(A) := \mathbb{E}\l[ \eta \bigl( [0,t] \times A \bigr) \r].
	\end{equation*}
	Let us fix $\varepsilon>0$ for now. The L\'evy process $L$ (with the continuous part 0) is scaled as follows.
	\begin{equation}
		L^{\varepsilon^{-1}}(t) := L(\varepsilon^{-1}t),\ t\geq 0.
	\end{equation}
	Let $\eta^{\varepsilon^{-1}}$ denote the PRM corresponding to the process $L^{\varepsilon^{-1}}$. One can show then that
	\begin{equation}
		\eta^{\varepsilon^{-1}} \l( [0,t] \times A \r) = \eta \bigl( [0,\varepsilon^{-1}t] \times A \bigr).
	\end{equation}
	Hence,
	\begin{equation}
		\nu^{\varepsilon^{-1}} = \varepsilon^{-1}\nu.
	\end{equation}
	The compensated PRM corresponding to $L^{\varepsilon^{-1}}$, denoted by $\tilde{\eta}$, is given by
	\begin{equation}
		\tilde{\eta}^{\varepsilon^{-1}} = \eta^{\varepsilon^{-1}} - t\nu^{\varepsilon^{-1}} = \eta^{\varepsilon^{-1}} - \varepsilon^{-1} t \nu.
	\end{equation}
	
	\subsection{Marcus Mapping for the Perturbed Equation}
	The subsection defines the Marcus mapping corresponding to the jump process (for the equation \eqref{eqn problem considered with epsilon LDP with L}).
	Let $\Phi^{\varepsilon}$ denote the solution to the ordinary differential equation
	\begin{equation}\label{eqn ODE for Marcus Mapping Phi epsilon}
		\frac{d\Phi^{\varepsilon}}{dt}\l(t,l,x\r) = \varepsilon l\bar{g}\l( \Phi^{\varepsilon}\l(t,l,x\r)\r),\ t\in\mathbb{R}_{+},\ l\in \mathbb{R}, x\in L^2,
	\end{equation}
	with $\phi\l(0,l,x\r) = x$.
	By change of variables, we can see that for $\varepsilon,t \geq 0$
	\begin{equation}
		\Phi^{\varepsilon}\l(t,l,x\r) = \Phi \l(\varepsilon t,l,x\r).
	\end{equation}
	We now write the equation \eqref{eqn problem considered with epsilon LDP with L} with the help of the Marcus mapping $\Phi^{\varepsilon}$.
	
	\begin{align}\label{eqn problem considered with epsilon Marcus Form 1}
		\nonumber dm^{\varepsilon}(t) = & \l( \Delta m^{\varepsilon}(t) + m^{\varepsilon}(t) \times \Delta m^{\varepsilon}(t) - \l( 1 + \l| m^{\varepsilon}(t) \r|^2 \r) m^{\varepsilon}(t) \r) \, dt \\
		\nonumber & + \int_{B} \bigl[ \Phi^{\varepsilon}\bigl( 1, l, m^{\varepsilon}(t) \bigr) - m^{\varepsilon}(t) \bigr] \, \tilde{\eta}^{\varepsilon^{-1}}(dl,dt) \\
		& + \int_{B} \bigl[ \Phi^{\varepsilon}\bigl( 1, l, m^{\varepsilon}(t) \bigr) - m^{\varepsilon}(t) - \varepsilon l\bar{g}\bigl( m^{\varepsilon} (t) \bigr)\bigr] \, \nu^{\varepsilon^{-1}}(dl)\,dt,
	\end{align}
	with $m^{\varepsilon}(0) = m_0$.
	The above equation can be written as follows.
	\begin{align}\label{eqn problem considered with epsilon Marcus Form 2}
		\nonumber dm^{\varepsilon}(t) = & \l( \Delta m^{\varepsilon}(t) + m^{\varepsilon}(t) \times \Delta m^{\varepsilon}(t) - \l( 1 + \l| m^{\varepsilon}(t) \r|^2 \r) m^{\varepsilon}(t) \r) \, dt \\
		\nonumber & + \int_{B} \bigl[ \Phi \bigl( \varepsilon, l, m^{\varepsilon}(t) \bigr) - m^{\varepsilon}(t) \bigr] \, \tilde{\eta}^{\varepsilon^{-1}}(dl,dt) \\
		& + \int_{B} \l[ \Phi\l( \varepsilon, l, m^{\varepsilon}(t) \r) - m^{\varepsilon}(t) - \varepsilon l\bar{g}\l( m^{\varepsilon} (t)  \r)\r] \, \nu^{\varepsilon^{-1}}(dl)\,dt.
	\end{align}
	We define the following operators similarly as done before in Section \ref{section Existence of a Weak Martingale Solution}.
	\begin{align*}
		G\l( \varepsilon , l , v \r) & = \Phi\l( \varepsilon , l , v \r) - v ,\\
		H\l( \varepsilon , l , v \r) & = \Phi\l( \varepsilon , l , v \r) - v - l\varepsilon\bar{g}(v),\\
		b(\varepsilon , v) & = \int_{B} H\l( \varepsilon , l , v \r) \, \nu^{\varepsilon^{-1}}(dl).
	\end{align*}
	Note that
	\begin{equation}
		b(\varepsilon , v) = \int_{B} H\l( \varepsilon , l , v \r) \, \nu^{\varepsilon^{-1}}(dl) = \varepsilon^{-1} \int_{B} H\l( \varepsilon , l , v \r) \, \nu(dl).
	\end{equation}
	Effectively, equation \eqref{eqn problem considered with epsilon Marcus Form 2} can be written as
	
	\begin{align}\label{eqn problem considered with epsilon Marcus Form 3}
		\nonumber m^{\varepsilon}(t) = & m_0 + \int_{0}^{t} \l( \Delta m^{\varepsilon}(s) + m^{\varepsilon}(s) \times \Delta m^{\varepsilon}(s) - \l( 1 + \l| m^{\varepsilon}(s) \r|^2 \r) m^{\varepsilon}(t) \r) \, ds \\
		& + \int_{0}^{t} \int_{B} \bigl[ G \bigl( \varepsilon , l , m^{\varepsilon}(s) \bigr) \bigr] \, \tilde{\eta}^{\varepsilon^{-1}}(dl,ds) 
		+ \varepsilon^{-1} \int_{0}^{t} b \bigl(\varepsilon , m^{\varepsilon}(s) \bigr) \,ds \ t\geq 0.
	\end{align}

	\begin{remark}
		A definition for a weak martingale solution for the problem \eqref{eqn problem considered with epsilon Marcus Form 3} ( for $\varepsilon>0$ ) can be given in the spirit of Definition \ref{definition weak martingale solution}. In fact, \eqref{eqn problem considered Marcus Form} is same as \eqref{eqn problem considered with epsilon Marcus Form 3} with $\varepsilon = 1$. Theorem \ref{theorem existence of weak martingale solution}, Theorem \ref{theorem existence of strong solution} can be generalized for the equation \eqref{eqn problem considered with epsilon Marcus Form 3} (replacing the equation \eqref{eqn problem considered Marcus Form}). Pathwise uniqueness for solutions is shown in Theorem \ref{theorem pathwise uniqueness stochastic control equation}.
	\end{remark}
	The following theorem is a consequence of pathwise uniqueness and the theory of Yamada and Watanabe (see \cite{Ondrejat_thesis} for instance).
	
	\begin{theorem}\label{theorem existence of strong solution and J epsilon}
		Let the assumptions of Theorem \ref{theorem existence of weak martingale solution} hold (with $d = 1,2$). Then the problem \eqref{eqn problem considered with epsilon Marcus Form 3} has a strong solution (which is also unique in law) in the following sense.
		\begin{enumerate}
			\item For any $\varepsilon>0$, if $\l( \Omega , \mathcal{F} , \mathbb{F} , m , \eta^{\varepsilon^{-1}}\r)$
			and $\l( \Omega^\p , \mathcal{F}^\p , \mathbb{F}^\p , m^\p , \l(\eta^{\varepsilon^{-1}}\r)^\p\r)$ are two martingale solutions to the problem \eqref{eqn problem considered with epsilon Marcus Form 3} then they have the same laws on $\mathbb{U}_T$.
			\item For every $\varepsilon>0$, there exists a Borel measurable function 
			\begin{equation}\label{eqn definition of J epsilon}
				J^{\varepsilon}: \mathbb{M}\to\mathbb{U}_T,
			\end{equation}
			such that the following holds.
			Let $\l(\Omega , \mathcal{F} , \mathbb{F} , \mathbb{P}\r)$ be a filtered probability space (satisfying the usual hypotheses). Let $\eta$ be an arbitrary $\mathbb{R}$ valued time homogeneous Poisson random measure defined on the said probability space and let $m^{\varepsilon}:= J^{\varepsilon}(\eta)$. Then $\l(\Omega , \mathcal{F} , \mathbb{F} , \mathbb{P} , m^{\varepsilon} , \eta^{\varepsilon^{-1}}\r)$ is a weak martingale solution to the problem \eqref{eqn problem considered with epsilon Marcus Form 3}.
		\end{enumerate}
	\end{theorem}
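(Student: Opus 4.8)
The plan is to derive both assertions from the Yamada--Watanabe principle for stochastic evolution equations driven by a Poisson random measure; the two ingredients required are a (suitably generalised) existence result for \eqref{eqn problem considered with epsilon Marcus Form 3} and pathwise uniqueness for it.

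First I would observe that, for every fixed $\varepsilon>0$, a weak martingale solution to \eqref{eqn problem considered with epsilon Marcus Form 3} exists. The deterministic rescaling $L^{\varepsilon^{-1}}(t)=L(\varepsilon^{-1}t)$ amounts to replacing the intensity $\nu$ by $\varepsilon^{-1}\nu$ and the Marcus flow $\Phi(1,\cdot,\cdot)$ by $\Phi(\varepsilon,\cdot,\cdot)$, and none of the structural facts used in Section \ref{section Existence of a Weak Martingale Solution} is thereby affected: the linear growth and Lipschitz estimates for the Marcus map and for the operators $G,H,b$ (Lemma \ref{lemma linear growth Lipschitz G and H}, Lemma \ref{lemma Lipschitz Continuity Linear Growth Phi H1}, and their Galerkin analogues) hold uniformly for $\varepsilon$ in any bounded range, and the energy bounds of Lemmas \ref{lemma bounds 1}--\ref{lemma bounds 4} go through with constants that depend on $\varepsilon$ only through $\varepsilon^{-1}$. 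Hence the Faedo--Galerkin approximation, the Aldous/tightness argument, and the Skorohod--Jakubowski representation of Theorem \ref{theorem existence of weak martingale solution} carry over verbatim and produce a weak martingale solution $\l(\Omega^\p,\mathcal F^\p,\mathbb F^\p,\mathbb P^\p,m^{\varepsilon},\eta^\p\r)$ satisfying the bounds of Lemma \ref{Lemma FG approximations bounds on m prime}. Second, pathwise uniqueness for \eqref{eqn problem considered with epsilon Marcus Form 3} holds for $d=1,2$: two weak martingale solutions built over the same filtered probability space and driven by the same Poisson random measure coincide $\mathbb P$-a.s. This is precisely Theorem \ref{theorem pathwise uniqueness stochastic control equation}, which I take as given here.

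The substantive step is the invocation of the Yamada--Watanabe theorem in the version adapted to equations driven by Poisson random measures over a Lusin state space (Ondrejat \cite{Ondrejat_thesis}). One checks that the path space $\mathbb{U}_T$ and the space $\mathbb{M}$ of configurations of time-homogeneous Poisson random measures on $[0,T]\times B$ are Polish (hence Lusin), that Definition \ref{definition weak martingale solution} in its $\varepsilon$-analogue fits the abstract notion of a weak solution---$m^{\varepsilon}$ is $\mathbb F^\p$-progressively measurable, $\eta^\p$ is $\mathbb F^\p$-adapted with compensator $\text{Leb}\otimes\nu$, and the joint law of $(m^{\varepsilon},\eta^\p)$ is carried by the appropriate Borel subset of $\mathbb{U}_T\times\mathbb{M}$---and that pathwise uniqueness holds in the sense just recalled. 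The abstract theorem then yields simultaneously: (i) uniqueness in law of $m^{\varepsilon}$ on $\mathbb{U}_T$, which is assertion (1); and (ii) a Borel measurable map $J^{\varepsilon}:\mathbb{M}\to\mathbb{U}_T$ such that, on any filtered probability space $\l(\Omega,\mathcal F,\mathbb F,\mathbb P\r)$ carrying an $\mathbb R$-valued time-homogeneous Poisson random measure $\eta$ with intensity $\text{Leb}\otimes\nu$, the process $m^{\varepsilon}:=J^{\varepsilon}(\eta)$ is $\mathbb F$-adapted and $\l(\Omega,\mathcal F,\mathbb F,\mathbb P,m^{\varepsilon},\eta^{\varepsilon^{-1}}\r)$ is a weak martingale solution of \eqref{eqn problem considered with epsilon Marcus Form 3}; this is assertion (2), with $J^{\varepsilon}$ as in \eqref{eqn definition of J epsilon}.

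The main obstacle I expect is exactly this last, abstract, step: carrying out the measurable-selection/factorisation argument that upgrades ``weak existence $+$ pathwise uniqueness'' to a measurable solution map, which in the present infinite-dimensional jump setting requires verifying the Lusin structure of $\mathbb{U}_T$ and $\mathbb{M}$, the adaptedness of $J^{\varepsilon}(\eta)$ to the augmented filtration generated by $\eta$, and that $J^{\varepsilon}$ is a genuine function of the driving measure alone (no continuous/Brownian component being present). Once these hypotheses are checked, both assertions of the theorem follow formally.
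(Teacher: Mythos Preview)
Your proposal is correct and follows exactly the paper's approach: the paper states this theorem as ``a consequence of pathwise uniqueness and the theory of Yamada and Watanabe (see \cite{Ondrejat_thesis} for instance)'' without further elaboration, relying on the Remark preceding the theorem that existence (Theorem \ref{theorem existence of weak martingale solution}) generalises to \eqref{eqn problem considered with epsilon Marcus Form 3} and that pathwise uniqueness is covered by Theorem \ref{theorem pathwise uniqueness stochastic control equation}. Your write-up is considerably more detailed than the paper's single sentence, but the underlying strategy---weak existence plus pathwise uniqueness fed into the Yamada--Watanabe machinery of \cite{Ondrejat_thesis}---is identical.
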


	\subsection{Notations, some spaces:}\label{Section Notations, Some Spaces} We recall that $B = B(0,1)\backslash\{0\}$. Let $B_T = [0,T] \times B$. Let $\mathbb{M} = \mathcal{M}(B_T)$ denote the space of all measures $\mathcal{V}$ on $\l( B_T,\mathcal{B}(B_T) \r)$ such that $\mathcal{V}(K)<\infty$, for any compact subset $K\subset B_T$.
	We endow $\mathbb{M}$ with the topology $\mathcal{T}\l(\mathbb{M}\r)$, which is the weakest topology such that for each $v\in C_C(B_T)$, the map
	\begin{equation}
		\mathcal{V} \mapsto \l(v,\mathcal{V}\r) : = \int_{B_T} v(l,s) \,  \mathcal{V}(dl,ds),
	\end{equation}
	is continuous. This topology can be metrized such that $\mathbb{M}$ is a Polish space.
	Let $X = B \times [0,\infty)$ and $X_T = B_T \times [0,\infty )$. The spaces $\bar{\mathbb{M}} = \mathcal{M}(X_T)$ and $\mathcal{T(\bar{\mathbb{M}})}$ are defined analogously.
	There exists a unique probability measure $\bar{\mathbb{P}}$ on $\l(\bar{\mathbb{M}},\mathcal{T}\l(  \bar{\mathbb{M}}  \r)\r)$ such that the canonical map $\bar{\eta}(\bar{m}) = \bar{m}$ is a Poisson random measure with intensity measure $\nu(dl)\,dt\,dr$. The corresponding compensated Poisson random measure is denoted by $\tilde{\bar{\eta}}$ and given by
	\begin{equation}
		\tilde{\bar{\eta}}\l(dt\,dl\,dr\r) := \bar{\eta} \l(dt\,dl\,dr\r) = \nu(dl)\,dt\,dr.
	\end{equation}
	Let $\bar{\mathcal{F}}$ denote the completion of $\sigma\l\{ \bar{\eta}\l( (0,s] \times D \r) : s\in[0,t],\ D\in\mathbb{X} \r\}$ under $\bar{\mathbb{P}}$.	
	Let $\bar{\mathbb{F}} : = \l\{\bar{\mathcal{F}}_t\r\}_{t\in[0,T]}$.	
	Let $\bar{\mathcal{P}}$ denote the $\bar{\mathbb{F}}$ predictable sigma field on $[0,T] \times \bar{\mathbb{M}}$, with the filtration $\l\{\bar{\mathcal{F}}_t\r\}_{t\in[0,T]}$ on $\l( \bar{\mathbb{M}} ,\mathcal{B}\l(\bar{\mathbb{M}}\r)\r)$.	
	Let $\bar{\mathcal{A}}$ denote the class of all $\l( \bar{\mathcal{P}} \times \mathcal{B}(B) \r)\backslash \mathcal{B}([0,\infty))$-measurable maps
	\begin{equation*}
		\phi: B_T\times \bar{\mathbb{M}} \to [0,\infty).
	\end{equation*}
	For $\phi\in\bar{\mathcal{A}}$, define a counting process $\eta_C^{\phi}$ (denoted by $\eta^{\phi}$ for brevity) on $B$ by
	\begin{equation}
		\eta^{\phi}\l( (0,t] \times D \r) = \int_{\l( (0,t] \times D \r)} \int_{(0,\infty)} \chi_{[0,\phi(s,l)]}(r) 
		\, \bar{\eta}\l(ds\,dl\,dr\r),\ t\in[0,T],\ D\in\mathcal{B}(B).
	\end{equation}	
	Similarly, we define
	\begin{equation}
		\tilde{\eta}^{\phi}\l( (0,t] \times D \r) = \int_{\l( (0,t] \times D \r)} \int_{(0,\infty)} \chi_{[0,\phi(s,l)]}(r) \, \tilde{\bar{\eta}}\l(ds\,dl\,dr\r),\ t\in[0,T],\ D\in\mathcal{B}(B).
	\end{equation}	
	$\eta^{\phi}$ is a controlled Poisson random measure. Clearly,
	\begin{equation}
		\tilde{\eta}^{\phi}\l( (0,t] \times D \r) = \eta^{\phi}\l( (0,t] \times D \r) - \int_{(0,t] \times D} \phi(s,l) \, \nu(dl) \, ds.
	\end{equation}	
	Let $K\in\mathbb{N}$.
	\begin{equation}
		\mathcal{S}^K := \l\{  \theta : B_T \to [0,\infty) : \mathcal{L}_T \leq K \r\},
	\end{equation}
	where
	\begin{equation}
		\mathcal{L}_T : = \int_{0}^{T} \int_{B} \biggl[ \biggl( \theta (t,l) \log \bigl(\theta (t,l)\bigr) \biggr) - \theta(t,l) + 1 \biggr] \nu(dl)\, dt.
	\end{equation}
	A function $\theta\in\mathcal{S}^K$ can be identified with a measure $\nu^{\theta}\in\mathbb{M}$, defined by
	\begin{equation}
		\nu^{\theta}(A_T) = \int_{A_T}\theta(t,l) \, \nu(dl) \, dt,\ A_T\in\mathcal{B}(B_T).
	\end{equation}
	That is,
	\begin{equation}
		\frac{\nu^{\theta}(dl,dt)}{\nu(dl)dt} = \theta.
	\end{equation}	
	This identification induces a topology on $\mathcal{S}^K$, under which $\mathcal{S}^K$ is a compact space. Let us denote
	\begin{equation}
		\mathbb{S} = \cup_{K\in\mathbb{N}}\, \mathcal{S}^K.
	\end{equation}
	For $\phi\in\mathbb{S}$, we have
	\begin{equation}
		\tilde{\eta}^{\phi}\l( A_T \r) = \eta^{\phi}\l( A_T \r) - \nu^{\phi}\l( A_T \r),\ A_T\in\mathcal{B}(B_T).
	\end{equation}
	The bounds (especially linear growth) established in Lemma \ref{lemma linear growth Lipschitz G and H} can also be shown for the operators $G(\varepsilon , l , \cdot),\ H(\varepsilon , l , \cdot)$. In particular, we make precise the dependence of the constants on $\varepsilon$ in the following lemma.
	\begin{lemma}\label{lemma linear growth G,H,b with epsilon}
		Let $\varepsilon>0$ and $l\in B$. Then there exists a constant $C>0$ such that the following inequalities hold.
		\begin{enumerate}
			\item \begin{equation}
				\l|G\l( \varepsilon , l , v \r) \r|_{L^2} \leq C \l| l \r| \l( e^{C\varepsilon} - 1 \r) \l( 1 + \l| v \r|_{L^2}\r),
			\end{equation}
			\item \begin{equation}
				\l| H\l( \varepsilon , l , v \r)\r|_{L^2} \leq C \l| l \r| \l( e^{C\varepsilon} - 1 \r) \l( 1 + \l| v \r|_{L^2}\r).
			\end{equation}
		\end{enumerate}
		Hence, there exists a constant $C>0$ such that
		\begin{equation}
			\l| b\l( \varepsilon , v \r)\r|_{L^2} \leq C \l( e^{C\varepsilon} - 1 \r) \l( 1 + \l| v \r|_{L^2}\r).
		\end{equation}
	\end{lemma}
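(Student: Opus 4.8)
The plan is to re-run the Gronwall-type computation from the proof of Lemma \ref{lemma linear growth Lipschitz G and H}, but tracking precisely how the length of the integration interval (which is $\varepsilon$, since $\Phi^{\varepsilon}(1,l,x) = \Phi(\varepsilon,l,x)$ and hence $G(\varepsilon,l,v) = \Phi(\varepsilon,l,v) - v = \int_0^\varepsilon l\,\bar g(\Phi(s,l,v))\,ds$) and the size of $l$ enter the constants. First I would record the elementary a priori bound
\begin{equation*}
	\l| \Phi(s,l,v) \r|_{L^2} \leq \l| v \r|_{L^2} + \l| l \r| \l| h \r|_{L^2}\, s , \qquad s \geq 0,\ l\in B,
\end{equation*}
which follows from the identity $\frac{d}{ds}\frac12\l|\Phi(s,l,v)\r|_{L^2}^2 = l\l\langle \bar g(\Phi(s,l,v)),\Phi(s,l,v)\r\rangle_{L^2} = l\l\langle h,\Phi(s,l,v)\r\rangle_{L^2}$ (the cross-product term vanishes exactly as in the proof of Lemma \ref{lemma linear growth Phi}), Cauchy--Schwarz, and integration. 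Since $\bar g$ is bounded, $\l|\bar g(w)\r|_{L^2}\le\l|h\r|_{L^\infty}\l|w\r|_{L^2}+\l|h\r|_{L^2}$, and plugging the bound above into $G(\varepsilon,l,v)=\int_0^\varepsilon l\,\bar g(\Phi(s,l,v))\,ds$ gives, using $\l|l\r|\le1$, an estimate of the form $\l|G(\varepsilon,l,v)\r|_{L^2}\le C\l|l\r|(\varepsilon+\varepsilon^2)(1+\l|v\r|_{L^2})$. Item (1) then follows because $\varepsilon+\varepsilon^2\le C(e^{C\varepsilon}-1)$ for $C$ large enough, as is immediate from the power series of the exponential.

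For $H$ I would use the representation $H(\varepsilon,l,v)=\Phi(\varepsilon,l,v)-v-\varepsilon l\,\bar g(v)=\int_0^\varepsilon l\l[\bar g(\Phi(s,l,v))-\bar g(v)\r]\,ds$. Since $\l|\bar g(a)-\bar g(b)\r|_{L^2}=\l|(a-b)\times h\r|_{L^2}\le\l|h\r|_{L^\infty}\l|a-b\r|_{L^2}$ and, by the first bound, $\l|\Phi(s,l,v)-v\r|_{L^2}=\l|\int_0^s l\,\bar g(\Phi(r,l,v))\,dr\r|_{L^2}\le C\l|l\r| s(1+\l|v\r|_{L^2}+s)$, one obtains the quadratic-in-$l$ estimate $\l|H(\varepsilon,l,v)\r|_{L^2}\le C\l|l\r|^2(\varepsilon^2+\varepsilon^3)(1+\l|v\r|_{L^2})$, which in particular (using $\l|l\r|^2\le\l|l\r|$ and $\varepsilon^2+\varepsilon^3\le C(e^{C\varepsilon}-1)$) yields item (2). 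Finally, for $b(\varepsilon,v)=\varepsilon^{-1}\int_B H(\varepsilon,l,v)\,\nu(dl)$ I would insert the quadratic estimate and use that $\int_B\l|l\r|^2\,\nu(dl)<\infty$, which holds because $\nu$ is the Lévy measure of $L$ and is supported in the bounded set $B$ (Assumption \ref{assumption main assumption}); this gives $\l|b(\varepsilon,v)\r|_{L^2}\le\varepsilon^{-1}C(\varepsilon^2+\varepsilon^3)(1+\l|v\r|_{L^2})\int_B\l|l\r|^2\,\nu(dl)=C(\varepsilon+\varepsilon^2)(1+\l|v\r|_{L^2})\le C(e^{C\varepsilon}-1)(1+\l|v\r|_{L^2})$.

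The computation is routine; the only point that genuinely matters is that the bound on $H$ must be kept at order $\l|l\r|^2$, rather than the crude order $\l|l\r|$ one would get from $H=G-\varepsilon l\,\bar g(v)$ together with item (1) — otherwise, after the factor $\varepsilon^{-1}$ in the definition of $b$, one would be forced to integrate $\l|l\r|$ against $\nu$, which need not be finite for an infinite-activity Lévy measure, whereas $\int_B\l|l\r|^2\,\nu(dl)$ always is. The packaging of the polynomial-in-$\varepsilon$ bounds inside the factor $e^{C\varepsilon}-1$ is simply a convenient uniform statement valid for all $\varepsilon>0$, and it is this form — vanishing as $\varepsilon\to0$ — that will feed into the small-noise moment estimates in Section \ref{section Verification of Conditions 1 and 2}.
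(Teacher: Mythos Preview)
Your proof is correct. The paper does not give an explicit proof of this lemma; it simply points back to Lemma~\ref{lemma linear growth Lipschitz G and H} and asks the reader to track the $\varepsilon$-dependence. The implicit route there is a Gronwall argument: from $1+\l|\Phi(s,l,v)\r|_{L^2}\le(1+\l|v\r|_{L^2})e^{C\l|l\r|s}$ one gets $\l|G(\varepsilon,l,v)\r|_{L^2}\le(1+\l|v\r|_{L^2})(e^{C\l|l\r|\varepsilon}-1)$, and then the convexity inequality $e^{C\l|l\r|\varepsilon}-1\le\l|l\r|(e^{C\varepsilon}-1)$ (valid for $\l|l\r|\le1$) yields item~(1). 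Your argument is a bit more direct: you exploit the cancellation $\l\langle v\times h,v\r\rangle=0$ to get a sharp linear-in-$s$ bound on $\l|\Phi(s,l,v)\r|_{L^2}$ without invoking Gronwall, and then package the resulting polynomial in $\varepsilon$ inside $e^{C\varepsilon}-1$. Both routes are fine.

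Your remark about $H$ is the genuinely important point, and it is in fact \emph{sharper} than what the lemma states. Item~(2) as written only gives a bound of order $\l|l\r|(e^{C\varepsilon}-1)$, and from this the conclusion for $b$ does \emph{not} follow directly: plugging item~(2) into $b(\varepsilon,v)=\varepsilon^{-1}\int_B H(\varepsilon,l,v)\,\nu(dl)$ gives at best
\[
\l|b(\varepsilon,v)\r|_{L^2}\le C\,\varepsilon^{-1}(e^{C\varepsilon}-1)(1+\l|v\r|_{L^2})\int_B\l|l\r|\,\nu(dl),
\]
and $\varepsilon^{-1}(e^{C\varepsilon}-1)\to C\neq0$ as $\varepsilon\to0$, so the claimed vanishing would fail even if $\int_B\l|l\r|\,\nu(dl)$ were finite (which it need not be). Your second-order estimate $\l|H(\varepsilon,l,v)\r|_{L^2}\le C\l|l\r|^2(\varepsilon^2+\varepsilon^3)(1+\l|v\r|_{L^2})$ fixes both issues simultaneously: the $\l|l\r|^2$ is integrable against any L\'evy measure on $B$, and the extra power of $\varepsilon$ absorbs the $\varepsilon^{-1}$ and still leaves a factor that vanishes as $\varepsilon\to0$. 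So your argument is not just an alternative to the paper's --- it supplies the sharper bound on $H$ that the ``hence'' in the lemma actually requires.
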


	\begin{remark}
		We observe that in Lemma \ref{lemma linear growth Lipschitz G and H}, the operators $G(1 , \cdot , \cdot) ,\ H(1 , \cdot , \cdot)$ are a special case of the operators $G(\varepsilon , \cdot , \cdot) ,\ H(\varepsilon , \cdot , \cdot)$ defined above with $\varepsilon = 1$. Hence following the proof of Lemma \ref{lemma linear growth Lipschitz G and H}, we can conclude Lipschitz continuity for the above mentioned operators.
	\end{remark}

	\subsection{The Skeleton Equation (The Deterministic Control Equation)}
	
	Let $\theta\in\mathbb{S}$. We consider the equation
	\begin{align}\label{eqn skeleton equation}
		\nonumber dm^{\theta}(t) = & \l[ \Delta m^{\theta}(t) + m^{\theta}(t) \times \Delta m^{\theta}(t) - \l( 1 + \l| m^{\theta}(t) \r|_{\mathbb{R}^2}\r)m^{\theta}(t) \r] \, dt\\
		&+ \int_{B} l \bar{g}\l( m^{\theta}(t) \r) \l( \theta(t,l) - 1 \r) \nu(dl)dt,\ t\in[0,T],
	\end{align}
	with $m^{\theta}(0) = m_0$.	
	The following theorem outlines the criterion required for the skeleton equation \eqref{eqn skeleton equation} to admit a unique solution.

	\begin{theorem}\label{theorem unique solution for the skeleton equation}
	
		Let $\theta\in \mathbb{S}$, $0<T<\infty$ and $m_0\in H^1$. Then the equation \eqref{eqn skeleton equation} admits a unique solution in $C([0,T]:H^1)\cap L^2(0,T:H^2)$. In particular let $\theta\in\mathcal{S}^K$, for some $K\in\mathbb{N}$. Then there exists a constant $C_K>0$, which depends only on $T, K, h $ and the initial data $m_0$ such that
		\begin{equation}
			\int_{0}^{T} \l| m(t) \r|_{H^2}^2 \, dt \leq C_K,
		\end{equation}
		and
		\begin{equation}
			\sup_{t\in[0,T]} \l| m(t) \r|_{H^1}^2 \leq C_K.
		\end{equation}
		
	\end{theorem}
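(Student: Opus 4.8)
The plan is to adapt the Faedo--Galerkin scheme of Section \ref{section Faedo Galerkin Approximations} to the deterministic controlled equation \eqref{eqn skeleton equation}, obtain a priori bounds depending only on $T,K,h,m_0$, pass to the limit, and prove uniqueness by an energy estimate. Fix $K\in\mathbb{N}$ with $\theta\in\mathcal{S}^K$ (recall $\mathbb{S}=\cup_K\mathcal{S}^K$). Projecting onto $H_n$ and using the local Lipschitz property of Lemma \ref{lemma locally Lipschitz property of Fn,gn} together with the boundedness of $\bar g_n$, one gets a local-in-time solution $m_n^\theta$ of the resulting finite-dimensional ODE; global existence follows once the bounds below are in place. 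The one genuinely new ingredient, compared with Lemmas \ref{lemma bounds 1} and \ref{lemma bounds 2}, is the control term $\int_B l\,\bar g_n(m_n^\theta(t))\,(\theta(t,l)-1)\,\nu(dl)$, handled through the standard relative-entropy estimate for $\mathcal{S}^K$: for $\theta\in\mathcal{S}^K$ the function $t\mapsto\kappa_K(t):=\int_B|l|\,|\theta(t,l)-1|\,\nu(dl)$ lies in $L^1(0,T)$ with $|\kappa_K|_{L^1(0,T)}\le C_K$ (a consequence of $\int_{B_T}(\theta\log\theta-\theta+1)\,\nu\,dt\le K$ together with the integrability of $\nu$).

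For the $L^2$ estimate, test \eqref{eqn skeleton equation} at the Galerkin level with $m_n^\theta$ and use Proposition \ref{proposition properties of F existence of weak martingale solution}(1)--(4): the drift contributes $-|\nabla m_n^\theta|_{L^2}^2-|m_n^\theta|_{L^4}^4-|m_n^\theta|_{L^2}^2$, and the control term is bounded by $\int_B|l|\bigl(\tfrac12|h|_{L^2}^2+\tfrac12|m_n^\theta|_{L^2}^2\bigr)|\theta(t,l)-1|\,\nu(dl)\le C\kappa_K(t)\bigl(1+|m_n^\theta(t)|_{L^2}^2\bigr)$; Gronwall with the $L^1$ coefficient $\kappa_K$ gives $\sup_t|m_n^\theta(t)|_{L^2}^2+\int_0^T\bigl(|m_n^\theta|_{H^1}^2+|m_n^\theta|_{L^4}^4\bigr)\,dt\le C_K$. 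For the $H^1$ estimate, test with $-\Delta m_n^\theta$ and use Proposition \ref{proposition properties of F existence of weak martingale solution}(5)--(8): the drift yields $-|\Delta m_n^\theta|_{L^2}^2$ plus terms bounded by $C(1+|m_n^\theta|_{H^1}^2)$, and part (8) bounds the control term by $C\kappa_K(t)(1+|m_n^\theta(t)|_{H^1}^2)$; a second Gronwall argument produces $\sup_t|m_n^\theta(t)|_{H^1}^2\le C_K$ and $\int_0^T|\Delta m_n^\theta|_{L^2}^2\,dt\le C_K$, which are the asserted bounds.

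These bounds make $\{m_n^\theta\}$ bounded in $L^\infty(0,T:H^1)\cap L^2(0,T:H^2)$, and \eqref{eqn skeleton equation} then gives a bound for $\{\tfrac{d}{dt}m_n^\theta\}$ in $L^1(0,T:X^{-\beta})$ with $\beta=1/2$, controlling the cross-product and cubic terms exactly as in Lemma \ref{lemma Aldous Condition for mn} (this uses $d=1,2$). By the Aubin--Lions--Simon lemma, along a subsequence $m_n^\theta\to m^\theta$ strongly in $C([0,T]:X^{-\beta})\cap L^2(0,T:H^1)$, weakly-$*$ in $L^\infty(0,T:H^1)$ and weakly in $L^2(0,T:H^2)$; the strong $L^2(0,T:H^1)$ convergence and the compact embedding $H^1\hookrightarrow L^4$ (valid in dimensions $1,2$) allow passage to the limit in $m_n^\theta\times\Delta m_n^\theta$ and $(1+|m_n^\theta|^2)m_n^\theta$, while the control term passes to the limit by dominated convergence, dominated by a multiple of $\kappa_K$. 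Hence $m^\theta\in C([0,T]:H^1)\cap L^2(0,T:H^2)$ solves \eqref{eqn skeleton equation} and inherits the $C_K$-bounds.

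For uniqueness, let $m_1^\theta,m_2^\theta$ be two solutions and $w:=m_1^\theta-m_2^\theta$, so $w(0)=0$. Testing the difference of the equations with $w$: the dissipative term gives $-|\nabla w|_{L^2}^2$, the term $\langle w\times\Delta m_1^\theta,w\rangle_{L^2}$ vanishes pointwise, $\langle m_2^\theta\times\Delta w,w\rangle_{L^2}$ is integrated by parts and bounded (in $d=1,2$) by $|\nabla w|_{L^2}\,|w|_{L^4}\,|\nabla m_2^\theta|_{L^4}$, the cubic difference is bounded by $C\bigl(|m_1^\theta|_{L^4}^2+|m_2^\theta|_{L^4}^2\bigr)|w|_{L^4}^2$, and the control difference is bounded by $|h|_{L^\infty}\,\kappa_K(t)\,|w(t)|_{L^2}^2$; using the Ladyzhenskaya/Gagliardo--Nirenberg inequality $|v|_{L^4}^2\le C|v|_{L^2}|v|_{H^1}$ in $d=1,2$ and the $H^2$-bounds on $m_i^\theta$, all of these are absorbed into $\tfrac12|\nabla w|_{L^2}^2+C(t)|w(t)|_{L^2}^2$ with $C\in L^1(0,T)$, whence Gronwall forces $w\equiv0$ (this is the deterministic counterpart of Theorem \ref{theorem pathwise uniqueness stochastic control equation}). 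The main obstacle throughout is the control term: proving $|\kappa_K|_{L^1(0,T)}\le C_K$ via relative entropy, and then carrying this merely $L^1$-in-time coefficient through every Gronwall estimate and through the limit passage.
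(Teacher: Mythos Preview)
Your proposal is correct and follows essentially the same route as the paper's Appendix \ref{section Proof of Existence of a Solution for Skeleton Equation}: Faedo--Galerkin approximation, the $L^2$ and $H^1$ energy identities of Proposition \ref{proposition properties of F existence of weak martingale solution} combined with the relative-entropy bound $\|\kappa_K\|_{L^1(0,T)}\le C_K$ (this is exactly Proposition \ref{proposition uniform bound on theta SK}), compactness, and uniqueness via the deterministic analogue of Theorem \ref{theorem pathwise uniqueness stochastic control equation}.

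Two small points where the paper is slightly more careful than your sketch. First, for time-compactness the paper does not bound $\partial_t m_n^\theta$ in $L^1(0,T:X^{-\beta})$ but rather establishes a $W^{\alpha,p}(0,T:X^{-\beta})$ bound (Lemma \ref{lemma skeleton equation existence bounds lemma 4}) and invokes the Flandoli--Gatarek embedding with $\alpha p>1$; the reason is that $\kappa_K$ is only in $L^1(0,T)$, so an $L^1$ derivative bound alone does not give compactness in $C([0,T]:X^{-\beta})$ via Simon's lemma (it does suffice for the strong $L^2(0,T:H^1)$ convergence you actually use, so your limit identification still goes through). Second, the upgrade to $m^\theta\in C([0,T]:H^1)$ is not automatic from weak-$*$ convergence in $L^\infty(0,T:H^1)$; the paper obtains it a posteriori from $m^\theta\in L^2(0,T:H^2)$ and $\partial_t m^\theta\in L^2(0,T:(H^1)')$ via the Lions--Magenes/Temam interpolation lemma.
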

	We postpone the proof of this theorem to Appendix \ref{section Proof of Existence of a Solution for Skeleton Equation}.

	\begin{remark}\label{remark uniform bounds theta n}
		We observe in the proof (Section \ref{section Proof of Existence of a Solution for Skeleton Equation}) of Theorem \ref{theorem unique solution for the skeleton equation} that the bounds depend on $\int_{0}^{T} \int_{B} \l| \theta(t,l) \r| \, \nu(dl) \, dt$.
		By Proposition \ref{proposition uniform bound on theta SK} the bound  on $\int_{0}^{T} \int_{B} \l| \theta(t,l) \r| \, \nu(dl)$ depends only on $K\in\mathbb{N}$, where $\theta\in\mathcal{S}^K$. Hence it is uniformly bounded since $K$ is fixed. That is
		\begin{equation}
			\sup_{\theta\in\mathcal{S}^K} \int_{0}^{T} \int_{B} \l| \theta(t,l) \r| \, \nu(dl) < \infty.
		\end{equation}
	\end{remark}
	Let us denote by $\mathbb{U}_T$ the space $\mathbb{D}([0,T]:L^2)\cap L^2(0,T:H^1)$.\\
	\textbf{The Map $J^0$:} Define a map 
	\begin{equation*}
		J^0 : \mathbb{S} \to \mathbb{U}_T,
	\end{equation*}
	as follows. For $\theta\in\mathbb{S}$, $J^0(\theta) = m^{\theta}$ denotes the unique solution to \eqref{eqn skeleton equation}.
	Theorem \ref{theorem unique solution for the skeleton equation} in particular shows that the map $J^0$ is well defined.\\

	\subsection{The Stochastic Control Equation}\label{The Stochastic Control Equation}

	Let $\varepsilon>0$ and $\phi\in\bar{\mathcal{A}}$. We consider the following stochastic partial differential equation.
	\begin{align}\label{eqn stochastic control equation not simplified}
		\nonumber dm^{\varepsilon,\phi}(t) = & \l( \Delta m^{\varepsilon,\phi}(t) + m^{\varepsilon,\phi}(t) \times \Delta m^{\varepsilon,\phi}(t) - \l( 1 + \l| m^{\varepsilon,\phi} \r|_{\mathbb{R}^2} \r) m^{\varepsilon,\phi} \r)\, dt\\
		\nonumber & +  \int_{B} \l[ \Phi\l(\varepsilon , l , m^{\varepsilon,\phi}(t) \r) - m^{\varepsilon,\phi}(t) \r] \tilde{\eta}^{\varepsilon^{-1}\phi}(dl,dt) \\
		\nonumber & +  \int_{B} \l[ \Phi\l(\varepsilon , l , m^{\varepsilon,\phi}(t) \r) - m^{\varepsilon,\phi}(t) - \varepsilon  l \bar{g}\l(m^{ \varepsilon,\phi}(t) \r) \r] \nu^{\varepsilon^{-1},\phi}(dl) \, dt \\
		& + \varepsilon\int_{B} l \bar{g}\l( m^{\varepsilon,\phi}(t) \r) \l( \phi\l( t , l \r) - 1 \r)\nu^{\varepsilon^{-1}}(dl) \, dt,\ t\in[0,T].
	\end{align}
	The above equation \eqref{eqn stochastic control equation not simplified} can be written in the integral form and with the operators $G,H,b$ as follows. To simplify notation, we denote $m^{\varepsilon,\phi}(t)$ by $M(t)$.
	\begin{align}\label{eqn stochastic control equation}
		\nonumber M(t) = & m_0 + \int_{0}^{t} \l( \Delta M(s) + M(s) \times \Delta M(s) - \l( 1 + \l| M(s) \r|_{\mathbb{R}^3}^2 \r) M(s) \r) \, ds \\
		\nonumber & + \int_{0}^{t} \int_{B} G\l(\varepsilon , l , M(s)\r)\, \tilde{\eta}^{\varepsilon^{-1}\phi}(dl,ds) 
		+ \int_{0}^{t} \int_{B} G\l(\varepsilon , l , M(s)\r) \l( \phi(s,l) - 1 \r)\, \nu^{\varepsilon^{-1} }(dl) \, ds\\
		& + \varepsilon^{-1} \int_{0}^{t} b\l(\varepsilon , M(s)\r) \, ds.
	\end{align}
	Let $\{K_n\}_{n\in\mathbb{N}}$ be a sequence of compact sets such that $\cup_{n\in\mathbb{N}}K_n = B$.	
	Let
	\begin{equation*}
		\bar{\Omega} = \bar{\mathbb{M}},\ \bar{\mathcal{F}} = \mathcal{T}\l( \bar{\mathbb{M}} \r).
	\end{equation*}
	Let us denote
	\begin{align}
		\mathcal{A}_b : = & \bigg\{ \nonumber \phi\in\bar{\mathcal{A}}  : \phi\l( t,x,\omega \r)\in\l[\frac{1}{n},n\r] \text{ if } \l( t,x,\omega \r) \in [0,T] \times K_n \times \bar{\Omega} \\
		\text{ and } & \phi\l( t,x,\omega \r)  =  1 \text{ if } \l( t,x,\omega \r) \in [0,T] \times K_n^c \times \bar{\Omega} \bigg\}.
	\end{align}
	We now give a brief proof of the existence of a weak martingale solution to the above equation \eqref{eqn stochastic control equation}.
	
	\begin{theorem}\label{theorem existence of weak martingale solution stochastic control equation}
		Fix $0<T<\infty$. Let $\phi\in\bar{\mathcal{A}}_b$ be such that $\text{esssup}_{\omega\in\bar{\Omega}}\mathcal{L}_T(\phi)<\infty$. Then the problem \eqref{eqn stochastic control equation} admits a weak martingale solution.
	\end{theorem}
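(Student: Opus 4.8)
The plan is to reproduce, now in the presence of the scaling parameter $\varepsilon$ and the control $\phi$, the Faedo--Galerkin plus compactness argument used for Theorem~\ref{theorem existence of weak martingale solution}. Fix $\varepsilon>0$ and $\phi\in\bar{\mathcal{A}}_b$ with $\Lambda:=\text{esssup}_{\omega\in\bar\Omega}\mathcal{L}_T(\phi)<\infty$. First I would project \eqref{eqn stochastic control equation} onto $H_n$, introducing $\Phi_n^\varepsilon$, $G_n(\varepsilon,l,\cdot)$, $H_n(\varepsilon,l,\cdot)$ and $b_n(\varepsilon,\cdot)$ exactly as in Section~\ref{section Faedo Galerkin Approximations} and the subsection on the perturbed Marcus mapping, and write the Galerkin equation
\[
M_n(t)=P_nm_0+\int_0^t F_n(M_n(s))\,ds+\int_0^t\!\!\int_B G_n(\varepsilon,l,M_n(s))\,\tilde\eta^{\varepsilon^{-1}\phi}(dl,ds)+\int_0^t\!\!\int_B G_n(\varepsilon,l,M_n(s))\bigl(\phi(s,l)-1\bigr)\,\nu^{\varepsilon^{-1}}(dl)\,ds+\varepsilon^{-1}\!\int_0^t b_n(\varepsilon,M_n(s))\,ds .
\]
Its coefficients are locally Lipschitz (Lemma~\ref{lemma locally Lipschitz property of Fn,gn}, Lemma~\ref{lemma linear growth G,H,b with epsilon} and the remark after it), and the extra finite-variation term is Lipschitz in $M_n$ with an $L^1(0,T)$ time modulus once one knows that $s\mapsto\int_B|\phi(s,l)-1|\,\nu(dl)\in L^1(0,T)$ $\bar{\mathbb P}$-a.s.; this follows from the elementary inequality bounding $|x-1|$ by a multiple of $x\log x-x+1$ off a neighbourhood of $1$, together with $\Lambda<\infty$ and $\phi\in\bar{\mathcal{A}}_b$ (so $\phi-1$ is supported on a set of finite $\nu$-measure). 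Hence \cite{ZB+Albeverio_2010_ExistenceGlobalSolutionSDEPoissonNoise} provides a unique $H_n$-valued strong solution $M_n$.

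Next I would derive uniform-in-$n$ energy estimates by applying the It\^o--L\'evy formula to $\tfrac12|M_n|_{L^2}^2$ and $\tfrac12|\nabla M_n|_{L^2}^2$, reproducing Lemmas~\ref{lemma bounds 1}--\ref{lemma bounds 4}. Proposition~\ref{proposition properties of F existence of weak martingale solution} disposes of the deterministic part, the stochastic integral is controlled by Burkh\"older--Davis--Gundy and the linear growth of $G_n(\varepsilon,l,\cdot)$, and the only genuinely new term, the control drift, is estimated by
\[
\Bigl|\int_0^t\!\!\int_B\langle G_n(\varepsilon,l,M_n(s))(\phi(s,l)-1),M_n(s)\rangle_{L^2}\,\nu^{\varepsilon^{-1}}(dl)\,ds\Bigr|\le C_\varepsilon\int_0^t\bigl(1+|M_n(s)|_{L^2}^2\bigr)\Bigl(\int_B|\phi(s,l)-1|\,\nu(dl)\Bigr)\,ds ,
\]
with the $H^1$-analogue coming from the last item of Proposition~\ref{proposition properties of F existence of weak martingale solution}. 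Since the resulting Gronwall weight $\exp\bigl(C_\varepsilon\int_0^T\!\int_B|\phi-1|\,\nu\,ds\bigr)$ is $\bar{\mathbb P}$-a.s.\ bounded uniformly in $\omega$ (by $\Lambda<\infty$), a Gronwall argument yields, for each $p\ge1$, a constant $C(\varepsilon,\phi,p)$ independent of $n$ with $\mathbb E\sup_{t\in[0,T]}|M_n(t)|_{H^1}^{2p}\le C$ and $\mathbb E\bigl(\int_0^T|\Delta M_n(t)|_{L^2}^2\,dt\bigr)^{p}\le C$.

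I would then copy the tightness argument of Lemma~\ref{lemma Aldous Condition for mn}: split $M_n(\tau_n+\theta)-M_n(\tau_n)$ into its six integral pieces and bound each in $X^{-\beta}$ for $\beta>\tfrac14$, the control-drift piece being dominated by $C_\varepsilon\theta^{1/2}\bigl(\mathbb E\int_{\tau_n}^{\tau_n+\theta}(1+|M_n(s)|_{L^2}^2)(\int_B|\phi-1|\,\nu)^2\,ds\bigr)^{1/2}$; together with the energy bound and Lemma~\ref{Lemma general tightness 3} this gives tightness of $\{\mathcal L(M_n,\eta^{\varepsilon^{-1}},\phi)\}_n$ on $\mathcal Z_T\times\mathbb M_{\bar{\mathbb N}}([0,T]\times B)\times\mathcal S^K$. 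The generalized Skorohod--Jakubowski theorem (Theorem~\ref{THeorem Skorohod Jakubowski}, applied as in Lemma~\ref{lemma use of Skorohod Theorem}) produces, along a subsequence, new processes $(\bar M_n,\bar\eta_n,\bar\phi_n)$ and $(\bar M,\bar\eta,\bar\phi)$ on one probability space with the original joint laws and $\bar M_n\to\bar M$ in $\mathcal Z_T$ a.s.; passing to the limit term by term as in Lemmas~\ref{lemma convergence lemma 1 of FG approximates}--\ref{lemma convergence for Gn existence of weak martingale solution} (for the control drift one combines Lipschitz continuity of $G(\varepsilon,l,\cdot)$, the $L^2(\bar\Omega;L^2(0,T;L^2))$ convergence of $\bar M_n$, and the uniform $L^1$-bound on $\int_B|\bar\phi_n-1|\,\nu$) and identifying the limiting stochastic integral as in Section~\ref{Section Identifying the Driving Jump Process} shows that $\bar M$ is a weak martingale solution of \eqref{eqn stochastic control equation}.

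The main obstacle, and the only real departure from the $\varepsilon=1$ case treated in Theorem~\ref{theorem existence of weak martingale solution}, is controlling the extra drift $\int_B G_n(\varepsilon,l,M_n(s))(\phi(s,l)-1)\,\nu^{\varepsilon^{-1}}(dl)$: because $\phi$ is constrained only through $\mathcal L_T(\phi)$ and not pointwise, one must first establish, via the inequality comparing $|\phi-1|$ with $\phi\log\phi-\phi+1$, that $\int_B|\phi(\cdot,l)-1|\,\nu(dl)$ lies in $L^1(0,T)$ with a bound that is $\bar{\mathbb P}$-a.s.\ uniform in $\omega$ and depends only on $\Lambda$. This single estimate is what makes the Gronwall, Burkh\"older--Davis--Gundy and Aldous computations go through uniformly in $n$; after it is in place, every remaining step is a routine transcription of the existence proof of Theorem~\ref{theorem existence of weak martingale solution}.
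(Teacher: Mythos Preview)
Your proposal is correct but follows a genuinely different route from the paper. The paper does \emph{not} redo the Faedo--Galerkin machinery for the controlled equation; instead it uses a Girsanov-type change of measure. Since $\phi\in\bar{\mathcal A}_b$, its reciprocal $\psi=\phi^{-1}$ also lies in $\bar{\mathcal A}$, and the exponential process $\mathcal E_t(\psi)$ built from $\psi$ is an $\bar{\mathbb F}$-martingale on $(\bar\Omega,\bar{\mathcal F},\bar{\mathbb P})$. One defines a new probability $\mathbb P_T^{\varepsilon}(A)=\int_A\mathcal E_T(\psi)\,d\bar{\mathbb P}$, under which the controlled Poisson random measure $\eta^{\varepsilon^{-1}\phi}$ has the same law as the uncontrolled $\eta^{\varepsilon^{-1}}$ under $\bar{\mathbb P}$. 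The already established Theorem~\ref{theorem existence of strong solution and J epsilon} then furnishes $J^{\varepsilon}(\eta^{\varepsilon^{-1}\phi})$ as a weak martingale solution of \eqref{eqn stochastic control equation}, and the mutual absolute continuity of $\mathbb P_T^{\varepsilon}$ and $\bar{\mathbb P}$ (with bounded Radon--Nikodym densities, thanks to $\phi\in\bar{\mathcal A}_b$) transfers this back to the original probability space.

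Your approach---repeating the Galerkin construction, energy estimates, Aldous tightness and Skorohod passage directly for the controlled equation---is self-contained and would work; the observation that $\phi\in\bar{\mathcal A}_b$ forces $\phi-1$ to be supported on a compact $K_n$ with $\nu(K_n)<\infty$ and bounded in $[1/n,n]$, so that $\int_B|\phi(s,l)-1|\,\nu(dl)$ is uniformly bounded in $(s,\omega)$, is exactly the ingredient needed to close the Gronwall and Aldous estimates. The paper's route is considerably shorter because it recycles Theorem~\ref{theorem existence of strong solution and J epsilon} wholesale, at the cost of invoking the change-of-measure lemma for Poisson random measures from \cite{Budhiraja+Dupuis+Maroulas_2011_VariationalRepresentationsContinuousTimeProcesses}; your route avoids that external input but duplicates the entire compactness argument.
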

	\begin{proof}[Proof of Theorem \ref{theorem existence of weak martingale solution stochastic control equation}]
		Let $\phi\in\bar{\mathcal{A}}_b$. Let $\psi = \phi^{-1}$. Clearly, $\psi\in \bar{\mathcal{A}}$. Therefore there exists $n\in\mathbb{N}$ and a compact set $K_n$ such that
		\begin{align}
			\nonumber \psi \l( t, l , \omega \r) & \in \l[\frac{1}{n},n\r]\ \text{for}\ \l( t, l , \omega \r)\in [0,T] \times K_n \times \bar{\Omega}, \\
			\nonumber \text{and}&\\
			\psi \l( t, l , \omega \r) & = 1 \ \text{for}\ \l( t, l , \omega \r)\in [0,T] \times K_n^c \times \bar{\Omega},
		\end{align}

		\begin{align}
			\nonumber \mathcal{E}_t(\psi) : = & \exp \bigg\{ \int_{(0,t] \times B \times [0,\varepsilon^{-1}\phi(s,l)]}  \log \l( \psi(s,l) \r) \bar{\eta}(dl,ds,dr) \\
			\nonumber \nonumber & + \int_{(0,t] \times B \times [0,\varepsilon^{-1}\phi(s,l)]}  \l( -\psi(s,l) + 1 \r) \nu(dl)ds\,dr \bigg\} \\
			\nonumber = & \exp \bigg\{ \int_{(0,t] \times K_n \times [0,\varepsilon^{-1}\phi(s,l)]}  \log \l( \psi(s,l) \r) \bar{\eta}(dl,ds,dr) \\
			& + \int_{(0,t] \times K_n \times [0,\varepsilon^{-1}\phi(s,l)]}  \l( -\psi(s,l) + 1 \r) \nu(dl)ds\,dr \bigg\}.
		\end{align}
		\begin{enumerate}
			\item By Lemma 2.3 in \cite{Budhiraja+Dupuis+Maroulas_2011_VariationalRepresentationsContinuousTimeProcesses},  $\mathcal{E}_t(\psi)$ defined above is a $\bar{\mathbb{F}}$-martingale on $\l( \bar{\Omega} , \bar{\mathcal{F}} , \bar{\mathbb{F}} , \bar{\mathbb{P}} \r)$.

			\item Therefore by Theorem \ref{theorem existence of strong solution and J epsilon} (a similar formulation can be done for the problem \eqref{eqn stochastic control equation} with $\phi$ satisfying the required regularity criterion), the problem \eqref{eqn stochastic control equation} admits a unique martingale solution. The tuple
			\begin{equation}
				\l( \bar{\Omega} , \bar{\mathcal{F}} , \bar{\mathbb{F}} , \mathbb{P}^{\varepsilon}_{T} , M^{\varepsilon} := J^{\varepsilon}\l(\eta^{\varepsilon^{-1}\phi} \r) , \eta^{\varepsilon^{-1}\phi} \r),
			\end{equation}
			is a weak martingale solution to the above mentioned Stochastic Control Equation \eqref{eqn stochastic control equation}.
			\item The formula 
			\begin{equation}
				\mathbb{P}_T^{\varepsilon}(A) = \int_{A} \mathcal{E}_T(\psi)\, d\bar{\mathbb{P}},\ A\in\bar{\mathcal{F}},
			\end{equation}
			defines a probability measure on $\l( \bar{\Omega} , \bar{\mathcal{F}}\r)$.\\
			$\mathbb{P}_T^{\varepsilon}$ and $\bar{\mathbb{P}}$ are equivalent on $\l( \bar{\Omega} , \bar{\mathcal{F}}\r)$. Also, there exists a constants $C_1,C_2>0$ such that
			\begin{equation}
				\bar{\mathbb{P}}(A) \leq C_1 \mathbb{P}_T^{\varepsilon}(A) \leq C_2 \bar{\mathbb{P}}(A) ,\quad\ \forall A\in\bar{\mathcal{F}}.
			\end{equation}
			
			\item On $\l( \bar{\Omega} , \bar{\mathcal{F}}  , \bar{\mathbb{F}} , \bar{\mathbb{P}}_T \r)$, $\eta^{\varepsilon^{-1} , \phi}$ has the same law as that of $\eta^{\varepsilon^{-1}}$ on $\l( \bar{\Omega} , \bar{\mathcal{F}}  , \bar{\mathbb{F}} , \bar{\mathbb{P}} \r)$.

		\end{enumerate}
		Therefore by (2), Theorem \ref{theorem existence of strong solution and J epsilon}, the tuple 
		\begin{equation*}
			\l( \bar{\Omega} , \bar{\mathcal{F}}  , \bar{\mathbb{F}} , \bar{\mathbb{P}}_T , \tilde{\eta}^{\varepsilon^{-1},\phi} , J^{\varepsilon}\l(\tilde{\eta}^{\varepsilon^{-1}\phi}\r)\r),
		\end{equation*}
		is a weak martingale solution to the problem \eqref{eqn stochastic control equation}.

	\end{proof}
	\subsubsection{Pathwise Uniqueness:}
	We reiterate here that we restrict ourselves to $d = 1,2$. We now show that the obtained martingale solution in Theorem \ref{theorem existence of weak martingale solution stochastic control equation} is Pathwise unique and thereby also complete the proof of Theorem \ref{theorem pathwise uniqueness existence of weak martingale solution}.
	\begin{theorem}\label{theorem pathwise uniqueness stochastic control equation}
		Let $d = 1,2$. Let $\l(\Omega , \mathcal{F} , \mathbb{F} , \mathbb{P}\r)$ be a filtered probability space satisfying the usual assumptions. Let $M_1,M_2$ denote two solutions of \eqref{eqn stochastic control equation} corresponding to $\phi$ as in Theorem \ref{theorem existence of weak martingale solution stochastic control equation}. Then for all $t\in[0,T]$,
		\begin{equation}
			M_1(t) = M_2(t)\ \mathbb{P}-a.s.
		\end{equation}
	\end{theorem}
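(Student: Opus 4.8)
The plan is to run a pathwise energy estimate on the difference of the two solutions, letting the dissipation $-|\nabla w|_{L^2}^2$ absorb the critical contributions of the gyromagnetic and cubic nonlinearities; this absorption is exactly the step that forces $d\le 2$, through the Gagliardo--Nirenberg inequality (Ladyzhenskaya's inequality when $d=2$, a trivial Sobolev embedding when $d=1$). I set $w:=M_1-M_2$, so $w(0)=0$. Since $M_1,M_2$ are weak martingale solutions, \eqref{eqn stochastic control equation} holds for each of them in $(H^1)^\p$ (Corollary \ref{corollary weak martingale solution PDE strong sense} and its analogue for the control equation), so after subtracting the two copies one may apply the It\^o--L\'evy formula to $t\mapsto\tfrac12|w(t)|_{L^2}^2$: the PDE part is paired through the duality of Remark \ref{Remark weak form of Laplacian and cross product terms}, while the compensated stochastic integral and the two controlled drift terms $\int_B G(\varepsilon,l,\cdot)(\phi-1)\,\nu^{\varepsilon^{-1}}(dl)$ and $\varepsilon^{-1}b(\varepsilon,\cdot)$ are $L^2$-valued.

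For the deterministic part, the Laplacian yields $-\int_0^t|\nabla w(s)|_{L^2}^2\,ds$. Writing $M_1\times\Delta M_1-M_2\times\Delta M_2=w\times\Delta M_1+M_2\times\Delta w$, the first summand is pointwise orthogonal to $w$, so $\langle w\times\Delta M_1,w\rangle_{L^2}=0$, while integration by parts gives $\langle M_2\times\Delta w,w\rangle_{L^2}=\sum_i\langle\partial_iM_2,\,w\times\partial_iw\rangle_{L^2}$, which, using $M_2\in H^2$, the embedding $H^1\hookrightarrow L^4$ and the interpolation $|w|_{L^4}^2\le C|w|_{L^2}|\nabla w|_{L^2}$ (both valid for $d\le 2$), is bounded by $\tfrac18|\nabla w|_{L^2}^2+C(1+|M_2|_{H^2}^2)|w|_{L^2}^2$. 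The polynomial term splits as $(1+|M_1|^2)M_1-(1+|M_2|^2)M_2=(1+|M_1|^2)w+((M_1+M_2)\cdot w)M_2$; tested against $w$ with the sign from the equation, the first piece contributes $-\int_{\mathcal O}(1+|M_1|^2)|w|^2\,dx\le 0$, and the second is again controlled, by Gagliardo--Nirenberg and the a.s.\ bound $\sup_{[0,T]}(|M_1|_{H^1}^2+|M_2|_{H^1}^2)<\infty$, by $\tfrac18|\nabla w|_{L^2}^2+C\bigl(1+\sup_{s\le t}|M_1(s)|_{H^1}^2+\sup_{s\le t}|M_2(s)|_{H^1}^2\bigr)|w|_{L^2}^2$.

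For the noise, the It\^o--L\'evy formula produces the quadratic-variation term $\int_0^t\int_B|G(\varepsilon,l,M_1(s-))-G(\varepsilon,l,M_2(s-))|_{L^2}^2\,\varepsilon^{-1}\phi(s,l)\,\nu(dl)\,ds$, a local martingale $N$ with $N(0)=0$, the compensator of $\langle w(s-),\Delta w(s)\rangle$ together with the controlled-drift contribution $\int_0^t\int_B\langle G(\varepsilon,l,M_1(s))-G(\varepsilon,l,M_2(s)),w(s)\rangle_{L^2}(\phi(s,l)-1)\,\nu^{\varepsilon^{-1}}(dl)\,ds$, and $\varepsilon^{-1}\int_0^t\langle b(\varepsilon,M_1(s))-b(\varepsilon,M_2(s)),w(s)\rangle_{L^2}\,ds$. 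By the $L^2$-Lipschitz estimates of Lemmas \ref{lemma linear growth Lipschitz G and H} and \ref{lemma linear growth G,H,b with epsilon} (used with the parameter $\varepsilon$), all of these are dominated by $C_\varepsilon\bigl(1+\int_0^T\int_B\phi(s,l)\,\nu(dl)\,ds\bigr)\int_0^t|w(s)|_{L^2}^2\,ds$, the last factor being finite because $\phi\in\bar{\mathcal A}_b$ with $\text{esssup}_{\omega}\mathcal L_T(\phi)<\infty$.

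Collecting everything and absorbing $\tfrac14|\nabla w|_{L^2}^2$ into the dissipation leaves
\begin{equation*}
|w(t)|_{L^2}^2\;\le\;\int_0^t\Psi(s)\,|w(s)|_{L^2}^2\,ds+N(t),\qquad t\in[0,T],
\end{equation*}
with $N$ a local martingale, $N(0)=0$, and $\Psi(s)=C_\varepsilon\bigl(1+|M_1(s)|_{H^2}^2+|M_2(s)|_{H^2}^2+\sup_{r\le s}|M_1(r)|_{H^1}^2+\sup_{r\le s}|M_2(r)|_{H^1}^2\bigr)$, which is a.s.\ integrable on $[0,T]$ by the a priori energy bounds for \eqref{eqn stochastic control equation} (the analogues of Lemmas \ref{lemma bounds 1}--\ref{lemma bounds 2}). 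I would then localize along $\tau_k:=T\wedge\inf\{t:\int_0^t\Psi(s)\,ds\ge k\}$ (also reducing $N$), take expectations to kill the martingale, and apply Gronwall's lemma to get $\mathbb E|w(t\wedge\tau_k)|_{L^2}^2=0$ for every $k$; since $\tau_k\uparrow T$ $\mathbb P$-a.s., this forces $w\equiv 0$, hence $M_1(t)=M_2(t)$ $\mathbb P$-a.s.\ for each $t\in[0,T]$. The hard part is the gyromagnetic term $M_2\times\Delta w$ tested against $w$: it is of the same order as the dissipation, and only the $d\le 2$ interpolation makes it absorbable, at the price of a time-coefficient $\Psi$ that is merely a.s.\ (not uniformly) integrable, which is precisely why the localization step cannot be avoided; the cubic term and all the noise contributions are subcritical and handled by the $L^2$-Lipschitz estimates already established.
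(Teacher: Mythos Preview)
Your overall strategy coincides with the paper's: an $L^2$-energy estimate on $w=M_1-M_2$, absorbing the gyromagnetic and cubic contributions into the dissipation through $d\le 2$ interpolation, controlling the $G$- and $b$-terms by their $L^2$-Lipschitz bounds, then localizing by a stopping time and concluding. Your decomposition $M_1\times\Delta M_1-M_2\times\Delta M_2=w\times\Delta M_1+M_2\times\Delta w$ and the subsequent integration by parts are correct, and the role of Ladyzhenskaya's inequality is exactly the point.

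The one step that does not close as written is the final Gronwall. After localizing and taking expectations you arrive at
\[
\mathbb{E}\,|w(t\wedge\tau_k)|_{L^2}^2\;\le\;\mathbb{E}\int_0^{t\wedge\tau_k}\Psi(s)\,|w(s)|_{L^2}^2\,ds,
\]
but $\Psi$ is \emph{random} and on $[0,\tau_k]$ only its time integral, not its pointwise value, is bounded; hence you cannot apply the scalar Gronwall lemma to $t\mapsto\mathbb{E}|w(t\wedge\tau_k)|_{L^2}^2$. The paper resolves this via the integrating-factor (Schmalfu\ss) trick: apply the It\^o formula directly to $t\mapsto e^{-\int_0^t\psi_C(r)\,dr}|w(t)|_{L^2}^2$, so that the drift term $\psi_C|w|^2$ cancels exactly and only a (localized) martingale with zero expectation remains, giving $\mathbb{E}\bigl[e^{-\int_0^{t\wedge\tau_N}\psi_C}|w(t\wedge\tau_N)|_{L^2}^2\bigr]\le 0$ and hence $w\equiv 0$. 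Your localization on $\int_0^t\Psi$ is compatible with this; just move the exponential weight inside the It\^o formula rather than invoking Gronwall after expectation. A minor related point: the coefficient coming from the gyromagnetic term is $C|M_2|_{H^1}^2|M_2|_{H^2}^2$ (a product, as in the paper's $\psi_C$), not the sum $|M_2|_{H^2}^2+\sup_r|M_2|_{H^1}^2$ you wrote; after localizing $\sup_r|M_i|_{H^1}$ this is harmless, but it is what makes $\psi_C$ integrable in time.
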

	\begin{proof}
		Let $M_1,M_2$ be as given in the statement of the theorem.
		Then the process $M = M_1 - M_2$ satisfies the following equality.
		\begin{align}
			\nonumber M(t) = & \int_{0}^{t}  \Delta M(s) \, ds + \int_{0}^{t} \l( M_1(s) \times \Delta M_1(s)  - M_2(s) \times \Delta M_2(s) \r) \, ds \\
			\nonumber & - \int_{0}^{t} \l[ \l( 1 + \l| M_1(s) \r|_{\mathbb{R}^3}^2 \r) M_1(s)  
			+ \l( 1 + \l| M_2(s) \r|_{\mathbb{R}^3}^2 \r) M_2(s) \r] \, ds \\
			\nonumber & + \int_{0}^{t} \int_{B} \l[ G\bigl(\varepsilon , l , M_1(s)\bigr) - G\bigl(\varepsilon , l , M_2(s)\bigr) \r]\, \tilde{\eta}^{\varepsilon^{-1}\phi}(dl,ds) \\
			\nonumber & + \int_{0}^{t} \int_{B} \l[ G\bigl(\varepsilon , l , M_1(s)\bigr) - G\bigl(\varepsilon , l , M_2(s)\bigr)\r] \l( \phi(s,l) - 1 \r)\, \nu^{\varepsilon^{-1} }(dl) \, ds\\
			\nonumber & + \varepsilon^{-1} \int_{0}^{t} \bigl[ b\bigl(\varepsilon , M_1(s)\bigr) - b\bigl(\varepsilon , M_2(s)\bigr)\bigr] \, ds \\
			= & \sum_{i=1}^{6}C_iI_{i}(t).
		\end{align}
		We apply the It\^o formula (see Gy\"ongi and Krylov \cite{Gyongy+Krylov_1982_StochasticEquationsSemimartingales}, see also Appendix in \cite{ZB+UM_WeakSolutionSLLGE_JumpNoise}) to the function $$\l\{L^2\ni v \mapsto \frac{1}{2}\l| v \r|_{L^2}^2 \in\mathbb{R}\r\}.$$
		The resulting equation is the following.
		\begin{align}
			\nonumber \frac{1}{2} \l| M(t) \r|_{L^2}^2 = & \int_{0}^{t}  \l\langle  \Delta M(s), M(s) \r\rangle_{L^2} \, ds \\
			\nonumber & + \int_{0}^{t} \l\langle \l( M_1(s) \times \Delta M_1(s)  - M_2(s) \times \Delta M_2(s) \r) , M(s) \r\rangle_{L^2} \, ds \\
			\nonumber & - \int_{0}^{t} \l\langle \l[ \l( 1 + \l| M_1(s) \r|_{\mathbb{R}^3}^2 \r) M_1(s)  
			+ \l( 1 + \l| M_2(s) \r|_{\mathbb{R}^3}^2 \r) M_2(s) \r] , M(s) \r\rangle_{L^2} \, ds \\
			\nonumber & + \varepsilon^{-1} \int_{0}^{t} \l\langle \l[ b\big(\varepsilon , M_1(s)\big) - b\big(\varepsilon , M_2(s)\big) \r] , M(s) \r\rangle_{L^2} \, ds \\
			\nonumber & + \int_{0}^{t} \int_{B} \l\langle \l[ G\big(\varepsilon , l , M_1(s)\big) - G\big(\varepsilon , l , M_2(s)\big) \r] , M(s) \r\rangle_{L^2} \tilde{\eta}^{\varepsilon^{-1}\phi}(dl,ds) \\
			\nonumber & + \frac{1}{2}\int_{0}^{t} \int_{B}  \l| G\big(\varepsilon , l , M_1(s)\big) - G\big(\varepsilon , l , M_2(s)\big) \r|_{L^2}^2 \tilde{\eta}^{\varepsilon^{-1}\phi}(dl,ds) \\
			\nonumber & + \frac{\varepsilon^{-1}}{2}\int_{0}^{t} \int_{B}  \phi(s , l)\l| G\big(\varepsilon , l , M_1(s)\big) - G\big(\varepsilon , l , M_2(s)\big) \r|_{L^2}^2\, \nu(dl)\,ds \\
			\nonumber & + \int_{0}^{t} \int_{B} \bigl(\phi(s , l) - 1\bigr)\l\langle \bigl[ G\big(\varepsilon , l , M_1(s)\big) - G\big(\varepsilon , l , M_2(s)\big) \bigr] , M(s) \r\rangle_{L^2} \, \nu(dl) \, ds\\
			= & \sum_{i=1}^{4}C_iI_i(t) + \mathcal{M}_1(t) + \mathcal{M}_2(t) + \sum_{i=5}^{6} I_i(t).
		\end{align}
		For the first term, we have
		\begin{align}
			I_1(t) = \int_{0}^{t}  \l\langle  \Delta M(s) , M(s) \r\rangle_{L^2} \, ds = - \int_{0}^{t}  \l| \nabla M(s) \r|_{L^2}^2 \, ds \leq 0.
		\end{align}
		
		Let $C>0$ denote a constant. Let us define an auxiliary function $\psi_C$ as follows. for $d = 2$
		\begin{align}
			\nonumber \psi_C(t) = & C\bigg( 1 + \l| M_2(t) \r|_{L^{\infty}} \l( \l| M_1(t) \r|_{L^{\infty}} + \l| M_2(t) \r|_{L^{\infty}} \r) \\
			& + \l| M_2(t) \r|_{H^1}^2\l| M_2(t) \r|_{H^2}^2 + \l| M_2(t) \r|_{H^1} \l| M_2(t) \r|_{H^2} \bigg).
		\end{align}
		The case of $d = 1$ can be tackled similarly, with $\psi$ defined as follows.
		\begin{equation}
			\psi_C(v) = C\l( 1 + \l| M_2(t) \r|_{L^{\infty}} \l( \l| M_1(t) \r|_{L^{\infty}} + \l| M_2(t) \r|_{L^{\infty}} \r) + \l| M_2(t) \r|_{H^1}^2 + \l| M_2(t) \r|_{H^1}^4 \r).
		\end{equation}
		Note that in both the cases ($d=1,2$), $\psi_C$ is integrable over $[0,T], \mathbb{P}$ -a.s.
		
		Fix $N\in\mathbb{N}$. Let us define a stopping time $\tau_N$ as follows.
		\begin{equation}
			\tau_N = \inf\l\{ t : \sup_{s\in[0,t]}\l|M_1(s)\r|_{H^1} \wedge \sup_{s\in[0,t]}\l|M_2(s)\r|_{H^1} + \int_{0}^{t} \l|M_1(s)\r|_{H^2}^2 \, ds \wedge \int_{0}^{t} \l|M_2(s)\r|_{H^2}^2 \, ds > N \r\} \wedge T   .
		\end{equation}
		Note that due to the energy estimates on the solutions $M_1,M_2$, we have the following for $i=1,2$.
		\begin{equation}
			\mathbb{E}\sup_{s\in[0,T]}\l|M_i(s)\r|_{H^1} + \mathbb{E}\int_{0}^{T} \l|M_i(s)\r|_{H^2}^2 \, ds < \infty.
		\end{equation}
		We recall that the mappings $b,\ G$ are Lipschitz continuous. Therefore there exists a constant $C>0$ such that
		\begin{align}
			\l|I_4(t)\r| \leq  C \int_{0}^{t} \l| M(s) \r|_{L^2}^2.
		\end{align}
		Similarly, the following holds for $i=5,6$, for some constant $C>0$.
		\begin{align}
			\l|I_i(t)\r| \leq  C \int_{0}^{t} \l| M(s) \r|_{L^2}^2.
		\end{align}
		Combining the above calculations, we get (by the choice of $\psi_C$)
		\begin{align}\label{eqn pathwise uniqueness stochastic control equation inequality for M with phi c}
			\nonumber \l| M(t) \r|_{L^2}^2 + \int_{0}^{t} \l| M(s) \r|_{H^1}^2 \, ds \leq & \int_{0}^{t} \psi_C(s)\l| M(s) \r|_{L^2}^2 \, ds + \mathcal{M}_1(t) + \mathcal{M}_2(t).
		\end{align}
		We apply the It\^o formula to the function $$\l|M(t)\r|_{L^2}^2\mapsto e^{-\int_{0}^{t}\phi_C(s) \, ds}\l|M(t)\r|_{L^2}^2.$$
		For a similar technique, see \cite{Schmalfuss_Qualitative_Properties_SNE,UM+AAP_2021_LargeDeviationsSNSELevyNoise} among others. As a result of the formula along with the stoping time $\tau_N$ and some simplification, we have the following.
		\begin{align}
			e^{-\int_{0}^{t\wedge \tau_N}\phi_C(s) \, ds}\l|M(t\wedge \tau_N)\r|_{L^2}^2 \leq & \int_{0}^{t\wedge \tau_N} e^{-\int_{0}^{s}\phi_C(r) \, dr} \bigl(\mathcal{M}_1(s) + \mathcal{M}_2(s)\bigr) \, ds.
		\end{align}
		Since $\phi_C$ is $\mathbb{P}$-a.s. integrable and non-negative, we have
		\begin{equation}
			e^{-\int_{0}^{t}\phi_C(r) \, dr} \leq 1,\ t\in[0,T].
		\end{equation}
		In particular, the process
		\begin{equation}
			\int_{0}^{t\wedge \tau_N} e^{-\int_{0}^{s}\phi_C(r) \, dr} \bigl(\mathcal{M}_1(s) + \mathcal{M}_2(s)\bigr) \, ds,\ t\in[0,T],
		\end{equation}
		is a martingale.
		Hence
		\begin{align}
			\mathbb{E}\l[ e^{-\int_{0}^{t\wedge \tau_N}\phi_C(s) \, ds}\l|M(t\wedge \tau_N)\r|_{L^2}^2\r] \leq & \mathbb{E}\l[\int_{0}^{t\wedge \tau_N} e^{-\int_{0}^{s}\phi_C(r) \, dr} \bigl(\mathcal{M}_1(s) + \mathcal{M}_2(s)\bigr) \, ds\r] = 0.
		\end{align}
		Due to the bounds assumed on the solutions $M_1,M_2$, we have 
		\begin{equation}
			\lim_{N\to\infty}\tau_N = T.
		\end{equation}
		Therefore (see for example \cite{UM+AAP_2021_LargeDeviationsSNSELevyNoise}), for each $t\in[0,T]$,
		\begin{equation}
			M_1(t)= M_2(t),\ \mathbb{P}\text{-a.s.}
		\end{equation}
		This concludes the proof of the theorem.
	\end{proof}
	
	Theorem \ref{theorem pathwise uniqueness stochastic control equation} shows that the problem \eqref{eqn stochastic control equation} admits a pathwise unique solution. Therefore we have the following result as a consequence of the existence of a weak martingale solution and its pathwise uniqueness.
	\begin{theorem}\label{Theorem existence of strong solution stochastic control equation}
		Let $d = 1,2$. The problem \eqref{eqn stochastic control equation} admits a pathwise unique, strong solution.
	\end{theorem}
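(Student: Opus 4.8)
The plan is to deduce the statement from the Yamada--Watanabe principle, combining the weak existence of Theorem \ref{theorem existence of weak martingale solution stochastic control equation} with the pathwise uniqueness of Theorem \ref{theorem pathwise uniqueness stochastic control equation}. Pathwise uniqueness is already in hand: Theorem \ref{theorem pathwise uniqueness stochastic control equation} asserts precisely that, on any filtered probability space satisfying the usual hypotheses and carrying the driving (controlled) Poisson random measure $\eta^{\varepsilon^{-1}\phi}$, two solutions of \eqref{eqn stochastic control equation} with the same data $(\varepsilon,\phi,m_0)$ are indistinguishable. Hence only the existence of a \emph{strong} solution remains to be produced.

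For that, I would invoke the abstract Yamada--Watanabe theorem (\cite{Ikeda+Watanabe}; in the infinite-dimensional jump-noise formulation, see \cite{Ondrejat_thesis} and the discussion in \cite{ZB+UM_WeakSolutionSLLGE_JumpNoise}), with ``input space'' the Polish space of counting measures carrying $\eta^{\varepsilon^{-1}\phi}$ (equivalently, $\bar{\mathbb{M}}$ together with the control $\phi$, which under the change of measure used in the proof of Theorem \ref{theorem existence of weak martingale solution stochastic control equation} has the same law as the base PRM) and ``solution space'' $\mathbb{U}_T=\mathbb{D}([0,T]:L^2)\cap L^2(0,T:H^1)$, which is Polish. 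The two ingredients required are exactly (a) existence of a weak martingale solution, supplied by Theorem \ref{theorem existence of weak martingale solution stochastic control equation}, and (b) pathwise uniqueness, supplied by Theorem \ref{theorem pathwise uniqueness stochastic control equation}. One then runs the standard argument: given two weak solutions, realise them jointly on a common probability space by disintegrating each over the law of the common driving noise (``doubling of variables''), apply pathwise uniqueness to conclude the two copies coincide almost surely, deduce uniqueness in law of the pair (noise, solution), and finally extract, by the usual measurable-selection lemma, a Borel map $J^{\varepsilon,\phi}$ from the input space into $\mathbb{U}_T$ such that $M:=J^{\varepsilon,\phi}(\eta^{\varepsilon^{-1}\phi})$ solves \eqref{eqn stochastic control equation} on any prescribed basis. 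In fact the construction in the proof of Theorem \ref{theorem existence of weak martingale solution stochastic control equation} already exhibits such a map (there the solution is produced as $J^{\varepsilon}(\tilde{\eta}^{\varepsilon^{-1}\phi})$), so the Yamada--Watanabe step merely certifies that it does not depend on the chosen basis; the a priori energy bounds transported along the Girsanov-type change of measure in that proof (the analogues of Lemmas \ref{lemma bounds 1}--\ref{lemma bounds 4}, now with $\varepsilon$- and $K$-dependent constants as in Lemma \ref{lemma linear growth G,H,b with epsilon} and Theorem \ref{theorem unique solution for the skeleton equation}) guarantee that the solution genuinely takes values in $\mathbb{U}_T$.

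The work here is technical rather than conceptual, and the main point to verify is that the generalized Yamada--Watanabe machinery applies verbatim with a \emph{controlled} Poisson random measure in the role of the driving noise: one must check that the space of such counting measures is Polish (so the disintegration is legitimate), that the notions of weak solution and of pathwise uniqueness used above match the hypotheses of the abstract statement — in particular that pathwise uniqueness was established for an \emph{arbitrary} basis carrying the noise, which it was in Theorem \ref{theorem pathwise uniqueness stochastic control equation} — and that the admissible control $\phi$ (bounded above and below on each $K_n$, equal to $1$ off $K_n$) is compatible with the filtrations used in the doubling argument. Given the results already in place, none of this is an essential obstacle; once it is recorded, the conclusion that \eqref{eqn stochastic control equation} has a pathwise unique, strong solution follows at once. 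This is the controlled analogue of Theorem \ref{theorem existence of strong solution and J epsilon}, and setting $\phi\equiv 1$ recovers Theorem \ref{theorem existence of strong solution}.
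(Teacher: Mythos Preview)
Your proposal is correct and follows exactly the paper's approach: the paper states the theorem as an immediate consequence of the existence of a weak martingale solution (Theorem \ref{theorem existence of weak martingale solution stochastic control equation}) together with pathwise uniqueness (Theorem \ref{theorem pathwise uniqueness stochastic control equation}), invoking the Yamada--Watanabe theory. Your write-up simply spells out in more detail the mechanics of the Yamada--Watanabe argument that the paper leaves implicit.
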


	\section{Sufficient Conditions for LDP}\label{section Sufficient Conditions for LDP}
	\begin{enumerate}
		\item[\textbf{Condition 1:}]
		Let Assumption \ref{assumption main assumption} hold. For all $K\in\mathbb{N}$, let $\theta_n,\theta\in\mathcal{S}^K$ and $\theta_n\to\theta$ in $\mathcal{S}^K$ (that is  $\nu^{\theta_n}\to\nu^{\theta}$) as $n\to\infty$, then
		\begin{equation}
			J^0\l( \nu^{\theta_n} \r) \to J^0\l( \nu^{\theta} \r) \text{ in } \mathbb{U}_T.
		\end{equation}

		\item[\textbf{Condition 2:}]
		Let $K_n,n\in\mathbb{N}$ be an increasing sequence of compact subsets of $B$ such that $\cup_{n\in\mathbb{N}} = B$.		
		Let $\mathcal{U}^K$ denote the following.
		\begin{equation*}
			\mathcal{U}^K : = \l\{ \phi\in\bar{\mathcal{A}}_b : \phi \in \mathcal{S}^K, \bar{\mathbb{P}}\text{-a.s.} \r\}.
		\end{equation*}
		Let Assumption \ref{assumption main assumption} hold. Let $\{\varepsilon_n\}_{n\in\mathbb{N}}$ be a $(0,1]$-valued sequence converging to $0$. For all $K\in\mathbb{N}$, let $\phi_{\varepsilon_n},\phi\in\mathcal{S}^K$ be such that $\phi_{\varepsilon_n}$ converges in law to $\phi$ as $\varepsilon_n\to0$. Then
		\begin{equation}
			J^{\varepsilon_n}\l( \varepsilon_n \eta^{\varepsilon_n^{-1}\phi_{\varepsilon_n}}\r) \text{ converges in law to } J^0\l( \nu^{\phi} \r) \text{ in } \mathbb{U}_T.
		\end{equation}
	\end{enumerate}

	\subsection{Statement of the Main Theorem}
	\begin{theorem}\label{theorem LDP}
		Let $d=1,2$. Let the initial data $m_0$ and the given function $h$ be as in Assumption \ref{assumption main assumption}. Then the family  of laws of   $\l\{ m^{\varepsilon} \r\}_{\varepsilon\in(0,1]}$ (the family of solutions to the problem \eqref{eqn problem considered with epsilon LDP with L}) satisfies the large deviations principle on $\ \mathbb{U}_T$ with the good rate function $I$, given by
		\begin{equation}
			I(v) : = \inf_{\theta\in\mathbb{S}\,,m^{\theta} = v}\l\{ \mathcal{L}_T(\theta) \r\}\ v\in\mathbb{U}_T,
		\end{equation}
		where $m^{\theta}$ denotes the unique solution to the skeleton equation (deterministic control equation) \eqref{eqn skeleton equation} with the initial condition $m_0$. Note that $I(v) = \infty$ if the set $\l\{\theta\in\mathbb{S}:m^{\theta} = v\r\}$ is empty.
	\end{theorem}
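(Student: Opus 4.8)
The plan is to obtain the LDP from the weak convergence approach of Budhiraja, Chen, and Dupuis \cite{Budhiraja+Chen+Dupuis_2013_LDPDrivenByPoissonProcess}. Since $\mathbb{U}_T$ is a Polish space, the LDP with rate function $I$ is equivalent to the Laplace principle with the same rate function \cite{Dupuis+Ellis_Book_WeakConvergenceApproach_LargeDeviations}, and the latter follows once Condition 1 and Condition 2 of Section \ref{section Sufficient Conditions for LDP} are established. So the argument splits into three tasks: showing that $I$ is a good rate function, verifying Condition 1, and verifying Condition 2. Goodness of $I$ is essentially a corollary of Condition 1: for $M\in[0,\infty)$ the sublevel set $\{\theta\in\mathbb{S}:\mathcal{L}_T(\theta)\le M\}$ is contained in some $\mathcal{S}^K$, which is compact; since $\theta\mapsto J^0(\nu^\theta)$ is continuous on $\mathcal{S}^K$ by Condition 1 and $\mathcal{L}_T$ is lower semicontinuous on $\mathcal{S}^K$, the level set $\{v:I(v)\le M\}$ is the continuous image of a compact set, hence compact, and $I$ is lower semicontinuous by the usual extraction argument on $\mathcal{S}^K$.

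For Condition 1, take $\theta_n\to\theta$ in $\mathcal{S}^K$ and set $m_n=J^0(\nu^{\theta_n})$. Theorem \ref{theorem unique solution for the skeleton equation} together with Remark \ref{remark uniform bounds theta n} provides bounds on $\sup_t\|m_n(t)\|_{H^1}$ and $\int_0^T\|m_n(t)\|_{H^2}^2\,dt$ uniform in $n$; reading off the equation \eqref{eqn skeleton equation} gives a uniform bound on $\partial_t m_n$ in $L^2(0,T;(H^1)')$, where the quasilinear term $m_n\times\Delta m_n$ and the cubic term $(1+|m_n|^2)m_n$ are controlled in $L^2(0,T;L^2)$ using the Ladyzhenskaya / Gagliardo--Nirenberg inequalities available in dimensions $1,2$. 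An Aubin--Lions--Simon compactness argument extracts a subsequence converging weakly in $L^2(0,T;H^2)$, weakly-$\ast$ in $L^\infty(0,T;H^1)$, and strongly in $L^2(0,T;H^1)\cap C([0,T];L^2)$. Strong convergence in $L^2(0,T;H^1)$ lets one pass to the limit in $m_n\times\Delta m_n$ and in the cubic term, while the control term $\int_0^\cdot\int_B l\bar g(m_n(s))(\theta_n(s,l)-1)\,\nu(dl)\,ds$ converges because $\bar g$ is Lipschitz, $m_n\to m$ strongly, and $\theta_n\to\theta$ in the sense of measures. The limit solves \eqref{eqn skeleton equation} with control $\theta$, so by the uniqueness part of Theorem \ref{theorem unique solution for the skeleton equation} the whole sequence converges, giving Condition 1.

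For Condition 2, let $\varepsilon_n\downarrow 0$ and $\phi_{\varepsilon_n},\phi\in\mathcal{S}^K$ with $\phi_{\varepsilon_n}$ converging in law to $\phi$, and set $M_n:=J^{\varepsilon_n}(\varepsilon_n\eta^{\varepsilon_n^{-1}\phi_{\varepsilon_n}})$, which solves the stochastic control equation \eqref{eqn stochastic control equation} with parameters $(\varepsilon_n,\phi_{\varepsilon_n})$. The structural input is Lemma \ref{lemma linear growth G,H,b with epsilon}: $G(\varepsilon_n,l,\cdot)$, $H(\varepsilon_n,l,\cdot)$, $b(\varepsilon_n,\cdot)$ all carry a prefactor $e^{C\varepsilon_n}-1\to 0$ while $\varepsilon_n^{-1}(e^{C\varepsilon_n}-1)\to C$, so the pure stochastic integral against $\tilde\eta^{\varepsilon_n^{-1}\phi_{\varepsilon_n}}$ and the correction $\varepsilon_n^{-1}b(\varepsilon_n,\cdot)$ vanish in $L^2$, whereas $\int_0^\cdot\int_B \varepsilon_n^{-1}G(\varepsilon_n,l,M_n(s))(\phi_{\varepsilon_n}(s,l)-1)\,\nu(dl)\,ds$ converges to the skeleton control term $\int_0^\cdot\int_B l\bar g(m(s))(\phi(s,l)-1)\,\nu(dl)\,ds$. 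Concretely, I would first prove energy estimates on $M_n$ uniform in $n$ (mimicking Lemmas \ref{lemma bounds 1}--\ref{lemma bounds 4}, the extra control terms handled through $\mathcal{L}_T(\phi_{\varepsilon_n})\le K$ and a variational/Girsanov estimate as in \cite{Budhiraja+Chen+Dupuis_2013_LDPDrivenByPoissonProcess,UM+AAP_2021_LargeDeviationsSNSELevyNoise}) together with the Aldous condition in $X^{-\beta}$, yielding (as in Lemma \ref{lemma tightness lemma}) tightness of the laws of $(M_n,\varepsilon_n\eta^{\varepsilon_n^{-1}\phi_{\varepsilon_n}},\phi_{\varepsilon_n})$ on $\mathcal{Z}_T\times\bar{\mathbb{M}}\times\mathcal{S}^K$. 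Applying the Jakubowski--Skorohod theorem (Theorem \ref{THeorem Skorohod Jakubowski}) and passing to the limit on the new probability space identifies the limit of the magnetization component as $J^0(\nu^\phi)$; pathwise uniqueness of the skeleton equation then upgrades joint convergence in law to convergence in law of $M_n$ to $J^0(\nu^\phi)$ in $\mathbb{U}_T$, i.e. Condition 2.

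The main obstacle, in both conditions, is the quasilinear gyromagnetic term $m\times\Delta m$ (and secondarily the cubic term $(1+|m|^2)m$): passing to the limit in it requires strong convergence of the approximating magnetizations in $L^2(0,T;H^1)$, which is only available because the $H^1$--$H^2$ energy estimates close in dimensions $d=1,2$ (where the relevant interpolation and Sobolev/Ladyzhenskaya inequalities are at one's disposal). Securing these strong-compactness inputs uniformly — in $n$ for Condition 1, and in both $n$ and $\varepsilon_n$ for Condition 2, where the additional difficulty is the control perturbation and the need to keep all constants independent of $\varepsilon_n$ — is the technical heart of the proof; the remaining steps (vanishing of the small-noise terms via Lemma \ref{lemma linear growth G,H,b with epsilon}, convergence of the linear control term, and identification of the limit through uniqueness) are then essentially routine.
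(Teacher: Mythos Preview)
Your overall strategy---reduce the LDP to Conditions 1 and 2 of Section~\ref{section Sufficient Conditions for LDP} via the Budhiraja--Chen--Dupuis framework---is exactly what the paper does, and your treatment of Condition~1 (uniform skeleton bounds, Aubin--Lions compactness, identification of the limit via uniqueness) matches the paper's Lemma~\ref{lemma cdn1} essentially line for line.

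Where you diverge is in Condition~2. You propose a \emph{tightness/Skorohod} route: uniform energy estimates and an Aldous condition for $M_n=J^{\varepsilon_n}(\varepsilon_n\eta^{\varepsilon_n^{-1}\phi_{\varepsilon_n}})$, then Jakubowski--Skorohod, then identification of the limit as $J^0(\nu^{\phi})$ via vanishing of the small-noise terms and uniqueness of the skeleton equation. The paper instead takes a \emph{direct comparison} route (Lemmas~\ref{lemma cdn2 convergence of norm} and~\ref{lemma cdn2 convergence in probability}): it sets $y_n:=J^0(\nu^{\phi_{\varepsilon_n}})$ (the skeleton solution driven by the \emph{same random control} $\phi_{\varepsilon_n}$), applies the It\^o formula to $\tfrac12|Y_n-y_n|_{L^2}^2$, and uses the explicit $\varepsilon_n$-decay from Lemma~\ref{lemma linear growth G,H,b with epsilon} together with a stopping-time/exponential-weight argument to show $Y_n-y_n\to 0$ in probability in $\mathbb{U}_T$; Condition~1 then turns $y_n\Rightarrow J^0(\nu^{\phi})$ into $Y_n\Rightarrow J^0(\nu^{\phi})$. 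Both approaches are valid. Yours is softer and closer in spirit to the martingale-existence machinery already set up in Section~\ref{section Existence of a Weak Martingale Solution}, but it requires redoing tightness and limit identification for the controlled stochastic equation. The paper's approach is more quantitative (one actually sees the rate $C_{\varepsilon_n}^2/\varepsilon_n\to 0$), avoids a second Skorohod construction, and cleanly separates the ``$\varepsilon_n\to 0$'' effect (handled by $Y_n-y_n\to 0$) from the ``$\phi_{\varepsilon_n}\Rightarrow\phi$'' effect (handled by Condition~1).
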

	
	\begin{proof}[Proof of Theorem \ref{theorem LDP}]
		To prove that the family of laws of $m^{\varepsilon}$ satisfies the LDP, it suffices to verify the Conditions 1 and 2 mentioned in Section \ref{section Sufficient Conditions for LDP} (see \cite{ZB+UM+Zhai_Preprint_LDP_LLGE_JumpNoise,Budhiraja+Chen+Dupuis_2013_LDPDrivenByPoissonProcess,UM+AAP_2021_LargeDeviationsSNSELevyNoise}). Verifying the said conditions is the aim of the following Section \ref{section Verification of Conditions 1 and 2}.
	\end{proof}

	\section{Verification of Conditions 1 and 2}\label{section Verification of Conditions 1 and 2}
	The section focuses on verifying Conditions 1 and 2, thereby proving Theorem \ref{theorem LDP}.
	\subsection{Verification of Condition 1}
	We recall the mapping $J^0$ here. For $\theta\in\mathbb{S}$ (more specifically $\theta\in\mathcal{S}^K$, for some $K\in\mathbb{N}$), $J^0(\nu^{\theta}) = J^0(\theta)$ denotes the unique solution to \eqref{eqn skeleton equation}. The equality $J^0(\nu^{\theta}) = J^0(\theta)$ is justified by the identification of $\theta$ with the measure $\nu^{\theta}$, and the space $\mathcal{S}^K$ having the corresponding topology.
	
	Condition 1 is a consequence of Lemma \ref{lemma cdn1} given below. We recall the space $$\mathbb{U}_T = \mathbb{D}([0,T]:L^2)\cap L^2(0,T:H^1).$$
	\begin{lemma}\label{lemma cdn1}
		Let $K\in\mathbb{N}$ and let $\theta_n,\theta\in\mathcal{S}^K$, be such that
		\begin{equation}
			\theta_n\to\theta\text{ as }n\to\infty\text{ in }\mathbb{S}.
		\end{equation}
		Then 
		\begin{equation}
			J^0\l(\nu^{\theta_n}\r) \to J^0\l(\nu^{\theta}\r)\text{ as }n\to\infty\text{ in }\mathbb{U}_T
		\end{equation}
		Moreover, let $\theta,\tilde{\theta}$ be two $\mathcal{S}^K$-valued random variables, possibly defined on different probability spaces $\Omega,\tilde{\Omega}$, with the same laws. then the laws of the random variables 
		\begin{equation}
			\Omega \ni \omega \mapsto J^0\l(v^{\theta}\r)\in \mathbb{U}_T;\ \tilde{\Omega} \ni \tilde{\omega} \mapsto J^0\l(\nu^{\tilde{\theta}}\r)\in\mathbb{U}_T
		\end{equation}
		are equal.
	\end{lemma}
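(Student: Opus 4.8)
The strategy is the standard stability-in-the-control argument for skeleton equations: fix $K\in\mathbb{N}$ and a sequence $\theta_n\to\theta$ in $\mathcal{S}^K$, write $m_n:=J^0(\nu^{\theta_n})$ and $m:=J^0(\nu^{\theta})$, and estimate the difference $w_n:=m_n-m$. By Theorem \ref{theorem unique solution for the skeleton equation} and Remark \ref{remark uniform bounds theta n}, the family $\{m_n\}$ is uniformly bounded in $C([0,T]:H^1)\cap L^2(0,T:H^2)$, with the bound depending only on $K$, $T$, $h$, $m_0$; the same holds for $m$. The plan is to subtract the two skeleton equations \eqref{eqn skeleton equation}, apply the It\^o (here, deterministic chain-rule) formula to $t\mapsto \tfrac12|w_n(t)|_{L^2}^2$, and obtain a Gronwall-type inequality of the form
\begin{align*}
	|w_n(t)|_{L^2}^2 + \int_0^t |w_n(s)|_{H^1}^2\,ds \leq \int_0^t \psi(s)\,|w_n(s)|_{L^2}^2\,ds + R_n(t),
\end{align*}
where $\psi\in L^1(0,T)$ is built from the (uniform in $n$) norms of $m_n,m$ exactly as the auxiliary function $\psi_C$ in the proof of Theorem \ref{theorem pathwise uniqueness stochastic control equation}, and $R_n$ collects the terms genuinely involving the difference $\theta_n-\theta$. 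Once $R_n\to 0$ uniformly on $[0,T]$, Gronwall yields $\sup_{t\in[0,T]}|w_n(t)|_{L^2}^2 + \int_0^T |w_n(s)|_{H^1}^2\,ds \to 0$, which is precisely convergence in $\mathbb{U}_T=\mathbb{D}([0,T]:L^2)\cap L^2(0,T:H^1)$.

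**Handling the individual terms.** The Laplacian term contributes $-\int_0^t|\nabla w_n(s)|_{L^2}^2\,ds$, which is absorbed on the left. The cubic term $-(1+|m_n|^2)m_n + (1+|m|^2)m$ tested against $w_n$ is controlled, after adding and subtracting, by $C(1+\|m_n\|_{L^\infty}^2+\|m\|_{L^\infty}^2)|w_n|_{L^2}^2$ plus a term absorbable into $|\nabla w_n|_{L^2}^2$; here one uses the Ladyzhenskaya/Gagliardo-Nirenberg inequality in $d=1,2$ together with the uniform $H^1\cap L^2(0,T:H^2)$ bounds, exactly as in Theorem \ref{theorem pathwise uniqueness stochastic control equation}. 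The gyromagnetic term $m_n\times\Delta m_n - m\times\Delta m = w_n\times\Delta m_n + m\times\Delta w_n$ tested against $w_n$: the second piece vanishes by antisymmetry after integration by parts in the appropriate weak sense (Remark \ref{Remark weak form of Laplacian and cross product terms}), and the first is bounded by $\|\Delta m_n\|_{L^2}\,\|w_n\|_{L^4}^2$, again split by interpolation into an absorbable $|\nabla w_n|_{L^2}^2$ contribution and a $\psi(s)|w_n|_{L^2}^2$ contribution. This is why all the norms of $m_n$ feeding into $\psi$ are the ones for which we have uniform-in-$n$ control. The control-dependent remainder is
\begin{align*}
	R_n(t) = \int_0^t \int_B l\,\Big\langle \bar g(m(s))(\theta_n(s,l)-\theta(s,l)) + \big(\bar g(m_n(s))-\bar g(m(s))\big)(\theta_n(s,l)-1),\, w_n(s)\Big\rangle_{L^2}\,\nu(dl)\,ds,
\end{align*}
up to reorganizing terms; the second summand here is again absorbed into the $\psi(s)|w_n|_{L^2}^2$ bound using that $\bar g$ is Lipschitz and $\sup_{\theta_n\in\mathcal{S}^K}\int_0^T\int_B|\theta_n(t,l)-1|\nu(dl)\,dt$ is finite by Remark \ref{remark uniform bounds theta n}, leaving only the genuinely $\theta$-dependent piece $\int_0^t\int_B l\,\langle \bar g(m(s))(\theta_n-\theta),w_n\rangle_{L^2}\,\nu(dl)\,ds$.

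**The main obstacle.** The delicate point is showing this last term tends to $0$. One cannot simply use $\theta_n\to\theta$ weakly against $w_n$ because $w_n$ itself depends on $n$. The remedy is to decouple: bound $\big|\int_0^t\int_B l\,\langle \bar g(m(s))(\theta_n-\theta),w_n\rangle\,\nu(dl)\,ds\big| \le \big(\sup_{s}\|w_n(s)\|_{L^2}\big)\cdot \int_0^T\int_B |l|\,\|\bar g(m(s))\|_{L^2}\,|\theta_n-\theta|\,\nu(dl)\,ds$, and observe that the second factor converges to $0$ because $m$ is fixed, $s\mapsto \bar g(m(s))$ is a fixed $L^1(0,T:L^2)$ (indeed $L^2(0,T:H^1)$) function, $|l|$ is bounded on $B$, and $\theta_n\to\theta$ in $\mathcal{S}^K$ implies $\nu^{\theta_n}\to\nu^{\theta}$ in $\mathbb{M}$; combined with the uniform-integrability bound on $\mathcal{S}^K$ (the super-linear cost $\mathcal{L}_T\le K$, which gives a de la Vall\'ee-Poussin–type equi-integrability), this forces the integral to vanish — this is the standard lemma (cf.\ Budhiraja–Chen–Dupuis, and \cite{UM+AAP_2021_LargeDeviationsSNSELevyNoise,Zhai+Zhang_2015_LargeDeviations_2DSNSE_MultiplicativeLevyNoise}) that a sequence converging in $\mathcal{S}^K$ converges in the weak topology against functions $g$ with $|g(l,s)|\le C(1+\|\cdot\|)$ in the appropriate integrable sense. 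Since $\sup_s\|w_n(s)\|_{L^2}$ is a priori bounded (by the uniform bounds on $m_n,m$), the product goes to $0$, so $R_n\to0$ uniformly and Gronwall closes the argument. Finally, the equality of laws of $\omega\mapsto J^0(\nu^{\theta})$ and $\tilde\omega\mapsto J^0(\nu^{\tilde\theta})$ for identically distributed $\mathcal{S}^K$-valued random variables $\theta,\tilde\theta$ is immediate: $J^0:\mathbb{S}\to\mathbb{U}_T$ is a deterministic (Borel measurable, indeed continuous by the first part) map, so the laws are pushforwards of equal laws under the same map and hence coincide.
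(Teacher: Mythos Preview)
Your approach is genuinely different from the paper's. The paper does \emph{not} estimate the difference $w_n=m_n-m$ directly; instead it uses a compactness argument: the uniform bounds from Theorem~\ref{theorem unique solution for the skeleton equation} (depending only on $K$) place $\{m_n\}$ in a relatively compact set of $L^4(0,T:L^4)\cap L^2(0,T:H^1)\cap C([0,T]:L^2)$, one extracts a subsequential limit $\bar m$, shows $\bar m$ solves \eqref{eqn skeleton equation} with control $\theta$ (this uses only the weak convergence $\nu^{\theta_n}\to\nu^{\theta}$ against \emph{fixed} test functions), and then invokes uniqueness to conclude $\bar m=m$ and hence full-sequence convergence.

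Your direct energy estimate is attractive, but there is a genuine gap at the step you flag as ``the main obstacle''. You bound the residual by
\[
\Bigl(\sup_{s}\|w_n(s)\|_{L^2}\Bigr)\cdot \int_0^T\!\!\int_B |l|\,\|\bar g(m(s))\|_{L^2}\,|\theta_n(s,l)-\theta(s,l)|\,\nu(dl)\,ds
\]
and assert that the second factor tends to $0$ by weak convergence plus the de~la~Vall\'ee--Poussin equi-integrability coming from $\mathcal{L}_T\le K$. This is false: weak convergence $\nu^{\theta_n}\to\nu^{\theta}$ together with uniform integrability yields $\int\phi\,(\theta_n-\theta)\,\nu(dl)\,ds\to0$ for fixed integrable $\phi$, but \emph{not} $\int\phi\,|\theta_n-\theta|\,\nu(dl)\,ds\to0$, which would be strong $L^1$ convergence. (Think of an oscillatory example $\theta_n(s,l)=1+\sin(n l)$ on a bounded interval: it converges weakly to $1$ in $\mathcal{S}^K$ for suitable $K$, yet $\int|\theta_n-1|$ does not vanish.) The ``standard lemma'' you invoke (Budhiraja--Chen--Dupuis, Zhai--Zhang) is precisely of the first type, without the absolute value.

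The underlying difficulty is circularity: the test function against which you integrate $\theta_n-\theta$ contains $w_n$, which itself depends on $n$. The compactness route in the paper sidesteps this because, after passing to a subsequential limit $\bar m$, one only ever pairs $\theta_n-\theta$ with functions built from the \emph{fixed} $\bar m$ (and fixed test vectors $V$), so genuine weak convergence suffices. If you want to salvage the direct approach you would need either a stronger convergence statement on $\mathcal{S}^K$ than is available, or a more delicate splitting of the residual that decouples $w_n$ from $\theta_n-\theta$ without introducing $|\theta_n-\theta|$; neither is provided. Your argument for the ``moreover'' part (equality of laws via measurability of $J^0$) is correct.
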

	\begin{proof}[Proof of Lemma \ref{lemma cdn1}]
		Let us fix the following notations for this proof. For $n\in\mathbb{N}$, let $m_n$ denote the unique solution to the problem \eqref{eqn skeleton equation} corresponding to $\theta_n$. Similarly, let $m$ denote the unique solution to the problem \eqref{eqn skeleton equation} corresponding to $\theta$. The existence and uniqueness of these solutions are given by Theorem \ref{theorem unique solution for the skeleton equation}. We give an outline of the proof here. Similar details can be found in \cite{ZB+UM+Zhai_Preprint_LDP_LLGE_JumpNoise,UM+AAP_2021_LargeDeviationsSNSELevyNoise} among others. Some calculations are similar to the proof of Theorem \ref{theorem unique solution for the skeleton equation}.
		\begin{enumerate}
			\item[\textbf{Step 1}] Uniform energy estimates for the sequence $m_n$, followed by compactness arguments, resulting in a limit $\bar{m}$.\\    
			Notice that in Theorem \ref{theorem unique solution for the skeleton equation} (see also Remark \ref{remark uniform bounds theta n}), the bounds on the solutions $m_n$ depend only on the terminal time $T$, the initial data $m_0$ and $K$, where $\theta_n\in\mathcal{S}^K,n\in\mathbb{N}$. Therefore the bounds established in Theorem \ref{theorem unique solution for the skeleton equation} (with $\theta$ replaced by $\theta_n$) are uniform in $n\in\mathbb{N}$.
			\begin{lemma}\label{lemma bounds lemma 1 existence of skeleton equation}
				There exists a constant $C>0$ such that the following hold.
				\begin{equation}
					\sup_{t\in[0,T]} \l| m_n(t) \r|_{H^1}^2 \leq C,
				\end{equation}
				\begin{equation}
					\int_{0}^{T} \l| m_n(t) \r|_{H^2}^2 \, dt \leq C,
				\end{equation}
			\end{lemma}
			
			\begin{lemma}\label{lemma bounds lemma 2 existence of skeleton equation}
				Let $p\geq 2,\beta>\frac{1}{4}$, $\alpha\in(0,\frac{1}{2})$. There exists a constant $C>0$ such that
				\begin{equation}
					\l| m_n \r|_{W^{\alpha , p}\l([0,T]:X^{-\beta}\r)} \leq C.
				\end{equation}
			\end{lemma}
			Using standard compactness results (for instance see Section \ref{section Proof of Existence of a Solution for Skeleton Equation}), we can obtain a subsequence $\{ m_n \}_{n\in\mathbb{N}}$ (using the same notation), along with some $\bar{m}\in L^2(0,T:H^2)\cap L^{\infty}(0,T:H^1) \cap C([0,T]:L^2)$ such that
			
			\begin{align}
				m_n\to \bar{m}&\ \text{weakly in} \ L^2(0,T:H^2),\\
				m_n\to \bar{m}&\ \text{weakly* in} \ L^{\infty}(0,T:H^1),
			\end{align}
			and
			\begin{align}\label{cdn1 convergence of mn to bar m}
				m_n \to \bar{m} \text{ in } L^4(0,T:L^4)\cap L^2(0,T:H^1) \cap C([0,T]:L^2).
			\end{align}
			
			\item[\textbf{Step 2}] Following the proof of Lemma 6.1 in \cite{UM+AAP_2021_LargeDeviationsSNSELevyNoise} (see also Section 4 in \cite{LE_Deterministic_LLBE}), we can show that this obtained limit $\bar{m}$ is a solution for the problem \eqref{eqn skeleton equation} corresponding to $\theta$.\\
			By Theorem \ref{theorem unique solution for the skeleton equation}, the solution $\bar{m}$ is unique (corresponding to $\theta$). Hence $m = \bar{m}$.
			In particular, we can conclude from the convergence in \eqref{cdn1 convergence of mn to bar m} that $m_n\to m$ in $L^4(0,T:L^4)\cap L^2(0,T:H^1) \cap C([0,T]:L^2)$, and therefore in $\mathbb{U_T}$.
		\end{enumerate}
		This concludes the proof of Lemma \ref{lemma cdn1}.
	\end{proof}

	\subsection{Condition 2}
	Let $n,N\in\mathbb{N}$. Let us define a stopping time $\tau_n^N$ as follows.
	\begin{equation}\label{eqn cdn2 definition of stopping time}
		\tau_n^N = \inf\l\{ t : \sup_{s\in[0,t]}\l|Y_n(s)\r|_{H^1} + \sup_{s\in[0,t]}\l|y_n(s)\r|_{H^1} + \int_{0}^{t} \l|Y_n(s)\r|_{H^2}^2 \, ds + \int_{0}^{t} \l|y_n(s)\r|_{H^2}^2 \, ds > N \r\}\wedge T    .
	\end{equation}
	For the sake of convenience, we supress the dependence on $N$ and write the stopping time as $\tau_n$.
	We first state and prove an auxiliary result.
	\begin{lemma}\label{lemma cdn2 convergence of norm}
		Let $Y_n = J^{\varepsilon_n}(\varepsilon_n \eta^{\varepsilon_n^{-1}\phi_{\varepsilon_n}})$ and $y_n = J^0(\phi_{\varepsilon_n})$. Then for each $N\in\mathbb{N}$,
		\begin{equation}
			\lim_{n\to\infty} \mathbb{E} \l[ \sup_{t\in[0,T\wedge \tau_n]} \l| Y_n - y_n \r|_{L^2}^2 + \int_{0}^{T\wedge\tau_n} \l| Y_n - y_n \r|_{H^1}^2 \, dt \r] = 0.
		\end{equation}
	\end{lemma}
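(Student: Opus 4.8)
The plan is to work directly with the difference $Z_n := Y_n - y_n$. Subtracting the skeleton equation \eqref{eqn skeleton equation} (with $\theta = \phi_{\varepsilon_n}$) from the stochastic control equation \eqref{eqn stochastic control equation} (with $\varepsilon = \varepsilon_n$, $\phi = \phi_{\varepsilon_n}$), and using $Y_n(0) = m_0 = y_n(0)$, one obtains an equation for $Z_n$ whose terms are: the diffusion $\int_0^t \Delta Z_n$, the gyromagnetic difference $\int_0^t (Y_n\times\Delta Y_n - y_n\times\Delta y_n)$, the cubic difference $-\int_0^t[(1+|Y_n|^2)Y_n - (1+|y_n|^2)y_n]$, the martingale term $\int_0^t\int_B G(\varepsilon_n,l,Y_n)\,\tilde\eta^{\varepsilon_n^{-1}\phi_{\varepsilon_n}}(dl,ds)$, the controlled-drift discrepancy $\int_0^t\int_B[\varepsilon_n^{-1}G(\varepsilon_n,l,Y_n) - l\bar g(y_n)](\phi_{\varepsilon_n}-1)\,\nu(dl)\,ds$, and the Marcus correction $\varepsilon_n^{-1}\int_0^t b(\varepsilon_n,Y_n)\,ds$. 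I would then apply the It\^o formula (Gy\"ongy--Krylov, as in the proof of Theorem \ref{theorem pathwise uniqueness stochastic control equation}) to $t\mapsto |Z_n(t)|_{L^2}^2$; this is licit because $Z_n(s)\in H^2$ for a.e.\ $s$ and the only jumps come from $Y_n$.

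The deterministic-type terms are treated as in Theorem \ref{theorem pathwise uniqueness stochastic control equation}. The diffusion gives $-\int_0^t|\nabla Z_n|_{L^2}^2$, which together with $|Z_n|_{L^2}^2$ reconstructs $\int_0^t|Z_n|_{H^1}^2$ on the left. For the gyromagnetic term split $Y_n\times\Delta Y_n - y_n\times\Delta y_n = Z_n\times\Delta Y_n + y_n\times\Delta Z_n$; testing against $Z_n$, the first piece vanishes since $a\times b\perp a$, and the second, after integration by parts with the Neumann condition, equals $-\int_\mathcal{O}\sum_k\partial_k y_n\cdot(\partial_k Z_n\times Z_n)\,dx$, bounded by $\int_\mathcal{O}|\nabla y_n||\nabla Z_n||Z_n|$. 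Here $d=1,2$ is essential: by Gagliardo--Nirenberg/Ladyzhenskaya ($|Z_n|_{L^4}^2\lesssim|Z_n|_{L^2}|Z_n|_{H^1}$ and $|\nabla y_n|_{L^4}\lesssim|y_n|_{H^1}^{1/2}|y_n|_{H^2}^{1/2}$ for $d=2$, with the $L^\infty$-substitute for $d=1$) and Young, this is $\le\frac14|Z_n|_{H^1}^2 + C|y_n|_{H^1}^2|y_n|_{H^2}^2|Z_n|_{L^2}^2$ (resp.\ $C|y_n|_{H^1}^4|Z_n|_{L^2}^2$). The cubic difference, via $\bigl||Y_n|^2Y_n - |y_n|^2y_n\bigr|\le C(|Y_n|^2+|y_n|^2)|Z_n|$ and Ladyzhenskaya, is $\le\frac14|Z_n|_{H^1}^2 + C(|Y_n|_{H^1}^2+|y_n|_{H^1}^2)^2|Z_n|_{L^2}^2$. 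Crucially every coefficient multiplying $|Z_n|_{L^2}^2$ is integrable over $[0,\tau_n]$ with a deterministic bound $C_N$, precisely because the stopping time $\tau_n=\tau_n^N$ of \eqref{eqn cdn2 definition of stopping time} keeps $\sup_{[0,\tau_n]}(|Y_n|_{H^1}+|y_n|_{H^1})$ and $\int_0^{\tau_n}(|Y_n|_{H^2}^2+|y_n|_{H^2}^2)$ below $N$; this is the same auxiliary-function ($\psi_C$) mechanism already used in Theorem \ref{theorem pathwise uniqueness stochastic control equation}.

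The $\varepsilon_n$-dependent terms are where $\varepsilon_n\to0$ enters. Writing $G(\varepsilon,l,v)=\int_0^\varepsilon l\bar g(\Phi(s,l,v))\,ds$ and $H(\varepsilon,l,v)=\int_0^\varepsilon l[\bar g(\Phi(s,l,v))-\bar g(v)]\,ds$, and using the Lipschitz/linear-growth bounds of Lemmas \ref{lemma linear growth Lipschitz G and H} and \ref{lemma linear growth G,H,b with epsilon}, one gets the refined estimates $|\varepsilon^{-1}G(\varepsilon,l,v)-l\bar g(v)|_{L^2}\le C\varepsilon l^2(1+|v|_{L^2})$ and $|H(\varepsilon,l,v)|_{L^2}\le C\varepsilon^2 l^2(1+|v|_{L^2})$, hence $\varepsilon^{-1}|b(\varepsilon,v)|_{L^2}\le C\varepsilon(1+|v|_{L^2})$ since $\int_B l^2\,\nu(dl)<\infty$. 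So the Marcus correction tested against $Z_n$ is $O(\varepsilon_n)$; in the controlled-drift discrepancy one further splits $\varepsilon_n^{-1}G(\varepsilon_n,l,Y_n)-l\bar g(y_n)=[\varepsilon_n^{-1}G(\varepsilon_n,l,Y_n)-l\bar g(Y_n)]+l[\bar g(Y_n)-\bar g(y_n)]$, the first bracket $O(\varepsilon_n)$ and the second Lipschitz in $Z_n$ with constant $|l|\,|h|_{L^\infty}$; the martingale term is handled by Burkholder--Davis--Gundy, its bracket being $\lesssim\varepsilon_n^{-1}\int_0^{\tau_n}\!\int_B|G(\varepsilon_n,l,Y_n)|_{L^2}^2\phi_{\varepsilon_n}\,\nu(dl)\,ds=O(\varepsilon_n)$. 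All $O(\varepsilon_n)$ contributions are controlled on $[0,\tau_n]$ using $\sup_{[0,\tau_n]}|Y_n|_{L^2}\le C_N$ and the uniform bound $\sup_{\psi\in\mathcal{S}^K}\int_0^T\!\int_B|l||\psi(s,l)-1|\,\nu(dl)\,ds<\infty$ (a standard consequence of $\mathcal L_T\le K$ and $\int_B l^2\nu<\infty$; cf.\ Remark \ref{remark uniform bounds theta n}), so they assemble into a remainder $R_n(t)=\int_0^t r_n(s)\,ds$ with $r_n\ge0$ and $\mathbb E\!\int_0^{T\wedge\tau_n}r_n(s)\,ds\to0$, plus a martingale $\mathcal M_n$.

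Collecting everything gives, on $[0,T\wedge\tau_n]$, the inequality $|Z_n(t)|_{L^2}^2+\int_0^t|Z_n(s)|_{H^1}^2\,ds\le\int_0^t\Psi_n(s)|Z_n(s)|_{L^2}^2\,ds+R_n(t)+\mathcal M_n(t)$ with $\int_0^{\tau_n}\Psi_n(s)\,ds\le C_N$ a.s. The final step is an exponential Gronwall argument: apply It\^o to $e^{-\int_0^t\Psi_n(s)\,ds}|Z_n(t)|_{L^2}^2$ stopped at $\tau_n$, use $Z_n(0)=0$ and $e^{-\int_0^{t\wedge\tau_n}\Psi_n}\ge e^{-C_N}$, then take $\mathbb E\sup_{t\le T\wedge\tau_n}$ and absorb the BDG term (which carries a factor $\varepsilon_n^{1/2}$ and is absorbable into the left-hand side for $n$ large since $\mathbb E\sup_{[0,T\wedge\tau_n]}|Z_n|_{L^2}^2\le 4N^2<\infty$ a priori), obtaining $\mathbb E[\sup_{t\le T\wedge\tau_n}|Z_n(t)|_{L^2}^2+\int_0^{T\wedge\tau_n}|Z_n|_{H^1}^2\,dt]\le C_N(\mathbb E\!\int_0^{T\wedge\tau_n}r_n(s)\,ds+\varepsilon_n)\to0$. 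I expect the main obstacle to be the gyromagnetic term: obtaining the estimate in exactly the form $\frac14|Z_n|_{H^1}^2+(\text{coefficient integrable up to }\tau_n)|Z_n|_{L^2}^2$, which dictates the precise Gagliardo--Nirenberg exponents, is the reason the stopping time must control both $\sup|Y_n|_{H^1},\sup|y_n|_{H^1}$ and $\int|Y_n|_{H^2}^2,\int|y_n|_{H^2}^2$, and forces the restriction $d=1,2$; a secondary delicate point is the bookkeeping of the $\varepsilon_n\to0$ terms, where the refined $O(\varepsilon^2)$ bound on $H(\varepsilon,l,\cdot)$ and the uniform $\mathcal{S}^K$-bounds are what make the extra Marcus and controlled-drift contributions vanish in the limit.
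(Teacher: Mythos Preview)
Your proposal is correct and follows essentially the same approach as the paper: write the equation for the difference, apply the It\^o formula to $|Z_n|_{L^2}^2$, absorb the deterministic bilinear and cubic terms into a Gronwall weight $\Psi_C$ whose time-integral is bounded by $C_N$ on $[0,\tau_n]$, apply the exponential-weight It\^o formula, stop at $\tau_n$, and handle the martingale via BDG. The only organizational difference is in the $\varepsilon_n$-dependent terms: the paper keeps the $\varepsilon_n^{-1}\int_B G(\varepsilon_n,l,Y_n)(\phi_{\varepsilon_n}-1)\,\nu$ and the $\int_B l\bar g(y_n)(\phi_{\varepsilon_n}-1)\,\nu$ pieces separate and rewrites $l\bar g(y_n)=\varepsilon_n^{-1}[G(\varepsilon_n,l,y_n)-H(\varepsilon_n,l,y_n)]$, tracking everything through the constants $C_{\varepsilon_n}=C(e^{C\varepsilon_n}-1)$ of Lemma~\ref{lemma linear growth G,H,b with epsilon}, whereas you combine them and split as $[\varepsilon_n^{-1}G(\varepsilon_n,l,Y_n)-l\bar g(Y_n)]+l[\bar g(Y_n)-\bar g(y_n)]$, using the cleaner Taylor-type bounds $|\varepsilon^{-1}G-l\bar g|\le C\varepsilon l^2(1+|\cdot|)$ and $|H|\le C\varepsilon^2 l^2(1+|\cdot|)$; your version is a bit more transparent but the substance is identical.
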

	\begin{proof}[Proof of Lemma \ref{lemma cdn2 convergence of norm}]
		We denote $\phi_{\varepsilon_n}$ by $\phi_n$. The process $Y_n$ satisfies the following equality.
		\begin{align}\label{eqn cdn2 equation for Capital yn}
			\nonumber Y_n(t) = & m_0 + \int_{0}^{t} \l( \Delta Y_n(s) + Y_n(s) \times \Delta Y_n(s) - \l( 1 + \l| Y_n(s) \r|_{\mathbb{R}^3}^2 \r) Y_n(s) \r) \, ds \\
			\nonumber & + \int_{0}^{t} \int_{B} G\bigl(\varepsilon_n , l , Y_n(s)\bigr)\, \tilde{\eta}^{\varepsilon_n^{-1}\phi_n}(dl,dt) 
			+ \int_{0}^{t} \int_{B} G\bigl(\varepsilon_n , l , Y_n(s)\bigr) \bigl( \phi_n(s,l) - 1 \bigr)\, \nu^{\varepsilon_n^{-1} }(dl) \, ds\\
			& + \varepsilon_n^{-1} \int_{0}^{t} b\bigl(\varepsilon_n , Y_n(s)\bigr) \, ds.
		\end{align}
		Similarly, the process $y_n$ satisfies the following equality.
		\begin{align}\label{eqn cdn2 equation for small yn}
			\nonumber y_n(t) = &  m_0 + \int_{0}^{t} \biggl[ \Delta y_n(s) + y_n(s) \times \Delta y_n(s) - \bigl( 1 + \l| y_n(s) \r|_{\mathbb{R}^2}^2 \bigr) y_n(s) \biggr] \, dt\\
			& + \int_{0}^{t} \int_{B} l \bar{g}\bigl( y_n(s) \bigr) \bigl( \phi_n(t,l) - 1 \bigr) \nu(dl) \, ds.
		\end{align}
		Let $X_n = Y_n - y_n$. Then $X_n$ satisfies the following equation.
		\begin{align}\label{eqn cdn2 equation for Xn}
			\nonumber X_n(t) = & \int_{0}^{t}  \Delta X_n(s) \, ds + \int_{0}^{t} \l[ Y_n(s) \times \Delta Y_n(s) - y_n(s) \times \Delta y_n(s) \r] \, ds\\
			\nonumber & - \int_{0}^{t} \l[ \l( 1 + \l| Y_n(s) \r|_{\mathbb{R}^3}^2 \r) Y_n(s)  - \bigl( 1 + \l| y_n(s) \r|_{\mathbb{R}^3}^2 \bigr) y_n(s)  \r] \, ds \\
			\nonumber & + \int_{0}^{t} \int_{B} G\bigl(\varepsilon_n , l , Y_n(s)\bigr)\, \tilde{\eta}^{\varepsilon_n^{-1}\phi_n}(dl,dt) \\
			\nonumber & + \int_{0}^{t} \int_{B} G\bigl(\varepsilon_n , l , Y_n(s)\bigr) \bigl( \phi_n(s,l) - 1 \bigr)\, \nu^{\varepsilon_n^{-1} } \, (dl) \, ds\\
			\nonumber & + \varepsilon_n^{-1} \int_{0}^{t} b\bigl(\varepsilon_n , Y_n(s)\bigr) \, ds \\
			& +  \int_{0}^{t} \int_{B} l \bar{g}\bigl( y_n(s) \bigr) \bigl( \phi_n(t,l) - 1 \bigr) \, \nu(dl) \, ds    .
		\end{align}
		Applying the It\^o formula to the function 
		\begin{equation*}
			\l\{L^2\ni v\mapsto \frac{1}{2}\l| v \r|_{L^2}^2\in\mathbb{R}\r\},	
		\end{equation*}
		followed by the use of integration by parts gives
		
		\begin{align}\label{eqn cdn2 Ito formula 1 on Xn}
			\nonumber \frac{1}{2} \l| X_n(t) \r|_{L^2}^2 = & \int_{0}^{t}  \l\langle \Delta X_n(s) , X_n(s) \r\rangle_{L^2} \, ds + \int_{0}^{t} \l\langle    y_n(s) \times \Delta X_n(s)  , X_n(s) \r\rangle_{L^2} \, ds\\
			\nonumber & - \int_{0}^{t} \bigl\langle \l\langle Y_n(s) + y_n(s) , X_n(s) \r\rangle_{\mathbb{R}^3}Y_n(s)   , X_n(s) \bigr\rangle_{L^2}  \, ds \\
			\nonumber & + \int_{0}^{t} \l\langle  \l( 1 + \l| y_n(s) \r|_{\mathbb{R}^3}^2 \r) X_n(s)  , X_n(s) \r\rangle_{L^2}  \, ds \\
			\nonumber & + \frac{1}{2} \int_{0}^{t} \int_{B} \l| G\bigl(\varepsilon_n , l , Y_n(s)\bigr) \r|_{L^2}^2 \, \tilde{\eta}^{\varepsilon_n^{-1}\phi_n}(dl,ds) \\
			\nonumber &   + \int_{0}^{t} \l\langle G\bigl(\varepsilon_n , l , Y_n(s)\bigr) , X_n(s) \r\rangle_{L^2} \, \tilde{\eta}^{\varepsilon^{-1}_n\phi{_{\varepsilon_n}}}(dl,ds)\\
			\nonumber & + \frac{\varepsilon_n^{-1}}{2} \int_{0}^{t} \int_{B} \phi_{n}(s,l) \l| G\bigl(\varepsilon_n , l , Y_n(s)\bigr) \r|_{L^2}^2 \, \nu(dl) \, ds \\
			\nonumber & + \varepsilon_n^{-1} \int_{0}^{t} \int_{B} \l\langle G\bigl(\varepsilon_n , l , Y_n(s)\bigr) \bigl( \phi_n(s,l) - 1 \bigr) , X_n(s) \r\rangle_{L^2} \, \nu(dl) \, ds\\
			\nonumber & + \varepsilon_n^{-1} \int_{0}^{t} \l\langle b\bigl(\varepsilon_n , Y_n(s) \bigr) , X_n(s) \r\rangle_{L^2}  \, ds \\
			\nonumber & +  \int_{0}^{t} \int_{B} l \l\langle \bar{g}\bigl( y_n(s) \bigr) \bigl( \phi_n(s,l) - 1 \bigr) , X_n(s) \r\rangle_{L^2} \nu(dl)ds \\
			= & \sum_{i=1}^{4} C_iI_i(t) + \mathcal{M}_1(t) + \mathcal{M}_2(t) + \sum_{i=5}^{8} C_iI_i(t).
		\end{align}
		The terms $I_i,i=1,\dots,4$ can be calculated as done in, for example, \cite{ZB+BG+Le_SLLBE}.\\
		\textbf{Calculations for $I_5$:}
		\begin{align*}
			\l| C_5 I_5(t) \r| =  & \l| \frac{\varepsilon_n^{-1}}{2} \int_{0}^{t} \int_{B} \phi_{n}(s,l) \l| G\bigl(\varepsilon_n , l , Y_n(s)\bigr) \r|_{L^2}^2 \, \nu(dl) \, ds \r| \\
			\leq & C \frac{\varepsilon_n^{-1}}{2} \int_{0}^{t} \int_{B} \l| \phi_{n}(s,l) \r| \l| G\bigl( \varepsilon_n , l , Y_n(s) \bigr)  \r|_{L^2}^2 \, \nu(dl) \, ds  \\
			\leq & C \frac{C_{\varepsilon_n}^2}{2\varepsilon_n} \int_{0}^{t} \int_{B} \l| \phi_{n}(s,l) \r| \bigl( 1 + \l|  Y_n(s) \r|_{L^2}^2 \bigr) \, \nu(dl) \, ds.
		\end{align*}
		\textbf{Calculations for $I_6$:}
		
		We recall the constant $C_{\varepsilon_n}$ here (see Lemma \ref{lemma linear growth G,H,b with epsilon}).
		\begin{align}
			C_{\varepsilon_n} = & \, C\l( e^{C\varepsilon_n} - 1 \r) .
		\end{align}

		\begin{align}\label{eqn cdn2 inequality for I5}
			\nonumber \l| C_6 I_6(t) \r| = & \varepsilon_n^{-1} \l| \int_{0}^{t} \int_{B} \l\langle G\bigl(\varepsilon_n , l , Y_n(s)\bigr) \bigl( \phi_n(s,l) - 1 \bigr) , X_n(s) \r\rangle_{L^2} \, \nu(dl) \, ds \r| \\
			\nonumber \leq & \varepsilon_n^{-1}  \int_{0}^{t} \int_{B} \l| G\bigl(\varepsilon_n , l , Y_n(s)\bigr) \r|_{L^2} \l| \bigl( \phi_n(s,l) - 1 \bigr) \r|  \l| X_n(s) \r|_{L^2} \, \nu(dl) \, ds  \\
			\nonumber \leq &   \int_{0}^{t} \int_{B} \varepsilon_n\l| G\bigl(\varepsilon_n , l , Y_n(s)\bigr) \r|_{L^2} \l| \bigl( \phi_n(s,l) - 1 \bigr) \r|  \varepsilon_n^{-2} \l| X_n(s) \r|_{L^2} \, \nu(dl) \, ds  \\
			\nonumber \leq &   \int_{0}^{t} \int_{B} \varepsilon_n \l( 1 + \l| Y_n(s) \r|_{L^2} \r) \l| \bigl( \phi_n(s,l) - 1 \bigr) \r| C_{\varepsilon_n} \varepsilon_n^{-2} \l| X_n(s) \r|_{L^2} \, \nu(dl) \, ds  \\
			\leq & C \frac{\varepsilon_n^2}{2} \int_{0}^{t} \l( 1 + \l| Y_n(s) \r|_{L^2}^2 \r) \, ds + \frac{C_{\varepsilon_n}^2}{\varepsilon_n^{4}} \int_{0}^{t} \l| X_n(s) \r|_{L^2}^2  \, ds.
		\end{align}
		\textbf{Calculations for $I_7$:}
		\begin{align}\label{eqn cdn2 inequality for I6}
			\nonumber \l| C_7I_7(t) \r| \leq & \varepsilon_n^{-1} \l| \int_{0}^{t} \l\langle b\bigl(\varepsilon_n , Y_n(s)\bigr) , X_n(s) \r\rangle_{L^2}  \, ds \r| \\
			\nonumber = & \varepsilon_n^{-2}\varepsilon_n \l| \int_{0}^{t} \l\langle b\bigl(\varepsilon_n , Y_n(s)\bigr) , X_n(s) \r\rangle_{L^2}  \, ds \r| \\
			\nonumber \leq & \varepsilon_n^{-2}\varepsilon_n  \int_{0}^{t} \l| b\bigl(\varepsilon_n , Y_n(s)\bigr) \r|_{L^2} \l| X_n(s) \r|_{L^2}  \, ds  \\
			\nonumber \leq & C_{\varepsilon_n}\varepsilon_n^{-2}\varepsilon_n  \int_{0}^{t} \l( 1 + \l| Y_n(s) \r|_{L^2}\r) \l| X_n(s) \r|_{L^2}  \, ds  \\
			\leq & \varepsilon_n^2  \frac{1}{2} \int_{0}^{t} \l( 1 + \l| Y_n(s) \r|_{L^2}^2 \r) \, ds 
			+ \frac{C_{\varepsilon_n}^2}{2\varepsilon_n^4} \int_{0}^{t}  \l| X_n(s) \r|_{L^2}^2  \, ds.
		\end{align}
		Note that due to the definition of the constant $C_{\varepsilon_n}$, the term $\frac{C_{\varepsilon_n}^2}{2\varepsilon_n^4}$ is bounded even as $n\to\infty$ (i.e. as $\varepsilon_n\to 0$).\\
		\textbf{Calculations for $I_8$:}
		For the last term, we have the following observation first.
		\begin{equation}
			l\varepsilon_n\bar{g}(v) = G(\varepsilon_n , l , v) - H(\varepsilon_n , l , v).
		\end{equation}
		Therefore
		\begin{equation}
			l\bar{g}(v) = \varepsilon_n^{-1}\l[ G(\varepsilon_n , l , v) - H(\varepsilon_n , l , v) \r].
		\end{equation}
		Therefore
		\begin{align}
			\nonumber \l|C_8I_8(t)\r| = &  \l| \varepsilon_n^{-1} \int_{0}^{t} \int_{B} l \l\langle \bar{g}\bigl( y_n(s) \bigr) \bigl( \phi_n(t,l) - 1 \bigr) , X_n(s) \r\rangle_{L^2} \, \nu(dl) \, ds \r| \\
			\nonumber = & \l| \varepsilon_n^{-1} \int_{0}^{t} \int_{B}  \l\langle G\bigl(\varepsilon_n , l , y_n(s)\bigr) - H\bigl(\varepsilon_n , l , y_n(s)\bigr) \bigl( \phi_n(s,l) - 1 \bigr) , X_n(s) \r\rangle_{L^2} \, \nu(dl) \, ds \r| \\
			\nonumber \leq & \varepsilon_n^{-1} \int_{0}^{t} \int_{B}  \l| G\bigl(\varepsilon_n , l , y_n(s)\bigr) - H\bigl(\varepsilon_n , l , y_n(s)\bigr) \bigl( \phi_n(s,l) - 1 \bigr) \r|_{L^2}      \l|  X_n(s) \r|_{L^2}    \, \nu(dl) \, ds \\
			\nonumber \leq & \varepsilon_n^{-1} \int_{0}^{t} \int_{B}  \l| G\bigl(\varepsilon_n , l , y_n(s)\bigr)  \l( \phi_n(s,l) - 1 \r) \r|_{L^2}      \l|  X_n(s) \r|_{L^2}     \nu(dl) \, ds \\
			& +  \varepsilon_n^{-1} \int_{0}^{t} \int_{B}  \l|  H\bigl(\varepsilon_n , l , y_n(s)\bigr) \bigl( \phi_n(s,l) - 1 \bigr) \r|_{L^2}      \l|  X_n(s) \r|_{L^2}    \, \nu(dl) \, ds .
		\end{align}
		
		Therefore combining the above calculations with \eqref{eqn cdn2 inequality for I5}, \eqref{eqn cdn2 inequality for I6}, we have the following inequality.
		\begin{align}\label{eqn cdn2 inequality for I7}
			\nonumber & \l| \int_{0}^{t} \int_{B} l \l\langle \bar{g}\bigl( y_n(s) \bigr) \l( \phi_n(t,l) - 1 \r) , X_n(s) \r\rangle_{L^2} \nu(dl) \, ds \r| \\ 
			\leq &   \frac{\varepsilon_n^2}{2} \int_{0}^{t} \l( 1 + \l| y_n(s) \r|_{L^2}^2 \r) \, ds + \frac{1}{2}  C_{\varepsilon_n}^2  \varepsilon_n^{-4} C \int_{0}^{t}  \l| X_n(s) \r|_{L^2}^2  \, ds.
		\end{align}

		For now, we set aside the above inequalities and return to them at a later stage.
		Consider an auxiliary function $f:[0,T] \times \mathbb{R}\to\mathbb{R}$ given by $f(t,x) = e^{\int_{0}^{t} - \Psi(r) \, dr    }x$. The function $\Psi$ is described in the following lines.\\
		For convenience, we rewrite \eqref{eqn cdn2 Ito formula 1 on Xn} as follows
		\begin{align}
			\l| X_n(t) \r|_{L^2}^2 = \int_{0}^{t} f_1(s) \, ds + \int_{0}^{t} f_2(s) \, ds + \int_{0}^{t}f_3(s) \, \tilde{\eta}^{\varepsilon_n^{-1}\phi_n}(dl,ds),
		\end{align}
		with $f_1$ representing the integrands for the summands $I_i,i=1,\dots4$, $f_2$ representing the integrands for the summands $I_i,i=5 , \dots 8$ and $f_3$ representing the integrands for $\mathcal{M}_1 + \mathcal{M}_2$.
		Let us define a function 
		$$\Psi_C : [0,T]\to\mathbb{R}$$
		for some constant $C>0$ as follows.
		\begin{equation}
			\Psi_C(s) = C\l( 1 + \l| y_n(t) \r|_{L^{\infty}}^2 \l( \l| Y_n(t) \r|_{L^{\infty}}^2 + \l| y_n(t) \r|_{L^{\infty}}^2 \r) + \l| y_n(t) \r|_{H^1}^2\l| y_n(t) \r|_{H^2}^2 + \l| y_n(t) \r|_{H^1} \l| y_n(t) \r|_{H^2} \r)
		\end{equation}
		Note that the above expression is for dimension $2$, i.e. $d = 2$. The case $d = 1$ can be handled similarly with a slight change (see for example \cite{ZB+BG+Le_SLLBE}) in the above mentioned function $\Psi_C$.
		
		From the calculations done for the terms $I_i,i=1,\dots 7$, we can conclude that there exists a constant $C>0$ such that for each $s\in[0,T]$,
		\begin{align}\label{eqn cdn2 inequality for Xn L2 H1 norm }
			f_1(s) + f_2(s) + \l| \nabla X_n(s) \r|_{L^2}^2 \leq \Psi_C(s) \l| X_n(s) \r|_{L^2}^2 + C \l[ \frac{C_{\varepsilon_n}^2}{\varepsilon_n} + \frac{\varepsilon_n^2}{2}\r]\l( 1 + \l| Y_n(s) \r|_{L^2}^2 \r).
		\end{align}
		The third term on the left hand side of the above inequality is non-negative. Therefore, neglecting the third term on the left hand side for now, we get the following inequality.
		\begin{align}
			\nonumber e^{\int_{0}^{s} - \Psi_C(r) \, dr  } \l( f_1(s) + f_2(s) \r) \leq & e^{\int_{0}^{s} - \Psi_C(r) \, dr    } \Psi_C(s) \l| X_n(s) \r|_{L^2}^2 
			+ e^{\int_{0}^{s} - \Psi_C(r) \, dr    } C \bigg[ \frac{C_{\varepsilon_n}^2}{\varepsilon_n} \\
			& + \frac{\varepsilon_n^2}{2}\bigg]\l( 1 + \l| Y_n(s) \r|_{L^2}^2 \r).
		\end{align}		
		Applying the It\^o-L\'evy formula (see for example \cite{Oksendal+Sulem_StochasticControlJumpDiffusionsBook}) to the function $f$ gives us the following equation.
		\begin{align}
			\nonumber f(t,\l| X_n(t) \r|_{L^2}^2) = & \int_{0}^{t} e^{\int_{0}^{s} - \Psi_C(r) \, dr    } \l[- \Psi_C(s) \l| X_n(s) \r|_{L^2}^2 \, ds + f_1(s) + f_2(s) \r] \, ds \\
			& + \int_{0}^{t} e^{\int_{0}^{t} - \Psi_C(r) \, dr    } f_3(s) \, \tilde{\eta}^{\varepsilon_n^{-1}\phi_n}(dl,ds).
		\end{align}		
		Therefore
		\begin{align}
			\nonumber e^{\int_{0}^{t} - \Psi_C(r) \, dr    }\l| X_n(t) \r|_{L^2}^2 \leq & \int_{0}^{t} e^{\int_{0}^{t} - \Psi_C(r) \, dr    } f_3(s) \, \tilde{\eta}^{\varepsilon_n^{-1}\phi_n}(dl,ds) \\
			& + C \l[ \frac{C_{\varepsilon_n}^2}{\varepsilon_n} + \frac{\varepsilon_n^2}{2}\r] \int_{0}^{t} e^{\int_{0}^{s} - \Psi_C(r) \, dr    } \l( 1 + \l| Y_n(s) \r|_{L^2}^2 \r) \, ds.
		\end{align}		
		Stopping the process at $\tau_n$, (see \eqref{eqn cdn2 definition of stopping time}) gives
		\begin{align}
			\nonumber & e^{\int_{0}^{t \wedge \tau_n } - \Psi_C(r) \, dr    }\l| X_n(t \wedge \tau_n ) \r|_{L^2}^2 \\
			\nonumber \quad &\leq \int_{0}^{t \wedge \tau_n } e^{\int_{0}^{t} - \Psi_C(r) \, dr    } f_3(s) \, \tilde{\eta}^{\varepsilon_n^{-1}\phi_n}(dl,ds) \\
			&  \quad +  C \l[ \frac{C_{\varepsilon_n}^2}{\varepsilon_n} + \frac{\varepsilon_n^2}{2}\r] \int_{0}^{t \wedge \tau_n } e^{\int_{0}^{s} - \Psi_C(r) \, dr    } \l( 1 + \l| Y_n(s) \r|_{L^2}^2 \r) \, ds.
		\end{align}
		Therefore
		\begin{align}
			\nonumber & \l| X_n(t \wedge \tau_n ) \r|_{L^2}^2 \\
			\nonumber \quad &\leq e^{\int_{0}^{t \wedge \tau_n } \Psi_C(r) \, dr    } \bigg[ \int_{0}^{t \wedge \tau_n } e^{\int_{0}^{s} - \Psi_C(r) \, dr    } f_3(s) \, \tilde{\eta}^{\varepsilon_n^{-1}\phi_n}(dl,ds) 
			\\
			& +  C \l[ \frac{C_{\varepsilon_n}^2}{\varepsilon_n} + \frac{\varepsilon_n^2}{2}\r] \int_{0}^{t \wedge \tau_n } e^{\int_{0}^{s} - \Psi_C(r) \, dr    } \l( 1 + \l| Y_n(s) \r|_{L^2}^2 \r) \, ds\bigg].
		\end{align}
		Due to the embedding $H^2\hookrightarrow L^{\infty}$ and the definition of the stopping time $\tau_n$, there exists a constant $C_N$ such that
		\begin{align}
			e^{\int_{0}^{t \wedge \tau_N } \Psi_C(r) \, dr    } \leq C_N.
		\end{align}
		Moreover since $\Psi_C$ is non-negative, for each $s\in[0,T]$ we have the following.
		\begin{align}
			e^{\int_{0}^{s} - \Psi_C(r) \, dr    } \leq 1.
		\end{align}
		Therefore
		\begin{align}\label{eqn cdn2 inequality for Xn with e Psi before calculating f3}
			\nonumber\l| X_n(t \wedge \tau_n ) \r|_{L^2}^2 
			\leq & C_N \bigg[ \int_{0}^{t \wedge \tau_n } e^{\int_{0}^{s} - \Psi_C(r) \, dr    } f_3(s) \, \tilde{\eta}^{\varepsilon_n^{-1}\phi_n}(dl,ds) \\
			& +  C \l[ \frac{C_{\varepsilon_n}^2}{\varepsilon_n} + \frac{\varepsilon_n^2}{2}\r] \int_{0}^{t \wedge \tau_n } e^{\int_{0}^{s} - \Psi_C(r) \, dr    } \l( 1 + \l| Y_n(s) \r|_{L^2}^2 \r) \, ds \bigg].
		\end{align}
		We now do estimates for the term $f_3$.
		First we establish some bounds on the term $f_3$.
		\begin{align*}
			\l| f_3 \r|_{L^2}^2 = & \l| \frac{1}{2}  \l| G\l(\varepsilon_n , l , Y_n\r) \r|_{L^2}^2 + \l\langle G(\varepsilon_n , l , Y_n) , X_n(s) \r\rangle_{L^2} \r|^2 \\
			\leq & C_{\varepsilon_n}^2   \l| G\l(\varepsilon_n , l , Y_n\r) \r|_{L^2}^4  + C_{\varepsilon_n}\l| G\l(\varepsilon_n , l , Y_n\r) \r|_{L^2}^2 \l| X_n \r|_{L^2}^2  \\
			\leq & C C_{\varepsilon_n}^2 \l( 1 + \l| Y_n \r|_{L^2}^4 \r) + C C_{\varepsilon_n} \l| X_n \r|_{L^2}^2 \l( 1 + \l| Y_n \r|_{L^2}^2 \r).  
		\end{align*}
		Therefore by the Burkh\"older-Davis-Gundy Inequality, we have

		\begin{align}
			\nonumber & \mathbb{E} \sup_{t\in[0,T]} \l| \int_{0}^{t \wedge \tau_n }  e^{\int_{0}^{s} - \Psi_C(r) \, dr    } f_3(s) \, \tilde{\eta}^{\varepsilon_n^{-1}\phi_n}(dl,ds) \r| \\
			\nonumber \leq & C \mathbb{E} \bigg( \int_{0}^{T \wedge \tau_n } \int_{B} e^{ - 2\int_{0}^{s} \Psi_C(r) \, dr    } \bigg[  
			\l| G\bigl(\varepsilon_n , l , Y_n(s)\bigr) \r|_{L^2}^2 \\
			\nonumber & +  \l\langle G\big(\varepsilon_n , l , Y_n(s)\big) , X_n(s) \r\rangle_{L^2} \bigg]^2
			\, \nu^{\varepsilon_n^{-1}}(dl) \, ds \bigg)^{\frac{1}{2}} \\
			\nonumber \leq & C C_{\varepsilon_n}^2 \mathbb{E} \l( \int_{0}^{T \wedge \tau_n } \int_{B} e^{ - 2\int_{0}^{s} \Psi_C(r) \, dr    }     \l( 1 + \l| Y_n(s) \r|_{L^2}^4 \r)  \, \nu^{\varepsilon_n^{-1}}(dl) \, ds \r)^{\frac{1}{2}} \\
			\nonumber & + C  C_{\varepsilon_n}   \mathbb{E} \l( \int_{0}^{T \wedge \tau_n } \int_{B} e^{ - 2\int_{0}^{s} \Psi_C(r) \, dr } \l| X_n(s) \r|_{L^2}^2 \l( 1 + \l| Y_n(s) \r|_{L^2}^2 \r)  \, \nu^{\varepsilon_n^{-1}}(dl) \, ds \r)^{\frac{1}{2}} \\
			\nonumber \leq & \varepsilon_n^{-\frac{1}{2}} C C_{\varepsilon_n}^2 \mathbb{E} \l( \int_{0}^{T \wedge \tau_n } \int_{B} e^{ - 2\int_{0}^{s} \Psi_C(r) \, dr    }     \l( 1 + \l| Y_n(s) \r|_{L^2}^4 \r)  \, \nu(dl) \, ds \r)^{\frac{1}{2}} \\
			\nonumber & + \varepsilon_n^{-\frac{1}{2}} C  C_{\varepsilon_n}   \mathbb{E} \l( \int_{0}^{T \wedge \tau_n } \int_{B} e^{ - 2\int_{0}^{s} \Psi_C(r) \, dr } \l| X_n(s) \r|_{L^2}^2 \l( 1 + \l| Y_n(s) \r|_{L^2}^2 \r)  \, \nu(dl) \, ds \r)^{\frac{1}{2}} \\
			\leq & C \l(  \frac{C_{\varepsilon_n}^2}{\varepsilon_n^{\frac{1}{2}}} +  \frac{C_{\varepsilon_n}}{\varepsilon_n} \mathbb{E} \int_{0}^{T} \l| X_n(s) \r|_{L^2}^2 \, ds \r).
		\end{align}
	We now go back to the inequality \eqref{eqn cdn2 inequality for Xn with e Psi before calculating f3}. Taking the supremum over $[0,T]$, followed by taking the expectation of both sides and then combining the resulting inequality with the calculations done so far, there exists a constant $C_N$ that is independent of $n$ such that
		\begin{align}
			\mathbb{E} \sup_{t\in[0,T]} \l| X_n(t \wedge \tau_n ) \r|_{L^2}^2 
			\leq C_N \l( \frac{C_{\varepsilon_n}^2}{\varepsilon_n^{\frac{1}{2}}} + \frac{C_{\varepsilon_n}^2}{\varepsilon_n} + C_N \frac{\varepsilon_n^2}{2}  \r) + \frac{C_{\varepsilon_n}}{\varepsilon_n} \mathbb{E} \int_{0}^{T} \l| X_n(s) \r|_{L^2}^2 \, ds.
		\end{align}
		Note that again, by the definition of the constant $C_{\varepsilon_n}$, the term $\frac{C_{\varepsilon_n}}{\varepsilon_n}$ is bounded independent of $n$. Using the Gronwall inequality, there exists another constant $C_N$ such that
		\begin{align}\label{eqn cdn2 inequality for L2 norm of Xn going to zero}
			\mathbb{E} \sup_{t\in[0,T]} \l| X_n(t \wedge \tau_n ) \r|_{L^2}^2 \leq C_N \l( \frac{C_{\varepsilon_n}^2}{\varepsilon_n^{\frac{1}{2}}} + \frac{C_{\varepsilon_n}^2}{\varepsilon_n} + \frac{\varepsilon_n^2}{2} \r).
		\end{align}
		For each fixed $N\in\mathbb{N}$, the right hand side of the above inequality goes to $0$ as $n$ goes to infinity.		
		Going back to the inequality \eqref{eqn cdn2 inequality for Xn L2 H1 norm } and applying the stopping time $\tau_N$, we have
		\begin{align}
			\nonumber \mathbb{E} \int_{0}^{T \wedge \tau_n } \l|  X_n(s) \r|_{H^1}^2 \, ds \leq & C
			\mathbb{E}\int_{0}^{T \wedge \tau_n } \l( \Psi_C(s) + 1 \r) \l| X_n(s) \r|_{L^2}^2 \\
			& + C \l[ \frac{C_{\varepsilon_n}^2}{\varepsilon_n} + \frac{\varepsilon_n^2}{2}\r] \mathbb{E}\int_{0}^{T \wedge \tau_n } \l( 1 + \l| Y_n(s) \r|_{L^2}^2 \r).
		\end{align}
		Using \eqref{eqn cdn2 inequality for L2 norm of Xn going to zero}, for each fixed $N\in\mathbb{N}$, the right hand side, and hence the left hand side of the above inequality goes to $0$ as $n$ goes to infinity.		
		This concludes the proof of the lemma.

	\end{proof}

	\begin{lemma}\label{lemma cdn2 convergence in probability}
		Let Assumption \ref{assumption main assumption} hold. Let $\varepsilon_n$ be a $(0,1]$-valued sequence converging to $0$. Let $\phi_{\varepsilon_{n}}$ be a $\mathcal{U}^K$-valued sequence, along with $\phi\in\mathcal{U}^K$ such that $\mathcal{L}\l(\phi_{\varepsilon_n}\r)$ converges to $\mathcal{L}(\phi)$ on $\mathcal{S}^K$. Here $\mathcal{L}(\phi)$ (and respectively $\mathcal{L}\l(\phi_{\varepsilon_n}\r)$) denotes the law of $\phi$ (and respectively $\phi_{\varepsilon_n}$). Then the sequence of random variables
		\begin{equation}
			\mathbb{M} \ni \nu \mapsto J^{\varepsilon_n}\l( \varepsilon_n \eta^{ \varepsilon_n^{-1}\phi_{\varepsilon_n}} \r) - J^0\l(\nu^{\varepsilon_n}\r) \in \mathbb{U}_T,
		\end{equation}
		converges to $0$ in probability.
	\end{lemma}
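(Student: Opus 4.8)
The plan is to deduce the statement from Lemma \ref{lemma cdn2 convergence of norm} by a localization (stopping time) argument. Write $\phi_n := \phi_{\varepsilon_n}$, $Y_n := J^{\varepsilon_n}\l(\varepsilon_n\eta^{\varepsilon_n^{-1}\phi_n}\r)$, $y_n := J^0\l(\nu^{\phi_n}\r)$, and let $\tau_n = \tau_n^N$ be the stopping time in \eqref{eqn cdn2 definition of stopping time}. Since $\mathbb{U}_T = \mathbb{D}([0,T]:L^2)\cap L^2(0,T:H^1)$, the claim that $Y_n - y_n \to 0$ in $\mathbb{U}_T$ in probability is equivalent to
\begin{equation*}
	\sup_{t\in[0,T]}\l| Y_n(t) - y_n(t) \r|_{L^2} + \l( \int_{0}^{T} \l| Y_n(t) - y_n(t) \r|_{H^1}^2 \, dt \r)^{\frac{1}{2}} \longrightarrow 0 \quad\text{in } \bar{\mathbb{P}}\text{-probability}.
\end{equation*}
Lemma \ref{lemma cdn2 convergence of norm} already provides exactly this convergence with the time horizon replaced by $T\wedge\tau_n^N$, for each fixed $N\in\mathbb{N}$. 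Hence the only missing ingredient is that $\bar{\mathbb{P}}\l(\tau_n^N < T\r)$ can be made small uniformly in $n$ once $N$ is large.

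To this end I would first record uniform-in-$n$ energy bounds for $Y_n$ and $y_n$. For $y_n$ this is immediate: since $\phi_n \in \mathcal{S}^K$ $\bar{\mathbb{P}}$-a.s., Theorem \ref{theorem unique solution for the skeleton equation} and Remark \ref{remark uniform bounds theta n} give $\sup_{t\in[0,T]}\l| y_n(t) \r|_{H^1}^2 + \int_{0}^{T}\l| y_n(t) \r|_{H^2}^2\,dt \leq C_K$ $\bar{\mathbb{P}}$-a.s., with $C_K$ independent of $n$. For $Y_n$ I would rerun the It\^o--L\'evy computations of Lemmas \ref{lemma bounds 1} and \ref{lemma bounds 2} (applied to $v\mapsto\tfrac12|v|_{L^2}^2$ and $v\mapsto\tfrac12|\nabla v|_{L^2}^2$) for the stochastic control equation \eqref{eqn stochastic control equation}, handling the extra drift, compensator, and control terms precisely as the terms $I_5,\dots,I_8$ were handled in the proof of Lemma \ref{lemma cdn2 convergence of norm}: the $\varepsilon$-dependent linear growth of $G(\varepsilon_n,l,\cdot)$, $H(\varepsilon_n,l,\cdot)$, $b(\varepsilon_n,\cdot)$ comes from Lemma \ref{lemma linear growth G,H,b with epsilon}, the quotients $\tfrac{C_{\varepsilon_n}}{\varepsilon_n}$ and $\tfrac{C_{\varepsilon_n}^2}{\varepsilon_n}$ remain bounded (indeed vanish) as $\varepsilon_n\to0$, and the control $\phi_n$ enters only through $\sup_{\theta\in\mathcal{S}^K}\int_{0}^{T}\int_{B}|\theta(t,l)|\,\nu(dl)\,dt < \infty$ (Remark \ref{remark uniform bounds theta n}). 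After the Burkh\"older--Davis--Gundy and Gronwall inequalities this yields $\sup_{n}\bigl(\mathbb{E}^{\bar{\mathbb{P}}}\sup_{t\in[0,T]}| Y_n(t)|_{H^1}^2 + \mathbb{E}^{\bar{\mathbb{P}}}\int_{0}^{T}| Y_n(t)|_{H^2}^2\,dt\bigr) < \infty$.

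Granting these bounds, I would take $N$ large enough that $N - 2C_K > N/2$, so that the two $y_n$-contributions in \eqref{eqn cdn2 definition of stopping time} cannot force $\tau_n^N < T$ on their own, and then use Chebyshev's inequality to get $\sup_{n}\bar{\mathbb{P}}\l(\tau_n^N < T\r) \leq C'/N$ with $C'$ independent of $n$, so this quantity tends to $0$ as $N\to\infty$. Finally, fixing $\delta > 0$ and splitting over $\{\tau_n^N = T\}$ and its complement (on the former event the quantity localized at $\tau_n^N$ equals the full one),
\begin{equation*}
	\bar{\mathbb{P}}\l( \sup_{t\in[0,T]}\l| Y_n(t) - y_n(t) \r|_{L^2} + \l( \int_{0}^{T} \l| Y_n(t) - y_n(t) \r|_{H^1}^2\,dt \r)^{\frac12} > \delta \r) \leq \bar{\mathbb{P}}\l( \tau_n^N < T \r) + R_n^N,
\end{equation*}
where $R_n^N := \bar{\mathbb{P}}\l( \sup_{t\in[0,T\wedge\tau_n^N]}\l| Y_n(t) - y_n(t) \r|_{L^2} + \l( \int_{0}^{T\wedge\tau_n^N} \l| Y_n(t) - y_n(t) \r|_{H^1}^2\,dt \r)^{\frac12} > \delta \r)$ satisfies $\lim_{n\to\infty}R_n^N = 0$ for each fixed $N$ by Lemma \ref{lemma cdn2 convergence of norm} and Chebyshev. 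Letting $n\to\infty$ and then $N\to\infty$ gives the conclusion, since $\delta$ was arbitrary. The main obstacle is the uniform-in-$n$ energy estimate for $Y_n$ in the second step: one has to check that the $\varepsilon_n$-dependent constants generated by scaling the Marcus map by $\varepsilon_n$ and the intensity by $\varepsilon_n^{-1}$, together with the control $\phi_n$, aggregate to a bound that does not blow up as $\varepsilon_n\to0$; everything else is a routine localization.
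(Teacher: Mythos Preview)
Your proposal is correct and follows essentially the same localization strategy as the paper: split over $\{\tau_n^N = T\}$ and its complement, control the first piece via Lemma \ref{lemma cdn2 convergence of norm} and Chebyshev, and control $\bar{\mathbb{P}}(\tau_n^N < T)$ via uniform-in-$n$ energy bounds on $Y_n$ and $y_n$ combined with Chebyshev, then send $n\to\infty$ followed by $N\to\infty$. The paper's proof is terser---it simply writes $\mathbb{P}(\tau_n < T) \leq \frac{1}{N}[\mathbb{E}|Y_n|_{Y_T} + \mathbb{E}|y_n|_{Y_T}]$ and invokes a uniform bound on the right-hand side without spelling out the $Y_n$ estimate---whereas you explicitly flag that uniform-in-$n$ control of $Y_n$ (with the $\varepsilon_n$-scaled Marcus map and $\varepsilon_n^{-1}$-scaled intensity) is the point requiring care, which is a fair and accurate observation.
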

	\begin{proof}[Proof of Lemma \ref{lemma cdn2 convergence in probability}]
		As done in Lemma \ref{lemma cdn2 convergence of norm}, we denote $Y_n = J^{\varepsilon_n}\l( \varepsilon_n \eta^{\varepsilon_n^{-1}\phi_{\varepsilon_n}}\r)$ and $y_n = J^0\l( \nu^{\phi_{\varepsilon_n}} \r)$.
		For a sequence of random variables $Z_n,n\in\mathbb{N}$ and $Z$, in order to show that $Z_n\to Z$ in probability, it suffices to show that each $\delta>0$,
		\begin{equation}
			\lim_{n\to\infty}\mathbb{P}\l( \l[ \sup_{t\in[0,T]}\l| Z_n(t) - Z(t) \r|_{L^2}^2 + \int_{0}^{T} \l| Z_n(t) - Z(t) \r|_{H^1}^2 \, dt \r] > \delta \r) = 0, \text{ for } \delta>0.
		\end{equation}
		Let $\delta,\alpha>0$ be given.		
		For the sake of the following calculations, let us fix the following notation (short-hand).
		\begin{align}
			\l| v \r|_{Y_T} := \sup_{s\in[0,T]}\l|v(s)\r|_{H^1}^2   + \int_{0}^{T} \l|v(s)\r|_{H^2}^2 \, ds .
		\end{align}
		Now, for each $n,N\in\mathbb{N}$, the following holds.		
		
		\begin{align}
			\nonumber \mathbb{P} \l(\l| Y_n - y_n \r|_{Y_{T}} \geq\delta \r) \leq &
			\mathbb{P} \l(\l| Y_n - y_n \r|_{Y_{T}} \geq \delta : \tau_n = T \r)\\
			\nonumber  &+ \mathbb{P} \l(\l| Y_n - y_n \r|_{Y_{T}} \geq \delta : \tau_n < T \r) \\
			\leq &  \frac{1}{\delta} \mathbb{E}  \l| Y_n - y_n \r|_{Y_{T\wedge \tau_n}} + \frac{1}{N} \l[ \mathbb{E}\l|Y_n\r|_{Y_T} + \mathbb{E}\l|y_n\r|_{Y_T}\r].
		\end{align}		
		We choose $N_{\alpha}$ large enough so that
		\begin{equation}
			\frac{1}{N_{\alpha}} \sup_{n\in\mathbb{N}} \bigl[ \mathbb{E}\l|Y_n\r|_{Y_T} + \mathbb{E}\l|y_n\r|_{Y_T} \bigr] < \frac{\alpha}{2}.
		\end{equation}
		For this $N_{\alpha}$ chosen above, by Lemma \ref{lemma cdn2 convergence of norm} we can choose $N_0$ large enough so that
		\begin{equation}
			\frac{1}{\delta} \mathbb{E}  \l| Y_n - y_n \r|_{Y_{T\wedge \tau_n}} < \frac{\alpha}{2}, \forall n\geq N_0.
		\end{equation}
		Hence for any given $\delta,\alpha>0$, we can choose $N_0$ large enough so that
		\begin{equation}
			\mathbb{P} \l(\l| Y_n - y_n \r|_{Y_{T}} \geq\delta \r) < \alpha,\ \forall n\geq N_0.
		\end{equation}
		This concludes the proof of Lemma \ref{lemma cdn2 convergence in probability}.
	\end{proof}
	
	\subsection{Verification of Condition 2}
	Condition two is a consequence of Lemma \ref{lemma cdn2 convergence in probability}.
	We recall that we have to show that
	\begin{equation}
		J^{\varepsilon_n}\l( \varepsilon_n \eta^{\varepsilon_n^{-1},\phi_{\varepsilon_n}} \r) \to J^{0}\l(\phi\r),
	\end{equation}
	in law on $\mathbb{U}_T$.
	Let $f: \mathbb{U}_T\to\mathbb{R}$ be globally Lipschitz continuous and bounded.
	\begin{align}
		\nonumber & \l| \int_{\mathbb{U}_T} f(x) \, d\mathcal{L}\l( J^{\varepsilon_n}\l( \varepsilon_n \eta^{\varepsilon_n^{-1},\phi_{\varepsilon_n}} \r) \r) - \int_{\mathbb{U}_T} f(x) \, d\mathcal{L}\l( J^{0}\l( \phi \r) \r) \r| \\
		\nonumber = & \l| \int_{\Omega} f\l(  J^{\varepsilon_n}\l( \varepsilon_n \eta^{\varepsilon_n^{-1},\phi_{\varepsilon_n}} \r) \r) \, d\mathbb{P} - \int_{\Omega} f\l(  J^{0}\l( \phi \r) \r) \, d\mathbb{P} \r| \\
		\nonumber = & \l| \int_{\tilde{\Omega}} f\l(  J^{\varepsilon_n}\l( \varepsilon_n \eta^{\varepsilon_n^{-1},\phi_{\varepsilon_n}} \r) \r) \, d\tilde{\mathbb{P}} - \int_{\tilde{\Omega}} f\l(  J^{0}\l( \phi \r) \r) \, d\tilde{\mathbb{P}} \r| \\
		\nonumber \leq & \l| \int_{\tilde{\Omega}} f\l(  J^{\varepsilon_n}\l( \varepsilon_n \eta^{\varepsilon_n^{-1},\tilde{\phi}_{\varepsilon_n}} \r) \r) \, d\tilde{\mathbb{P}} - \int_{\tilde{\Omega}} f\l(  J^{0}\l( \tilde{\phi}_{\varepsilon_n} \r) \r) \, d\tilde{\mathbb{P}} \r| \ \boxed{\text{Term 1}}\\
		\nonumber & + \l| \int_{\tilde{\Omega}} f\l(  J^{0}\l( \tilde{\phi}_{\varepsilon_n} \r) \r) \, d\tilde{\mathbb{P}} - \int_{\tilde{\Omega}} f\l(  J^{0}\l( \phi \r) \r) \, d\tilde{\mathbb{P}} \r|.\ \boxed{\text{Term 2}}
	\end{align}
	For Term 1, $f$ is globally Lipschitz continuous and bounded and $J^{\varepsilon_n}\l( \varepsilon_n \eta^{\varepsilon_n^{-1},\tilde{\phi}_{\varepsilon_n}} \r) - J^{0}\l( \tilde{\phi}_{\varepsilon_n} \r)$ converges to $0$ (Lemma \ref{lemma cdn2 convergence in probability}). Term 1 goes to $0$ as $n$ goes to $\infty$ by dominated convergence theorem.	
	For Term 2, that $f$ globally Lipschitz continuous and bounded along with Lemma \ref{lemma cdn1} imply that Term 2 goes to $0$ as $n$ goes to $\infty$.
	
	This concludes the verification of Condition 2.

	\appendix

	\section{Proof of Existence of a Solution for Skeleton Equation}\label{section Proof of Existence of a Solution for Skeleton Equation}

	\begin{proof}[Proof of Theorem \ref{theorem unique solution for the skeleton equation}]

		\begin{proof}[\textbf{Brief idea of the proof:}]
			The proof is standard. We approximate the equation \eqref{eqn skeleton equation} by finite dimensional equations (Faedo-Galerkin approximations). The structure of the proof is similar to the proof of Theorem \ref{theorem existence of weak martingale solution}. The existence of approximates is followed by establishing uniform energy estimates on the obtained solutions. Standard compactness arguments give convergence (possibly along a subsequence). The obtained limit is shown to be a solution of \eqref{eqn skeleton equation}.
		\end{proof}
		\begin{enumerate}
			\item[\textbf{Step 1:}] 
			Let, as in Section \ref{section Faedo Galerkin Approximations}, $H_n\subset L^2$ denote the linear span of the eigenfunctions corresponding to the first $n$ eigenvalues of the Neumann Laplacian. Let $P_n:L^2\to H_n$ denote the orthogonal projection operator onto $H_n$.\\
			We approximate the equation \eqref{eqn skeleton equation} by the following finite dimensional equation.
			\begin{align}\label{eqn skeleton FG eqn 1}
				\nonumber dm_n^{\theta}(t) = & \biggl[ P_n \Delta m_n^{\theta}(t) + P_n\l( m_n^{\theta}(t) \times \Delta m_n^{\theta}(t) \r) - P_n\bigl( \l( 1 + \l| m_n^{\theta}(t) \r|_{\mathbb{R}^2}\r) m_n^{\theta}(t) \bigr) \biggr] \, dt\\
				&+ \int_{B} l P_n\biggl( \bar{g}  \bigl( m_n^{\theta}(t) \bigr) \biggr)\bigl( \theta\l(t,l\r) - 1 \bigr) 
				\, \nu(dl) \, dt,\ t\geq 0,
			\end{align}
			with $m_n^{\theta}(0) = P_n(m_0)$.\\
			Let us write down a few notations, which will be used only in the proof of this theorem.
			\begin{align}
				F_n^1 & : H_n \ni m \mapsto  \Delta m    \in H_n, \\
				F_n^2 & : H_n \ni m \mapsto P_n \l( m \times \Delta m \r)  \in H_n, \\
				F_n^3 & : H_n \ni m \mapsto P_n\l( \l( 1 + \l| m \r|_{\mathbb{R}^3}^2\r) m \r) \in H_n, \\
				\bar{g}_n & : H_n \ni m \mapsto P_n \bigl( \bar{g}\l( m \r) \bigr)    \in H_n.
			\end{align}
			For the proof of this theorem, we suppress the notation $\theta$, that is, we replace $m^{\theta}$ be $m$, $m_n^{\theta}$ by $m_n$, etc. Using the above notations, equation \eqref{eqn skeleton FG eqn 1} can be written in the integral form as
			\begin{align}\label{eqn skeleton FG eqn 2}
				\nonumber m_n (t) = &   m_n(0) + \int_{0}^{t} \biggl[ F_n^1\bigl(m_n(s)\bigr) + F_n^2\bigl(m_n(s)\bigr) - F_n^3\bigl(m_n(s)\bigr) \biggr] \, ds \\
				& + \int_{0}^{t} \int_{B} l \bar{g}_n(m_n(s)) \l( \theta(s,l) - 1 \r) \, \nu(dl)ds.
			\end{align}
			The first three mappings, viz. $F_n^i,i=1,2,3$ are locally Lipschitz (see for example \cite{LE_Deterministic_LLBE}).
			
			Let $v_1,v_2\in H_n,$ and $s\in[0,T]$. Then there exists a constant $C_{\text{s}} $ such that
			\begin{align}
				\nonumber \l| \int_{B}l \l( \bar{g}_n(v_1) - \bar{g}_n(v_2) \r) \l( \theta(s,l) - 1 \r) \, \nu(dl) \r|_{L^2} \leq & \l| h \r|_{L^{\infty}} \int_{B} \l| l \r| \l| v_1 - v_2 \r|_{L^2} \l| \theta(s,l) - 1 \r| \, \nu (dl) \\
				\nonumber \leq & C_{\text{s}} \l| v_1 - v_2 \r|_{L^2} \int_{B} \l| l \r| \l| \theta(s,l) - 1 \r| \, \nu (dl) \\
				\leq & C_{\text{s}} \l| v_1 - v_2 \r|_{L^2} .
			\end{align}
			The justification for the last inequality is as follows. By Proposition \ref{proposition uniform bound on theta SK}, the quantity $\int_{B} \l| l \r| \l| \theta(s,l) - 1 \r| \, \nu (dl)$ is finite, for a.a. $s\in[0,T]$. Moreover, by the same Proposition \ref{proposition uniform bound on theta SK}, integrating the above inequality over $[0,t],\ t\in[0,T]$, there exists a constant $C>0$ such that
			\begin{align}
				\int_{0}^{t} \l| \int_{B}l \l( \bar{g}_n(v_1) - \bar{g}_n(v_2) \r) \l( \theta(s,l) - 1 \r) \, \nu(dl) \r|_{L^2} \, ds \leq & C \l| v_1 - v_2 \r|_{L^2}.
			\end{align}
			In particular, the mapping
			\begin{equation}
				\l\{ F_n^4 : H_n \ni v\mapsto \int_{B}l  \bar{g}_n(v)  \bigl( \theta(s,l) - 1 \bigr) \, \nu(dl) \in H_n \r\},
			\end{equation}
			is Lipschitz continuous. Therefore we can also conclude linear growth property for the above mentioned mapping.
			Hence the problem \eqref{eqn skeleton FG eqn 2} admits a unique global solution in $H_n$ (for example, see \cite{ZB+Albeverio_2010_ExistenceGlobalSolutionSDEPoissonNoise}).
			
			\item[\textbf{Step 2:}] Uniform Energy Estimates.

			\begin{lemma}\label{lemma skeleton equation existence bounds lemma 2}
				There exists a constant $C>0$, depending on $T,m_0$ but not on $n\in\mathbb{N}$ such that the following hold.
				\begin{equation}
					\sup_{t\in[0,T]} \l| m_n(t) \r|_{L^2}^2 \leq C,
				\end{equation}
				\begin{equation}
					\int_{0}^{T} \l| m_n(t) \r|_{H^1}^2 \, dt \leq C,
				\end{equation}
				\begin{equation}
					\int_{0}^{T} \l| m_n(t) \r|_{L^4}^4 \, dt \leq C.
				\end{equation}
				Note that the constant $C$ depends on the initial data $m_0$, the terminal time $T$ and $K$ in the space $\mathcal{S}^K$ (see Proposition \ref{proposition uniform bound on theta SK}).
			\end{lemma}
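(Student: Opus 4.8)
The plan is to carry out the classical energy (a priori) estimate directly on the finite-dimensional system \eqref{eqn skeleton FG eqn 2}. Since the right-hand side of \eqref{eqn skeleton FG eqn 2} is locally Lipschitz on $H_n$ and, by the linear growth recorded in Step 1, the solution does not blow up, $t\mapsto m_n(t)$ is a well-defined absolutely continuous $H_n$-valued trajectory, so the ordinary chain rule applies to $v\mapsto\tfrac12\lvert v\rvert_{L^2}^2$. Evaluating it along $m_n$ and using Proposition \ref{proposition properties of F existence of weak martingale solution} (items (1)--(3), which hold verbatim for the operators $F_n^i$ here: $\langle F_n^1(v),v\rangle_{L^2}=-\lvert\nabla v\rvert_{L^2}^2$, $\langle F_n^2(v),v\rangle_{L^2}=0$, $\langle F_n^3(v),v\rangle_{L^2}=\lvert v\rvert_{L^4}^4+\lvert v\rvert_{L^2}^2$) yields, for every $t\in[0,T]$,
\begin{align*}
\tfrac12\lvert m_n(t)\rvert_{L^2}^2 + \int_0^t \lvert\nabla m_n(s)\rvert_{L^2}^2\,ds + \int_0^t \lvert m_n(s)\rvert_{L^4}^4\,ds + \int_0^t \lvert m_n(s)\rvert_{L^2}^2\,ds = \tfrac12\lvert m_n(0)\rvert_{L^2}^2 + R_n(t),
\end{align*}
where $R_n(t):=\int_0^t\int_B l\,\langle\bar g_n(m_n(s)),m_n(s)\rangle_{L^2}\bigl(\theta(s,l)-1\bigr)\,\nu(dl)\,ds$ and $\lvert m_n(0)\rvert_{L^2}=\lvert P_n m_0\rvert_{L^2}\le\lvert m_0\rvert_{L^2}$.

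The core of the argument is the control term $R_n$. By item (4) of Proposition \ref{proposition properties of F existence of weak martingale solution} (equivalently, $\langle v\times h,v\rangle_{\mathbb{R}^3}=0$ pointwise forces $\langle\bar g_n(v),v\rangle_{L^2}=\langle h,v\rangle_{L^2}$) one has $\lvert\langle\bar g_n(v),v\rangle_{L^2}\rvert\le\tfrac12\lvert h\rvert_{L^2}^2+\tfrac12\lvert v\rvert_{L^2}^2$, and since $\mathrm{supp}(\nu)\subset B$ gives $\lvert l\rvert\le1$,
\begin{align*}
\lvert R_n(t)\rvert \le \tfrac12\lvert h\rvert_{L^2}^2\int_0^t\Lambda(s)\,ds + \tfrac12\int_0^t \Lambda(s)\,\lvert m_n(s)\rvert_{L^2}^2\,ds, \qquad \Lambda(s):=\int_B\lvert l\rvert\,\lvert\theta(s,l)-1\rvert\,\nu(dl).
\end{align*}
Here the decisive input is Proposition \ref{proposition uniform bound on theta SK}: for $\theta\in\mathcal{S}^K$ the quantity $\int_0^T\Lambda(s)\,ds$ is finite and controlled by a constant $C_K$ depending only on $K$, $\nu$ and $T$. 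Dropping the three non-negative running integrals on the left-hand side, this gives $\lvert m_n(t)\rvert_{L^2}^2\le\lvert m_0\rvert_{L^2}^2+\lvert h\rvert_{L^2}^2 C_K+\int_0^t\Lambda(s)\lvert m_n(s)\rvert_{L^2}^2\,ds$, and Gronwall's lemma with the $L^1(0,T)$ weight $\Lambda$ produces $\sup_{t\in[0,T]}\lvert m_n(t)\rvert_{L^2}^2\le\bigl(\lvert m_0\rvert_{L^2}^2+\lvert h\rvert_{L^2}^2 C_K\bigr)e^{C_K}$, uniform in $n$ and of exactly the asserted dependence on $m_0$, $T$, $K$.

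Finally, to obtain the remaining two bounds I would return to the energy identity, keep the running integrals, discard only $\tfrac12\lvert m_n(t)\rvert_{L^2}^2\ge0$, and use the $L^\infty(0,T;L^2)$ bound just proved together with $\int_0^T\Lambda(s)\,ds\le C_K$ to estimate $\lvert R_n(T)\rvert\le\tfrac12\lvert h\rvert_{L^2}^2 C_K+\tfrac12 C\,C_K$; this immediately bounds $\int_0^T\lvert\nabla m_n(s)\rvert_{L^2}^2\,ds$, $\int_0^T\lvert m_n(s)\rvert_{L^2}^2\,ds$ and $\int_0^T\lvert m_n(s)\rvert_{L^4}^4\,ds$, and since $\lvert m_n\rvert_{H^1}^2=\lvert\nabla m_n\rvert_{L^2}^2+\lvert m_n\rvert_{L^2}^2$ the $L^2(0,T;H^1)$ estimate follows. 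The one place that genuinely needs care — everything else being the routine testing-by-$m_n$ computation already used in the proof of Lemma \ref{lemma bounds 1} — is making the bound on $\int_0^T\int_B\lvert l\rvert\,\lvert\theta(s,l)-1\rvert\,\nu(dl)\,ds$ explicit in $K$ via Proposition \ref{proposition uniform bound on theta SK} and then running Gronwall with this time-dependent (but integrable) weight rather than a constant one.
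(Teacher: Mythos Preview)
Your proposal is correct and follows essentially the same approach as the paper: test the Galerkin system against $m_n$, use the structural identities of Proposition~\ref{proposition properties of F existence of weak martingale solution} to produce the three good running integrals, bound the control term via $\lvert\langle\bar g_n(v),v\rangle_{L^2}\rvert\le\tfrac12\lvert h\rvert_{L^2}^2+\tfrac12\lvert v\rvert_{L^2}^2$ and Proposition~\ref{proposition uniform bound on theta SK}, and close with Gronwall using the integrable weight $\Lambda(s)=\int_B\lvert l\rvert\,\lvert\theta(s,l)-1\rvert\,\nu(dl)$. Your write-up is in fact slightly more explicit than the paper's about the time-dependent Gronwall weight and the recovery of the $L^2(0,T;H^1)$ and $L^4(0,T;L^4)$ bounds from the energy identity, but the argument is the same.
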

			\begin{proof}[Proof of Lemma \ref{lemma skeleton equation existence bounds lemma 2}]
				
				Let $t\in[0,T]$ and $n\in\mathbb{N}$. Multiplying the equation \eqref{eqn skeleton FG eqn 2} by $m_n$ gives
				\begin{align}
					\frac{1}{2}\l|m_n(t)\r|_{L^2}^2 = \frac{1}{2}\l|m_0\r|_{L^2}^2 + \int_{0}^{t} \l\langle F_n^1(s) + F_n^2(s) - F_n^3(s) + \l(\int_{B}l\bar{g}_n(s)\nu(dl)\r) , m_n(s) \r\rangle_{L^2} \, ds.
				\end{align}
	We first show calculations for the last term on the left hand side of the inner product. For $t\in[0,T]$,
				\begin{align}
					\nonumber & \l| \int_{0}^{t} \l\langle \int_{B} \l| l \r| \bar{g}_n(m_n(s)) \bigl( \theta(s,l) - 1 \bigr) , m_n(s) \r\rangle_{L^2} \, \nu(dl)  \, ds \r| \\
					\nonumber \leq & \int_{0}^{t} \l( \l[ \frac{1}{2} \l| h \r|_{L^2}^2 + \frac{1}{2} \l| m_n(s) \r|_{L^2}^2 \r] \int_{B}  \l| l \r|  \l| \theta(s,l) - 1 \r| \, \nu(dl) \r) \, ds \\
					\nonumber \leq & \int_{0}^{t} \l( \l[ \frac{1}{2} \l| h \r|_{L^2}^2 \r] \int_{B}  \l| l \r| \l| \theta(s,l) - 1 \r| \, \nu(dl) \r) \, ds \\
					\nonumber & + \int_{0}^{t} \l( \frac{1}{2} \l| m_n(s) \r|_{L^2}^2 \int_{B}  \l| l \r| \l| \theta(s,l) - 1 \r| \, \nu(dl) \r) \, ds \\
					\nonumber \leq &  \frac{1}{2} \l| h \r|_{L^2}^2  \int_{0}^{t}   \int_{B}  \l| l \r|  l \l| \theta(s,l) - 1 \r| \, \nu(dl)  \, ds \\
					& +  \frac{1}{2} \int_{0}^{t}  \l|   m_n(r) \r|_{L^2}^2 \int_{B}  \l| l \r| \l| \theta(s,l) - 1 \r| \, \nu(dl)  \, ds .
				\end{align}
				The first three terms (viz. $F_n^i,i=1,2,3$) can be handled in the spirit of Lemma \ref{proposition properties of F existence of weak martingale solution} (see also \cite{LE_Deterministic_LLBE}). This, combined with the above calculation, along with Proposition \ref{proposition uniform bound on theta SK} we have the following inequality.
				\begin{align}
					\nonumber \l| m_n(t) \r|_{L^2}^2 & + \int_{0}^{t} \l|m_n(s)\r|_{H^1}^2 \, ds + \int_{0}^{t} \l|m_n(s)\r|_{L^4}^4 \, ds \\
					& \leq  C \bigg[ \l| m_0 \r|_{L^2}^2 + 1  + \int_{0}^{t}   \sup_{r\in[0,s]} \l|m_n(s)\r|_{L^2}^2 \int_{B}  \l| l \r| \l| \theta(s,l) - 1 \r| \, \nu(dl) \, ds\bigg].
				\end{align}
				Note again that the second term on the left hand side of the above inequality contains the full 
				$H^1$ norm, which is obtained as the sum of $\l|\nabla m_n(s)\r|_{L^2}^2$ and $\l|m_n(s)\r|_{L^2}^2$. First, we observe that all the three terms on the left hand side of the above inequality are non-negative. Hence the second and the third can be neglected, for now, keeping the inequality as it is. Taking supremum over $[0,T]$ and using the Gronwall inequality gives us the required result.
				Note that after using the Gronwall inequality, the term $\l( e^{ \int_{0}^{T} \int_{B} \l| l \r| \l| \theta(s,l) - 1 \r| \, \nu(dl) \, ds } \r)$ appears as a multiplier. Since this term is finite (depending on $T$ and $K$, where $\theta\in\mathcal{S}^K$, see Proposition \ref{proposition uniform bound on theta SK}), the inequality can be proved. Proof for the remaining two inequalities can be given following the idea of the proof of Lemma \ref{lemma bounds 1}.
			\end{proof}

			\begin{lemma}\label{lemma skeleton equation existence bounds lemma 3}
				There exists a constant $C>0$ such that the following hold.
				\begin{equation}
					\sup_{t\in[0,T]} \l| m_n(t) \r|_{H^1}^2 \leq C,
				\end{equation}
			
				\begin{equation}
					\int_{0}^{T} \l| \Delta m_n(t) \r|_{L^2}^2 \, dt \leq C,
				\end{equation}
				
			\end{lemma}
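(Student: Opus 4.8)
The plan is to run the second a priori energy estimate, exactly parallel to the proof of Lemma~\ref{lemma bounds 2} but now for the deterministic control equation \eqref{eqn skeleton FG eqn 2}. Since $m_n(t)\in H_n$ for every $t$, so that $P_n\Delta m_n = \Delta m_n$, I would apply the chain rule to $t\mapsto \tfrac12\l|\nabla m_n(t)\r|_{L^2}^2$ along the solution (equivalently, test the equation with $-\Delta m_n(t)$), obtaining
\begin{align*}
	\tfrac12\l|\nabla m_n(t)\r|_{L^2}^2 = \tfrac12\l|\nabla P_n m_0\r|_{L^2}^2
	&+ \int_0^t \l\langle F_n^1(m_n(s)) + F_n^2(m_n(s)) - F_n^3(m_n(s)) , -\Delta m_n(s) \r\rangle_{L^2}\, ds \\
	&+ \int_0^t \int_B l\, \l\langle \bar{g}_n(m_n(s)) , -\Delta m_n(s) \r\rangle_{L^2}\,\bigl(\theta(s,l) - 1\bigr)\,\nu(dl)\, ds .
\end{align*}

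For the first integral I would invoke Proposition~\ref{proposition properties of F existence of weak martingale solution}, items (5)--(7): the exchange term contributes $\l\langle F_n^1(v),-\Delta v\r\rangle_{L^2} = -\l|\Delta v\r|_{L^2}^2$, the gyromagnetic term vanishes, and the entropy-correction term $-F_n^3$ has a favourable sign, so the deterministic integrand is $\leq -\l|\Delta m_n(s)\r|_{L^2}^2$ (these are handled exactly as in \cite{LE_Deterministic_LLBE} and Lemma~\ref{lemma bounds 2}). For the control integral, item (8) of the same Proposition gives $\bigl|\l\langle \bar{g}_n(v),-\Delta v\r\rangle_{L^2}\bigr|\leq C\bigl(1+\l|v\r|_{H^1}^2\bigr)$ uniformly in $n$, so its absolute value is at most $C\int_0^t \bigl(1+\l|m_n(s)\r|_{H^1}^2\bigr)\,g(s)\, ds$, where $g(s):= \int_B \l|l\r|\,\l|\theta(s,l)-1\r|\,\nu(dl)$. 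By Proposition~\ref{proposition uniform bound on theta SK} (see also Remark~\ref{remark uniform bounds theta n}), $g\in L^1(0,T)$ with $\int_0^T g(s)\,ds\leq C_K$ depending only on $T,h$ and the index $K$ for which $\theta\in\mathcal{S}^K$. Collecting the estimates and moving $\int_0^t\l|\Delta m_n(s)\r|_{L^2}^2\,ds$ to the left, I arrive at
\begin{equation*}
	\l|\nabla m_n(t)\r|_{L^2}^2 + \int_0^t \l|\Delta m_n(s)\r|_{L^2}^2\, ds \;\leq\; \l|m_0\r|_{H^1}^2 + C\int_0^t \bigl(1+\l|m_n(s)\r|_{H^1}^2\bigr)\,\bigl(1+g(s)\bigr)\, ds .
\end{equation*}

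To close the estimate, I would add $\l|m_n(t)\r|_{L^2}^2$ to both sides and use Lemma~\ref{lemma skeleton equation existence bounds lemma 2}, which already provides $\sup_{t\in[0,T]}\l|m_n(t)\r|_{L^2}^2\leq C$; writing $\l|m_n\r|_{H^1}^2 = \l|\nabla m_n\r|_{L^2}^2+\l|m_n\r|_{L^2}^2$ and absorbing the $L^2$-bounded contributions into the constant, the inequality reduces to $y_n(t)\leq C + C\int_0^t y_n(s)\,\bigl(1+g(s)\bigr)\, ds$ with $y_n(t):=\l|\nabla m_n(t)\r|_{L^2}^2$. Since $1+g\in L^1(0,T)$, the Gronwall inequality with integrable kernel gives $y_n(t)\leq C\exp\bigl(C\int_0^T(1+g(s))\,ds\bigr)\leq C_K$, uniformly in $n$ and $t\in[0,T]$; combined with the $L^2$ bound this is the first assertion. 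Feeding the uniform $H^1$ bound back into the displayed inequality, discarding the nonnegative term $\l|\nabla m_n(t)\r|_{L^2}^2$ on the left and putting $t=T$, yields $\int_0^T\l|\Delta m_n(t)\r|_{L^2}^2\, dt\leq C$, the second assertion.

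The one point that is not entirely routine is that $g$ (hence the Gronwall kernel) is merely $L^1(0,T)$ and need not be bounded, because a function $\theta\in\mathcal{S}^K$ carries no pointwise bound; this is precisely why one must use the integrable-kernel form of Gronwall's lemma, and why the $n$-uniformity of the bounds hinges on Proposition~\ref{proposition uniform bound on theta SK} controlling $\int_0^T\int_B\l|l\r|\,\l|\theta(s,l)-1\r|\,\nu(dl)\,ds$ by a constant depending only on $K$. The cancellations in items (5)--(7), the Young-inequality bound in item (8), and the treatment of the cross-product term are otherwise identical to the stochastic estimate in Lemma~\ref{lemma bounds 2} and the references cited there.
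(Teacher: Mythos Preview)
Your proposal is correct and follows essentially the same approach the paper intends: the paper's own proof of this lemma is simply ``similar in structure to the proof of Lemma~\ref{lemma skeleton equation existence bounds lemma 2} and is hence skipped,'' and what you have written is precisely that parallel argument, testing with $-\Delta m_n$ and invoking items (5)--(8) of Proposition~\ref{proposition properties of F existence of weak martingale solution} together with Proposition~\ref{proposition uniform bound on theta SK}. Your remark that the Gronwall kernel $1+g(s)$ is only $L^1$ in time (not $L^\infty$) and that this is exactly what Proposition~\ref{proposition uniform bound on theta SK} is needed for is a correct and useful clarification; the paper handles the same point in the proof of Lemma~\ref{lemma skeleton equation existence bounds lemma 2} by noting that the multiplier $\exp\bigl(\int_0^T\int_B|l||\theta(s,l)-1|\,\nu(dl)\,ds\bigr)$ is finite.
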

			\begin{proof}[Proof of Lemma \ref{lemma skeleton equation existence bounds lemma 3}]
				The proof is similar in structure to the proof of Lemma \ref{lemma skeleton equation existence bounds lemma 2} and is hence skipped.
			
			\end{proof}
			
			\begin{lemma}\label{lemma skeleton equation existence bounds lemma 4}
				Let $\beta>\frac{1}{4}$, $\alpha\in(0,\frac{1}{2})$ and $p\geq 2$. There exists a constant $C>0$, which can depend on $\alpha,p,\beta$ but not on $n\in\mathbb{N}$, such that
		
				\begin{equation}
					\l| m_n \r|_{W^{\alpha , p}\l(0,T:X^{-\beta}\r)} \leq C.
				\end{equation}
			\end{lemma}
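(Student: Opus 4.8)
The plan is to differentiate (in time) the integral form \eqref{eqn skeleton FG eqn 2} and estimate each of the four contributions to $\partial_t m_n$ in a suitable Bochner space $L^{r}(0,T:X^{-\beta})$, uniformly in $n$, using the energy bounds of Lemmas \ref{lemma skeleton equation existence bounds lemma 2}--\ref{lemma skeleton equation existence bounds lemma 3}, the Sobolev embeddings available in $d=1,2$, and Proposition \ref{proposition uniform bound on theta SK} for the $\theta$--dependent term. The only abstract input is the elementary fact (see e.g. \cite{Flandoli_Gatarek,Metivier_SPDE_InfDimensions_Book}) that if $\xi\in L^{r}(0,T:E)$ then $\mathcal I(t):=\int_0^t\xi(s)\,ds$ lies in $W^{1,r}(0,T:E)$ with $\|\mathcal I\|_{W^{1,r}(0,T:E)}\le C_T\|\xi\|_{L^{r}(0,T:E)}$, together with the Bochner--Sobolev embedding $W^{1,r}(0,T:E)\hookrightarrow W^{\alpha,p}(0,T:E)$ in the appropriate parameter range; since the skeleton equation carries no stochastic integral, no martingale inequalities are needed. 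Writing $m_n = P_n m_0 + \sum_{i=1}^{3}\int_0^{\cdot}F_n^{i}(m_n(s))\,ds + \int_0^{\cdot}\xi_n^{\theta}(s)\,ds$ with $\xi_n^{\theta}(s)=\int_B l\,\bar g_n(m_n(s))\,(\theta(s,l)-1)\,\nu(dl)$, the constant term $P_n m_0$ is trivially in $W^{\alpha,p}(0,T:X^{-\beta})$ with norm bounded by $C\,|m_0|_{H^1}$, so it remains to treat the four integral terms.

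For the exchange term, $F_n^{1}(m_n)=\Delta m_n\in L^{2}(0,T:L^{2})\hookrightarrow L^{2}(0,T:X^{-\beta})$, uniformly by Lemma \ref{lemma skeleton equation existence bounds lemma 3}. For the gyromagnetic term, using $|m_n\times\Delta m_n|_{L^{4/3}}\le |m_n|_{L^{4}}\,|\Delta m_n|_{L^{2}}$ together with $H^{1}\hookrightarrow L^{4}$ (valid for $d=1,2$) and $L^{4/3}\hookrightarrow X^{-\beta}$ for $\beta>\tfrac14$, one gets $F_n^{2}(m_n)\in L^{2}(0,T:X^{-\beta})$, uniformly by Lemmas \ref{lemma skeleton equation existence bounds lemma 2}--\ref{lemma skeleton equation existence bounds lemma 3}. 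For the cubic term, $H^{1}\hookrightarrow L^{6}$ gives $|(1+|m_n|_{\mathbb R^3}^{2})m_n|_{L^{2}}\le |m_n|_{L^{2}}+|m_n|_{L^{6}}^{3}\le C(1+|m_n|_{H^1}^{3})$, so $F_n^{3}(m_n)\in L^{\infty}(0,T:L^{2})\hookrightarrow L^{\infty}(0,T:X^{-\beta})$, uniformly by Lemma \ref{lemma skeleton equation existence bounds lemma 3}. Hence each $\int_0^{\cdot}F_n^{i}(m_n)\,ds$ belongs to $W^{1,2}(0,T:X^{-\beta})$ (indeed to $W^{1,\infty}$ for $i=3$), and $W^{1,2}(0,T:X^{-\beta})$ embeds continuously into the space of $\tfrac12$--H\"older continuous $X^{-\beta}$--valued functions, hence into $W^{\alpha,p}(0,T:X^{-\beta})$ for every $\alpha\in(0,\tfrac12)$ and every $p\ge1$; this is precisely where the restriction $\alpha<\tfrac12$ enters.

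For the control term we estimate $|\xi_n^{\theta}(s)|_{L^{2}}\le |\bar g_n(m_n(s))|_{L^{2}}\int_B|l|\,|\theta(s,l)-1|\,\nu(dl)\le C\bigl(1+\sup_{t\in[0,T]}|m_n(t)|_{L^{2}}\bigr)\,\varphi_{\theta}(s)$ with $\varphi_{\theta}(s):=\int_B|l|\,|\theta(s,l)-1|\,\nu(dl)$. By Lemma \ref{lemma skeleton equation existence bounds lemma 2} the prefactor is bounded uniformly in $n$, and by Proposition \ref{proposition uniform bound on theta SK} the map $\varphi_{\theta}$ is integrable on $[0,T]$ with a bound depending only on $K$ (and uniform over $\theta\in\mathcal S^{K}$); hence $\xi_n^{\theta}\in L^{1}(0,T:X^{-\beta})$ uniformly in $n$, so $\int_0^{\cdot}\xi_n^{\theta}(s)\,ds\in W^{1,1}(0,T:X^{-\beta})\hookrightarrow W^{\alpha,p}(0,T:X^{-\beta})$ in the admissible range of $(\alpha,p)$. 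Collecting the four pieces yields the asserted uniform bound $\|m_n\|_{W^{\alpha,p}(0,T:X^{-\beta})}\le C$, with $C$ depending on $\alpha,p,\beta,T,m_0$ and $K$ but not on $n$. The step that requires genuine care is this last one: unlike the PDE terms, the integrand $\xi_n^{\theta}$ is controlled by the relative--entropy constraint defining $\mathcal S^{K}$ only in $L^{1}(0,T:X^{-\beta})$ (a fast burst of $\theta$ on a short time interval keeps $\mathcal L_T(\theta)$ bounded while making $\varphi_{\theta}$ large), so the whole interplay between $\alpha$, $p$ and $\beta$ -- and the impossibility of pushing $\alpha$ up to $1$ -- ultimately traces back to the control term together with the exchange term $\Delta m_n$; everything else is routine.
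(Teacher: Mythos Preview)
Your treatment of the three PDE terms $F_n^{1},F_n^{2},F_n^{3}$ is essentially the paper's own argument: each integrand lies uniformly in $L^{2}(0,T:X^{-\beta})$, so the corresponding time-primitive lies in $W^{1,2}(0,T:X^{-\beta})$, and the embedding $W^{1,2}\hookrightarrow W^{\alpha,p}$ (valid whenever $\alpha<\tfrac12+\tfrac1p$, in particular for all $\alpha\in(0,\tfrac12)$ and $p\ge 2$) finishes those pieces.

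The gap is in the control term. From Proposition~\ref{proposition uniform bound on theta SK} you correctly extract only $\varphi_\theta\in L^{1}(0,T)$, hence $\xi_n^{\theta}\in L^{1}(0,T:X^{-\beta})$ and $J_4:=\int_0^{\cdot}\xi_n^{\theta}\,ds\in W^{1,1}(0,T:X^{-\beta})$. But the embedding $W^{1,1}(0,T:E)\hookrightarrow W^{\alpha,p}(0,T:E)$ holds only when $\alpha-\tfrac1p\le 1-\tfrac11=0$, i.e.\ $\alpha p\le 1$; for $\alpha p>1$ it is false (any $W^{1,1}$ function that is merely continuous but not H\"older at a point, e.g.\ $t\mapsto -1/\log t$ near $0$, is a counterexample, since $W^{\alpha,p}\hookrightarrow C^{0,\alpha-1/p}$ when $\alpha p>1$). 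The lemma is stated for the full range $\alpha\in(0,\tfrac12)$, $p\ge 2$, which certainly allows $\alpha p>1$; more importantly, the \emph{only} use of this lemma downstream (the Flandoli--G\k{a}tarek compact embedding $W^{\alpha,p}(0,T:X^{-\beta'})\hookrightarrow C([0,T]:X^{-\beta})$) requires precisely $\alpha p>1$. So the phrase ``in the admissible range of $(\alpha,p)$'' hides the fact that the range you actually obtain, $\alpha p\le 1$, is disjoint from the range that is needed.

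The paper does not try to go through $W^{1,1}$ for $J_4$; it invokes an external result (Lemma~A.2 in \cite{UM+AAP_2021_LargeDeviationsSNSELevyNoise}) that bounds the $W^{\alpha,p}$ seminorm of $J_4$ directly, exploiting finer consequences of $\theta\in\mathcal S^{K}$ than the single $L^{1}$ bound in Proposition~\ref{proposition uniform bound on theta SK}. Concretely, the entropy constraint $\mathcal L_T(\theta)\le K$ yields, after splitting $\{|\theta-1|\le\delta\}$ versus $\{|\theta-1|>\delta\}$, an $L^{2}$-in-time control on the ``small'' part of $\varphi_\theta$ (so that contribution sits in $W^{1,2}$), while the ``large'' part must be estimated by a separate argument on the Gagliardo seminorm itself. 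Your $L^{1}$ route cannot substitute for this; you need to carry out (or cite) that sharper analysis to close the proof.
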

			\begin{proof}[Proof of Lemma \ref{lemma skeleton equation existence bounds lemma 4}]
				For a similar result, See Lemma 4.9 in the \cite{ZB+UM+Zhai_Preprint_LDP_LLGE_JumpNoise}. 
				We rewrite the equality \eqref{eqn skeleton FG eqn 2} as follows.
				\begin{align}
					\nonumber \l| m_n(t) \r|_{L^2}^2 = & \l| m_0 \r|_{L^2}^2 + 2\int_{0}^{t} F_n^1(m_n(s)) \, ds 
					+ 2\int_{0}^{t} F_n^2(m_n(s)) \, ds\\
					\nonumber & - 2\int_{0}^{t} F_n^3(m_n(s)) \, ds
					+ 2\int_{0}^{t} F_n^4(m_n(s)) \, ds \\
					= &  \sum_{i=1}^{4}  J_i(t).
				\end{align}
				
				For $i=1,2,3$, using the bounds established in Lemma \ref{lemma bounds lemma 2 existence of skeleton equation}, one can show that there exists a constant $C>0$, which can depend on $\alpha,p$, but not on $n\in\mathbb{N}$ such that
				\begin{equation}
					\l|J_i\r|_{W^{1,2}(0,T:X^{-\beta})} \leq C.
				\end{equation}
				For $p$ such that $\frac{1}{2} + \frac{1}{p} > \alpha $, we have the following continuous embedding \cite{Simon_Compact_Sets}
				\begin{equation}
					W^{1,2}(0,T:X^{-\beta}) \hookrightarrow W^{\alpha,p}(0,T:X^{-\beta}).
				\end{equation}
				Further, following Lemma A.2, Appendix A in \cite{UM+AAP_2021_LargeDeviationsSNSELevyNoise}, one can show that there exists a constant $C>0$ independent of $n\in\mathbb{N}$ such that
				\begin{equation}
					\l|J_4\r|_{W^{\alpha,p}(0,T:X^{-\beta})} \leq C.
				\end{equation}
				
			\end{proof}
			
			\item[\textbf{Step 3:}] \textbf{Compactness Arguments:}
			Using Theorem 2.2 of Flandoli and Gatarek \cite{Flandoli_Gatarek}, the space $W^{\alpha,p}(0,T:X^{-\beta^\p})$ is compactly embedded into the space $C([0,T]:X^{-\beta})$, for $\beta > \beta^\p$ and $\alpha p >1$. 
			Again, using Theorem 2.1 of \cite{Flandoli_Gatarek}, we have the following compact embedding
			\begin{equation}
				L^2(0,T:H^2)\cap L^{\infty}(0,T:H^1)\hookrightarrow L^2(0,T:H^1).
			\end{equation}
			Using the compactness arguments  and uniform bounds (Lemmas \ref{lemma skeleton equation existence bounds lemma 2}, \ref{lemma skeleton equation existence bounds lemma 3}, \ref{lemma skeleton equation existence bounds lemma 4}) above, there exists an element 
			\begin{align*}
				m\in L^{\infty}(0,T:H^1)\cap L^2(0,T:H^2)\cap C([0,T]:X^{-\beta})\cap L^4(0,T:L^4),
			\end{align*}
			such that
			\begin{align}
				m_n\to m\ \text{strongly in}\ L^2(0,T:H^1)\cap L^4(0,T:L^4),
			\end{align}
			\begin{align}
				m_n\to m\ \text{weakly in}\ L^2(0,T:H^2).
			\end{align}
			For $r\in(1,\frac{4}{3})$, there exists a constant $C>0$ such that
			\begin{equation}
				\int_{0}^{T} \l| m_n(t) \times \Delta m_n(t) \r|_{L^{2}}^r \, dt \leq C.
			\end{equation}
			In particular, we can conclude (see \cite{LE_Deterministic_LLBE}) that
			\begin{equation}
				m_n \times \Delta m_n \to m \times \Delta m\ \text{weakly in}\ L^2(0,T : (H^1)^\p).
			\end{equation}
			Using standard arguments (see for example \cite{Temam}), we can show that the obtained limit $m$ is a solution to the equation \eqref{eqn skeleton equation}.
			By Lemma 1.2 in \cite{Temam} (see page. 176), that $m\in L^2(0,T:H^2)$ and $\frac{du}{dt}\in L^2(0,T:(H^1)^\p)$  implies $m\in C([0,T]: H^1)$. This concludes the proof of the existence part in Theorem \ref{theorem unique solution for the skeleton equation}. The proof of uniqueness can be done along the lines of the proof of Theorem \ref{theorem pathwise uniqueness stochastic control equation}.

		\end{enumerate}

	\end{proof}

	\section{An Auxiliary Result}\label{Section An Auxiliary Result}
	We refer the reader to \cite{Zhai+Zhang_2015_LargeDeviations_2DSNSE_MultiplicativeLevyNoise}  for the following proposition (see also Lemma 4.5, \cite{ZB+UM+Zhai_Preprint_LDP_LLGE_JumpNoise}, Lemma 3.4, \cite{Budhiraja+Chen+Dupuis_2013_LDPDrivenByPoissonProcess}, Remark 5.4, \cite{UM+AAP_2021_LargeDeviationsSNSELevyNoise}).
	\begin{proposition}\label{proposition uniform bound on theta SK}
		Let us define the following space.
		\begin{equation}
			\mathcal{H}:= \l\{ v : B\to\mathbb{R}^{+} : \exists \delta>0, \forall \Gamma \in \mathcal{B} \text{ with } \nu(\Gamma)< \infty, \int_{\Gamma} e^{\delta v^2}\nu(dl)<\infty \r\}.
		\end{equation}
		Let $f\in \mathcal{H}\cap L^2(\nu)$. Then for every $K\in\mathbb{N}$,
		\begin{equation}
			\sup_{\theta\in\mathcal{S}^K}\int_{0}^{T} \int_{B} f(l) \l| \theta(l,s) - 1 \r|\, \nu(dl) \, ds : = C_K < \infty.
		\end{equation}
		In particular, for $f(l) = \l| l \r| \in \mathcal{H}\cap L^2(\nu)$,
		\begin{equation}
			\sup_{\theta\in\mathcal{S}^K}\int_{0}^{T} \int_{B} \l|l\r| \l| \theta(l,s) - 1 \r|\, \nu(dl) \, ds < \infty.
		\end{equation}
	\end{proposition}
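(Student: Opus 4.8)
The plan is to reduce the claimed uniform bound to a single pointwise inequality and then integrate. Write $\ell(x):=x\log x-x+1$ for $x\ge0$ (with $\ell(0)=1$); this is the convex, nonnegative function appearing in $\mathcal{L}_T$, with $\ell(1)=0$, so $\theta\in\mathcal{S}^K$ means precisely $\int_0^T\int_B\ell(\theta(t,l))\,\nu(dl)\,dt\le K$. First I would prove that there is a universal constant $C_2>0$ with
\begin{equation}\label{eqn proposal pointwise}
	c\,|x-1|\le \ell(x)+C_2\,c^2+e^{c}\,\mathbf 1_{\{|x-1|\ge 1/2\}},\qquad x\ge 0,\ c\ge 0.
\end{equation}
If $|x-1|<1/2$ then $x\in(1/2,3/2)$, and Taylor's theorem at $x=1$ (where $\ell'(1)=0$, $\ell''(y)=1/y$) gives $\ell(x)\ge\tfrac13(x-1)^2$, whence Young's inequality yields $c|x-1|\le\tfrac13(x-1)^2+\tfrac34c^2\le\ell(x)+\tfrac34c^2$. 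If $|x-1|\ge1/2$ and $x\ge3/2$ then $c|x-1|\le cx\le\ell(x)+(e^{c}-1)$ by the Fenchel inequality for the conjugate pair $(\ell,\ell^\ast)$ with $\ell^\ast(c)=e^{c}-1$ (a direct Legendre computation). If $|x-1|\ge1/2$ and $x\le1/2$ then $|x-1|=1-x\le1$, so $c|x-1|\le c\le e^{c}$. Thus \eqref{eqn proposal pointwise} holds with $C_2=\tfrac34$.

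Next I would apply \eqref{eqn proposal pointwise} with $x=\theta(t,l)$ and $c=\sigma f(l)$, where $\sigma:=\min(\delta,1)$ and $\delta>0$ is the constant from $f\in\mathcal{H}$, and integrate over $B_T$ against $\nu(dl)\,dt$. Using $\mathcal{L}_T(\theta)\le K$ and $\int_0^T\int_B f^2\,\nu(dl)\,dt=T\|f\|_{L^2(\nu)}^2<\infty$, this gives
\begin{equation*}
	\sigma\int_0^T\!\!\int_B f(l)\,|\theta(t,l)-1|\,\nu(dl)\,dt\;\le\;K+C_2\,\sigma^2 T\,\|f\|_{L^2(\nu)}^2+\int_{E_\theta}e^{\sigma f(l)}\,\nu(dl)\,dt,
\end{equation*}
where $E_\theta:=\{(t,l)\in B_T:|\theta(t,l)-1|\ge1/2\}$. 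To control the last term I would use two facts. First, on $E_\theta$ one has $\ell(\theta)\ge\ell_0:=\min\{\ell(1/2),\ell(3/2)\}>0$ (since $\ell$ decreases on $(0,1)$ and increases on $(1,\infty)$), hence $(\mathrm{Leb}\otimes\nu)(E_\theta)\le\ell_0^{-1}\mathcal{L}_T(\theta)\le K/\ell_0$. Second, $\{f>1\}$ has finite $\nu$-measure by Chebyshev, $\nu(\{f>1\})\le\|f\|_{L^2(\nu)}^2$. Splitting $E_\theta$ along $\{f\le1\}$ (there $e^{\sigma f}\le e$, so that part is $\le eK/\ell_0$) and $\{f>1\}$ (there $\sigma\le\delta$ and $f>1$ force $\sigma f\le\delta f\le\delta f^2$, so $e^{\sigma f}\le e^{\delta f^2}$, and since $\{f>1\}$ has finite $\nu$-measure the definition of $\mathcal{H}$ gives $\int_{\{f>1\}}e^{\delta f^2}\,\nu<\infty$, making that part $\le T\int_{\{f>1\}}e^{\delta f^2}\,\nu$) produces a bound depending only on $K,T,\delta,\|f\|_{L^2(\nu)}$ and $\int_{\{f>1\}}e^{\delta f^2}\,\nu$, hence independent of $\theta$. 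Dividing by $\sigma$ gives the uniform constant $C_K<\infty$.

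For the ``in particular'' statement, $f(l)=|l|$ lies in $\mathcal{H}$ trivially, because on $B$ we have $|l|<1$ so $e^{\delta|l|^2}\le e^{\delta}$ is bounded for any $\delta>0$; and $|l|\in L^2(\nu)$ because $\nu$, being the Lévy measure of $L$ supported in $B(0,1)\setminus\{0\}$, satisfies $\int_B|l|^2\,\nu(dl)<\infty$. Hence the general statement applies. The only genuinely delicate point in the argument is the choice of the pointwise bound \eqref{eqn proposal pointwise}: a cruder reduction using only the Fenchel inequality $c|x-1|\le\ell(x)+e^{c}$ with $c=\sigma f$ would require $\int_B e^{\sigma f}\,\nu<\infty$, which need not hold in general, whereas the quadratic term near $x=1$ together with the automatic finiteness of $(\mathrm{Leb}\otimes\nu)(E_\theta)$ is exactly what circumvents this.
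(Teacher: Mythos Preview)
The paper does not provide its own proof of this proposition; it simply states the result and refers the reader to external references (Zhai--Zhang, Budhiraja--Chen--Dupuis, and related works). Your argument is correct and self-contained: the pointwise inequality \eqref{eqn proposal pointwise} that separates the near-$1$ region $|\theta-1|<1/2$ (handled by the quadratic lower bound $\ell(x)\ge\tfrac13(x-1)^2$) from the far-from-$1$ region (handled via the Fenchel inequality $cx\le\ell(x)+e^{c}-1$ together with the entropy bound $(\mathrm{Leb}\otimes\nu)(E_\theta)\le K/\ell_0$), followed by the split along $\{f\le1\}$ and $\{f>1\}$ with the $\mathcal{H}$-hypothesis controlling the latter, is essentially the standard approach appearing in the cited literature, so there is nothing in the paper to contrast against beyond your having supplied the details it chose to omit.
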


	\par\medskip\noindent
	\bibliographystyle{plain}
	\bibliography{References_GokhaleSoham}
\end{document}